\documentclass{amsart}
\usepackage{amssymb}
\usepackage{amsfonts}
\usepackage{amssymb}
\usepackage{amsmath}
\usepackage{amsthm}
\usepackage{enumerate}
\usepackage{tabularx}
\usepackage{centernot}
\usepackage{mathtools}
\usepackage{stmaryrd}
\usepackage{amsthm,amssymb}
\usepackage{etoolbox,color}
\usepackage{tikz}
\usepackage{amssymb}
\usetikzlibrary{matrix}
\usepackage{tikz-cd}
\usepackage{tikz}
\usepackage{marginnote}
\definecolor{mygray}{gray}{0.85}
\usepackage[linecolor=black, backgroundcolor=mygray,colorinlistoftodos,prependcaption,textsize=small]{todonotes}
\usepackage{xargs}  
\usepackage[colorlinks,citecolor=blue,urlcolor=blue, linkcolor=blue]{hyperref}

\renewcommand{\leq}{\leqslant}
\renewcommand{\geq}{\geqslant}

\renewcommand{\trianglelefteq}{\trianglelefteqslant}
\newcommand{\mrm}[1]{\mathrm{#1}}

\usepackage[backgroundcolor=mygray,colorinlistoftodos,prependcaption,textsize=small]{todonotes}
\usepackage{xcolor}

\makeatletter
\def\subsection{\@startsection{subsection}{3}%
  \z@{.5\linespacing\@plus.7\linespacing}{.3\linespacing}%
  {\bfseries\centering}}
\makeatother

\makeatletter
\def\subsubsection{\@startsection{subsubsection}{3}%
  \z@{.5\linespacing\@plus.7\linespacing}{.3\linespacing}%
  {\centering}}
\makeatother

\makeatletter
\def\myfnt{\ifx\protect\@typeset@protect\expandafter\footnote\else\expandafter\@gobble\fi}
\makeatother

\newtheorem{theorem}{Theorem}[section]

\newtheorem{convention}[theorem]{Conventions}
\newtheorem{corollary}[theorem]{Corollary}

\newtheorem{lemma}[theorem]{Lemma}

\theoremstyle{plain}

\theoremstyle{definition}

\newtheorem{fact}[theorem]{Fact}

\newtheorem{definition}[theorem]{Definition}
\newtheorem{remark}[theorem]{Remark}

\newtheorem{notation}[theorem]{Notation}

\newtheorem*{theorem1.1}{Theorem 1.1}
\newtheorem*{theorem1.2}{Theorem 1.2}
\newcounter{claimcounter}
\newenvironment{claim}{\stepcounter{claimcounter}{\noindent {\bf Claim \theclaimcounter.}}}{}
\newenvironment{claimproof}[1]{\noindent{{\em Proof.}}\space#1}{\hfill $\rule{0.40em}{0.40em}$}


\usepackage{booktabs} 
\usepackage{hyperref} 


\newcommand{\pointw}{{P(W)}}
\newcommand{\pointwone}{{P(W')}}
\newcommand{\tranw}{{T(W)}}
\newcommand{\tranwone}{{T(W')}}
\newcommand{\lattw}{{L(W)}}
\newcommand{\lattwone}{{L(W')}}

\newcommand{\N}{{\mathbb{N}}}
\newcommand{\Z}{{\mathbb{Z}}}
\newcommand{\Q}{{\mathbb{Q}}}
\newcommand{\R}{{\mathbb{R}}}

\newcommand{\iso}{\mathrm{Iso}}
\newcommand{\aff}{\mathrm{Aff}}
\newcommand{\gl}{\mathrm{GL}}

\newcommand{\coalpha}{\alpha^{\vee}}


\newcommand{\tshort}{{\mathbb{Z}^n\overset{t}{\rightarrow}W\overset{p}{\rightarrow}W_0}}

\newcommand{\cspace}{~}
\newcommand{\say}[1]{``#1''}

\usetikzlibrary{arrows.meta}
\newcommand{\nodecircle}[2]{\node[draw, circle, fill=white, thick, minimum size=5pt, inner sep=0pt, label=below:{\scriptsize #1}] at #2 {};}
\newcommand{\nodercircle}[2]{\node[draw, circle, fill=white, thick, minimum size=5pt, inner sep=0pt, label=right:{\scriptsize #1}] at #2 {};}

\newcommand{\verteq}{\rotatebox{90}{$\,=$}}

\begin{document}

\begin{abstract}
We prove that every finite direct product of crystallographic groups arising from an irreducible root system (in the sense of Lie theory) is profinitely rigid (equiv. first-order rigid). This is a generalization of recent proofs of profinite rigidity of affine Coxeter \mbox{groups \cite{paolini_simon_homog, corson-hughes-moller-varghese, paolini-sklinos}. Our proof uses model theory.}
\end{abstract}

\title[Profinite rigidity of crystallographic groups from Lie theory]{Profinite rigidity of crystallographic groups arising from Lie theory}
\thanks{Research of the second named author was  supported by project PRIN 2022 ``Models, sets and classifications", prot. 2022TECZJA, and by INdAM Project 2024 (Consolidator grant) ``Groups, Crystals and Classifications''.}

\author{Davide Carolillo}

\address{Department of Mathematics ``Giuseppe Peano'', University of Torino, Via Carlo Alberto 10, 10123, Italy.}
\email{davide.carolillo@unito.it}

\author{Gianluca Paolini}

\address{Department of Mathematics ``Giuseppe Peano'', University of Torino, Via Carlo Alberto 10, 10123, Italy.}
\email{gianluca.paolini@unito.it}

\date{\today}
\maketitle

\tableofcontents



\section{Introduction}

A finitely generated residually finite group $G$ is said to be profinitely rigid if for any finitely generated residually finite group $H$ we have that $\widehat{G} \cong \widehat{H}$ implies that $G \cong H$, where $\widehat{G}$ denotes profinite completion.
In the current literature, the problem of profinite rigidity of finitely generated groups has become central to group theory, motivated by the following major open question posed by Remeslennikov in \cite[Question~5.48]{MK10}: is a non-abelian free group profinitely rigid? The problem remains open but much progress has been made on profinite rigidity in recent years.

\smallskip
Motivated by these developments, the problem of profinite rigidity of Coxeter groups has been considered in \cite{profinite_Coxeter, varghese}, with \cite{varghese} focusing specifically on affine Coxeter groups and posing the question of their profinite rigidity. Now, by classical works of Oger \cite{Oger88}, the problem of profinite rigidity for affine Coxeter groups is equivalent to a model-theoretic question, i.e., that of first-order rigidity, which asks whether such groups are, up to isomorphism, the only finitely generated models of their first-order theory.
This led to a model-theoretic solution \cite{paolini-sklinos} to the problem posed in \cite{varghese} due to the second named author of this paper and R. Sklinos.
 A purely group theoretic proof of this result was given in \cite{corson-hughes-moller-varghese}. Yet another proof (also model-theoretic) of profinite rigidity of affine Coxeter groups appears in \cite{paolini_simon_homog}.
	
\smallskip
The present paper extends and fully leverages the technology introduced in \cite{paolini-sklinos} toward a proof of profinite rigidity of affine Coxeter groups, broadening its scope of application, with a particular focus on applications to Lie theory and root systems. 

	
\smallskip	
We now introduce the main object of interest to this paper, namely {\em crystallographic groups}. A crystallographic group is a group $G$ fitting into a short exact sequence: $1 \rightarrow T \rightarrow W \rightarrow W_0 \rightarrow 1$, with $T \cong \mathbb{Z}^n$ and $W_0$ finite. The group $T$ is called the {\em translation subgroup} of $W$, and $W_0$ is called the {\em point group} of $W$. By Bieberbach's First Theorem (and its strengtehing due to Zassenhaus cf. \cite[Theorem\cspace3.2 and 3.3]{hiller}) these groups correspond exactly to the discrete cocompact subgroups of the isometry group of the Euclidean space $\mathbb{E}^n$ containing $n$ linearly independent translations. These groups have been studied since at least the 19th century, in fact they also appear in Hilbert's 18th problem. This problem specifically asked whether there are only finitely many ``essentially different'' crystallographic groups in $n$-dimensional Euclidean spaces. Here, ``essentially different'' can be defined as isomorphism of abstract groups, or equivalently, by Bieberbach's Second Theorem (cf. \cite[Theorem\cspace3.4]{hiller}), up to conjugation by affine motions of $\mathbb{E}^n$.
	
\smallskip 	This connects with affine Coxeter groups, as irreducible affine Coxeter groups are crystallographic groups of a certain kind. In fact, they have several additional properties of interest, namely:
	\begin{enumerate}[$\bullet$]
	\item they are split (that is, the sequence $1 \rightarrow T \rightarrow W \rightarrow W_0 \rightarrow 1$ splits);
	\item their associated integral representations are absolutely irreducible;
	\item they arise from a root system (in the sense of Lie theory).
	\end{enumerate}
	
	In this paper, we will see that the methods used in \cite{paolini-sklinos} toward proving the profinite rigidity of affine Coxeter groups have a much greater scope of applicability. In particular, we prove two major profinite rigidity results.
	 The first one is:

	\begin{theorem}\label{main_th1} Finite direct products of absolutely irreducible split crystallographic groups are profinitely rigid (equiv. first-order rigid).
	\end{theorem}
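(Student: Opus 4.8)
The plan is to reduce the statement to \emph{first-order rigidity} and then reconstruct the group from its first-order theory. Write $G = \prod_{i=1}^{k}(T_i \rtimes W_0^i)$ with $T_i \cong \mathbb{Z}^{n_i}$ and $\rho_i \colon W_0^i \to \gl_{n_i}(\mathbb{Z})$ absolutely irreducible, and set $T = \prod_i T_i \cong \mathbb{Z}^N$, $W_0 = \prod_i W_0^i$, $m = |W_0|$, so that $G = T \rtimes_\rho W_0$ with $\rho = \bigoplus_i \rho_i$. Since $G$ is finitely generated and abelian-by-finite, Oger's theorem reduces profinite rigidity to the assertion that every finitely generated $H \equiv G$ satisfies $H \cong G$; so I fix such an $H$ and aim to build an isomorphism.

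First I would recover the translation lattice by a definability argument. For $g \in T$ the conjugacy class of $g$ is contained in the orbit $\{\rho(w)g : w \in W_0\}$, hence has at most $m$ elements, whereas for $g = (v,\bar g)$ with $\bar g \neq 1$ the translation parts of its conjugates fill the infinite coset $v + (I-\rho(\bar g))T$ (nonzero because $\rho$ is faithful). Thus $T$ is exactly the set defined by the formula $\tau_m(x)$ asserting ``$x$ has at most $m$ conjugates''; i.e.\ $T$ is the FC-centre and is $\emptyset$-definable. The sentences expressing that $\tau_m$ defines an abelian normal self-centralizing subgroup of index exactly $m$, that it has no $p$-torsion and that $|T/pT| = p^N$ (for each prime $p$), and that $G$ splits over $\tau_m$ with a complement of order $m$ isomorphic to the fixed finite group $W_0$, are all first-order and true in $G$; transferring them to $H$ yields a definable $T_H \cong \mathbb{Z}^N$, normal of index $m$, with $C_H(T_H)=T_H$, $W_0^H := H/T_H \cong W_0$, and a splitting $H \cong T_H \rtimes_{\rho_H} W_0^H$.

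Next I would pin down the module structure locally. For each prime $p$ and each $k$, the finite group $T/p^kT$ together with the conjugation action of $W_0 = G/T$ is interpretable by fixed formulas, and the isomorphism type of this finite structure is captured by a single sentence; hence $(T_H/p^kT_H,\, W_0^H) \cong (T/p^kT,\, W_0)$ for all $p,k$. Passing to the inverse limit over $k$ (a compactness argument on the finite sets of isomorphisms) gives $T_H \otimes \mathbb{Z}_p \cong T \otimes \mathbb{Z}_p$ as $\mathbb{Z}_p[W_0]$-lattices for every $p$; reading off the traces of $\rho_H(w)$ modulo all primes recovers the rational character, so $T_H \otimes \mathbb{Q} \cong T \otimes \mathbb{Q}$ as well. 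In other words $T_H$ and $T$ lie in the same \emph{genus} of $\mathbb{Z}[W_0]$-lattices.

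The main obstacle is the final, purely arithmetic step: upgrading ``same genus'' to ``isomorphic as $\mathbb{Z}[W_0]$-modules''. This is exactly where absolute irreducibility is indispensable: it forces $\End_{\mathbb{Q}[W_0]}(T \otimes \mathbb{Q}) = \prod_{i=1}^{k} \mathbb{Q}$, a commutative semisimple algebra with no totally definite quaternion component and trivial ray class group, so the Eichler/Jacobinski theory of lattices over orders should collapse the genus of $T$ to a single isomorphism class (up to the harmless action of $\Aut(W_0)$). Granting this, $T_H \cong T$ as $W_0$-modules, and combining the module isomorphism with the splitting and $W_0^H \cong W_0$ produces a group isomorphism $H \cong T \rtimes_\rho W_0 = G$. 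I expect essentially all the difficulty to be concentrated here --- in verifying that the Eichler condition genuinely applies and that the genus is a single class for the lattices arising from these absolutely irreducible representations --- whereas the model-theoretic reconstruction in the earlier steps, though it must be carried out carefully, is routine interpretability.
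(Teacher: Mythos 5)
Your model-theoretic reconstruction is sound and runs parallel to the paper's: your FC-centre formula is a variant of the paper's definition of $T$ by $\forall y\,([x,y^k]=e)$ (Lemma \ref{lemma_the_prop_def_transla_crysp}), and the transfer of the crystallographic structure, of the splitting, and of the genus is Lemma \ref{lemma - every f.g. group el. equivalent to a crysp gp is crysp} together with Lemma \ref{lemma-split extensions are definable}. The genuine gap is the step you yourself flag as unverified, and the tools you name do not fill it. Eichler/Jacobinski theory is a \emph{cancellation} theory: under the Eichler condition it shows that stably isomorphic lattices are isomorphic; it does not say that a genus consists of a single class. The number of classes in a genus is a separate class-group computation attached to $\End_{\Z[W_0]}(T)$, and your assertion that the relevant ``ray class group'' is trivial is precisely what has to be proved --- taken literally it is even false, since ray class groups of $\Q$ are $(\Z/f\Z)^{*}$ or $(\Z/f\Z)^{*}/\{\pm 1\}$ and usually nontrivial; what is true, and needs an argument, is that the genus classes here are governed by the ideal class group of $\Z$, because $\End_{\Z[W_0]}(T)=\prod_i\Z$ is a maximal order and all local scalar units are automorphisms of the local lattices. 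The paper replaces all of this by an elementary two-step argument: by Schur's lemma, absolute irreducibility forces any isomorphism between centerings to be multiplication by a rational scalar, so distinct $\prec$-maximal centerings are never isomorphic and there are finitely many of them (Lemma \ref{thm. - absolutely irreducible -> there are only finitely many maximal centerings}); then Fact \ref{plesken_fact} yields an embedding $\sigma\colon L'\to L$ of index coprime to the product of the indices of those centerings, which forces $\sigma(L')=\ell L$ and hence $L'\cong L$ (Theorem \ref{thm - finitely many centerings -> split profinite rigidity}).

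There is a second, independent gap created by your treatment of the direct product. You fold everything into the single crystallographic group $G=T\rtimes W_0$ with $T=\bigoplus_i T_i$; but this $T$ is decomposable, hence \emph{not} absolutely irreducible, so no single-factor statement applies to it, and the paper's finiteness hypothesis fails outright for such $T$: the $W_0$-stable subgroups $aT_1\oplus bT_2$ with $\gcd(a,b)=1$ form infinitely many pairwise non-proportional centerings. To make your route work you would additionally have to show that every lattice in the genus of $\bigoplus_i T_i$ splits as $\bigoplus_i L_i'$ with $L_i'$ in the genus of $T_i$ --- true here, because the $\Q T_i$ are pairwise non-isomorphic and stay irreducible over every $\Q_p$, so every gluing between the summands is locally, hence globally, trivial --- but this lemma is nowhere in your outline. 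The paper avoids the issue entirely: by Oger--Lasserre (Fact \ref{fact - lasserre}) a finitely generated $H\equiv \prod_i G_i$ decomposes as $\prod_i H_i$ with $H_i\equiv G_i$, and rigidity is then verified one absolutely irreducible factor at a time (Lemma \ref{lemma_direct_products}, Corollary \ref{cor_asb_irr_rigid}). So the reduction to irreducible factors is not a convenience in the paper; it is what makes the centering machinery applicable at all.
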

	
	Regarding our second main result, following \cite{martinais}, we say that a crystallographic group \emph{arises from an irreducible root system} if it admits an affine realization as a group of isometries of an Euclidean space (cf. Definition\cspace\ref{def - crystallographic group arising from a root system}) such that the associated point group is essential and it is the \mbox{Weyl group of an irreducible root system.}
	
	\begin{theorem}\label{main_th2} Finite direct products of crystallographic groups arising from an irreducible root system are profinitely rigid (equiv. first-order rigid).
	\end{theorem}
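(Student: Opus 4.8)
The plan is to derive Theorem \ref{main_th2} from Theorem \ref{main_th1} by closing the two gaps that separate the two hypotheses. Recall first that, by the classical results of Oger \cite{Oger88}, for a finitely generated abelian-by-finite group profinite rigidity and first-order rigidity coincide, and both amount to showing that any finitely generated residually finite $H$ with $\widehat{H} \cong \widehat{W}$ must be isomorphic to $W$; a standard profinite argument forces such an $H$ to again be crystallographic, with the same finite point group $W_0$ and an action on its translation lattice that is conjugate over $\widehat{\Z}$ to that of $W$. Now let $W$ be a crystallographic group arising from an irreducible root system, with point group $W_0$ the Weyl group and translation lattice $T$. Since $W_0$ is essential and is the Weyl group of an irreducible root system, it acts on $T \otimes \Q$ by (a rational multiple of) the reflection representation, which is absolutely irreducible. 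Thus the absolute-irreducibility hypothesis of Theorem \ref{main_th1} is automatic, and the only remaining gap is the splitting hypothesis.

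To remove it, I would separate the data that $\widehat{W}$ must determine into (i) the translation lattice $T$ as a $\Z[W_0]$-module and (ii) the extension class in $H^2(W_0; T)$. Step (ii) turns out to be free of difficulty: because $W_0$ is finite and $T$ is finitely generated, the bar resolution is finitely generated free, so $H^2(W_0; T)$ is finite, and the comparison map $H^2(W_0; T) \to H^2(W_0; \widehat{T})$ induced by the flat base change $\Z \to \widehat{\Z}$ is an isomorphism (flatness plus finiteness of the target group). Hence the possibly non-split extension class is recorded faithfully in $\widehat{W}$ and is recovered verbatim from the profinite data, so no non-split class can obstruct rigidity at the cohomological level. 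The entire difficulty therefore concentrates in step (i).

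The main obstacle is exactly the local-to-global problem for $\Z[W_0]$-lattices: recovering the $\gl_n(\Z)$-conjugacy class of the integral reflection representation from its $\gl_n(\widehat{\Z})$-conjugacy class, i.e.\ showing that two $W_0$-invariant lattices lying in the same genus are already isomorphic as $\Z[W_0]$-modules, up to a twist by $\Aut(W_0)$. I would establish this by exploiting the rigidity special to irreducible root systems: absolute irreducibility forces $\End_{\Q[W_0]}(T \otimes \Q) = \Q$, so the enveloping order sits inside $M_n(\Q)$, and the $W_0$-invariant lattices are sandwiched between the root lattice $Q$ and the weight lattice $P$, hence classified by subgroups of the finite group $P/Q$. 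Invoking the classification in \cite{martinais} reduces the claim to the finitely many irreducible types, where one checks that each genus of invariant lattices consists of a single $\Aut(W_0)$-orbit of isomorphism classes (class number one after twisting). Granting this rigidity, the recovered triple $(T, \rho, \mathrm{class})$ agrees with that of $W$ up to the action of $\Aut(W_0) \times \gl_n(\Z)$, so $H \cong W$; the passage from a single factor to finite direct products is then handled by the product machinery already developed in the proof of Theorem \ref{main_th1}.
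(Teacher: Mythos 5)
Your reduction of the lattice part to absolute irreducibility is sound and matches the paper (this is Lemma~\ref{lemma-every_lattice_L_s.t._Q(R)<L<P(R)_is_absolutely_irreducible} together with Theorem~\ref{theorem}), but your step (ii) --- the claim that the extension class poses ``no difficulty'' --- contains the essential gap, and it is precisely where almost all of the paper's work lies. The base-change isomorphism $H^2(W_0;T)\to H^2(W_0;\widehat{T})$ is true, but it does not give what you need. An abstract isomorphism $\widehat{H}\cong\widehat{W}$ does not identify the two extension classes ``verbatim'': it identifies them only up to a twist by a pair $(\phi_0,\phi_T)$ with $\phi_0\in\Aut(W_0)$ and $\phi_T$ an automorphism of $\widehat{T}$ compatible with the actions, i.e.\ up to the orbit of the class under the normalizer of $W_0$ inside $\mathrm{Aut}(\widehat{T})$. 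Two classes that are inequivalent under the normalizer over $\Z$ can perfectly well fall into the same orbit over $\widehat{\Z}$, so injectivity of the comparison map on $H^2$ does not rule out non-isomorphic groups with isomorphic profinite completions. Indeed the paper's introduction points to exactly such counterexamples (Finken--Neub\"user--Plesken): non-isomorphic, non-split crystallographic groups with the \emph{same} point group and \emph{isomorphic} translation lattices which are elementarily equivalent. So your sentence ``no non-split class can obstruct rigidity at the cohomological level'' is unjustified, and in the generality in which you assert it, false.

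What the paper actually does after the lattice step is the hard case analysis you skip. By Martinais's classification, for a fixed irreducible $R$ and admissible lattice $L$ there can be up to \emph{four} pairwise non-isomorphic extensions of $W_0(R)$ by $L$ (types $B_l$ with $CL_l$, $B_4$ with $CCL_4$, $C_l$ odd with $FL_l$, $D_l$ even with $FL_l$). When there are at most two, splitness is first-order expressible (Lemma~\ref{lemma-split extensions are definable}) and one is done; but when $n(W_0(R),L)\geq 3$ one must show that the distinct \emph{non-split} extensions are not even elementarily equivalent. The paper achieves this by constructing explicit sentences: it shows that affine reflections can be recognized first-order from the ranks of the fixed and anti-fixed sublattices of the translation subgroup (Lemma~\ref{lemma - logic characterization of reflections}), that any tuple of cosets behaving like simple reflections differs from the standard one only by an inner-by-graph automorphism (Fact~\ref{fact - every automorphism of a Weyl group of an irreducible root system is inner-by-grapph}, Remark~\ref{notation_2 - lemma all crystallographic groups arising from irreducible root systems are f.o. rigid and profinitely rigid}), and then it separates Martinais's explicit representatives by concrete coset computations (existence of involutions in prescribed cosets, coincidence of squares, commutation). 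None of this is recoverable from your cohomological argument, so the proposal as written does not prove the theorem.
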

	
	Notice that these two theorems generalize the results in \cite{corson-hughes-moller-varghese, paolini-sklinos} in various different directions. On the one hand, Theorem~\ref{main_th1} simply assumes absolute irreducibility of the integral representation associated to the crystallographic group $W$, without asking for specific properties of any of its affine realizations. On the other hand, Theorem~\ref{main_th2} does ask that the group can be realized as the group of symmetries of a root system (as in the case of affine Coxeter groups), but it generalizes \cite{corson-hughes-moller-varghese, paolini-sklinos} twofold, firstly, in considering other root lattices (not only the ones associated to the affine Coxeter groups), and, most importantly, in considering {\em any} group extension associated to any such lattice, not only the split ones. Our proof crucially relies on a combination of integral representation methods from \cite{plesken-holt} and the ``Crystallography of Coxeter Groups'', as developed by Maxwell and Martinais in \cite{martinais, maxwell}.
	
\smallskip 
Notice that {\em some} assumptions on the given crystallographic group are necessary in order to conclude profinite rigidity, as e.g. for every integer $n$ such that the class number of the cyclotomic field $\mathbb{Q}(\zeta_n)$ is strictly greater than $1$ (this is true for every $n\geq 85$), there exist split crystallographic groups $G_1,G_2$ of dimension $\phi(n)$ such that $G_1\not\cong G_2$ but $\widehat{G}_1 \cong \widehat{G}_2$ (see \cite{brigham1971}), where $\phi(n)$ is Euler's function. In particular, for any $p\geq 23$, there are such groups of the form $\mathbb{Z}^{p-1} \rtimes\mathbb{Z}/p\mathbb{Z}$ (see \cite[p.\cspace204-205]{finken-neubuser-plesken}). Similarly, there are known examples of non-isomorphic non-split crystallographic groups of the same genus (i.e., elementarily equivalent) which have isomorphic translation lattices (cf. \cite[p.\cspace205]{finken-neubuser-plesken}). On the other hand, notice that it is know that all crystallographic groups of dimension $\leq 4$ are profinitely rigid \cite{crystallo_profinite}.

What we find particularly interesting about our approach is that our proof uses model theory, i.e., in both Theorem~\ref{main_th1} and Theorem~\ref{main_th2} we actually prove that the groups under consideration are first-order rigid, and then deduce, via the already mentioned fundamental work of Oger \cite{Oger88}, that they are profinitely rigid.
\section{Preliminaries on crystallographic groups}\label{prel_sec}

In this section, we introduce the basics of crystallographic groups.



\begin{convention}\label{semi_convention}
	If $A,B,C$ are groups, then $A = B \rtimes_\alpha C$ denotes the external semidirect product of $B$ and $C$. In particular, the action of $C$ on $B$ is given by the image of the homomorphism $\alpha: C \rightarrow \mrm{Aut}(B)$. On the other hand, the notation $A = B \rtimes C$ is used for the internal semidirect product perspective. In the latter case, it is implicitly intended that $C$ acts by conjugation on $B$.
\end{convention}
Let $V$ be an $n$-dimensional real vector space, and let $(\,\cdot\,,\,\cdot\,)$ be a positive definite symmetric bilinear form on $V$ (that is, a non-degenerate inner product).
Then, the associated Euclidean vector space $E\vcentcolon=(V,(\,\cdot\,,\,\cdot\,))$ carries a natural notion of length given by the norm $\|\,\cdot\,\|\vcentcolon=(\,\cdot\,,\,\cdot\,)^{1/2}$. An (affine) \emph{isometry} of $E$ is a map $f:V\rightarrow V$ preserving this norm, i.e., a function such that $\|f(x)-f(y)\|=\|x-y\|$ for all $x,y\in V$. 
%
%
As well-known, the group $\iso(E)$ of the isometries of $E$ has a natural decomposition as:
$$	\iso(E)=V\rtimes O(E),$$
where $V$ corresponds to the translations $t_v:V\rightarrow V$, $x\mapsto x+v$, for a given vector $v\in V$, and $O(E)$ denotes the subgroup of orthogonal maps of $E$, i.e., the automorphisms of $V$ preserving the inner product.

\smallskip
We consider $E$ also as a topological space, with a topology compatible with the norm.
\begin{definition}{\cite[Definition\cspace3.1]{hiller}}\label{def - affine crysp group}
	Let $E$ be a finite-dimensional real Euclidean vector space. Then, an \emph{affine crystallographic group} $W$ is a subgroup of $\iso(E)$ whose action on $E$ is:
	\begin{enumerate}[(1)]
		\item \emph{discrete},
		i.e., for each $x\in E$, the orbit $W.x\subseteq E$ has no accumulation points;
		\item \emph{cocompact}, i.e., $E/W$ is compact with respect to the quotient topology.
	\end{enumerate} 
\end{definition}
Observe that condition (2) from \ref{def - affine crysp group} is equivalent to saying that $W$ has a fundamental domain with compact topological closure in $E$ (cf. \cite[Section\cspace3]{hiller}).

By the identification $\iso(E)=V\rtimes O(E)$, any affine crystallographic group $W\leq\iso(E)$ canonically determines two associated objects, namely:
\begin{enumerate}[$\bullet$]
	\item $T(W)\vcentcolon=W\cap (V\times\{1\})$, the \emph{translation subgroup} of $W$;
	\item $P(W)\vcentcolon=\{s\in O(E):\exists v\in V((v,s)\in W)\}$, the \emph{point group} of $W$.
\end{enumerate}

Definition~\ref{def - affine crysp group} entails that $T(W)$ is a \emph{lattice} in the vector space $V\times \{1\}$ of translations in $\iso(E)$, i.e., it is the free abelian group generated by some basis of $V\times\{1\}$. Thus, the rank of $T(W)$ as a free abelian group coincides with the dimension of $V$, and it is called the \emph{dimension} of $W$. Since $T(W)$ is a normal subgroup of $W$, it is stable under the action of $P(W)$, which forms a discrete subgroup of the compact group $O(E)$, and therefore must be finite. It follows that any affine crystallographic group $W$ fits into a short-exact sequence of groups:
\begin{equation*}
	1 \rightarrow T(W) \rightarrow W \rightarrow P(W) \rightarrow 1
\end{equation*}
\noindent
such that $T(W)$ is free abelian of finite rank (i.e., $T(W)\cong \Z^n$, for some $n\in\N$), $P(W)$ is finite, and it acts faithfully on $T(W)$. This observation is part of \say{Bieberbach's First Theorem} (see e.g. \cite[Theorem~3.2]{hiller}).

 By a fundamental result of Zassenhaus (cf. \cite{zassenhaus}, or \cite[Theorem\cspace3.3]{hiller}), this description actually suffices to provide an abstract characterization of crystallographic groups.
\begin{theorem}[Zassenhaus]\label{thm - zassenhaus}
	An abstract group $W$ is isomorphic to an affine crystallographic  group of dimension $n$ if and only if $W$ contains a free abelian subgroup $T$ of rank $n$ that is normal, of finite index, and maximal abelian.
\end{theorem}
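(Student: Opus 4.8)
The plan is to prove both implications, with the substantive content lying in the ``if'' direction. For the ``only if'' direction, suppose $W \leq \iso(E)$ is an affine crystallographic group of dimension $n$. The discussion preceding the statement already supplies the short exact sequence $1 \to T(W) \to W \to P(W) \to 1$ in which $T(W) \cong \Z^n$ is free abelian of rank $n$, normal, of finite index (as $P(W)$ is finite), and on which $P(W)$ acts faithfully. It remains only to check that $T(W)$ is maximal abelian. So let $A \leq W$ be abelian with $T(W) \leq A$ and pick $(v,s) \in A$. Since $A$ is abelian, $(v,s)$ commutes with every translation $(t,1) \in T(W)$; computing the conjugate in $V \rtimes O(E)$ gives $(v,s)(t,1)(v,s)^{-1} = (s(t),1)$, so that $s(t) = t$ for all $t \in T(W)$. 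As $T(W)$ is a full-rank lattice spanning $V$, this forces $s = \mathrm{id}$, whence $(v,s) \in T(W)$. Thus $A = T(W)$, as required.

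For the converse, let $W$ be an abstract group with a subgroup $T \cong \Z^n$ that is normal, of finite index, and maximal abelian. Set $W_0 \vcentcolon= W/T$, a finite group, and record the extension $1 \to T \to W \to W_0 \to 1$. Conjugation induces an action of $W$ on $T$ which is trivial on $T$ itself (as $T$ is abelian), hence descends to a homomorphism $W_0 \to \mrm{Aut}(T) = \gl_n(\Z)$. First I would show this action is faithful: if $w \in W$ centralizes $T$, then $\langle T, w\rangle$ is abelian ($w$ commutes with every element of $T$, and $T$ is abelian), so maximality gives $w \in T$; thus the centralizer of $T$ in $W$ equals $T$, and $W_0$ acts faithfully. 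Extending scalars, put $V \vcentcolon= T \otimes_\Z \R \cong \R^n$, so that $W_0$ acts faithfully and $\R$-linearly on $V$. Averaging an arbitrary inner product over the finite group $W_0$ produces a $W_0$-invariant positive definite symmetric bilinear form, turning $E \vcentcolon= (V, (\,\cdot\,,\,\cdot\,))$ into a Euclidean space with $W_0 \hookrightarrow O(E)$, while $T$ sits inside $V$ as a full-rank lattice $\iota(T)$.

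The heart of the proof, and the step I expect to be the main obstacle, is realizing the \emph{abstract} extension $W$ geometrically inside $\iso(E) = V \rtimes O(E)$. The extension is classified by a class $\xi \in H^2(W_0, T)$ for the above action. Pushing forward along the inclusion $\iota\colon T \hookrightarrow V$ yields $\iota_*\xi \in H^2(W_0, V)$, and since $W_0$ is finite while $V$ is a $\Q$-vector space (hence uniquely divisible), one has $H^2(W_0, V) = 0$. Consequently the pushed-out extension $1 \to V \to W' \to W_0 \to 1$ splits, giving $W' \cong V \rtimes W_0 \leq V \rtimes O(E) = \iso(E)$. A direct diagram chase shows the canonical map $W \to W'$ is injective: an element of the kernel maps to $1$ in $W_0$, hence lies in $T$, and is then sent to its image under $\iota$, which vanishes only at the identity. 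Thus $W$ embeds in $\iso(E)$ with $T$ mapping onto the lattice $\iota(T)$.

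It remains to verify that the image of $W$ is crystallographic in the sense of Definition~\ref{def - affine crysp group}. Cocompactness is immediate: $T$ acts on $V$ with compact quotient (a torus) and $W/T = W_0$ is finite, so $E/W$ is compact. For discreteness, fix $x \in E$; the orbit is $W.x = \{v + s(x) : (v,s) \in W\}$, and for each of the finitely many $s \in W_0$ the admissible translation parts $v$ form a single coset of the lattice $\iota(T)$ in $V$, which is discrete. Hence $W.x$ is a finite union of discrete sets and has no accumulation points. Therefore $W$ is isomorphic to an affine crystallographic group of dimension $n$, completing the proof.
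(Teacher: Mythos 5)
Your proof is correct, but there is nothing in the paper to compare it against: the paper does not prove Theorem~\ref{thm - zassenhaus} at all, stating it as a classical result with references to \cite{zassenhaus} and \cite[Theorem\cspace3.3]{hiller}. Your argument is essentially the standard proof found in such sources. The ``only if'' direction (the conjugation computation $(v,s)(t,1)(v,s)^{-1}=(s(t),1)$ plus the fact that $T(W)$ spans $V$) is fine, and your ``if'' direction follows the classical cohomological route: faithfulness of the $W_0$-action via maximality of $T$, averaging to get a $W_0$-invariant inner product on $V=T\otimes_\Z\R$, pushing out the extension along $\iota\colon T\hookrightarrow V$, and invoking $H^2(W_0,V)=0$ (finite group, uniquely divisible coefficients) to split the pushed-out extension and embed $W$ into $\iso(E)$. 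The one point you treat only implicitly, and which carries real weight, is that the kernel of the projection of the image of $W$ onto $W_0$ is \emph{exactly} $\iota(T)$ (not larger): this is what makes the orbit $W.x$ a union of finitely many cosets of the lattice $\iota(T)$ in your discreteness check, and it is also what identifies the translation subgroup of the image with $\iota(T)$, so that the resulting affine crystallographic group has dimension $n$ as required. That identification does follow from the commutative diagram relating $W$, $W'$, and $W_0$, so the proof stands; spelling it out would close the only visible gap.
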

Theorem~\ref{thm - zassenhaus} allows to redefine crystallographic groups as follows:

	

\begin{definition}\label{def - abstract crysp group}
	A group $W$ is said to be an \emph{(abstract) crystallographic group} if it admits a short-exact sequence of groups:
	\begin{equation*}
		1 \rightarrow \tshort \rightarrow 1
	\end{equation*}
	such that $W_0$ is finite and it acts faithfully on $T=t(\Z^n)$ via the map $\alpha:W_0\rightarrow\mathrm{Aut}(T)$, $w\mapsto (\,\cdot\,)^u$, for some (or, equivalently, any) $u\in p^{-1}(w)$.
\end{definition}

In light of Theorem\cspace\ref{thm - zassenhaus}, we can freely pass from one perspective (the affine one) to the other (the abstract one).
In this spirit, we refer to $T$ and $W_0$ as the \emph{translation subgroup} and \emph{point group} of $W$, respectively, as in the affine case.

By definition, associated to each crystallographic group there is a faithful action $\alpha: W_0 \rightarrow\mathrm{Aut}(T)$ of its point group $W_0$ on the translation subgroup $T$. Since $T \cong \mathbb{Z}^n$, this action is actually an integral representation of $W_0$, i.e., $\alpha: W_0 \rightarrowtail \mrm{GL}_n(\mathbb{Z})$. Accordingly, $T$ acquires a natural structure of a $\Z[W_0]$-lattice, which we denote by $L(W)$ and refer to as the \emph{translation lattice} of $W$. If $W_0=\{w_i:i<k\}$ is an enumeration without repetition of $W_0$, the scalar multiplication on $L(W)$ is defined as follows:
\begin{equation*}
	\sum_{i<k}n_iw_i.x\vcentcolon=\sum_{i < k}n_i\alpha(w_i)(x)=\sum_{i < k}n_i x ^{u_i},
\end{equation*}
for all $n_0,\ldots,n_{k-1}\in \Z$, $x\in T$ and some (or, equivalently, any) $u_i\in p^{-1}(w_i)$, with $i<k$ and $p:W_0\rightarrow\mathrm{Aut}(T)$ as in \ref{def - abstract crysp group}. In what follows, we often pass from $W$ to the associated lattice $L(W)$ without explicit mention, and will therefore use the terminology of representation theory when referring to $W$.
\begin{notation}
	The term \say{translation lattice} is sometimes used in the literature to denote the group structure on $T$. For clarity, in this paper we always refer to the group $T$ as the \emph{translation subgroup} of $W$, and to the associated $\mathbb{Z}[W_0]$-module $\lattw$ described above as the \emph{translation lattice} of $W$. Moreover, if $W$ is an abstract crystallographic group as in \ref{def - abstract crysp group}, we write $\tranw$ and $\pointw$ in place of $T$ and $W_0$ when we wish to emphasize the dependence on $W$; the maps in the associated short-exact sequence will always be clear from the context.
\end{notation}
An abstract crystallographic group $W$ can have many affine realizations inside a given Euclidean vector space $E$. Thus, it is natural to ask whether these realizations are equivalent, in the affine sense. The answer to this question is provided by the following result, knowns as \emph{Bieberbach's Second Theorem} (see \cite[Theorem\cspace3.4]{hiller}).


\begin{theorem}[Bieberbach\cspace II]\label{thm - bieberbach II}
	Two abstract crystallographic groups are isomorphic if and only if there is an Euclidean space $E$ such that their affine realizations in $\aff(E)$ are conjugated by some affine motion in $E$.
\end{theorem}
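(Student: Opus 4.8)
The plan is to prove the two implications separately; the substance is entirely in the forward direction, since if affine realizations of $W$ and $W'$ in $\aff(E)$ are conjugate by an affine motion $\alpha$, then conjugation by $\alpha$ is a group isomorphism and the abstract groups are isomorphic.

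For the converse, suppose $\varphi\colon W\to W'$ is an abstract isomorphism. The first step is to observe that the translation subgroup is an \emph{intrinsic} (hence isomorphism-invariant) feature of a crystallographic group: it coincides with the FC-centre $\{w\in W:\ w \text{ has finitely many conjugates}\}$. Indeed, every $t\in \tranw$ is centralized by the finite-index subgroup $\tranw$, while for $w\notin\tranw$ with nontrivial image $s$ in $\pointw$ one has $w\,t_u\,w^{-1}=t_{su}$, so the centralizer of $w$ meets $\tranw$ in the sublattice fixed by $s$, which lies in the proper subspace $\ker(s-1)$ and therefore has infinite index. (Equivalently one may invoke Theorem~\ref{thm - zassenhaus} together with uniqueness of the maximal normal abelian subgroup of finite index.) Consequently $\varphi(\tranw)=\tranwone$, so $\varphi$ restricts to a group isomorphism $\psi\colon \tranw\to\tranwone$ --- in particular $W$ and $W'$ have the same dimension $n$ --- and descends to an isomorphism $\bar\varphi\colon\pointw\to\pointwone$ of point groups. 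Since $\varphi$ is a homomorphism, $\psi$ is equivariant: $\psi(s\cdot t)=\bar\varphi(s)\cdot\psi(t)$ for all $s\in\pointw$, $t\in\tranw$.

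Next I would fix a common Euclidean space $E=(V,(\cdot,\cdot))$ of dimension $n$ and affine realizations $W,W'\leq\iso(E)=V\rtimes O(E)$, so that $\tranw,\tranwone$ become lattices in $V$ and $\pointw,\pointwone$ become finite subgroups of $O(E)$. Extending the lattice isomorphism $\psi$ to a linear automorphism $A:=\psi\otimes\R\in\gl(V)$ and conjugating by the affine motion $A$ (write $c_A$ for conjugation by $A$), equivariance gives $A\,\pointw\,A^{-1}=\pointwone$ and $A\,\tranw\,A^{-1}=\tranwone$; thus $W'':=AWA^{-1}$ and $W'$ share the same translation lattice $\tranwone$, the same point group $\pointwone\leq O(E)$, and the same action, and the composite isomorphism $\varphi\circ(c_A)^{-1}\colon W''\to W'$ induces the identity on both $\tranwone$ and $\pointwone$, i.e.\ $W''$ and $W'$ are \emph{equivalent extensions}. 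Note that $A$ need not be orthogonal, which is exactly why the theorem allows conjugation by affine motions rather than by isometries.

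It remains to show that two affine realizations with the same lattice $T':=\tranwone$, the same point group $P':=\pointwone$, the same action, and equivalent as extensions, must differ by a translation. Encoding each realization by a vector system $\tau\colon P'\to V$ (the translation parts of lifts of $P'$), the associated $2$-cochain $\delta\tau$ takes values in $T'$ and represents the extension class in $H^2(P',T')$, while conjugation by $t_b$ shifts $\tau$ by the principal derivation $s\mapsto(1-s)b$. The key input is that $P'$ is finite and $V$ is a real vector space, so $H^i(P',V)=0$ for $i\geq1$; hence the connecting homomorphism $H^1(P',V/T')\to H^2(P',T')$ is an isomorphism. The difference $\tau'-\tau''$ defines a class in $H^1(P',V/T')$ whose image under this isomorphism is the difference of the two extension classes; equivalence of the extensions forces that difference to vanish, so $\tau'-\tau''$ is a coboundary, i.e.\ $\tau'(s)-\tau''(s)\equiv(1-s)b\pmod{T'}$ for some $b\in V$. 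This $b$ yields $t_b W'' t_b^{-1}=W'$, whence $\alpha:=t_b A$ satisfies $\alpha W\alpha^{-1}=W'$. The main obstacle is precisely this last step: identifying abstract equivalence of the extensions with affine conjugacy by a pure translation, which hinges on the vanishing of the real-coefficient cohomology of the finite point group; establishing that $\tranw$ is characteristic is the other essential (though more routine) ingredient.
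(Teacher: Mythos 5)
The paper offers no proof of this statement: it is recorded as a classical fact with a citation to \cite[Theorem~3.4]{hiller}, so there is no internal argument to compare your attempt against. Your proof is correct and is essentially the standard cohomological argument found in that very reference (whose title is precisely ``Crystallography and cohomology of groups''): the FC-centre characterization makes the translation subgroup characteristic, extending the induced lattice isomorphism to $A=\psi\otimes\R\in\gl(V)$ reduces the problem to two realizations sharing the same lattice, point group and action, and the vanishing $H^i(P',V)=0$ for $i\geq 1$ (finite group, real vector space coefficients) converts equivalence of the resulting extensions into conjugacy by a translation $t_b$, so that $t_bA$ is the required affine motion.
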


\section{Model theory of crystallographic groups}
\begin{definition}
	We say that two groups $W$ and $H$ are \emph{elementarily equivalent}, and we write $W\equiv H$, if they satisfy the same first-order sentences in the usual language of groups $L_{gp}=\{\,\cdot\,,(\,\cdot\,)^{-1},1\}$.
\end{definition}

Given a group $G$, we denote by $\widehat{G}$ its \emph{profinite completion}, i.e., the inverse limit of its finite quotients. As is well known, if $G$ is \emph{residually finite}, then the canonical map from $G$ to $\widehat{G}$ is an embedding. In particular, any finitely generated abelian-by-finite group is residually finite. The following result from \cite{Oger88} will play a crucial role in the remainder of the paper.


\begin{fact}[Oger]\label{Oger}
	Let $G,H$ be finitely generated abelian-by-finite groups. Then, $G\equiv H$ if and only if $\widehat{G}\cong \widehat{H}$. Furthermore, the canonical embedding of $G$ into $\widehat{G}$ is an elementary embedding.
\end{fact}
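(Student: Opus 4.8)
The plan is to route the biconditional through the finite quotients of the two groups. The key external input is the theorem (Dixon--Formanek--Poland--Ribes) that for finitely generated groups one has $\widehat{G}\cong\widehat{H}$ if and only if $G$ and $H$ have exactly the same finite quotients, as isomorphism types. Granting this bridge, it suffices to establish, for finitely generated abelian-by-finite $G,H$, that $G\equiv H$ is equivalent to $G$ and $H$ sharing all finite quotients, and then to treat separately the ``furthermore'' clause that $G\hookrightarrow\widehat{G}$ is elementary.

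For the forward direction $G\equiv H\Rightarrow$ same finite quotients, I would fix a finite group $Q$ of order $m$ and write a single sentence $\sigma_Q$ expressing ``$Q$ is a quotient''. Here finite generation is essential: a finitely generated group has only finitely many subgroups of index at most $m$ (Marshall Hall), and these are captured uniformly through the coset action into $\Sym(m)$, so $\sigma_Q$ can existentially quantify a transversal $1=t_1,\dots,t_m$ and demand that the induced multiplication on the $m$ classes reproduce the multiplication table of $Q$ with the class of $1$ playing the role of a normal kernel. Since $G\equiv H$ transfers every such $\sigma_Q$, the two groups have the same finite quotients, and the bridge theorem immediately yields $\widehat{G}\cong\widehat{H}$.

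The reverse implication $\widehat{G}\cong\widehat{H}\Rightarrow G\equiv H$ is where I expect the real difficulty to lie, and I would attack it by exploiting the linear structure. A finitely generated abelian-by-finite group is encoded by a triple: a finite point group $F$, a $\mathbb{Z}[F]$-lattice $L$ (the translation lattice $\lattw$ in the crystallographic case), and an extension class in $H^{2}(F,L)$; passing to profinite completions replaces $L$ by $L\otimes\widehat{\mathbb{Z}}$ and sends the class to its image in $H^{2}(F,L\otimes\widehat{\mathbb{Z}})$, while fixing $F$. The content is then to show, first, that $\widehat{G}\cong\widehat{H}$ forces $L$ and $L'$ into the same \emph{genus}, i.e.\ to be isomorphic as $\mathbb{Z}_{p}[F]$-lattices at every prime $p$, with matching localized extension data (this is the Pickel/Grunewald--Segal circle of ideas); and second, that this local information is exactly what pins down the first-order theory of $G$. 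For the second point I would invoke the Baur--Monk--Szmielew analysis of modules: elementary equivalence of $\mathbb{Z}[F]$-modules is governed by the numerical invariants measuring indices of pairs of positive-primitive-definable subgroups, and these invariants are computed prime by prime, hence are readable from the genus. A Feferman--Vaught / interpretation argument that folds in the finite group $F$ and the finitely many possible extension classes should then upgrade ``$L,L'$ in the same genus with matching extension'' to $G\equiv H$.

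The main obstacle, concretely, is twofold: ensuring that no first-order invariant of the whole group can separate two groups whose lattices are merely locally isomorphic, and handling the extension class in the non-split case where $H^{2}(F,L)\neq 0$ — precisely the subtlety that Theorem~\ref{main_th2} later needs. Finally, for the elementarity of $G\hookrightarrow\widehat{G}$ I would argue that $\mathbb{Z}^{n}\hookrightarrow\widehat{\mathbb{Z}}^{n}$ is a pure embedding of $\mathbb{Z}$-modules realizing the same Baur--Monk invariants, so it is elementary at the module level; since every formula about $G$ reduces through the above interpretation to positive-primitive module data together with finite data about $F$, preservation and reflection of pp-formulas along the pure embedding promotes $G\hookrightarrow\widehat{G}$ to an elementary embedding.
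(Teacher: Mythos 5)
The paper itself offers no proof of Fact~\ref{Oger}: it is imported as a black box from \cite{Oger88}, so there is no internal argument to compare your proposal against, and it must be assessed on its own terms. Your bridge step (Dixon--Formanek--Poland--Ribes: for finitely generated groups, $\widehat{G}\cong\widehat{H}$ iff $G$ and $H$ have the same finite quotients) is fine. The fatal problem is the forward direction. The sentence $\sigma_Q$ you describe does not exist: existentially quantifying a transversal $1=t_1,\dots,t_m$ gives no first-order access to the coset partition, because the kernel $N$ is an infinite subset of $G$, and ``$x$ and $y$ lie in the same class'' is not a first-order condition on $(x,y,t_1,\dots,t_m)$; Marshall Hall's theorem makes the collection of subgroups of index at most $m$ finite, but finiteness of that collection is not definability of any of its members. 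Nor is this a repairable defect of your particular encoding: the statement your argument would establish --- that elementarily equivalent finitely generated groups have the same finite quotients --- is false. By Sela's theorem, $F_2\equiv F_3$, yet $F_3$ surjects onto $(\mathbb{Z}/2\mathbb{Z})^3$ while $F_2$ does not, so $\widehat{F_2}\not\cong\widehat{F_3}$. Any correct proof of this direction must therefore use the abelian-by-finite hypothesis, which your forward direction never invokes. The standard repair is exactly the mechanism this paper uses elsewhere (Lemma~\ref{lemma_the_prop_def_transla_crysp}, and item (3) of the proof of Lemma~\ref{lemma - every f.g. group el. equivalent to a crysp gp is crysp}): in a finitely generated abelian-by-finite group there is a definable abelian normal subgroup $T$ of finite index; any normal subgroup $N$ of index $m$ then satisfies $T^m=\{x^m:x\in T\}\subseteq N$, and since $T^m$ is definable, normal and of finite index, the order-$m$ quotients of $G$ are precisely the order-$m$ quotients of the finite group $G/T^m$, whose isomorphism type is captured by a single sentence and hence transfers along $\equiv$.

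The reverse direction and the ``furthermore'' clause are, by your own admission, sketches, and they both break at the same point: the claim that every first-order sentence about $G$ reduces, through an interpretation, to pp-module data about the lattice together with finite data about the point group and the extension class. That reduction is not delivered by the tools you cite --- Feferman--Vaught controls direct products and sums, not group extensions, and Baur--Monk--Szmielew applies to the $\mathbb{Z}[F]$-module $L$, not to the group $G$ built on top of it --- and it is, in substance, the content of Oger's theorem itself (whose actual pivot is the equivalence, for these groups, of $G\equiv H$ with $G\times\mathbb{Z}\cong H\times\mathbb{Z}$). Relatedly, any such argument has to be calibrated so that prime-by-prime data yields only elementary equivalence and never isomorphism: as the introduction of this paper recalls, there exist non-isomorphic crystallographic groups in the same genus (which are elementarily equivalent), so local data cannot pin down $G$ up to isomorphism, only up to $\equiv$. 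The same missing interpretation step is what would be needed to promote purity of $\mathbb{Z}^n\hookrightarrow\widehat{\mathbb{Z}}^n$ at the module level to elementarity of $G\hookrightarrow\widehat{G}$ at the group level, so the ``furthermore'' clause is likewise unproven as it stands.
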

\begin{lemma}\label{lemma_the_prop_def_transla_crysp} 
	Let $W$ be a crystallographic group with translation group $T$ and point group $W_0$. Then $T$ is $\emptyset$-definable in $W$. More precisely, if $W_0$ has order $k$, then $T$ is definable by the first-order formula $\phi(x)\equiv \forall y ([x,y^k]=e)$.
\end{lemma}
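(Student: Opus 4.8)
The plan is to prove the two inclusions $T \subseteq \phi(W)$ and $\phi(W) \subseteq T$ separately, where $\phi(W)$ denotes the subset of $W$ defined by $\phi$. Throughout I write the group operation on $T$ additively and exploit the canonical projection $p \colon W \to W_0$ with kernel $T$, together with the conjugation action $\alpha \colon W_0 \to \mathrm{Aut}(T)$, so that $x t x^{-1} = \alpha(p(x))(t)$ for every $x \in W$ and $t \in T$.

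For the inclusion $T \subseteq \phi(W)$, I would first observe that every $k$-th power in $W$ lands in $T$: since $|W_0| = k$, Lagrange's theorem gives $p(y)^k = e$, hence $p(y^k) = e$ and $y^k \in \ker p = T$ for every $y \in W$. As $T$ is abelian and $x \in T$, the elements $x$ and $y^k$ commute, so $[x, y^k] = e$ for all $y$, i.e.\ $\phi(x)$ holds.

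For the reverse inclusion $\phi(W) \subseteq T$, suppose $\phi(x)$ holds and set $w \vcentcolon= p(x)$ and $A \vcentcolon= \alpha(w)$. The crucial step is to test the universally quantified $y$ only on elements of $T$: for $t \in T$ one has $y^k = t^k = kt$ in additive notation, and $x$ commutes with $kt$ if and only if $A(kt) = kt$, i.e.\ $k\,(A(t) - t) = 0$. Here the key observation is that $T$ is torsion-free (being free abelian of finite rank), so this forces $A(t) = t$. Thus $\phi(x)$ already implies that $A$ fixes every element of $T$, that is $\alpha(w) = \mathrm{id}$.

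Finally I would invoke faithfulness of the action: since $\alpha \colon W_0 \to \mathrm{Aut}(T)$ is injective (part of Definition~\ref{def - abstract crysp group}), $\alpha(w) = \mathrm{id}$ yields $w = e$, whence $x \in \ker p = T$. The main conceptual point — and the only place where the specific hypotheses on $W$ enter — is the interplay in the second inclusion between torsion-freeness of $T$ (which allows cancellation of the factor $k$) and faithfulness of $\alpha$ (which lets triviality of the induced automorphism detect membership in $T$); without torsion-freeness the $k$-th power would not faithfully record the fixed points of $A$, and without faithfulness a trivial action would not imply $x \in T$.
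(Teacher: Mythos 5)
Your proof is correct and follows essentially the same route as the paper's: both directions rest on the same three ingredients (the quotient $W/T$ having order $k$ to place $k$-th powers in $T$, testing the quantifier on elements of $T$ and using torsion-freeness to cancel the factor $k$, then faithfulness of the point-group action). The only cosmetic difference is that you argue the inclusion $\phi(W)\subseteq T$ directly, whereas the paper phrases it as a proof by contradiction.
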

\begin{proof}
	Let $p: W \rightarrow W_0$ be the canonical projection as in \ref{def - abstract crysp group}, and let $\phi(x)$ be as in the statement of the lemma. Denote by $\phi(W)$ the set of realizations of $\phi(x)$ in $W$. We claim that $T=\phi(W)$. The inclusion $[\subseteq]$ follows directly from the fact that $T$ is abelian, and $W/T\cong W_0$ has finite order $k$. Indeed, for each $w\in W$, in $W/T$ we have $w^kT=(wT)^k=T$, showing that $w^k \in T$.
	
	\smallskip\noindent
	Conversely, for the inclusion $[\supseteq]$, we argue by contradiction. Suppose there exists some $w \in W\setminus T$ such that $W\models \phi(w)$. Then, for each $t\in T$, we have:
	\begin{equation*}
		w(kt)w^{-1}(kt)^{-1}=[w,kt]=e.
	\end{equation*}
	\noindent
	It follows that:
	\begin{equation*}
		(kt)^w = kt.
	\end{equation*}
	Since $T$ is torsion-free, this implies that $t^w=t$, for all $t\in T$. In particular, by the definition of the action $\alpha:W_0\rightarrow\mathrm{Aut}(T)$, we have:
	\begin{equation*}
		\alpha(p(w))(t)=t^w=t,
	\end{equation*}
	\noindent
	for all $t\in T$. However, by the faithfulness of $\alpha$, $\alpha(p(w))$ is the identity on $T$ if and only if $p(w)= 1_{W_0}$, i.e., if $w\in T$. This contradicts the assumption $w\in W\setminus T$, completing the proof.
\end{proof}

We briefly recall a few notions and facts concerning $\mathbb{Z}[W_0]$-lattices, which we will need below. For further details, see \cite{plesken-holt}.

\begin{definition}[{\cite[Definition\cspace4.1.6, p.\cspace94]{plesken-holt}}]\label{same_genus}
	\begin{enumerate}[(i)]
		\item Let $W$ and $W'$ be crystallographic groups with translations subgroups $T(W)$, and $T(W')$, respectively. Then, we say that $W$ and $W'$ belong to the \emph{same genus} if $W/mT(W)\cong W'/mT(W')$ for all $m\in\N$.
		\item Let $W_0$ be a finite group and $L, L'$ be two $\mathbb{Z}[W_0]$-lattices. We say that $L$ and $L'$ belong to the \emph{same genus} (as $\mathbb{Z}[W_0]$-modules) if for every prime number $p$ and $k \in \mathbb{N}$ we have that $L/p^kL \cong L'/p^kL'$ as $\mathbb{Z}[W_0]$-modules.
	\end{enumerate}
\end{definition}
\begin{fact}[{\cite[Exercise\cspace1, p.\cspace96]{plesken-holt}}]\label{Exercise1}
	Let $W$ and $W'$ be crystallographic groups belonging to the same genus. Suppose that $P(W), P(W')$ are the point groups, and $L(W), L(W')$ are the translation lattices, of $W$ and $W'$, respectively. Then, after identifying their (isomorphic) point groups with a group $W_0$, $L(W)$ and $L(W')$ lie in the same genus as $\Z[W_0]$-lattices.     
\end{fact}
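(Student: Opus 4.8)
The plan is to prove the statement by passing to the profinite completions of the translation subgroups, where the translation subgroup becomes \emph{characteristic}. The obstacle to arguing at finite level is that $\tranw/m\tranw$ need not be characteristic in $W/m\tranw$: for instance, if $\pointw$ acts through a reflection $\mathrm{diag}(-1,1)$ on $\Z^2$, then modulo $2$ the action is trivial, $W/2\tranw$ is elementary abelian, and $\tranw/2\tranw$ is visibly not preserved by all automorphisms. The point is that this defect disappears in the limit: over $\widehat{\Z}$ the reduction of the point-group action is faithful enough that the translation subgroup is recovered as a canonical subgroup. So the strategy is to assemble the hypothesized isomorphisms $W/m\tranw \cong W'/m\tranwone$ (Definition~\ref{same_genus}(i)) into an isomorphism of completions $\overline{W}\cong\overline{W'}$, where $\overline{W}\vcentcolon=\varprojlim_m W/m\tranw$ fits into an extension $1\rightarrow\widehat{\Z}^n\rightarrow\overline{W}\rightarrow\pointw\rightarrow 1$ with $\widehat{\Z}^n=\tranw\otimes\widehat{\Z}$, and then to restrict this isomorphism to the translation parts.

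The crux is the following lemma: $\widehat{T}\vcentcolon=\widehat{\Z}^n$ is the \emph{unique maximal abelian normal (closed) subgroup} of $\overline{W}$, and likewise $\tranw/m\tranw$ is the unique maximal abelian normal subgroup of $W/m\tranw$ for all sufficiently divisible $m$. The proof is a direct commutator computation: if $A$ is abelian normal and $(v,s)\in A$, then normality forces $((1-s)t,1)\in A$ for every translation $t$, while commutativity of $A$ forces $(v,s)$ to centralise these elements, i.e.\ $(1-s)^2t=0$ for all $t$. Over $\Z$ (hence over $\widehat{\Z}$) the matrix $(1-s)^2$ vanishes only when the finite-order semisimple element $s$ equals $1$, so $A\subseteq\widehat{T}$; over $\Z/m$ the same conclusion holds once, for each $1\neq s\in\pointw$, the integer $m$ is divisible by a high enough power of each prime dividing it (so that $(1-s)^2\not\equiv 0 \bmod m$). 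Since such $m$ form a cofinal, divisibility-directed set $\mathcal{D}$, this gives the finite-level statement on a cofinal system.

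The assembling step then goes as follows. For $m\mid m'$ in $\mathcal{D}$, the kernel $m\tranw/m'\tranw=m\cdot(\tranw/m'\tranw)$ of the transition map $W/m'\tranw\twoheadrightarrow W/m\tranw$ is characteristic, being the image of multiplication by $m$ on the characteristic subgroup $\tranw/m'\tranw$; consequently any isomorphism $W/m'\tranw\cong W'/m'\tranwone$ carries it onto $m\tranwone/m'\tranwone$ and descends to an isomorphism $W/m\tranw\cong W'/m\tranwone$. Thus the finite nonempty sets $\mathrm{Iso}(W/m\tranw,\,W'/m\tranwone)$ form an inverse system over $\mathcal{D}$, and the inverse limit of nonempty finite sets is nonempty, producing a coherent family and hence an isomorphism $\Phi\colon\overline{W}\xrightarrow{\sim}\overline{W'}$. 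By the crux lemma $\Phi(\widehat{T})=\widehat{T'}$, so $\Phi$ induces a \emph{single} isomorphism $\theta\colon\pointw\rightarrow\pointwone$ on the quotients, and the relation $\Phi(g\cdot x)=\theta(g)\cdot\Phi(x)$ exhibits $\Phi|_{\widehat{T}}$ as a $\theta$-semilinear isomorphism $\widehat{T}\cong\widehat{T'}$ of $\widehat{\Z}[W_0]$-lattices, once $\pointw$ and $\pointwone$ are identified with $W_0$ via $\theta$.

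Finally, reducing this $\widehat{\Z}[W_0]$-isomorphism modulo $p^k$ gives $\lattw/p^k\lattw\cong\lattwone/p^k\lattwone$ as $\Z[W_0]$-modules, for the same $\theta$, for every prime $p$ and every $k$, which is exactly Definition~\ref{same_genus}(ii). The main obstacle, and the only genuinely nontrivial input, is the crux lemma together with the resulting characteristicity of the transition kernels: everything else is the compactness assembly and a routine semilinearity bookkeeping. Working globally over $\widehat{\Z}$ rather than prime-by-prime is what guarantees that a \emph{single} identification $\theta$ of the point groups serves for all primes simultaneously, as the definition of genus for lattices requires.
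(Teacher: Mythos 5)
The paper never proves this statement: it is imported wholesale as a Fact, with the citation [Plesken--Holt, Exercise~1, p.~96], so there is no in-paper argument to compare yours against; I can only judge your proposal on its own terms, and on those terms it is correct. The skeleton is sound: the commutator computation shows that for an abelian normal subgroup $A$ of an extension of $\pointw$ by $\tranw/m\tranw$ (or by $\widehat{T}$), every $a\in A$ with image $s$ satisfies $(1-s)^2t=0$ for all translations $t$; since $s$ has finite order it is semisimple over $\Q$, so $(1-s)^2=0$ over $\Z$ forces $s$ to act trivially, and faithfulness of the point-group action (built into Definition~\ref{def - abstract crysp group}) gives $s=1$. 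Choosing a divisibility-directed cofinal set $\mathcal{D}$ of moduli $m$ for which no matrix $(1-s)^2$, $s\neq 1$, vanishes mod $m$ (possible because these finitely many integer matrices are nonzero) makes $\tranw/m\tranw$ the unique maximal abelian normal subgroup of $W/m\tranw$ for $m\in\mathcal{D}$, hence characteristic, hence the transition kernels $m\tranw/m'\tranw=m\cdot(\tranw/m'\tranw)$ are characteristic; the inverse limit of the nonempty finite sets of isomorphisms is then nonempty, and restricting the resulting $\Phi$ to the translation parts and reducing mod $p^k$ gives exactly Definition~\ref{same_genus}(ii). You are also right to flag the one genuinely delicate point: Definition~\ref{same_genus}(ii) requires a \emph{single} identification of the point groups working for all primes and all $k$ simultaneously, and your global construction of $\theta$ from $\Phi$ delivers precisely that, where a naive prime-by-prime argument would not. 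Two minor polish remarks: once you have the coherent family $(\phi_m)_{m\in\mathcal{D}}$ with $\phi_m(\tranw/m\tranw)=\tranwone/m\tranwone$, the equality $\Phi(\widehat{T})=\widehat{T'}$ follows by passing to the limit, so the $\widehat{\Z}$-level version of your crux lemma, while true by the same computation, is not strictly needed; and your sufficient condition on $m$ (``divisible by a high enough power of each prime dividing it'') should be stated with an explicit bound in terms of the nonzero entries of the matrices $(1-s)^2$, since that is what guarantees $\mathcal{D}$ is cofinal and directed.
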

\begin{fact}[{\cite[Theorem\cspace4.1.8]{plesken-holt}}]\label{plesken_fact} Let $W_0$ be a finite group and $L, L'$ be two $\Z[W_0]$-lattices belonging to the same genus. Then, for every $m \in \mathbb{N}$ there is an injective $\mathbb{Z}[W_0]$-module homomorphism $\sigma: L' \rightarrow L$ such that the index $[L : \sigma\,( L')]$ is coprime~to~$m$.
\end{fact}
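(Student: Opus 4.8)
The plan is to realise $\sigma$ by a local-to-global approximation on the Hom-module $H \vcentcolon= \mathrm{Hom}_{\Z[W_0]}(L',L)$, controlling its behaviour only at the finitely many primes dividing $m$. First I would unwind the genus hypothesis into its local form: by the standard theory underlying Definition~\ref{same_genus}, the condition $L/p^kL \cong L'/p^kL'$ for all $k$ is equivalent to the existence of a $\Z_p[W_0]$-isomorphism $\theta_p \colon L'_p \xrightarrow{\sim} L_p$ of the $p$-adic completions, for every prime $p$ (here $L_p \vcentcolon= L\otimes_\Z\Z_p$). In particular $L$ and $L'$ have the same $\Z_p$-rank for every $p$, hence the same rank $r$ over $\Z$. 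The case $m=1$ is then immediate: since $L'\otimes\Q \cong L\otimes\Q$ as $\Q[W_0]$-modules (their characters agree locally, hence globally), any such rational isomorphism cleared of denominators gives an injective $\Z[W_0]$-map $L'\to L$, and every index is coprime to $1$. So assume $m>1$ and let $S$ be the finite nonempty set of primes dividing $m$.

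Next I would analyse $H$. As $\Z[W_0]$ is Noetherian and $L'$ is finitely generated over it, $L'$ is finitely presented, so $H$ is a finitely generated torsion-free (hence free) $\Z$-module, and Hom commutes with the flat base change $\Z\to\Z_p$, giving $H\otimes_\Z\Z_p \cong \mathrm{Hom}_{\Z_p[W_0]}(L'_p,L_p)$. For each $p\in S$, the genus isomorphism $\theta_p$ is thus an element of $H\otimes\Z_p$ whose reduction modulo $p$ is an isomorphism of $\mathbb{F}_p[W_0]$-modules $L'/pL' \to L/pL$. Since the natural surjection $H\otimes\Z_p \twoheadrightarrow H/pH$ is the base change of $H\twoheadrightarrow H/pH$, the image $\bar\theta_p$ of $\theta_p$ in $H/pH$ is attained by a global homomorphism; by the Chinese Remainder Theorem the combined reduction $H\twoheadrightarrow \prod_{p\in S} H/pH$ is surjective, so I may pick a single $\sigma\in H$ whose class equals $\bar\theta_p$ in $H/pH$ for every $p\in S$ simultaneously. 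By construction $\sigma$ reduces modulo each $p\in S$ to the same $\mathbb{F}_p[W_0]$-homomorphism as $\theta_p$, namely an isomorphism.

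I would then verify the required properties. Writing $\sigma_p\in H\otimes\Z_p = \mathrm{Hom}_{\Z_p[W_0]}(L'_p,L_p)$ for the induced map $L'_p\to L_p$, its reduction modulo $p$ is an isomorphism for each $p\in S$; since $L_p$ is finitely generated over the local ring $\Z_p$, Nakayama's lemma makes $\sigma_p$ surjective, and a surjection between free $\Z_p$-modules of equal rank $r$ is an isomorphism. Fixing any $p\in S$, $\sigma$ becomes injective after tensoring with $\Q_p$, so $\sigma\otimes\Q$ is injective, and hence so is $\sigma$ because $L'$ embeds into $L'\otimes\Q$. Finally, for each $p\in S$ we have $\sigma(L')\otimes\Z_p = \sigma_p(L'_p) = L_p$, so $p\nmid [L:\sigma(L')]$; as this holds for all $p\in S$, the index $[L:\sigma(L')]$ is coprime to $m$.

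The main obstacle is the very first step: extracting from the mod-$p^k$ data of the genus an honest $\Z_p[W_0]$-isomorphism $\theta_p$ of completions, i.e.\ lifting a compatible system of isomorphisms $L'/p^kL' \cong L/p^kL$ to the inverse limit over the complete but (in general) non-local ring $\Z_p[W_0]$. This is precisely the content of the classical genus theory for lattices over orders, and once it is in hand the remainder is a routine combination of flat base change, Chinese Remainder approximation, and Nakayama's lemma.
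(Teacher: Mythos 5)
There is nothing on the paper's side to compare against: Fact~\ref{plesken_fact} is imported as a black box, cited to Plesken--Holt, and never proved in the paper. Your argument therefore stands on its own, and it is essentially correct; it is the standard proof of this classical embedding theorem for lattices in the same genus. The individual steps all check out: $H=\mathrm{Hom}_{\Z[W_0]}(L',L)$ is a finitely generated free $\Z$-module and commutes with the flat base change $\Z\to\Z_p$ (using that $\Z[W_0]$ is Noetherian, so $L'$ is finitely presented); the Chinese Remainder Theorem produces a single global $\sigma$ congruent mod $p$ to the local isomorphism $\theta_p$ for every $p\mid m$; congruence mod $p$ plus Nakayama gives surjectivity of $\sigma_p$, hence bijectivity since $L_p$ and $L'_p$ are free of the same rank; injectivity of $\sigma$ follows by tensoring with $\Q$; and $p\nmid[L:\sigma(L')]$ because $L_p/\sigma_p(L'_p)=0$. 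The separate treatment of $m=1$, where your set $S$ is empty and the approximation argument degenerates, is indeed necessary and is handled correctly.

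The one place where you lean on an unproved ingredient is the step you yourself flag: converting the paper's definition of genus (Definition~\ref{same_genus}: $L/p^kL\cong L'/p^kL'$ for all $k$) into an isomorphism $\theta_p\colon L'_p\to L_p$ of $p$-adic completions. Deferring this to ``classical genus theory'' is slightly uncomfortable, since the statement being proved belongs to that very theory; but the step is elementary and you could have included it. For fixed $p$, the sets $I_k:=\mathrm{Iso}_{\Z[W_0]}(L'/p^kL',\,L/p^kL)$ are finite and nonempty; the reduction maps $I_{k+1}\to I_k$ are well defined, because the reduction of an isomorphism is a surjective $\Z[W_0]$-map between finite modules of the same order $p^{kr}$, hence bijective; therefore $\varprojlim_k I_k\neq\emptyset$, and any compatible family assembles into a $\Z_p[W_0]$-isomorphism of the inverse limits, i.e.\ of the completions. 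With that paragraph inserted, your proof is complete (under the convention, implicit in how the paper applies the Fact, that $m\geq1$).
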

\begin{lemma}\label{lemma - every f.g. group el. equivalent to a crysp gp is crysp}
	Let $W$ be a crystallographic group, and $W'$ be a finitely generated group elementarily equivalent to $W$. Then, $W'$ is a crystallographic group with translation subgroup $\tranwone$ and point group $\pointwone$ such that the following hold:
	\begin{enumerate}[(1)]
	\item $\tranw\cong\tranwone$;
	\item $\pointw\cong\pointwone$;
	\item after identifying the (isomorphic) point groups with a group $W_0$, $L(W)$ and $L(W')$ lie in the same genus as $\Z[W_0]$-lattices.
\end{enumerate}	
\end{lemma}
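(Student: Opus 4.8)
The plan is to extract everything from a single uniform fact: by Lemma~\ref{lemma_the_prop_def_transla_crysp} the translation subgroup of a crystallographic group with point group of order $k$ is defined by the same $\emptyset$-formula $\phi(x)\equiv\forall y([x,y^k]=e)$, so all the structure we care about is uniformly (first-order) accessible in both $W$ and $W'$. Throughout I write $k=|P(W)|$ and $T'\vcentcolon=\phi(W')$.

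First I would show that $W'$ is crystallographic. The properties \emph{``$\phi$ defines a subgroup''}, \emph{``that subgroup is normal''}, \emph{``it is abelian''}, \emph{``it has index exactly $k$''}, and \emph{``it is maximal abelian''} are each expressible by a single $L_{gp}$-sentence; for maximality I use that a maximal abelian subgroup is exactly a self-centralizing one, i.e.\ $\forall x(\phi(x)\leftrightarrow\forall t(\phi(t)\to[x,t]=e))$, which is first-order. Since all these sentences hold in $W$ and $W\equiv W'$, they hold in $W'$, so $T'$ is a normal, maximal abelian subgroup of index $k$. Torsion-freeness of $T'$ transfers as well, not by a single sentence but by the scheme $\{\forall x(\phi(x)\wedge x^m=e\to x=e):m\geq 1\}$, each member of which holds in $W$ hence in $W'$; and $T'$ is finitely generated because it has finite index in the finitely generated group $W'$ (Schreier). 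Thus $T'$ is free abelian of finite rank, and Zassenhaus's Theorem~\ref{thm - zassenhaus} gives that $W'$ is crystallographic. Finally, since $T'$ is maximal abelian it is self-centralizing, so the conjugation action of $W'/T'$ on $T'$ is faithful and $T'=\phi(W')$ is genuinely the translation subgroup $T(W')$, with point group $P(W')=W'/T'$ of order $k$ (as reconfirmed by Lemma~\ref{lemma_the_prop_def_transla_crysp} applied to $W'$).

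Next I would obtain (1) and (2). Relativizing every $L_{gp}$-sentence to the $\emptyset$-definable set $\phi$ shows that the groups $T(W)=\phi(W)$ and $T(W')=\phi(W')$ are elementarily equivalent; being free abelian of finite rank and distinguished up to rank by the first-order invariant $[A:2A]=2^{\mathrm{rank}(A)}$, they must have equal rank, giving $T(W)\cong T(W')$. For the point groups, the quotient $W/T$ is uniformly interpretable in $W$ (quotient by the definable equivalence relation $xy^{-1}\in\phi(W)$, with the induced operation), so $W\equiv W'$ yields $P(W)\equiv P(W')$; both are finite, and elementarily equivalent finite groups are isomorphic, proving $P(W)\cong P(W')$.

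Finally, for (3), I would realize the genus condition through the same interpretation machinery applied modulo $m$. For each $m\in\N$, the subgroup $mT$ is characteristic in $T$, hence normal in $W$, and both $T$ and $mT$ are uniformly definable (the latter by $\exists y(\phi(y)\wedge x=y^m)$), so $W/mT$ is uniformly interpretable in $W$. Since $W/mT$ is finite and $W\equiv W'$, we get $W/mT(W)\cong W'/mT(W')$ for every $m$; that is, $W$ and $W'$ belong to the same genus in the sense of Definition~\ref{same_genus}(i). Identifying $P(W)$ with $P(W')$ via (2) and invoking Fact~\ref{Exercise1} then yields that $L(W)$ and $L(W')$ lie in the same genus as $\Z[W_0]$-lattices, which is (3). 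I expect the main obstacle to be the bookkeeping of the first step --- in particular the first-order reformulation of ``maximal abelian'' as self-centralizing and the handling of torsion-freeness as a sentence scheme --- together with the careful verification that each quotient used in (2) and (3) is interpreted by the \emph{same} formulas in $W$ and $W'$, so that elementary equivalence genuinely transfers; once this uniform interpretability is in place, the finiteness of the quotients makes the isomorphisms automatic and Fact~\ref{Exercise1} closes (3).
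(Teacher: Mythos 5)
Your proposal is correct and follows essentially the same route as the paper: both rest on the $\emptyset$-definability of the translation subgroup (Lemma~\ref{lemma_the_prop_def_transla_crysp}), transfer of the self-centralizing sentence $\forall x(\phi(x)\leftrightarrow\forall t(\phi(t)\to[x,t]=e))$ (which the paper phrases as faithfulness of the conjugation action, and you as maximal abelianness for Zassenhaus's criterion), first-order rigidity of finitely generated free abelian groups for (1), finiteness of the interpreted quotients for (2), and definability of $mT$ plus Fact~\ref{Exercise1} for (3). The only differences are cosmetic: you verify Zassenhaus's hypotheses (with a redundant torsion-freeness scheme) where the paper checks Definition~\ref{def - abstract crysp group} directly, and the substance of each step coincides with the paper's proof.
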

\begin{proof}
	By Lemma\cspace\ref{lemma_the_prop_def_transla_crysp}, $\tranw$ is $\emptyset$-definable in $W$ by a first-order formula $\phi(x)$. Since $W\equiv W'$, it follows that $\phi(W')$ is normal in $W'$, and $W'/\phi(W') \cong P(W)$. Moreover, $\phi(W')$ is finitely generated, as it is normal of finite index in $W'$, which is finitely generated by assumption. Finally, $\phi(W) $ and $\phi(W')$ are elementarily equivalent, and hence $\phi(W) \cong \phi(W')$, since finitely generated free abelian groups are well known to be first-order rigid.
	\noindent
	
	\smallskip\noindent
	We show that $W'/\phi(W')$ acts faithfully on $\phi(W')$ via the map:
	\begin{equation*}
		\alpha':W'/\tranwone\rightarrow \mathrm{Aut}(\tranwone)\quad\text{such that}\quad w'\tranwone\mapsto(\,\cdot\,)^{u'},
	\end{equation*}
	\noindent
	for some (or, equivalently, any) $u'\in w'\tranw$. This map is a well-defined group homomorphism, since $\phi(W')$ is abelian and normal in $W'$.
	
	\smallskip\noindent
	Now, consider an element $w\in W$. By the faithfulness of the action of $\pointw$ on $\tranw$, the conjugation $(\,\cdot\,)^w$ restricts to the identity on $\tranw$ (i.e., $w$ commutes with every element of $\tranw$) if and only if $w\in\tranw$. Hence, we have the following:
	\begin{equation*}
		W \models \forall x\left(\forall y(\phi(y)\rightarrow[x,y]=1)\leftrightarrow\phi(x)\right).
	\end{equation*} 
	\noindent
	It follows that $\alpha':W'/\tranwone\rightarrow \mathrm{Aut}(\tranwone)$ is injective, since $W'$ is elementarily equivalent to $W$ by assumption. Therefore, by Definition\cspace\ref{def - abstract crysp group}, $W'$ is crystallographic with translation subgroup $\tranwone=\phi(W')$ and point group $\pointwone=W'/\tranwone\cong \pointw$. This completes the proof of (1) and (2).
	
	\smallskip\noindent
	Concerning (3), let $T,L$ and $T',L'$ denote the translation subgroups and translation lattices of $W$ and $W'$, respectively. Identifying their isomorphic point groups with a finite group $W_0$, $L$ and $L'$ are naturally $\Z[W_0]$-lattices (cf. item (2) of this lemma). We claim that $L$ and $L'$ belong to the same genus as $\Z[W_0]$-lattices. Indeed, by Lemma~\ref{lemma_the_prop_def_transla_crysp}, there exists a $\emptyset$-definable formula whose solution set in $W$ (respectively $W'$) is the translation lattice $L$ (respectively $L'$). Thus, the conditions $W/mL\cong W'/mL'$, for $m\in\N$, are first-order expressible. Since $W'\equiv W$, it follows that $W$ and $W'$ belong to the same genus. Therefore, by Fact \ref{Exercise1}, $L, L'$ belong to the same genus as $\mathbb{Z}[W_0]$-lattices.
\end{proof}
\begin{theorem}\label{thm - finitely many centerings -> split profinite rigidity}
	Let $W$ be a crystallographic group with translation subgroup $\tranw$ and point group $\pointw$, and let $W'$ be a finitely generated group elementarily equivalent to $W$. Suppose that there are finitely many subgroups $N_0,\ldots,N_{k-1}$ {of $\tranw$ such that:}
	\begin{enumerate}[(a)]
		\item $N_i\trianglelefteq W$, and $[\tranw:N_i]<\infty$ for all $i<k$;
		\item every subgroup $N$ of $\tranw$ that is normal in $W$ is a multiple $\ell N_i$ of $N_i$, for some $i<k$ and $\ell\in\N$.
	\end{enumerate}
	\noindent
	Then, the following holds:
	\begin{enumerate}[(1)]
		\item identifying $\pointw$ and $\pointw'$ with $W_0$, $\lattw\cong\lattwone$ as $\Z[W_0]$-lattices;
		\item if in addition $W$ is split, then $W \cong W'$ (so $W$ is profinitely rigid).
	\end{enumerate}
\end{theorem}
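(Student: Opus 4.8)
The plan is to build directly on Lemma~\ref{lemma - every f.g. group el. equivalent to a crysp gp is crysp}, which already supplies the bulk of the structural information: since $W'$ is finitely generated and $W'\equiv W$, that lemma makes $W'$ crystallographic with $P(W')\cong P(W)$, and, after identifying both point groups with $W_0$, it puts the translation lattices $L:=\lattw$ and $L':=\lattwone$ in the \emph{same genus} as $\mathbb{Z}[W_0]$-lattices. Thus the entire content of (1) is to upgrade ``same genus'' to ``isomorphic,'' and this is exactly where hypotheses (a)--(b) must be combined with Fact~\ref{plesken_fact}.

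First I would record the dictionary between normality and module structure: because $T:=\tranw$ is abelian and normal in $W$ with $W/T\cong W_0$ acting by conjugation via $\alpha$, a subgroup $N\leq T$ is normal in $W$ if and only if it is $W_0$-invariant, i.e.\ a $\mathbb{Z}[W_0]$-sublattice of $L$. By Fact~\ref{plesken_fact}, for each $m\in\mathbb{N}$ there is an injective $\mathbb{Z}[W_0]$-homomorphism $\sigma_m:L'\to L$ with $[L:\sigma_m(L')]$ coprime to $m$. Its image is a full-rank (hence finite-index) $\mathbb{Z}[W_0]$-sublattice of $L$, so by the dictionary it is a finite-index normal-in-$W$ subgroup of $T$, and hypothesis (b) writes $\sigma_m(L')=\ell_m N_{i_m}$ for some $i_m<k$ and $\ell_m\geq 1$. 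Setting $d_i:=[L:N_i]$ and letting $n$ be the common rank, this yields $[L:\sigma_m(L')]=\ell_m^{\,n}\,d_{i_m}$.

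The crux of (1) is then a short numerical collapse exploiting that there are only finitely many $N_i$: choose $m$ divisible by every prime dividing $\prod_{i<k}d_i$. Since $[L:\sigma_m(L')]=\ell_m^{\,n}d_{i_m}$ is coprime to $m$ while every prime factor of $d_{i_m}$ divides $m$, we are forced to $d_{i_m}=1$, so $N_{i_m}=L$ and $\sigma_m(L')=\ell_m L$. As multiplication by $\ell_m$ is a $W_0$-equivariant isomorphism $L\to\ell_m L$, we get $L'\cong\sigma_m(L')=\ell_m L\cong L$ as $\mathbb{Z}[W_0]$-lattices, proving (1). I expect this bookkeeping — making the finitely many admissible ``shapes'' $N_i$ of normal sublattices, filtered through the coprimality in Fact~\ref{plesken_fact}, collapse to the full lattice — to be the main obstacle, since it is the only place the hypotheses (a)--(b) are genuinely used.

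For (2) I would argue that splitness is a first-order property, hence transfers from $W$ to $W'$. Since $T$ is $\emptyset$-definable by $\phi(x)$ (Lemma~\ref{lemma_the_prop_def_transla_crysp}) and $[W:T]=k=|W_0|$, the group $W$ splits iff it has a complement to $T$: namely $k$ elements $g_1,\dots,g_k$, one of them the identity, closed under multiplication, lying in pairwise distinct cosets of $T$ (expressible as $\neg\phi(g_i^{-1}g_j)$ for $i\neq j$); such a finite multiplicatively closed set is automatically a subgroup $C$ of order $k$ with $C\cap T=\{1\}$ and $CT=W$. This is a single first-order sentence $\sigma_{\mathrm{split}}$, so from $W\models\sigma_{\mathrm{split}}$ and $W\equiv W'$ we obtain $W'\models\sigma_{\mathrm{split}}$, i.e.\ $W'$ splits. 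Writing $W\cong L\rtimes_{\alpha}W_0$ and $W'\cong L'\rtimes_{\alpha'}W_0$, the $\mathbb{Z}[W_0]$-isomorphism $f:L'\to L$ from (1) intertwines $\alpha'$ and $\alpha$, so $(x,w)\mapsto(f(x),w)$ is a group isomorphism $W'\to W$. As $W'$ was an arbitrary finitely generated group elementarily equivalent to $W$, this establishes first-order rigidity, and Fact~\ref{Oger} then yields profinite rigidity.
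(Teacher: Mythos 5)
Your proposal is correct and takes essentially the same route as the paper: for (1) you invoke Lemma~\ref{lemma - every f.g. group el. equivalent to a crysp gp is crysp} and Fact~\ref{plesken_fact} exactly as the paper does, with the same index computation $[L:\ell N_i]=\ell^{\,n}[L:N_i]$ and the same coprimality collapse forcing $N_i=T$ (the paper additionally notes the trivial $\ell=0$ edge case, which you may absorb by remarking that a zero image cannot have finite index unless $L=0$). For (2), where the paper simply cites \cite[Section~3]{paolini-sklinos}, your first-order expression of splitness followed by the semidirect-product isomorphism induced by the $\Z[W_0]$-lattice isomorphism is the intended argument, and is in fact the same definability observation the paper later records as Lemma~\ref{lemma-split extensions are definable}.
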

\begin{proof}
	Item (2) follows from (1) and a direct argument, see e.g. \cite[Section~3]{paolini-sklinos}. We prove item (1): the argument is essentially as in \cite[Theorem~3.17]{paolini-sklinos}. By Lemma\cspace\ref{lemma - every f.g. group el. equivalent to a crysp gp is crysp}, $W'$ is also crystallographic, with $\tranw\cong\tranwone$ and $\pointw\cong\pointwone$. For ease of notation, let $T,L$ and $T',L'$ the translation subgroups and translation lattices of $W$ and $W'$, respectively. Identifying $P(W)$ and $P(W')$ with a common finite group $W_0$, again by \ref{lemma - every f.g. group el. equivalent to a crysp gp is crysp}, we have that $L, L'$ belong to the same genus \mbox{as $\mathbb{Z}[W_0]$-lattices.}
	
	\smallskip\noindent
	We now show that, under the additional assumptions of the present theorem, $L$ and $L'$ are isomorphic as $\mathbb{Z}[W_0]$-lattices. By assumption ($a$), the order $m_i\vcentcolon=[T:N_i]$ is finite, for all $i<k$. Hence, there exists a non-negative integer $m$ such that:
	\begin{equation*}
		m=\prod_{1<k}m_i.
	\end{equation*}
	\noindent
	Since $L$ and $L'$ belong to the same genus as $\Z[W_0]$-lattices, it follows from Fact\cspace\ref{plesken_fact} that there exists an injective $\Z[W_0]$-module homomorphism $\sigma: L' \rightarrow L$ such that the index $[T: \sigma (T')]$ is coprime to $m$. In particular, $\sigma(L')$ is a submodule of $L$, i.e., $\sigma(T')$ is stable under the action of the point group of $W$. Therefore, $\sigma(T')$ is a normal subgroup of $W$. By assumption ($b$), this implies that there are some $\ell\in\N$ and $j<k$ such that $\sigma(T') = \ell N_j$. We distinguish two cases.
	\begin{itemize}
		\item[\underline{Case 1}:] If $N_j=T$, then $\sigma(T')=\ell T$ and $\sigma:L'\rightarrow \ell L$ is an isomorphism of $\Z[W_0]$-lattices. Observe that if $\ell=0$ the thesis is immediate, as both $L$ and $L'$ are trivial. Otherwise, for $\ell\neq 0$, the function:
		\begin{equation*}
			\tau_\ell:L\rightarrow \ell L, \quad x\mapsto \ell x,
		\end{equation*}
		\noindent
		is naturally an isomorphism of $\Z[W_0]$-lattices, since $T$ is supposed to be torsion-free and the action of $W_0$ on $T$ is linear. Therefore, the composition $\sigma^{-1}\circ\tau_\ell$ witnesses that $L$ and $L'$ are isomorphic $\Z[W_0]$-lattices. 
		
		\item[\underline{Case 2}:] If $N_j\neq L$, then $[T:N_j] = m_j>1$. However, since:
		\begin{equation*}
			[T:\sigma(T')]=[T:\ell N_j]=[T:N_j]\cdot [N_j:\ell N_j]=m_j\ell^n,
		\end{equation*}
		\noindent
		this leads to a contradiction, as $[T:\sigma(T')]$ was supposed to be \mbox{coprime to $m$.}
	\end{itemize}
\end{proof}

\section{Centering conditions}\label{sec4}
In this section, we follow \cite{plesken-pohst} and show how to adapt it to our context.
\begin{definition}\label{def - centering of a Z[G]-repr module}
	Let $W_0$ be a finite group and $L$ be a $\Z[W_0]$-lattice. Then, a \emph{centering} of $L$ is a submodule $C$ of $L$ of finite index.
\end{definition}
A subgroup $C$ of a free abelian group $T$ of finite rank $n$ is itself free abelian of rank $m\leq n$, with the equality $m=n$ realized if and only if $C$ has finite index in $T$. Therefore, for any finite group $W_0$, we can equivalently characterize the centerings of a $\Z[W_0]$-lattice $L$ as the sublattices of maximal rank in $L$.

\smallskip 
Let $W_0$ be a finite group. There is a canonical correspondence assigning to each $\Z[W_0]$-lattice $L$ an integral representation $\alpha_L : W_0 \to \mathrm{GL}_n(\Z)$, where $n = \mathrm{rank}_\mathbb{Z}(L)$. Under this correspondence, two $\Z[W_0]$-lattices $L$ and $L'$ are isomorphic if and only if the associated representations $\alpha_L$ and $\alpha_{L'}$ are equivalent, i.e., if they are conjugate in $\mrm{GL}_n(\mathbb{Z})$. Moreover, $W_0$ acts faithfully on $L$ if and only if the corresponding representation $\alpha_L$ is faithful. We will often pass from one perspective to the other.

\smallskip 
Any integral representation $\alpha_L: W_0 \to \mathrm{GL}_n(\Z)$ naturally extends to a rational representation via the inclusion of $\mathrm{GL}_n(\Z)$ in $\mathrm{GL}_n(\Q)$. It is thus natural to associate to the $\Z[W_0]$-lattice $L$ a $\Q[W_0]$-module $\Q L$ that extends $L$ by allowing $\Q$-scalar multiplications. The rational representation associated to $L$ can then be viewed as arising from the action of $W_0$ on $\Q L$. Formally, we define:
\begin{definition}{\cite[\S\phantom{.}73, Ch.\phantom{.}XI]{representation_theory_book}}\label{def.-QL}
	Let $W_0$ be a finite group and $L$ be a $\Z[W_0]$-lattice. Suppose $W_0=\{w_i:i<k\}$ is an enumeration without repetition of $W_0$. Then, $\Q L$ is the $\Q[W_0]$-module with domain $\Q\otimes_\Z L$, and $\Q[W_0]$-scalar multiplication:
	\begin{equation*}
		r.(h\otimes x)=\sum_{i < k}q_ih\otimes w_i.x,
	\end{equation*}
	\noindent
	for all $r\in \Q [W_0]$, $x\in L$, and $h,q_0,\ldots,q_{k-1}\in\Q$ such that $r=\sum_{i<k}q_iw_i$.
\end{definition}
\begin{fact}{\cite[\S\phantom{.}73, Ch.\phantom{.}XI]{representation_theory_book}}\label{remark - QL}
	In the context of Definition\cspace\ref{def.-QL}, the $\Q[W_0]$-module $\Q L$ naturally inherits the structure of a $\Q$-vector space, where the $\Q$-scalar multiplication is defined by:
	\begin{equation*}
		q.(h\otimes x)=q1.(h\otimes x)=q h\otimes x,
	\end{equation*}
	\noindent
	for all $q,h\in\Q$, and $x\in L$. If $\{x_i:i<n\}$ is a free abelian basis of $L$, then $\{1\otimes x_i:i<n\}$ forms a basis of $\Q L$ as a $\Q$-vector space. In particular, the dimension of $\Q L$ over $\Q$ equals the rank of $L$ as a free abelian group.
\end{fact}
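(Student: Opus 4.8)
The plan is to treat this as the standard \emph{extension of scalars} statement: $\Q L = \Q \otimes_\Z L$ is obtained from $L$ by base change along the inclusion $\Z \hookrightarrow \Q$, and all three assertions are routine consequences of the formal properties of the tensor product together with the freeness of $L$ as a $\Z$-module.

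First I would verify that the displayed $\Q$-scalar multiplication is well defined and genuinely yields a $\Q$-vector space structure. The cleanest route is to observe that the prescription $q.(h \otimes x) = qh \otimes x$ is simply the canonical left-factor $\Q$-module structure on $\Q \otimes_\Z L$, which exists because $\Q$ is a commutative ring. The content of the displayed identity $q.(h\otimes x) = q1.(h\otimes x)$ is then only that this $\Q$-action coincides with the restriction of the $\Q[W_0]$-action of Definition~\ref{def.-QL} along the ring map $\Q \to \Q[W_0]$, $q \mapsto q1_{W_0}$. This I would check directly: taking $r = q 1_{W_0}$ in the formula of Definition~\ref{def.-QL}, only the term indexed by the identity $1_{W_0} \in W_0$ survives, giving $qh \otimes 1_{W_0}.x = qh \otimes x$, since $1_{W_0}$ acts as the identity on $L$.

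For the basis claim I would use that tensoring with $\Q$ commutes with finite direct sums. Writing $L = \bigoplus_{i<n} \Z x_i$ as abelian groups, one obtains a chain of $\Q$-linear isomorphisms $\Q \otimes_\Z L \cong \bigoplus_{i<n}(\Q \otimes_\Z \Z x_i) \cong \bigoplus_{i<n} \Q$, under which $1 \otimes x_i$ corresponds to the $i$-th standard basis vector; hence $\{1 \otimes x_i : i<n\}$ is a $\Q$-basis of $\Q L$, and the dimension statement follows at once. If one prefers to avoid invoking the behaviour of $\otimes$ under direct sums, the same conclusion can be reached by hand: spanning follows by writing an arbitrary simple tensor $q \otimes \ell$ with $\ell = \sum_{i<n} n_i x_i$ (where $n_i \in \Z$) as $\sum_{i<n} (q n_i)(1 \otimes x_i)$, and linear independence follows from the torsion-freeness of $\Z$ (equivalently, flatness of $\Q$ over $\Z$) after clearing denominators in a hypothetical relation $\sum_{i<n} q_i (1 \otimes x_i) = 0$.

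The only step that requires genuine care — and hence the one I would flag as the main (if modest) obstacle — is the linear independence of $\{1 \otimes x_i\}$: unlike spanning, it is not purely formal and rests essentially on the freeness of $L$, which guarantees that no nontrivial $\Z$-relation is introduced upon passing to $\Q L$. Everything else is bookkeeping with the tensor product.
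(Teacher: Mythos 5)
Your proof is correct. Note that the paper does not actually prove this statement: it is recorded as a Fact with a citation to Curtis--Reiner (\S\,73, Ch.\ XI), so there is nothing to compare against beyond the reference itself. Your argument — identifying the displayed $\Q$-action as the canonical left-factor structure on $\Q\otimes_\Z L$ restricted along $q\mapsto q1_{W_0}$, and then obtaining the basis claim either from compatibility of $\otimes$ with finite direct sums or by the hands-on spanning/independence check using torsion-freeness to clear denominators — is exactly the standard extension-of-scalars proof that the cited source supplies, and it fills the gap completely.
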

\begin{definition}{\cite[Section\phantom{.}2]{plesken-pohst}}\label{def. Z/Q-equivalence}
	Let $W_0$ be a finite group, and $L,L'$ be $\Z[W_0]$-lattices. Then, we say that $L$ and $L'$ are:
	\begin{enumerate}[(1)]
		\item $\Z$\emph{-equivalent}, and we write $L\sim_\Z L'$, if\, $L\cong L'$ (as $\Z[W_0]$-lattices);
		\item $\Q$\emph{-equivalent}, and we write $L\sim_\Q L'$, if\, $\Q L \cong \Q L'$ (as $\Q[W_0]$-modules).
	\end{enumerate}
\end{definition}
As mentioned above, in the language of representation theory, item\cspace$(1)$ in \ref{def. Z/Q-equivalence} corresponds to saying that the associated integral representations $\alpha_L$ and $\alpha_{L'}$ are equivalent, i.e., that there exists an invertible \mbox{matrix $A\in\mathrm{GL}_n(\Z)$ such that:}
\begin{equation*}
	\alpha_L(w)=A\alpha_{L'}(w)A^{-1},
\end{equation*}
\noindent
for all $w\in W_0$ (cf.\cspace\cite[Definition\cspace73.2, Ch.\phantom{.}XI]{representation_theory_book}). Similarly, item\cspace$(2)$ in \ref{def. Z/Q-equivalence} is equivalent to the representations $\alpha_L$ and $\alpha_{L'}$ being conjugate in $\mathrm{GL}_n(\Q)$ (cf. \cite[Definition\cspace73.1, Ch.\phantom{.}XI]{representation_theory_book}).

\smallskip
Clearly, $\Z$-equivalence implies $\Q$-equivalence, but the converse does not generally hold (cf. \cite[\S\phantom{.}73, Ch.\phantom{.}XI]{representation_theory_book}).
For example, consider the case where $W_0=\Z_2 = \mathbb{Z}/2\mathbb{Z}$, and let $L,L'$ be the $\Z[W_0]$-lattices on $\Z^2$ induced by the representations:
\begin{equation*}
	\alpha:\bar{1}\mapsto\begin{pmatrix}
		1&0\\
		0&-1
	\end{pmatrix}\quad\text{and}\quad \alpha':\bar{1}\mapsto \begin{pmatrix}
		1 & 1\\
		0 & -1
	\end{pmatrix},
\end{equation*}
\noindent
respectively. Then $L\sim_\Q L'$, but $L\not\sim_\Z L'$, as the matrices $\alpha(\bar{1})$ and $\alpha'(\bar{1})$ are conjugate in $\mathrm{GL}_2(\Q)$, yet not in $\mathrm{GL}_2(\Z)$.
\begin{lemma}{\cite[Section\phantom{.}2]{plesken-pohst}}\label{lemma every centering is Q-equivalent to the ambient module}
	Let $W_0$ be a finite group and $L$ be a $\Z[W_0]$-lattice. Then, every centering $C$ of $L$ is $\Q$-equivalent to $L$.
\end{lemma}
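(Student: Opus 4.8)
The plan is to exhibit an explicit isomorphism of $\Q[W_0]$-modules between $\Q C$ and $\Q L$ by rationalizing the inclusion of $C$ into $L$. First I would record the only arithmetic input needed: since $C$ is a centering, the quotient $L/C$ is a finite abelian group, say of order $m$, so that this group is annihilated by $m$ and we obtain the sandwiching $mL \subseteq C \subseteq L$.

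Next I would consider the inclusion $\iota \colon C \hookrightarrow L$, which is a homomorphism of $\Z[W_0]$-modules because $C$ is by definition a $\Z[W_0]$-submodule of $L$. Applying the functor $\Q \otimes_\Z (-)$ yields a $\Q$-linear map $f \vcentcolon= \mathrm{id}_\Q \otimes \iota \colon \Q C \to \Q L$. I would then check that $f$ is in fact $\Q[W_0]$-linear: by the description of the $W_0$-action in Definition~\ref{def.-QL}, one has $w.(h \otimes c) = h \otimes w.c$ on both $\Q C$ and $\Q L$, and since $\iota(w.c) = w.\iota(c)$ for all $w \in W_0$ and $c \in C$, the map $f$ commutes with the $W_0$-action, hence with multiplication by every element of $\Q[W_0]$.

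The two remaining points are injectivity and surjectivity of $f$. Injectivity follows from the flatness of $\Q$ over $\Z$ (equivalently, $C$ and $L$ are free abelian and $\iota$ is injective, so it remains injective after inverting the integers). Surjectivity is where the sandwiching $mL \subseteq C$ enters: for any $x \in L$ we have $mx \in C$, whence
$$1 \otimes x = \tfrac{1}{m} \otimes mx = f\!\left(\tfrac{1}{m}\otimes mx\right),$$
and since the elements $1\otimes x$ generate $\Q L$ as a $\Q$-vector space (cf. Fact~\ref{remark - QL}), the map $f$ is onto. Therefore $f$ is an isomorphism of $\Q[W_0]$-modules, which is exactly the assertion $C \sim_\Q L$.

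I expect no serious obstacle here; the argument is essentially formal once one invokes the exactness of rationalization together with the inclusion $mL \subseteq C$. The only step demanding genuine care is the verification that $f$ respects the full $\Q[W_0]$-module structure, and not merely the underlying $\Q$-vector space structure, which is why I would phrase that step explicitly in terms of the action of Definition~\ref{def.-QL} rather than leaving it implicit.
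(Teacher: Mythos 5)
Your proof is correct and follows essentially the same route as the paper: both rationalize the inclusion $C \subseteq L$ to a natural $\Q[W_0]$-module embedding $\Q C \rightarrow \Q L$ and show it is an isomorphism. The only difference is the surjectivity step, where the paper uses a dimension count (finite index gives $C$ and $L$ the same rank, hence $\Q C$ and $\Q L$ the same finite dimension, so the injective $\Q$-linear map is onto), whereas you argue directly from the sandwich $mL \subseteq C$; both are valid.
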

\begin{proof}
	Since $C$ is a submodule of $L$, the inclusion $C \subseteq L$ induces a natural embedding of $\Q[W_0]$-modules $\delta: \Q C \rightarrowtail \Q L$. We claim that this embedding is in fact an isomorphism.
	
	\smallskip\noindent
	By assumption, $C$ has finite index in $L$, and hence it is a free abelian group of the same rank as $L$. Consequently, by Remark\cspace\ref{remark - QL}, the $\Q$-vector spaces on $\Q C$ and $\Q L$ have the same finite dimension. Since the map $\delta$ is clearly $\Q$-linear and injective, it follows that $\delta$ is also surjective. We conclude that $\Q C \cong \Q L$, and therefore 
	$C \sim_\Q L$, as required.
\end{proof}
By the preceding discussion, for a fixed finite group $W_0$, each $\Q$-equivalence class of $\Z[W_0]$-lattices decomposes into a disjoint union of $\Z$-equivalence classes. Lemma\cspace\ref{lemma-there are only finitely many Z-equivalence classes}, which is an instantiation of the Jordan-Zassenhaus Theorem (cf.\cspace\cite[Theorem~79.1, Ch.\phantom{.}XI]{representation_theory_book}) in the context of $\Z[W_0]$-lattices, guarantees that this decomposition yields only finitely many distinct $\Z$-equivalence classes. We will need the following basic fact from \cite{plesken-pohst}.
\begin{fact}{\cite[Section\phantom{.}2]{plesken-pohst}}\label{fact-every_Q-equivalent_lattice_is_Z-equivalent_to_a_center}
	Let $W_0$ be a finite group, and $L,L'$ be $\Z[W_0]$-lattices. If $L\sim_\Q L'$, then there exists a centering $C$ of $L$ such that $C\sim_\Z L'$.
\end{fact}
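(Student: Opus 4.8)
The plan is to exploit the $\Q[W_0]$-module isomorphism supplied by the hypothesis $L \sim_\Q L'$ and then correct it by scaling so that its image lands inside $L$ with finite index. First I would fix an isomorphism $\psi \colon \Q L' \to \Q L$ of $\Q[W_0]$-modules, which exists precisely because $L \sim_\Q L'$. Using the canonical embedding $x \mapsto 1 \otimes x$ of $L'$ into $\Q L'$ (as in Remark~\ref{remark - QL}), I would regard $L'$ as a $\Z[W_0]$-submodule of $\Q L'$ and consider its image $M \vcentcolon= \psi(L') \subseteq \Q L$. Since $\psi$ is $\Z[W_0]$-linear and injective on $L'$, the restriction $\psi|_{L'} \colon L' \to M$ is an isomorphism of $\Z[W_0]$-lattices; in particular $M \cong L'$.

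The only defect is that $M$ need not sit inside $L$. To fix this I would clear denominators: since $L'$ is free abelian of finite rank, say with $\Z$-basis $e_1,\dots,e_n$, the module $M$ is generated over $\Z$ by $\psi(e_1),\dots,\psi(e_n)$, each of which is a $\Q$-linear combination of a fixed $\Z$-basis of $L$ (cf. Remark~\ref{remark - QL}). Choosing a common denominator $d \in \N$, $d \neq 0$, for all these finitely many coefficients yields $d\,\psi(e_i) \in L$ for every $i$, hence $dM \subseteq L$. Since $M$ is a $\Z[W_0]$-submodule of $\Q L$, so is $dM$, and I would set $C \vcentcolon= dM$.

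It then remains to verify that $C$ is a centering of $L$ with $C \sim_\Z L'$. For the first point, observe that $\psi$ is in particular a $\Q$-linear isomorphism of the underlying $\Q$-vector spaces, so $M = \psi(L')$ spans $\Q L$ over $\Q$ and therefore has rank equal to $\dim_\Q \Q L = \mathrm{rank}_\Z(L)$ by Remark~\ref{remark - QL}; scaling by $d \neq 0$ preserves rank, so $C = dM$ has maximal rank in $L$, i.e.\ finite index, and is thus a centering in the sense of Definition~\ref{def - centering of a Z[G]-repr module}. For the second point, since $L$ is torsion-free and the $W_0$-action is linear, multiplication by $d$ is an isomorphism $\tau_d \colon M \to dM = C$ of $\Z[W_0]$-lattices. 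Composing with $\psi|_{L'} \colon L' \to M$ gives $C \cong L'$, that is $C \sim_\Z L'$, as required.

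I do not expect a serious obstacle here, as this is a standard \emph{clear denominators} argument. The only two points requiring genuine care are that the scaled lattice retains full rank (which is what guarantees finite index, not merely that $C$ is a submodule) and that multiplication by $d$ is a bona fide $\Z[W_0]$-isomorphism rather than a mere abelian-group homomorphism; both rest on torsion-freeness of $L$ together with linearity of the action, exactly the ingredients already used for the map $\tau_\ell$ in the proof of Theorem~\ref{thm - finitely many centerings -> split profinite rigidity}.
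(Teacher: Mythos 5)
Your proof is correct. There is essentially nothing in the paper to compare it against: the paper states this result as a Fact imported from \cite[Section~2]{plesken-pohst} and gives no proof of it, so your argument serves to fill in that citation. What you give is the standard clear-denominators proof, and it is carried out correctly: transport $L'$ into $\Q L$ along the $\Q[W_0]$-isomorphism $\psi$, scale by a single integer $d\neq 0$ so that $C \vcentcolon= d\,\psi(L')$ lands inside $L$, and then verify the two points that genuinely need checking --- that $C$ has full rank in $L$ (which, as the paper observes immediately after Definition~\ref{def - centering of a Z[G]-repr module}, is equivalent to having finite index), and that both $\psi|_{L'}$ and multiplication by $d$ are $\Z[W_0]$-isomorphisms, so that $C\cong L'$ as $\Z[W_0]$-lattices. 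The only cosmetic remark is that injectivity of $\tau_d$ on $M=\psi(L')$ is best justified by noting that $M$ sits inside the $\Q$-vector space $\Q L$ (where multiplication by $d\neq 0$ is injective), rather than by torsion-freeness of $L$ itself; this changes nothing in the argument.
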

\begin{lemma}\label{lemma-there are only finitely many Z-equivalence classes}
	Let $W_0$ be a finite group, and $L$ be a $\Z[W_0]$-lattice. Then, the $\Q$-equivalence class of $L$ splits into finitely many disjoint $\Z$-equivalence classes.
\end{lemma}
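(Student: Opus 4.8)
The plan is to deduce the statement directly from the Jordan–Zassenhaus Theorem, after checking that its hypotheses are met in our situation. First I would set up the algebraic framework. Since $W_0$ is finite, Maschke's theorem guarantees that the group algebra $\Q[W_0]$ is a finite-dimensional semisimple $\Q$-algebra, and $\Z[W_0]$ is a $\Z$-order in it (it is finitely generated as a $\Z$-module and spans $\Q[W_0]$ over $\Q$). A $\Z[W_0]$-lattice in the sense used here is then exactly a full lattice in the classical sense, and by Remark~\ref{remark - QL} its rational span $\Q L$ is a finitely generated $\Q[W_0]$-module of dimension $\mathrm{rank}_\Z(L)$.

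Next I would translate the two equivalence relations into this language. By Definition~\ref{def. Z/Q-equivalence}, $L \sim_\Z L'$ means precisely that $L$ and $L'$ are isomorphic as $\Z[W_0]$-lattices, whereas $L \sim_\Q L'$ means $\Q L \cong \Q L'$ as $\Q[W_0]$-modules. Thus the $\Q$-equivalence class of $L$ is exactly the collection of all $\Z[W_0]$-lattices $L'$ whose rational span is isomorphic to the fixed module $V \vcentcolon= \Q L$, and the claim is that the partition of this collection by $\Z[W_0]$-isomorphism is finite.

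Then I would invoke Jordan–Zassenhaus (cf.\ \cite[Theorem~79.1]{representation_theory_book}) in the form: for a Dedekind domain $R$ with fraction field $K$, a finite-dimensional semisimple $K$-algebra $A$, an $R$-order $\Lambda$ in $A$, and a fixed finitely generated $A$-module $V$, there are only finitely many isomorphism classes of $\Lambda$-lattices $M$ with $KM \cong V$. Applying this with $R = \Z$, $K = \Q$, $A = \Q[W_0]$, $\Lambda = \Z[W_0]$, and $V = \Q L$ yields finitely many $\Z[W_0]$-isomorphism classes of lattices whose rational span is isomorphic to $\Q L$ — that is, finitely many $\Z$-equivalence classes inside the $\Q$-equivalence class of $L$, which is the desired conclusion.

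The argument is essentially verification-and-citation, so there is no genuine obstacle at this level: the real content is packaged inside Jordan–Zassenhaus (ultimately a geometry-of-numbers/finiteness-of-class-number input), which we take as given. The only point requiring a little care is the bookkeeping in the last step — reconciling the classical statement, which fixes a single module $V$ and counts full $\Lambda$-sublattices of $V$, with our formulation, which ranges over all lattices $\Q$-equivalent to $L$. Here one checks that every $L'$ with $\Q L' \cong \Q L$ is $\Z$-equivalent to a full $\Z[W_0]$-sublattice of $V$ (so restricting to lattices literally inside $V$ loses no isomorphism class), and that the rank data are preserved under these identifications, ensuring that nothing outside the intended $\Q$-equivalence class is being counted.
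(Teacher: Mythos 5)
Your proposal is correct and follows essentially the same route as the paper: both proofs reduce the statement to the Jordan--Zassenhaus Theorem (cited from \cite[\S\phantom{.}79, Ch.\phantom{.}XI]{representation_theory_book}), which counts isomorphism classes of $\Z[W_0]$-lattices inside the fixed $\Q[W_0]$-module $\Q L$, and then handle the same bookkeeping point that an arbitrary lattice $\Q$-equivalent to $L$ may be replaced, up to $\Z$-equivalence, by a full lattice sitting inside $\Q L$. The only cosmetic difference is that the paper performs this last reduction via its centering machinery (Fact\cspace\ref{fact-every_Q-equivalent_lattice_is_Z-equivalent_to_a_center} and Lemma\cspace\ref{lemma every centering is Q-equivalent to the ambient module}), whereas you embed $L'$ into $\Q L$ directly through the isomorphism $\Q L'\cong\Q L$; these are the same idea in different packaging.
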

\begin{proof}
	By the Jordan-Zassenhaus Theorem (see~\cite[\S\phantom{.}79,\phantom{.}Ch.\phantom{.}XI]{representation_theory_book}), \mbox{the set of all} $\Z[W_0]$-lattices $L'$ such that $L' \subseteq \Q L$ and $L' \sim_\Q L$ decomposes into finitely many $\Z$-equivalence classes. Thus, it suffices to observe that, by Fact\cspace\ref{fact-every_Q-equivalent_lattice_is_Z-equivalent_to_a_center}, every $\Z[W_0]$-lattice $\Q$-equivalent to $L$ is isomorphic to some centering $C$ of $L$. Since, by Lemma\cspace\ref{lemma every centering is Q-equivalent to the ambient module}, every such centering is itself $\Q$-equivalent to $L$, the claim follows immediately.
\end{proof}

A $\Z[W_0]$-lattice $L$ always admits infinitely many centerings. Indeed, if $C$ is a centering of $L$, then the infinite descending chain
\begin{equation*}
	C\geq 2C\geq 3C\geq\cdots
\end{equation*}
\noindent
consists entirely of centerings of $L$, since, for every $k\in\N^+$, the map $x\mapsto kx$ defines an isomorphism $C\rightarrow kC$. This observation implicitly suggests the following notion, already considered in \cite[Section\phantom{.}2]{plesken-pohst}.
\begin{definition}\label{def. - order centerings}
	Let $W_0$ be a finite group, and let $L$ be a $\Z[W_0]$-lattice. We define:
	\begin{enumerate}[(1)]
		\item $\mathcal{C}(L)$ to be the set of all centerings of $L$;
		\item a partial order $\prec$ on $\mathcal{C}(L)$, given by
		\begin{equation*}
			C\prec C' \quad\text{if and only if}\quad \exists \lambda\in\Z\quad\text{such that}\quad C=\lambda C',
		\end{equation*}
		\noindent
		for all $C,C'\in\mathcal{C}(L)$.
	\end{enumerate}
\end{definition} 
\begin{fact}{\cite[Section\phantom{.}2]{plesken-pohst}}\label{lemma - properties order centerings}
	Let $W_0$ be a finite group and $L$ be a $\Z[W_0]$-lattice. Then, the following hold:
	\begin{enumerate}[(1)]
		\item for all $C,C'\in\mathcal{C}(L)$, $C\sim_\Z C'$ whenever $C\prec C'$;
		\item for each $C\in\mathcal{C}(L)$, there exists a unique $\prec$-maximal centering $\overline{C}\in\mathcal{C}(L)$ such that $C\prec \overline{C}$.
	\end{enumerate}
\end{fact}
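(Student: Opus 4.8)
The plan is to treat the two items separately: item (1) is essentially the scaling isomorphism already used in Case~1 of Theorem~\ref{thm - finitely many centerings -> split profinite rigidity}, while item (2) requires a short divisibility analysis of the admissible scalars.

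For item (1), suppose $C \prec C'$, so $C = \lambda C'$ for some $\lambda \in \Z$. Since $C$ is a centering, hence of full rank, necessarily $\lambda \neq 0$; and as $\lambda C' = (-\lambda) C'$ for the subgroup $C'$, we may assume $\lambda \in \N^+$. I would then observe that the multiplication map $\tau_\lambda : C' \to C$, $x \mapsto \lambda x$, is an isomorphism of $\Z[W_0]$-lattices: it is injective because $C'$ is torsion-free, surjective onto $\lambda C' = C$ by definition, and $\Z[W_0]$-linear because the $W_0$-action is $\Z$-linear, so $\tau_\lambda(w.x) = \lambda(w.x) = w.(\lambda x) = w.\tau_\lambda(x)$. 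Hence $C \sim_\Z C'$.

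For item (2), I would reduce the existence and uniqueness of a $\prec$-maximal centering above $C$ to the existence of a \emph{greatest} element of the set $U := \{C' \in \mathcal{C}(L) : C \prec C'\}$. The key observation is the explicit description $U = \{\tfrac{1}{\lambda}C : \lambda \in S\}$, where $S := \{\lambda \in \N^+ : \tfrac{1}{\lambda}C \subseteq L\} = \{\lambda \in \N^+ : C \subseteq \lambda L\}$: indeed $C \prec C'$ means $C = \lambda C'$, i.e. $C' = \tfrac{1}{\lambda}C$ inside $\Q L$, and such a $C'$ lies in $\mathcal{C}(L)$ precisely when $\tfrac{1}{\lambda}C \subseteq L$ (it is automatically a full-rank $\Z[W_0]$-submodule, since scaling in $\Q L$ commutes with the $W_0$-action). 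Thus everything is governed by the set of admissible scalars $S$.

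The heart of the argument, and the step I expect to be the main obstacle, is showing that $S$ has a greatest element $\lambda_0$ divisible by every element of $S$. Finiteness of $S$ follows from an index bound: if $\lambda \in S$ then $C \subseteq \lambda L \subseteq L$ gives $\lambda^n = [L : \lambda L] \mid [L : C]$, with $n = \mathrm{rank}_\Z L$, bounding $\lambda$. For divisibility closure I would use that, since $L$ is free abelian, $\lambda L \cap \mu L = \mathrm{lcm}(\lambda,\mu)\,L$ (reduce to the one-dimensional identity $\lambda\Z \cap \mu\Z = \mathrm{lcm}(\lambda,\mu)\Z$); hence if $\lambda,\mu \in S$ then $C \subseteq \lambda L \cap \mu L = \mathrm{lcm}(\lambda,\mu)L$, so $\mathrm{lcm}(\lambda,\mu) \in S$. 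Consequently $\lambda_0 := \max S$ is the lcm of all elements of $S$, and every $\lambda \in S$ divides $\lambda_0$. Setting $\overline{C} := \tfrac{1}{\lambda_0}C \in U$, each $\tfrac{1}{\lambda}C \in U$ satisfies $\tfrac{1}{\lambda}C = \tfrac{\lambda_0}{\lambda}\,\overline{C}$ with $\tfrac{\lambda_0}{\lambda} \in \N^+$, so $\tfrac{1}{\lambda}C \prec \overline{C}$. Thus $\overline{C}$ is the greatest element of $U$, yielding at once both existence and uniqueness of the $\prec$-maximal centering above $C$; the degenerate case $L = 0$ is immediate.
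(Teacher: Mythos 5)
The paper offers no proof of this statement at all: it is imported as a black-box Fact from \cite[Section~2]{plesken-pohst}, so there is no internal argument to compare yours against. Your proof is correct and self-contained, and it supplies exactly what the paper delegates to the citation. Item (1) is the same scaling-isomorphism observation that the paper itself uses elsewhere (Case~1 of Theorem~\ref{thm - finitely many centerings -> split profinite rigidity}, and again inside Lemma~\ref{thm. - absolutely irreducible -> there are only finitely many maximal centerings}), so nothing to flag there. For item (2), the substance is your description of the up-set $U=\{C'\in\mathcal{C}(L): C\prec C'\}$ as $\{\tfrac{1}{\lambda}C:\lambda\in S\}$ with $S=\{\lambda\in\N^+ : C\subseteq \lambda L\}$, together with the two arithmetic facts that $S$ is finite (since $\lambda^n=[L:\lambda L]$ divides $[L:C]$) and closed under least common multiples (since $\lambda L\cap\mu L=\mathrm{lcm}(\lambda,\mu)L$ for $L$ free abelian); taking $\lambda_0=\mathrm{lcm}(S)=\max S$ then yields $\overline{C}=\tfrac{1}{\lambda_0}C$ as a greatest element of $U$, which gives existence and uniqueness of the $\prec$-maximal centering above $C$ in one stroke. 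I verified each of these steps and they hold. Two points you leave implicit but which are harmless: first, passing from ``greatest element of $U$'' to ``unique $\prec$-maximal centering above $C$'' uses transitivity and antisymmetry of $\prec$, i.e., that $\prec$ really is a partial order as asserted (without verification) in Definition~\ref{def. - order centerings}; both are immediate from $\lambda C'=(-\lambda)C'$ and torsion-freeness, but a one-line remark would make the reduction airtight. Second, the case $L\neq\{0\}$ is needed both for $\lambda\neq 0$ in item (1) and for the index bound in item (2); you do dispatch the degenerate case at the end, so this is fine. Compared with simply citing Plesken--Pohst, your argument has the merit of making visible the mechanism behind $\prec$-maximal centerings (divide out the largest admissible integer scalar), which is also the mechanism implicitly relied upon in Fact~\ref{lemma - properties order centerings}(2) when the paper later counts maximal centerings.
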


	It may happen that two distinct $\prec$-maximal centerings $C$ and $C'$ in $\mathcal{C}(L)$ are $\Z$-equivalent while remaining $\prec$-incomparable. Consequently, the number of $\Z$-equivalence classes (which is finite by \ref{lemma-there are only finitely many Z-equivalence classes}) does not necessarily bound the number of $\prec$-maximal centerings of a given $\Z[W_0]$-lattice. We will see that this phenomenon does not occur when the $\Z[W_0]$-lattice $L$ is in addition an \emph{absolutely irreducible} module. 
\smallskip\noindent
First, we fix some module-theoretic terminology.
\begin{definition}
	Let $Q$ be a ring (with unity). \mbox{Then a $Q$-module $M$ is said to be:}
	\begin{enumerate}[(1)]
		\item \emph{irreducible} $($or \emph{simple}$)$, if $M$ admits only trivial submodules, i.e., for every submodule $N$ of $M$ either $N=\{0\}$ or $N=M$;
		\item \emph{decomposable} if there exist two non-trivial submodules $N_0$ and $N_1$ of $M$ such that $M=N_0\oplus N_1$;
		\item \emph{indecomposable} if it is not decomposable.
	\end{enumerate}
\end{definition}

Clearly, if a $Q$-module is irreducible, then it is also indecomposable. In the case of $Q$ being the group algebra $K[W_0]$ of a finite group $W_0$ over a field $K$ of characteristic $0$, the converse is also true, as a consequence of a classic result by Maschke (cf. \cite[Proposition\phantom{.}2, Ch.\phantom{.}V, Annexe]{bourbaki} or \cite[Theorem\phantom{.}10.8, Ch.\phantom{.}II]{representation_theory_book}).

\begin{definition}{\cite[\S\phantom{.}29, Ch.\phantom{.}IV]{representation_theory_book}}\label{def - absolutely irreducible module}
	Let $W_0$ be a finite group and $M$ be a $\Q[W_0]$-module. Suppose that $W_0=\{w_i:i<k\}$ is an enumeration {without repetition of $W_0$.} 
	\begin{enumerate}[(1)]
		\item For every field extension $K$ of $\Q$, we define the $K[W_0]$-module $K M$ as follows\footnote{Various notations are used in the literature: this $K[W_0]$-module is denoted $M^K$ in \cite[\S\phantom{.}29, Ch.\phantom{.}IV]{representation_theory_book}, and $M_{(K)}$ in \cite[\S\phantom{.}8.1, Ch.\phantom{.}II]{bourbaki algèbre}. Here,
		we adopt $K M$ to preserve consistency with Definition\cspace\ref{def.-QL}.}:
		\begin{enumerate}[(i)]
			\item the underlying abelian group is $K\otimes_\Q M$, where the tensor product is computed with respect to the natural $\Q$-vector space structure on $M$;
			\item the $K[W_0]$-scalar multiplication is given by
			\begin{equation*}
				r.\left(\sum_{j<m}b_j\otimes x_j\right)=\sum_{\overset{i<k}{j<m}}a_ib_j\otimes w_i.x_j,
			\end{equation*}
			\noindent
			for all $r\in K [W_0]$, $x_0,\ldots,x_{m-1}\in M$, and $a_0,\ldots,a_{k-1},b_0,\ldots,b_{m-1}\in K$ such that $r=\sum_{i<n}a_iw_i$.
		\end{enumerate}
		\item The module $M$ is said to be \emph{absolutely irreducible} if $K M$ is irreducible for every field extension $K$ of $\Q$.
	\end{enumerate} 
\end{definition}
\begin{definition}\label{def - absolutely irreducible lattice}
	Let $W_0$ be a finite group. A $\Z[W_0]$-lattice $L$ is said to be \emph{absolutely irreducible} if $\Q L$ is absolutely irreducible as a $\Q[W_0]$-module. In particular, for every field extension $K$ of $\Q$, the tensor product defining $K L$ is computed with respect to the $\Q$-vector space structure on $\Q L$ from Fact\cspace\ref{remark - QL}. A crystallographic group $W$ with point group $W_0$ and translation lattice $L$ is said to be \emph{absolutely irreducible} if $L$ is absolutely irreducible (as a $\Z[W_0]$-lattice).
\end{definition}
\noindent 
\begin{lemma}\label{thm. - absolutely irreducible -> there are only finitely many maximal centerings}
	Let $W_0$ be a finite group and $L$ be an absolutely irreducible $\Z[W_0]$-lattice. Then there are only finitely many maximal centerings of $L$.
\end{lemma}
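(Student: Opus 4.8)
\section*{Proof proposal}

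The plan is to show that the assignment sending a maximal centering to its $\Z$-equivalence class is injective into a finite set; finiteness of the set of maximal centerings then follows at once. By Lemma~\ref{lemma every centering is Q-equivalent to the ambient module} every centering of $L$ is $\Q$-equivalent to $L$, and by Lemma~\ref{lemma-there are only finitely many Z-equivalence classes} the $\Q$-equivalence class of $L$ splits into only finitely many $\Z$-equivalence classes. Hence it suffices to prove that two maximal centerings $C,C'$ of $L$ with $C\sim_\Z C'$ must coincide.

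So I would assume $C\sim_\Z C'$, witnessed by a $\Z[W_0]$-isomorphism $f\colon C\to C'$. Applying $\Q\otimes_\Z(-)$ and identifying both $\Q C$ and $\Q C'$ with $\Q L$ (the identifications being isomorphisms precisely by Lemma~\ref{lemma every centering is Q-equivalent to the ambient module}), the map $f$ extends to a $\Q[W_0]$-automorphism $\widehat f$ of $\Q L$. This is where absolute irreducibility is used: by Definition~\ref{def - absolutely irreducible lattice}, $\Q L$ is an absolutely irreducible $\Q[W_0]$-module, so by the standard characterization of absolute irreducibility (cf.\ \cite[\S\,29, Ch.\,IV]{representation_theory_book}) its endomorphism algebra $\End_{\Q[W_0]}(\Q L)$ equals $\Q$. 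Therefore $\widehat f$ is multiplication by a scalar $q\in\Q^{\times}$, and viewing $C,C'\subseteq\Q L$ via $x\mapsto 1\otimes x$ we obtain $C'=f(C)=qC$.

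Writing $q=a/b$ with $a,b$ coprime integers and $b>0$, this yields the equality of submodules $aC=bC'$ inside $\Q L$; since $a,b\neq 0$, both sides are full-rank $\Z[W_0]$-submodules of $L$, hence centerings. Now $aC\prec C$ and $bC'\prec C'$ by the definition of $\prec$ (Definition~\ref{def. - order centerings}), so by the uniqueness of the maximal centering in Fact~\ref{lemma - properties order centerings}(2) we get $\overline{aC}=\overline{C}$ and $\overline{bC'}=\overline{C'}$. But $aC=bC'$ forces $\overline{aC}=\overline{bC'}$, whence $\overline{C}=\overline{C'}$. Since $C$ and $C'$ are maximal, $\overline{C}=C$ and $\overline{C'}=C'$, and therefore $C=C'$, as required.

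The main obstacle is the middle step: extracting from absolute irreducibility that every $\Q[W_0]$-automorphism of $\Q L$ is a rational scalar. This is precisely where the hypothesis is genuinely needed, since for a merely irreducible (but not absolutely irreducible) lattice the endomorphism algebra can be a larger division algebra over $\Q$, permitting $\Z$-equivalent maximal centerings that are $\prec$-incomparable --- exactly the pathology flagged in the discussion preceding the statement. One must also check that the extension of $f$ to $\Q L$ is well defined and that the passage from the rational scalar $q$ to the integral relation $aC=bC'$ lands inside $L$; both are routine once the identifications $\Q C\cong\Q L\cong\Q C'$ are fixed.
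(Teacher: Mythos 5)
Your proof is correct and takes essentially the same route as the paper's: both reduce finiteness to the Jordan--Zassenhaus bound on $\Z$-equivalence classes (Lemma~\ref{lemma-there are only finitely many Z-equivalence classes}), and then show that two $\Z$-equivalent $\prec$-maximal centerings must coincide by extending the $\Z[W_0]$-isomorphism to an endomorphism of $\Q L$, using absolute irreducibility to conclude it is a rational scalar, and clearing denominators to produce a common centering below both, which contradicts (or, in your direct formulation, exploits) the uniqueness of the $\prec$-maximal centering in Fact~\ref{lemma - properties order centerings}(2). The only differences are cosmetic: the paper argues by contradiction and disposes of the trivial case $L=\{0\}$ explicitly (where $\End_{\Q[W_0]}(\Q L)=\Q$ fails but the statement is vacuous), whereas you argue directly and leave that edge case implicit.
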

\begin{proof}
	 By \cite[Theorem\cspace2.1]{plesken-pohst}, the $\prec$-maximal centerings of $L$ form a complete set of representatives for the $\Z$-equivalence classes contained in the $\Q$-equivalence class of $L$. Moreover, by Lemma\cspace\ref{lemma-there are only finitely many Z-equivalence classes}, there are only finitely many such $\Z$-equivalence classes. Therefore, it suffices to show that any two distinct $\prec$-maximal centerings $\overline{C}$ and $\overline{C}'$ of $L$ cannot be $\Z$-equivalent. The case $L=\{0\}$ is trivial. If $L\neq\{0\}$, suppose for contradiction that there exists an isomorphism of $\Z[W_0]$-modules $\sigma:\overline{C}\rightarrow\overline{C}'$. Identifying $\Q\overline{C}$ and $\Q\overline{C}'$ with $\Q L$ via the isomorphisms induced by the inclusions $\overline{C},\overline{C}'\leq L$, this map linearly extends to a $\Q[W_0]$-module endomorphism $\bar{\sigma}$ of $\Q L$.
 	
 	\smallskip\noindent
 	Since $\Q L$ is absolutely irreducible, it follows from \cite[Theorem\cspace29.13, Ch.\phantom{.}IV]{representation_theory_book} that $\bar{\sigma}$ is a scalar multiplication by some rational number. In particular, this means that there exist $\alpha\in\Z$ and $\beta\in\Z^\times$ such that $\beta\sigma(x)=\alpha x$ for all $x\in \overline{C}$. Hence:
	\begin{equation*}
		\alpha \overline{C}=\beta\sigma(\overline{C})=\beta\overline{C}'.
	\end{equation*}
	\noindent
	Let $C\vcentcolon=\beta\overline{C}'$. Clearly, $C$ is a centering satisfying $C\prec\overline{C}'$, by definition. Since $C$ has maximal rank, we must have $\alpha\neq0$, as otherwise $L$ would be trivial. Therefore, the identity $C=\alpha\overline{C}$ witnesses that $C\prec\overline{C}$. However, this contradicts Fact\cspace\ref{lemma - properties order centerings}{(2)}, as $C\prec \overline{C}'$ and $\overline{C},\overline{C}'$ are assumed to be distinct.
\end{proof}
We can now restate Theorem\cspace\ref{thm - finitely many centerings -> split profinite rigidity} in this terminology as follows:
\begin{theorem}\label{theorem}
	Let $W$ be a crystallographic group with translation subgroup $\tranw$ and point group $\pointw$. Suppose that $W'$ is a finitely generated group elementarily equivalent to $W$. If $\lattw$ admits only finitely many $\prec$-maximal centerings, then:
	\begin{enumerate}[(1)]
		\item after identifying $\pointw$ and $\pointw'$ with a common finite group $W_0$, $\lattw\cong\lattwone$ as $\Z[W_0]$-lattices;
		\item if, in addition, $W$ is split, then $W \cong W'$. In particular, $W$ is profinitely rigid.
	\end{enumerate}
\end{theorem}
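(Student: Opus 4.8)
The plan is to deduce the statement from Theorem~\ref{thm - finitely many centerings -> split profinite rigidity} by verifying its hypotheses (a) and (b); since the two conclusions (1)--(2) here coincide verbatim with those of Theorem~\ref{thm - finitely many centerings -> split profinite rigidity}, this suffices. The bridge is the dictionary from Section~\ref{sec4}: a subgroup $N$ of $\tranw$ is normal in $W$ if and only if it is stable under the $W_0$-action (conjugation by $\tranw$ is trivial, as $\tranw$ is abelian and $N\sub\tranw$), i.e.\ if and only if $N$ is a $\Z[W_0]$-submodule of $\lattw$; and such an $N$ has finite index in $\tranw$ exactly when it has full rank, i.e.\ when it is a centering of $\lattw$. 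Thus (a) and (b) become assertions purely about submodules of $\lattw$, to be checked using the hypothesis that $\lattw$ has only finitely many $\prec$-maximal centerings.

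First I would take $N_0,\dots,N_{k-1}$ to be the finitely many $\prec$-maximal centerings of $\lattw$. Each $N_i$ is a full-rank submodule, hence a finite-index normal subgroup of $W$ contained in $\tranw$; this is precisely hypothesis (a). For hypothesis (b), let $N\trianglelefteq W$ with $N\sub\tranw$, equivalently a submodule of $\lattw$. If $N$ has full rank it is a centering, so by Fact~\ref{lemma - properties order centerings}(2) there is a unique $\prec$-maximal centering $\overline{N}$ with $N\prec\overline{N}$; by finiteness $\overline{N}=N_i$ for some $i<k$, and $N\prec N_i$ means $N=\ell N_i$ for some $\ell\in\N$, as required.

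The main obstacle is the remaining case, where $N$ is a \emph{nonzero} submodule of rank strictly below $\mathrm{rank}_\Z(\lattw)$: such an $N$ can never equal $\ell N_i$ (which is either trivial or full rank), so to secure (b) I must exclude it by proving that $\Q\lattw$ is irreducible. I would argue by contraposition. If $\Q\lattw$ is reducible, then by Maschke's theorem $\Q\lattw=U\oplus V$ with $U,V$ nonzero $\Q[W_0]$-submodules; set $L_1\vcentcolon=\lattw\cap U$ and $L_2\vcentcolon=\lattw\cap V$, both nonzero submodules. For each $a\in\N^+$ the submodule $D_a\vcentcolon=aL_1\oplus L_2$ is a centering of $\lattw$, and these are pairwise distinct. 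A short computation shows each $D_a$ is already $\prec$-maximal: the largest $d$ with $D_a\sub d\,\lattw$ is $d=1$, since even the component $L_2\sub D_a$ satisfies $L_2\nsubseteq d\,\lattw$ for $d>1$ (one checks $d\,\lattw\cap V=dL_2$, whence $L_2\sub d\,\lattw$ forces $d=1$). This would produce infinitely many $\prec$-maximal centerings, contradicting the hypothesis.

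Consequently $\Q\lattw$ is irreducible, the lower-rank case forces $N=\{0\}=0\cdot N_0$, and (b) holds in full. With (a) and (b) established, both conclusions (1) and (2) follow immediately from Theorem~\ref{thm - finitely many centerings -> split profinite rigidity}. I expect the verification of (a) and the full-rank part of (b) to be routine bookkeeping via the centering dictionary; the genuinely substantive point is the contrapositive argument above, ruling out proper-rank submodules by exhibiting the infinite family $\{D_a\}_{a\in\N^+}$ of $\prec$-maximal centerings whenever $\Q\lattw$ decomposes.
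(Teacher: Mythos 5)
Your proof is correct, and it follows the route the paper intends: the paper gives no separate proof of this theorem, presenting it as a ``restatement'' of Theorem~\ref{thm - finitely many centerings -> split profinite rigidity}, with the dictionary between $W$-normal subgroups of $T(W)$ and $\Z[W_0]$-submodules of $L(W)$ left implicit. Where you genuinely go beyond the paper is in verifying hypothesis~(b) of Theorem~\ref{thm - finitely many centerings -> split profinite rigidity} \emph{as literally stated}, i.e.\ for all normal subgroups $N\leq T(W)$, including nonzero ones of non-full rank, which can never be of the form $\ell N_i$. Your contrapositive argument handling this case is correct and is an addition relative to the paper: if $\Q L(W)$ were reducible, Maschke gives $\Q L(W)=U\oplus V$ with both summands nonzero, and the submodules $D_a=a\bigl(L(W)\cap U\bigr)\oplus\bigl(L(W)\cap V\bigr)$, $a\in\N^+$, are pairwise distinct centerings, each $\prec$-maximal because $D_a\subseteq d\,L(W)$ would force $L(W)\cap V\subseteq d\,L(W)\cap V=d\bigl(L(W)\cap V\bigr)$, impossible for $d\geq 2$; this contradicts the finiteness hypothesis, so $\Q L(W)$ is irreducible and the only non-full-rank normal subgroup is $\{0\}=0\cdot N_0$. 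The alternative justification, implicit in the paper, is that the proof of Theorem~\ref{thm - finitely many centerings -> split profinite rigidity} only ever invokes hypothesis~(b) for the subgroup $\sigma(T')$, which has finite index (hence full rank) in $T(W)$, so that Fact~\ref{lemma - properties order centerings}(2) alone suffices; but that requires opening up that proof, whereas your argument uses Theorem~\ref{thm - finitely many centerings -> split profinite rigidity} as a black box. Both are valid; yours is self-contained at the cost of one extra lemma, and as a by-product it shows that finiteness of the set of $\prec$-maximal centerings already forces $\Q L(W)$ to be irreducible (though, as the Gaussian-integer example with $W_0=\Z/4\Z$ shows, not absolutely irreducible, so this converse to Lemma~\ref{thm. - absolutely irreducible -> there are only finitely many maximal centerings} is the most one can extract).
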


    \begin{corollary}\label{cor_asb_irr_rigid}  Absolutely irreducible split crystallographic groups are profinitely rigid (equiv. first-order rigid).
\end{corollary}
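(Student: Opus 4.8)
The plan is to read the corollary straight off the machinery already in place, with essentially no new argument: absolute irreducibility is exactly the input needed to trigger the finiteness hypothesis of Theorem~\ref{theorem}, and splitness upgrades the resulting lattice isomorphism to a group isomorphism. Concretely, let $W$ be an absolutely irreducible split crystallographic group, with point group $W_0$ and translation lattice $L = L(W)$. By Definition~\ref{def - absolutely irreducible lattice}, $L$ is an absolutely irreducible $\Z[W_0]$-lattice, so Lemma~\ref{thm. - absolutely irreducible -> there are only finitely many maximal centerings} guarantees that $L$ admits only finitely many $\prec$-maximal centerings. This is precisely the standing hypothesis of Theorem~\ref{theorem}, so the corollary reduces to applying that theorem together with Oger's transfer result.

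First I would establish first-order rigidity. Let $W'$ be an arbitrary finitely generated group with $W' \equiv W$. Since $L(W)$ has only finitely many $\prec$-maximal centerings and $W$ is split, Theorem~\ref{theorem}(2) directly yields $W \cong W'$ (internally it first produces $L(W) \cong L(W')$ as $\Z[W_0]$-lattices via part (1), then uses splitness). Hence $W$ is, up to isomorphism, the unique finitely generated model of its first-order theory; that is, $W$ is first-order rigid.

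It remains to pass from first-order rigidity to profinite rigidity, and this is where I expect the only genuine (if minor) care to be needed. By Fact~\ref{Oger}, for finitely generated abelian-by-finite groups the conditions $\widehat{G} \cong \widehat{H}$ and $G \equiv H$ coincide, so the two notions of rigidity agree \emph{within} this class. The one point to verify is that an arbitrary finitely generated residually finite $H$ with $\widehat{H} \cong \widehat{W}$ is itself abelian-by-finite, so that Oger's theorem applies to the pair $(W,H)$: this holds because $\widehat{W}$ has an open normal abelian subgroup (the closure of the translation subgroup $\tranw$, of finite index since $[W:\tranw] < \infty$), and its image under the isomorphism is an open normal abelian subgroup $A \leq \widehat{H}$ whose intersection with the densely embedded $H$ is a finite-index abelian subgroup. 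Granting this, $\widehat{H} \cong \widehat{W}$ forces $H \equiv W$, whence $H \cong W$ by the previous paragraph, so $W$ is profinitely rigid. The substantive content therefore lies entirely upstream, in Theorem~\ref{theorem} and Lemma~\ref{thm. - absolutely irreducible -> there are only finitely many maximal centerings}; the corollary itself is a direct assembly of these with Fact~\ref{Oger}.
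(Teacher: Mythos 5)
Your proof is correct and is essentially the paper's own: the paper proves this corollary exactly by combining Lemma~\ref{thm. - absolutely irreducible -> there are only finitely many maximal centerings} with Theorem~\ref{theorem}, delegating the passage between first-order and profinite rigidity to Oger's theorem (Fact~\ref{Oger}), just as you do. Your additional verification that any finitely generated residually finite $H$ with $\widehat{H}\cong\widehat{W}$ is itself abelian-by-finite (so that Oger's theorem applies to the pair) is a standard point the paper leaves implicit, and your argument for it is sound.
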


    \begin{proof} This is by \ref{thm. - absolutely irreducible -> there are only finitely many maximal centerings} and \ref{theorem}.
    \end{proof}
\begin{fact}{\cite[Theorem\cspace2.2]{oger-lasserre}}\label{fact - lasserre}
	Let $G$ and $H$ be elementarily equivalent polycyclic-by-finite groups. If $G$ admits a decomposition of the form $G\cong \prod_{i\in[1,k]}G_{i}$, then there exists a decomposition $H\cong \prod_{i\in[1,k]}H_i$ of $H$ \mbox{such that $H_i\equiv G_i$ for all $i\in[1,k]$.}
\end{fact}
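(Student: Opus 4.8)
The plan is to reduce to two factors and then transport the decomposition across the profinite completion. First I would observe that it suffices to prove the two-factor case, namely that $G \cong A \times B$ with $A,B$ polycyclic-by-finite forces $H \cong H_A \times H_B$ with $H_A \equiv A$ and $H_B \equiv B$. Indeed, writing $G \cong G_1 \times (\prod_{i=2}^{k} G_i)$ then produces $H \cong H_1 \times H'$ with $H_1 \equiv G_1$ and $H' \equiv \prod_{i=2}^{k} G_i$; since a direct factor of a polycyclic-by-finite group is again finitely generated and polycyclic-by-finite, both $H'$ and $\prod_{i=2}^{k} G_i$ satisfy the hypotheses, and induction on $k$ closes the argument. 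So the entire content lies in the two-factor statement.

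For the two-factor case I would pass to profinite completions. By the polycyclic-by-finite form of Oger's theorem (the extension of Fact~\ref{Oger} beyond the abelian-by-finite setting), $G \equiv H$ gives $\widehat{G} \cong \widehat{H}$; and since profinite completion sends a finite direct product to the corresponding direct product of completions, $\widehat{G} \cong \widehat{A} \times \widehat{B}$. Combining, $\widehat{H} \cong \widehat{A} \times \widehat{B}$. Note that, granting a decomposition $H \cong H_A \times H_B$ with $\widehat{H_A} \cong \widehat{A}$ and $\widehat{H_B} \cong \widehat{B}$, the desired conclusions $H_A \equiv A$ and $H_B \equiv B$ follow immediately by applying Oger's theorem once more in each factor. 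Thus everything reduces to realizing the topological splitting $\widehat{H} \cong \widehat{A} \times \widehat{B}$ by an honest splitting of the discrete group $H$ that respects the two factors.

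The hard part is exactly this lifting. The subtle obstruction is that one cannot appeal to a Krull--Schmidt or cancellation principle to peel off the factors, since both fail for polycyclic-by-finite groups; the factors $\widehat{A},\widehat{B} \trianglelefteq \widehat{H}$ must be tracked explicitly. I would proceed structurally: the closed normal subgroups $\widehat{A},\widehat{B}$ suggest candidate normal subgroups $H_A\vcentcolon= H\cap\widehat{A}$ and $H_B\vcentcolon= H\cap\widehat{B} \trianglelefteq H$ (using the dense inclusion $H\hookrightarrow\widehat{H}$, which by Fact~\ref{Oger} is moreover elementary), and the Noetherian structure of polycyclic-by-finite groups forces these to be finitely generated; one then verifies $H = H_A H_B$ with $H_A \cap H_B = 1$ by comparing the finite quotients of $H$ with those of $\widehat{A}\times\widehat{B}$. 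The genuinely delicate point---where I expect the dedicated machinery of Oger--Lasserre to be needed rather than any soft argument---is to guarantee that taking closures recovers precisely $\widehat{A}$ and $\widehat{B}$ factorwise, i.e. that no residual mixing between the factors survives at the level of $H$; this is a congruence-subgroup computation controlling all finite quotients simultaneously, analogous in spirit to how Fact~\ref{plesken_fact} is used above to match lattices within a genus. Once the factorwise isomorphisms $\widehat{H_A}\cong\widehat{A}$ and $\widehat{H_B}\cong\widehat{B}$ are secured, Oger's theorem yields $H_A \equiv A$ and $H_B \equiv B$, completing the proof.
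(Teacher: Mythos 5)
The paper offers no proof of this statement: it is imported as a black box from \cite[Theorem~2.2]{oger-lasserre}, so the only question is whether your sketch constitutes a proof on its own. It does not; there are two independent gaps, each fatal. The first is the appeal to a ``polycyclic-by-finite form of Oger's theorem''. No such theorem is available: Fact~\ref{Oger} is proved in \cite{Oger88} for finitely generated \emph{abelian-by-finite} groups, and (as the very title of \cite{Oger88} indicates) the equivalence between elementary equivalence and isomorphism of profinite completions is a characterization special to that class. Your argument invokes the extension twice: once in the forward direction ($G\equiv H\Rightarrow\widehat{G}\cong\widehat{H}$), which is itself not free of charge, since it requires the finite quotients of a polycyclic-by-finite group to be detectable by first-order sentences; and once, decisively, in the backward direction, to convert the factorwise isomorphisms $\widehat{H_A}\cong\widehat{A}$ and $\widehat{H_B}\cong\widehat{B}$ into $H_A\equiv A$ and $H_B\equiv B$, where all four groups are merely polycyclic-by-finite rather than abelian-by-finite. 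Were this backward direction available for the whole polycyclic-by-finite class, profinite rigidity and first-order rigidity would coincide on that entire class --- a statement far stronger than anything known, and precisely the kind of statement Oger's work establishes \emph{only} in the abelian-by-finite setting. Nothing in the present paper or in \cite{Oger88} supplies it.

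The second gap is that the step you yourself flag as ``genuinely delicate'' --- realizing the topological splitting $\widehat{H}\cong\widehat{A}\times\widehat{B}$ by a splitting of the discrete group $H$ --- is the entire content of the theorem, and your outline does not prove it. The candidate subgroups $H_A=H\cap\widehat{A}$ and $H_B=H\cap\widehat{B}$ carry no guarantee whatsoever: a finitely generated group embedded densely in a profinite direct product can meet the factors trivially. Already $\mathbb{Z}$, sitting diagonally and densely in $\widehat{\mathbb{Z}}\cong\prod_p\mathbb{Z}_p$, intersects each factor $\mathbb{Z}_p$ in $\{0\}$; so ``$H=H_A H_B$ with $H_A\cap H_B=1$'' is not something one can expect to verify ``by comparing finite quotients'' --- it simply need not hold for an arbitrary identification $\widehat{H}\cong\widehat{A}\times\widehat{B}$, and producing an identification compatible with some discrete splitting is exactly where the work lies (your own correct observation that Krull--Schmidt and cancellation fail for these groups is evidence that no soft argument can do this). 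Deferring the point to ``the dedicated machinery of Oger--Lasserre'' makes the proposal circular: that machinery \emph{is} the proof of Fact~\ref{fact - lasserre}, not an ingredient you may quote while proving it.
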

\begin{lemma}\label{lemma_direct_products}
	Suppose that $G\cong \prod_{i\in[1,k]}G_i$ is a finitely generated abelian-by-finite group. If, for every $i\in[1,k]$, $G_i$ is first-order rigid, then $G$ is first-order rigid.
\end{lemma}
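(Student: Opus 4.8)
The plan is to unwind the definition of first-order rigidity and reduce it, via the Oger--Lasserre decomposition theorem (Fact~\ref{fact - lasserre}), to the first-order rigidity of the individual factors $G_i$. So let $H$ be an arbitrary finitely generated group with $H\equiv G$; the goal is to produce an isomorphism $H\cong G$. First I would record the structural facts on the hypothesis side. Since $G$ is finitely generated and abelian-by-finite, any abelian normal subgroup $A\trianglelefteq G$ of finite index is itself finitely generated (being of finite index in a finitely generated group), hence polycyclic, so $G$ is polycyclic-by-finite. The same applies to each factor: $G_i\cong G/\prod_{j\ne i}G_j$ is finitely generated, and $A\cap G_i$ is a finite-index abelian normal subgroup of $G_i$ (normality and finiteness of $G_i/(A\cap G_i)\cong AG_i/A\leq G/A$ being immediate), so each $G_i$ is finitely generated polycyclic-by-finite as well.

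The key preliminary step, and the one I expect to be the main obstacle, is to verify that $H$ too is polycyclic-by-finite, so that Fact~\ref{fact - lasserre} becomes applicable. Here one cannot simply transplant the definable-subgroup argument of Lemma~\ref{lemma_the_prop_def_transla_crysp}: for a general abelian-by-finite group the natural candidate formula $\forall y([x,y^{m}]=e)$, with $m=[G:A]$, defines a finite-index subgroup containing $A$ but need not define an abelian one, and its center can be trivial (as happens already for the infinite dihedral group). That argument succeeded in the crystallographic setting precisely because the translation subgroup is \emph{maximal} abelian, torsion-free, and acted on faithfully. In the general case I would instead invoke the standard preservation principle that a finitely generated group elementarily equivalent to a polycyclic-by-finite group is again polycyclic-by-finite; this is part of the model-theoretic background underlying Oger's theorems (cf. Fact~\ref{Oger}), and it places $H$ in the class where Oger--Lasserre operates.

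With $H$ now known to be polycyclic-by-finite and elementarily equivalent to $G\cong\prod_{i\in[1,k]}G_i$, Fact~\ref{fact - lasserre} supplies a decomposition $H\cong\prod_{i\in[1,k]}H_i$ with $H_i\equiv G_i$ for every $i$. Each $H_i$ is a direct factor of the finitely generated group $H$, hence a quotient of $H$, hence finitely generated. Applying the first-order rigidity of $G_i$ to the finitely generated group $H_i\equiv G_i$ yields $H_i\cong G_i$ for each $i$, and composing these isomorphisms gives $H\cong\prod_{i\in[1,k]}H_i\cong\prod_{i\in[1,k]}G_i\cong G$, which is exactly what first-order rigidity of $G$ demands. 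Apart from the preservation step in the second paragraph, the entire argument is bookkeeping with direct factors together with the two cited facts; the passage to profinite rigidity is then automatic from Fact~\ref{Oger}, since $G$ is finitely generated abelian-by-finite.
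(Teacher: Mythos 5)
Your proof has exactly the same architecture as the paper's: first place $H$ in the class where the Oger--Lasserre theorem applies, then invoke Fact~\ref{fact - lasserre} to get $H\cong\prod_{i\in[1,k]}H_i$ with $H_i\equiv G_i$, observe that each direct factor $H_i$ is finitely generated, apply the first-order rigidity of each $G_i$ to get $H_i\cong G_i$, and assemble. All of that part of your argument is correct and is precisely what the paper does.

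The one place you diverge --- and it is the step you yourself flag as the main obstacle --- is the justification that $H$ is polycyclic-by-finite. The paper gets this by asserting that a finitely generated group elementarily equivalent to a finitely generated \emph{abelian-by-finite} group is itself abelian-by-finite (and then finite generation upgrades abelian-by-finite to polycyclic-by-finite). That closure property is the standard fact implicit in Oger's work: if $A\trianglelefteq G$ is abelian of index $n$, then $\langle G^n\rangle$ is an abelian normal subgroup of finite index each of whose elements is a product of a \emph{bounded} number of $n$-th powers (bounded generation holds because $A$ is finitely generated abelian, hence Noetherian), so ``abelian-by-finite with these bounds'' is captured by a single first-order sentence that transfers to $H$. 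You instead invoke closure under elementary equivalence of the class of finitely generated \emph{polycyclic-by-finite} groups. That is a genuinely stronger principle: it is not contained in Fact~\ref{Oger} (which presupposes both groups abelian-by-finite), the bounded-generation argument above breaks down beyond the virtually abelian setting, and Fact~\ref{fact - lasserre} is stated with \emph{both} groups assumed polycyclic-by-finite precisely because no such preservation theorem is being relied upon. So, as written, your proof rests on an assertion you cannot point to in the cited background. The repair is immediate and stays inside your own setup: since $G$ is abelian-by-finite by hypothesis, use the abelian-by-finite closure result to conclude $H$ is abelian-by-finite, hence (being finitely generated) polycyclic-by-finite; with that substitution your proof coincides with the paper's.
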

\begin{proof}
	Let $H$ be a finitely generated group elementarily equivalent to $G$. Then $H$ is also abelian-by-finite. In particular, both $G$ and $H$ are polycyclic-by-finite. By Fact\cspace\ref{fact - lasserre}, there exists a decomposition $H\cong\prod_{i\in[1,k]}H_i$, with $H_i\equiv G_i$ for all $i\in[1,k]$. Each factor $H_i$ is finitely generated, and hence $H_i\cong G_i$ for all $i\in[1,k]$. Consequently, $G\cong H$.
\end{proof}
\begin{theorem1.1}
	\emph{Finite direct products of absolutely irreducible split crystallographic groups are profinitely rigid (equiv. first-order rigid).}
\end{theorem1.1}
\begin{proof}
	This is immediate by \ref{cor_asb_irr_rigid} and \ref{lemma_direct_products}.
	
	
\end{proof}

\section{Root systems and Weyl groups}
In this section, we present some basic notions on root systems and their symmetries that we will need in the following. Detailed introductions to the subject can be found in \cite{bourbaki, humphreys_1}.

In Section\cspace\ref{sec6}, we will employ some computations from \cite{martinais}, where Bourbaki's framework from \cite{bourbaki} is adopted. So, for the reader's convenience, we will maintain consistency with \cite{martinais} and
primarily follow \cite{bourbaki}.

\smallskip
Throughout this section, we adopt the following notational conventions.
\begin{notation}\label{notation - root systems}
	Let $K$ be a field extending $\Q$, and $V$ be a finite-dimensional vector space over $K$. Then:
	\begin{enumerate}[(1)]
		\item $V^*$ denotes the dual space of $V$, i.e., the $K$-vector space consisting of all $K$-linear maps from $V$ to $K$;
		\item given a subring $Q$ of $K$ and a subset $X\subseteq V$, we write $\langle X\rangle_Q$ for the $Q$-span of $X$ in $V$, i.e., the smallest $Q$-submodule of $V$ containing $X$.
	\end{enumerate}
\end{notation}
\begin{definition}\label{def - reflection}
	Let $V$ be a vector space of dimension $l$ over a field $K$ extending $\Q$. Then, a \emph{reflection} in $V$ is an automorphism $s\in\mathrm{GL}(V)$ which is both:
	\begin{enumerate}[(1)]
		\item a \emph{pseudo-reflection}, i.e., the subspace $\mathrm{ker}(1-s)$ has dimension $l-1$;
		\item an \emph{involution}, i.e., $s^2=1$.
	\end{enumerate}
We sometimes say {\em affine reflection}, to stress that we are considering $s \in \mathrm{GL}(V)$.
\end{definition}

Notice that any choice of a non-zero vector $a\in V$ and a non-trivial $K$-linear form $\alpha^*\in V^*$ yields a pseudo-reflection $s_{a,\alpha^*}$ of the following form:
\begin{equation*}
	s_{a,\alpha^*}(x)\vcentcolon=x-\alpha^*(x).a,
\end{equation*}
\noindent
for all $x\in V$ (cf. \cite[\S\phantom{.}2.1, Ch.\phantom{.}V]{bourbaki}). Moreover, if $\alpha^*(a)=2$, then $s_{a,\alpha^*}$ is a reflection (in the sense of \ref{def - reflection}) and it satisfies $s_{a,\alpha^*}(a)=-a$.
\begin{fact}\label{fact - every involution induces a decomposition of V in ker(1-s)+ker(1+s)}
	Let $V$ be a finitely generated real vector space. Then, every involution $s\in \gl(V)$ induces a decomposition of $V$ as a direct sum of subspaces of the form:
	\begin{equation}\tag{$\star_1$}\label{eq.1 - decomposition V induced by involutions}
		V=\mathrm{ker}(1-s)\oplus\mathrm{ker}(1+s).
	\end{equation}
	\noindent
	When $V$ is Euclidean with inner product $(\,\cdot\,,\,\cdot\,)$ and $s\in O(V)$ is a reflection of the form $s=s_{a,\alpha^*}$, for some $a\in V$ and $\alpha^*\in V^*$, then $s$ admits explicit description as:
	\begin{equation*}
		s(x)=x-2\frac{(x,a)}{(a,a)}a,
	\end{equation*}
	for all $x\in V$ (cf. \cite[\S\phantom{.}2.3, Ch.\phantom{.}V]{bourbaki}, \cite[\S\phantom{.}9.1, Ch.\phantom{.}III]{humphreys_1}), and the decomposition \eqref{eq.1 - decomposition V induced by involutions} simply reduces to:
	\begin{equation*}
		V=H_a\oplus\langle a\rangle_\R,
	\end{equation*}
	\noindent
	with $H_a$ being the hyperplane orthogonal to $a$ in $V$.	
\end{fact}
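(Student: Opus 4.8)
The statement is a standard piece of linear algebra over a field of characteristic $\neq 2$, so the plan is to split it into the two halves displayed in the statement and treat each by direct computation; no deep input is required beyond the definitions already in place.

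For the first half, given an involution $s\in\gl(V)$ (so $s^2=1$), I would exhibit the decomposition explicitly via the averaging projections. For any $x\in V$, write
\[
x=\tfrac{1}{2}\bigl(x+s(x)\bigr)+\tfrac{1}{2}\bigl(x-s(x)\bigr),
\]
and observe, using $s^2=1$, that $\tfrac{1}{2}(x+s(x))\in\mathrm{ker}(1-s)$ while $\tfrac{1}{2}(x-s(x))\in\mathrm{ker}(1+s)$; this yields $V=\mathrm{ker}(1-s)+\mathrm{ker}(1+s)$. The sum is direct because any $x$ in the intersection satisfies both $s(x)=x$ and $s(x)=-x$, forcing $2x=0$ and hence $x=0$ over $\R$. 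The only point worth flagging is that this argument — the factor $\tfrac12$ and the implication $2x=0\Rightarrow x=0$ — relies on $\mathrm{char}=0$, which is automatic here since $V$ is a real vector space.

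For the second half, I would first pin down $\alpha^*$. Writing $s=s_{a,\alpha^*}$, we have $\mathrm{ker}(1-s)=\mathrm{ker}(\alpha^*)$, since $s(x)=x\iff\alpha^*(x)\,a=0\iff\alpha^*(x)=0$ (as $a\neq0$). Because $s$ is an orthogonal involution, its $+1$- and $-1$-eigenspaces are mutually orthogonal: if $s(u)=u$ and $s(v)=-v$, then $(u,v)=(s(u),s(v))=-(u,v)$, so $(u,v)=0$. Hence the fixed hyperplane $\mathrm{ker}(\alpha^*)$ must coincide with $a^{\perp}=H_a$; thus $\alpha^*$ and the form $x\mapsto(x,a)$ have the same kernel and are therefore proportional, say $\alpha^*(x)=c\,(x,a)$. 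Evaluating at $a$ and using the reflection normalization $\alpha^*(a)=2$ gives $c=2/(a,a)$, which is exactly the claimed formula $s(x)=x-2\frac{(x,a)}{(a,a)}a$.

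Finally, with the formula in hand the reduction of the decomposition is immediate: $\mathrm{ker}(1-s)=\mathrm{ker}(\alpha^*)=H_a$, while $s(x)=-x\iff 2x=\alpha^*(x)\,a\iff x\in\langle a\rangle_\R$, so $\mathrm{ker}(1+s)=\langle a\rangle_\R$, and the general decomposition specializes to $V=H_a\oplus\langle a\rangle_\R$. I do not anticipate a genuine obstacle: the only step requiring a moment's care is the orthogonality-of-eigenspaces computation used to identify the fixed hyperplane with $a^{\perp}$, since this is precisely what forces $\alpha^*$ to take the specific form above rather than being an arbitrary functional with the correct kernel.
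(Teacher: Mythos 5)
Your proof is correct. Note that the paper itself gives no proof of this statement: it is recorded as a Fact with citations to Bourbaki \cite[\S\phantom{.}2.3, Ch.\phantom{.}V]{bourbaki} and Humphreys \cite[\S\phantom{.}9.1, Ch.\phantom{.}III]{humphreys_1}, so there is no in-paper argument to compare against; what you have supplied is a self-contained verification, and it follows the standard route that those references take. The first half (averaging projections $x=\tfrac12(x+s(x))+\tfrac12(x-s(x))$ plus triviality of the intersection, both using characteristic zero) is exactly the classical eigenspace decomposition for an involution. The second half is also sound: $\mathrm{ker}(1-s)=\mathrm{ker}(\alpha^*)$, orthogonality of the $\pm1$-eigenspaces of an orthogonal involution, hence $\mathrm{ker}(\alpha^*)=H_a$, proportionality of $\alpha^*$ and $(\,\cdot\,,a)$, and the normalization $\alpha^*(a)=2$ pinning down the constant. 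Two steps you use implicitly are worth flagging, though both are one-line computations consistent with the discussion in the paper preceding the Fact: (i) you need $a$ to lie in the $(-1)$-eigenspace, i.e.\ $s(a)=-a$, to conclude that the fixed hyperplane is contained in $a^{\perp}$ (this follows from $s(a)=a-\alpha^*(a)a=-a$); and (ii) the normalization $\alpha^*(a)=2$ is not an extra hypothesis but is forced by $s_{a,\alpha^*}$ being an involution, since $s^2(x)=x+\alpha^*(x)\bigl(\alpha^*(a)-2\bigr)a$ and $\alpha^*$ is non-trivial with $a\neq 0$.
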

\begin{definition}{\cite[D\'efinition\cspace1, \S\phantom{.}1.1, Ch.\phantom{.}VI]{bourbaki}}\label{def - root system}
	Let $V$ be a vector space over a field $K$ extending $\Q$. Then, a subset $R\subseteq V$ is said to be a \emph{root system in $V$} if the following conditions are satisfied:
	\begin{enumerate}[(1)]
		\item $R$ is finite, does not contain $0$, and $\langle R\rangle_K=V$;
		\item for all $\alpha\in R$, there is a $K$-linear form $\coalpha\in V^*$ such that:
		\begin{enumerate}[(a)]
			\item $\coalpha(\alpha)=2$;
			\item $R$ is stable under the reflection $s_{\alpha,\coalpha}$, i.e., $s_{\alpha,\coalpha}(R)\subseteq R$;
			\item $\coalpha(R)\subseteq \Z$.
		\end{enumerate}
	\end{enumerate}
	\noindent
	The elements $\alpha\in R$ are called the \emph{roots} of $R$, and the corresponding $K$-linear maps $\coalpha\in V^*$ are called the \emph{coroots} of $R$.
\end{definition}
\begin{notation}\label{s_a_notation}
	By \cite[Lemme\cspace1, \S\phantom{.}1.1, Ch.\phantom{.}VI]{bourbaki}, each root $\alpha\in R$ uniquely determines $\coalpha\in V^*$ and $s_{\alpha,\coalpha}\in \mathrm{GL}(V)$ through the axioms (2)(a),(b) above. Hence, the reflection $s_{\alpha,\coalpha}$ is simply denoted by $s_\alpha$ (cf. \cite[\S\phantom{.}1.1, Ch.\phantom{.}VI]{bourbaki}).
\end{notation}
In this paper, we are primarily concerned with root systems $R$ in real vector spaces. However, in Theorem\cspace\ref{thm. - R irreducible iff modules are (abs) irreducible} it will be necessary to regard $R$ as a root system in the $\Q$-vector space generated by its roots. The following notion provides a precise mean of translating between these two settings.
\begin{definition}{\cite[Proposition\cspace1, \S\phantom{.}8.1, Ch.\phantom{.}II]{bourbaki algèbre}}\label{prop. - characterization Q-structures}
	Let $V$ be a real vector space, and $U$ be a subspace of the $\Q$-vector space structure on $V$. Then, $U$ is a $\Q$\emph{-structure on }$V$ if it satisfies one of the following equivalent conditions:
	\begin{enumerate}[(1)]
		\item there exists a basis of $U$ (over $\Q$) which is also a basis of $V$ over $\R$;
		\item every basis of $U$ (over $\Q$) is also a basis of $V$ over $\R$;
		\item $V=\langle\, U\rangle_\R$ and every $\Q$-linearly independent subset of $U$ is $\R$-linearly independent;
		\item for every $\R$-vector space $V$, each $\Q$-linear function $f:U\rightarrow V'$ uniquely extends to an $\R$-linear map $\bar{f}:V\rightarrow V'$.
	\end{enumerate}
\end{definition}
The following fact will be crucial in Section\cspace\ref{sec6}.
\begin{fact}\label{fact - Q(R) is always a Q-structure in V}
	Every root system $R$ in a real vector space $V$ yields a $\Q$-structure $\langle R\rangle_\Q$ on $V$ (see \cite[Proposition\cspace1, \S\phantom{.}1.1, Ch.\phantom{.}VI]{bourbaki}, or \ref{fact - basis of a root system}).
\end{fact}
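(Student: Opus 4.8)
The plan is to verify condition (1) of Definition~\ref{prop. - characterization Q-structures} for $U\vcentcolon=\langle R\rangle_\Q$, namely to exhibit a $\Q$-basis of $U$ that is simultaneously an $\R$-basis of $V$. Since $R$ is a root system in the real vector space $V$, Definition~\ref{def - root system}(1) gives $\langle R\rangle_\R=V$; as $R$ is finite, I can extract from $R$ a maximal $\R$-linearly independent subset $\{\alpha_1,\dots,\alpha_l\}$, which is then an $\R$-basis of $V$ (here $l=\dim_\R V$). These vectors lie in $U$ and, being $\R$-linearly independent, are a fortiori $\Q$-linearly independent. It remains to show they $\Q$-span $U$, i.e.\ that every root $\beta\in R$ is a $\Q$-linear combination of the $\alpha_i$; since $U$ is by definition the $\Q$-span of $R$, this suffices.

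Fix $\beta\in R$ and write $\beta=\sum_i c_i\alpha_i$ with $c_i\in\R$ (possible since the $\alpha_i$ are an $\R$-basis). Applying each coroot $\alpha_j^\vee\in V^*$ and using linearity yields $\alpha_j^\vee(\beta)=\sum_i c_i\,\alpha_j^\vee(\alpha_i)$ for $j=1,\dots,l$. By Definition~\ref{def - root system}(2)(c), all the quantities $\alpha_j^\vee(\beta)$ and $\alpha_j^\vee(\alpha_i)$ are integers. Collecting them into the integer matrix $M\vcentcolon=(\alpha_j^\vee(\alpha_i))_{j,i}$ and the integer vector $b\vcentcolon=(\alpha_j^\vee(\beta))_j$, the relations read $Mc=b$ with $c=(c_i)_i$. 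Provided $M$ is invertible over $\Q$, we obtain $c=M^{-1}b\in\Q^l$, so each $c_i\in\Q$, as desired. Thus the whole argument reduces to the invertibility of $M$, which is the real crux.

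I would establish invertibility by equipping $V$ with a $W$-invariant positive-definite inner product $(\,\cdot\,,\,\cdot\,)$: the Weyl group $W=\langle s_\alpha:\alpha\in R\rangle$ acts faithfully on the finite spanning set $R$, hence embeds in $\Sym(R)$ and is finite, so averaging any inner product over $W$ produces such a form. With respect to it each $s_\alpha$ is the orthogonal reflection in the hyperplane $\alpha^\perp$, whence, as in Fact~\ref{fact - every involution induces a decomposition of V in ker(1-s)+ker(1+s)}, $\alpha^\vee(x)=2(x,\alpha)/(\alpha,\alpha)$. If $\sum_j d_j\alpha_j^\vee=0$ in $V^*$, then $\bigl(\,\cdot\,,\ \sum_j \tfrac{2d_j}{(\alpha_j,\alpha_j)}\alpha_j\bigr)=0$, so nondegeneracy forces $\sum_j \tfrac{2d_j}{(\alpha_j,\alpha_j)}\alpha_j=0$, and the $\R$-linear independence of the $\alpha_j$ gives $d_j=0$ for all $j$. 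Hence $\alpha_1^\vee,\dots,\alpha_l^\vee$ form a basis of $V^*$, so the pairing matrix $M$ between the basis $\{\alpha_i\}$ of $V$ and the basis $\{\alpha_j^\vee\}$ of $V^*$ is nonsingular. This completes condition~(1) of Definition~\ref{prop. - characterization Q-structures}, and with it the proof. (Alternatively, once a set of simple roots is available one sees at once that the simple roots are simultaneously a $\Q$-basis of $U$ and an $\R$-basis of $V$, bypassing the matrix computation; but the argument above is self-contained.)
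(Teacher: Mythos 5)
Your proof is correct, and it takes a genuinely different route from the one the paper gestures at. The paper offers no proof of this fact: it cites \cite[Proposition~1, \S\phantom{.}1.1, Ch.\phantom{.}VI]{bourbaki} and, as an alternative, points to Definition~\ref{fact - basis of a root system}. The intended shortcut behind that second pointer is that any basis $B$ of $R$ (whose existence is Bourbaki's Th\'eor\`eme~2, \S\phantom{.}1.5) is simultaneously an $\R$-basis of $V$ and, by property (3) of Definition~\ref{fact - basis of a root system}, a $\Q$-basis of $\langle R\rangle_\Q$, so condition (1) of Definition~\ref{prop. - characterization Q-structures} holds immediately. You instead avoid invoking the existence of simple systems (itself a nontrivial theorem): you extract a maximal $\R$-independent subset of $R$, use the integrality axiom \ref{def - root system}(2)(c) to turn the coordinates of an arbitrary root into the solution of an integer linear system $Mc=b$, and prove $M$ nonsingular by averaging an inner product over the (finite) Weyl group so that each coroot becomes $\coalpha(x)=2(x,\alpha)/(\alpha,\alpha)$, whence the chosen coroots form a basis of $V^*$ and the pairing matrix is invertible. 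Each of these steps is sound (faithfulness of the $W$-action on the spanning set $R$ gives finiteness of $W$; orthogonality of $s_\alpha$ for the averaged form identifies $\ker\coalpha$ with $\alpha^\perp$; invertibility of $M$ over $\R$ implies invertibility over $\Q$ since $M$ has integer entries). What the basis route buys is brevity, at the cost of resting on a substantial structure theorem; what your route buys is self-containedness, needing only the root-system axioms and elementary linear algebra --- and it is, in essence, Bourbaki's own proof of the Proposition the paper cites.
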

As a consequence of Fact\cspace\ref{fact - Q(R) is always a Q-structure in V}, a set $R$ of generators of a finite-dimensional real vector space $V$ is a root system in $V$ if and only if it is a root system in the $\Q$-vector space $\langle R\rangle_\Q$. Indeed, by Definition\cspace\ref{def - root system}(2)(c), each coroot $\coalpha$ of $R$ in $V$ only takes rational values on $\langle R\rangle_\Q$, and thus restricts canonically to a coroot in the rational span of $R$. Conversely, by Definition\cspace\ref{prop. - characterization Q-structures}(4), each coroot of $R$ in $\langle R\rangle_\Q$ extends uniquely to a coroot in $V$.
\begin{fact}\label{fact - Q(R^vee) is a Q-structure}
	Let $R$ be a root system in a real vector space $V$. Then, the following conditions hold:
	\begin{enumerate}[(1)]
		\item the set of coroots $R^\vee\vcentcolon=\{\coalpha:\alpha\in R\}$ is a root system in $V^*$ (cf. \cite[Proposition\cspace2, \S\phantom{.}1.1, Ch.\phantom{.}VI]{bourbaki});
		\item $\langle R^\vee\rangle_\Q$ is a $\Q$-structure on $V^*$ (cf. \cite[Proposition\cspace1, \S\phantom{.}1.1]{bourbaki}, or Fact\cspace\ref{fact - Q(R) is always a Q-structure in V}).
	\end{enumerate}
\end{fact}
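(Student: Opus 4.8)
The plan is to establish the two assertions in order, deriving (2) from (1) together with Fact~\ref{fact - Q(R) is always a Q-structure in V}. For (1), I would exploit the canonical identification of the finite-dimensional space $V$ with its double dual $(V^*)^*$, under which each vector $x\in V$ is viewed as the linear form $f\mapsto f(x)$ on $V^*$. The natural candidate for the coroot of a given $\coalpha\in R^\vee$ is then $\alpha$ itself, regarded as an element of $(V^*)^*$. With this identification in place, Definition~\ref{def - root system}(1) for $R^\vee$ is routine: $R^\vee$ is finite, being the image of the finite set $R$ under $\alpha\mapsto\coalpha$; it avoids $0$ since $\coalpha(\alpha)=2\neq 0$ forces $\coalpha\neq 0$; and it spans $V^*$ because its annihilator in $V$ is $\bigcap_{\alpha\in R}\ker(\coalpha)$, which coincides with the subspace of $V$ fixed by every reflection $s_\alpha$. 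The latter is trivial: for the finite group generated by the $s_\alpha$ one has $V=V^{W_0}\oplus[V,W_0]$, and since $x-s_\alpha(x)=\coalpha(x).\alpha$ the subspace $[V,W_0]$ already contains every scalar multiple of every root, hence all of $\langle R\rangle_K=V$, forcing $V^{W_0}=0$.

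Turning to Definition~\ref{def - root system}(2), with candidate coroot $\alpha$ for $\coalpha$: axiom (a) is $\alpha(\coalpha)=\coalpha(\alpha)=2$, and axiom (c) is the requirement that $\alpha(\beta^\vee)=\beta^\vee(\alpha)\in\Z$ for all $\beta\in R$, which is exactly Definition~\ref{def - root system}(2)(c) applied to $\beta$, recalling that $\alpha\in R$. \textbf{The main obstacle is axiom (b)}, the stability $s_{\coalpha,\alpha}(R^\vee)\subseteq R^\vee$. Here the reflection $s_{\coalpha,\alpha}$ on $V^*$ is, by the pseudo-reflection formula, the map $f\mapsto f-f(\alpha)\coalpha$, which is precisely the transpose ${}^t s_\alpha$ of $s_\alpha$ acting on $V^*$ (indeed $({}^t s_\alpha f)(x)=f(s_\alpha x)=f(x)-\coalpha(x)f(\alpha)$). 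The key lemma I would prove is the intertwining relation
\[
(s_\alpha\beta)^\vee={}^t s_\alpha(\beta^\vee)\qquad\text{for all }\alpha,\beta\in R,
\]
which says that $\beta\mapsto\beta^\vee$ conjugates the action of $s_\alpha$ on $R$ into that of ${}^t s_\alpha$ on $R^\vee$, yielding the desired stability. To prove it I would invoke the uniqueness of coroots (Notation~\ref{s_a_notation}): setting $\gamma\vcentcolon=s_\alpha\beta\in R$ and $\delta\vcentcolon={}^t s_\alpha(\beta^\vee)$, it suffices to verify that $\delta$ satisfies axioms (a) and (b) for the root $\gamma$. The first is immediate from $s_\alpha^2=1$, since $\delta(\gamma)=({}^t s_\alpha\beta^\vee)(\gamma)=\beta^\vee(s_\alpha\gamma)=\beta^\vee(\beta)=2$; for the second, one checks that $s_{\gamma,\delta}$ equals the conjugate $s_\alpha s_\beta s_\alpha$, which stabilizes $R$ because $s_\alpha$ and $s_\beta$ do. By uniqueness, $\delta=\gamma^\vee=(s_\alpha\beta)^\vee$, as claimed.

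Finally, part (2) follows with no further work: having shown in (1) that $R^\vee$ is a root system in the real vector space $V^*$, Fact~\ref{fact - Q(R) is always a Q-structure in V} applies verbatim to $R^\vee$ and shows that $\langle R^\vee\rangle_\Q$ is a $\Q$-structure on $V^*$ in the sense of Definition~\ref{prop. - characterization Q-structures}. I expect the only genuinely delicate point of the whole argument to be the verification of the intertwining relation, and in particular the identification of $s_{\gamma,\delta}$ with the conjugate $s_\alpha s_\beta s_\alpha$; the remaining steps are bookkeeping with the double-dual pairing.
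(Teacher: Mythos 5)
Your proposal is correct; the main thing to flag is that the paper itself contains no proof to compare against: the statement is a Fact imported verbatim from Bourbaki (Proposition~2 of Ch.~VI, \S~1.1 for item (1), Proposition~1 for item (2)), and the paper's own parenthetical already notes that (2) follows from Fact~\ref{fact - Q(R) is always a Q-structure in V} applied to $R^\vee$, which is exactly your last step. What you have done is reconstruct the standard Bourbaki argument for (1), and the reconstruction is sound. The crux, your intertwining relation $(s_\alpha\beta)^\vee={}^t s_\alpha(\beta^\vee)$, is Bourbaki's relation $(w\beta)^\vee={}^t w^{-1}(\beta^\vee)$ specialized to $w=s_\alpha$ (where $s_\alpha^{-1}=s_\alpha$), and your derivation of it from the uniqueness of coroots (the paper's Notation~\ref{s_a_notation}) is valid: the identification $s_{\gamma,\delta}=s_\alpha s_\beta s_\alpha$ that you single out as the delicate point is a one-line computation,
\[
s_\alpha s_\beta s_\alpha(x)=s_\alpha\bigl(s_\alpha x-\beta^\vee(s_\alpha x)\,\beta\bigr)=x-\beta^\vee(s_\alpha x)\,s_\alpha\beta=x-\delta(x)\,\gamma ,
\]
and $s_\alpha s_\beta s_\alpha$ stabilizes $R$ because $s_\alpha$ and $s_\beta$ do (stability $s(R)\subseteq R$ upgrades to $s(R)=R$ by finiteness and injectivity, so composites stabilize $R$). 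Your spanning argument is also correct: $\bigcap_{\alpha\in R}\ker\coalpha$ is the common fixed space $V^{W_0}$, and the characteristic-zero averaging decomposition $V=V^{W_0}\oplus[V,W_0]$ together with $\alpha=\tfrac{1}{2}\left(\alpha-s_\alpha(\alpha)\right)\in[V,W_0]$ gives $[V,W_0]\supseteq\langle R\rangle_\R=V$, hence $V^{W_0}=\{0\}$. The trade-off between the two treatments is the expected one: the paper's citation keeps the preliminaries lean and defers to the classical source it relies on throughout, while your argument makes the fact self-contained at the price of invoking two standard background ingredients (the double-dual identification and Maschke-type averaging) that Bourbaki's development already has in place.
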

\begin{definition}{\cite[\S\phantom{.}1.2, Ch.\phantom{.}VI]{bourbaki}}\label{def. - irreducible root system}
	Let $R$ be root system in a vector space $V$ over a field $K$ extending $\Q$. Then, $R$ is said to be \emph{reducible} if there are two non-empty subsets $R_0,R_1\subseteq R$ such that:
	\begin{enumerate}[(i)]
		\item $R=R_0\sqcup R_1$ (where $\sqcup$ denotes disjoint union);
		\item $V$ is the direct sum of the $K$-vector spaces $\langle R_0\rangle_K$ and $\langle R_1\rangle_K$;
		\item $R_i$ is a root system in $\langle R_i\rangle_K$, for $i = 0, 1$.
	\end{enumerate}
	In this case, we also say that $R$ is the \emph{direct sum} of the root systems $R_0$ and $R_1$.
	Finally, we say that $R$ is \emph{irreducible} if $R$ is not reducible and $R\neq\emptyset$.
\end{definition}
Any root system $R$ in $V$ admits a decomposition as a direct sum of the form:
\begin{equation*}
	\quad R=\bigsqcup_{i<m} R_i,
\end{equation*}
where $m\in\N$, and each $R_i$ is an irreducible root system in its span $\langle R_i\rangle_K$. Moreover, this decomposition is unique up to permutation of the factors (see \cite[Proposition\cspace6, \S\phantom{.}1.2, Ch.\phantom{.}VI]{bourbaki}).
\begin{definition}{\cite[\S\phantom{.}1.1, Ch.\phantom{.}VI]{bourbaki}}\label{def. - Weyl group of a root system}
	Let $R$ be a root system in a vector space $V$ over a field $K$ extending $\Q$. Then, the \emph{Weyl group} of $R$ is the subgroup $W_0(R)$ of $\mathrm{GL}(V)$ generated by the reflections $s_\alpha$, for all $\alpha\in R$ (cf. \ref{s_a_notation}).
\end{definition}
Notice that a Weyl group $W_0(R)$ is always finite, since by {(2)(b)} of Definition\cspace\ref{def - root system} we have an identification of $W_0(R)$ with a subgroup of the symmetric group on $R$.

\begin{fact}{\cite[Proposition, \S\cspace1.14, Ch.{\cspace}I]{humphreys}}\label{fact - every reflection in a Weyl group is induced by a root}
	Let $R$ be a root system in a real Euclidean vector space $V$, and $W_0(R)\leq O(V)$ its Weyl group. Then, every reflection $s\in W_0(R)$ (in the sense of \ref{def - reflection}) is of the form $s=s_\alpha$, for some $\alpha\in R$.
	
\end{fact}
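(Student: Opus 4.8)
The plan is to reduce the statement to the standard chamber geometry of finite reflection groups, following in spirit the argument in \cite{humphreys}. Since $s\in W_0(R)\leq O(V)$ is an involution whose fixed subspace $H\vcentcolon=\ker(1-s)$ has dimension $l-1$, Fact~\ref{fact - every involution induces a decomposition of V in ker(1-s)+ker(1+s)} gives $V=\ker(1-s)\oplus\ker(1+s)$, and (eigenspaces of an orthogonal involution being mutually orthogonal) $\ker(1+s)=H^\perp$ is a line. Thus $s$ is precisely the orthogonal reflection across $H$, negating $H^\perp$ and fixing $H$ pointwise. Writing $H_\beta$ for the hyperplane orthogonal to a root $\beta$, the goal then becomes to show $H=H_\alpha$ for some $\alpha\in R$: this forces $H^\perp=\langle\alpha\rangle_\R$ and hence $s=s_\alpha$. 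The case $l=1$ is immediate (there $H=\{0\}$, so $s=-\mathrm{id}=s_\alpha$ for any $\alpha\in R$), so I would assume $l\geq 2$.

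First I would record the two pieces of bookkeeping. By Definition~\ref{def - root system}(2)(b) each generating reflection satisfies $s_\beta(R)\subseteq R$, so $W_0(R)$ stabilises the finite set $R$; combined with the identity $w s_\beta w^{-1}=s_{w\beta}$ (valid since conjugating the reflection across $H_\beta$ by $w\in O(V)$ gives the reflection across $w(H_\beta)=H_{w\beta}$), this shows $W_0(R)$ permutes the \emph{finite} family of reflecting hyperplanes $\{H_\beta:\beta\in R\}$. Now I argue by contradiction, assuming $H\neq H_\beta$ for every $\beta\in R$. Then each intersection $H\cap H_\beta$ is a proper subspace of $H$ (of dimension at most $l-2$). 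Since $H$ is a real vector space of dimension $l-1\geq 1$, and a vector space over an infinite field is never a finite union of proper subspaces, there exists $v\in H$ lying on none of the hyperplanes $H_\beta$.

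Such a $v$ lies in the open set $V\setminus\bigcup_{\beta\in R}H_\beta$, hence in some open chamber $C$ of the reflection arrangement; at the same time $v\in H=\ker(1-s)$, so $s(v)=v$. The hard part is now the single genuinely non-formal input: the classical theorem that $W_0(R)$ acts \emph{simply transitively} on the chambers (equivalently, that a point lying on no reflecting hyperplane has trivial stabiliser in $W_0(R)$). Granting this, from $v=s(v)\in C\cap s(C)$ and the disjointness of distinct chambers we obtain $s(C)=C$, whence $s=1$; but a reflection is never the identity, a contradiction. Therefore $H=H_\alpha$ for some $\alpha\in R$, and $s=s_\alpha$, as claimed. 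I expect the simple‑transitivity (trivial‑stabiliser) statement to be the main obstacle, as everything else is elementary manipulation of finitely many hyperplanes; this is exactly why the result is imported here as a Fact and quoted from the theory of finite reflection groups \cite{humphreys} rather than developed from first principles.
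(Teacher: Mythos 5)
The paper offers no proof of this statement: it is imported verbatim as a Fact with a citation to \cite[Proposition, \S\,1.14, Ch.\,I]{humphreys}, so there is no internal argument to measure yours against. Your proof is correct and is essentially the argument in the cited source itself --- identify $s$ with the orthogonal reflection across $H=\ker(1-s)$ via the eigenspace decomposition, note that $W_0(R)$ permutes the finitely many root hyperplanes, pick a fixed point of $s$ off all of them if $H$ were not a root hyperplane, and invoke simple transitivity of $W_0(R)$ on chambers to force $s=1$, a contradiction --- with the single non-formal ingredient (simple transitivity, i.e.\ triviality of chamber stabilisers) correctly isolated and legitimately imported from the classical theory.
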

Several structural properties of a Weyl group $W_0(R)$ can be deduced from the combinatorics of the root system $R$. For instance, if $R$ admits a direct sum decomposition $R=\bigsqcup_{i<m}R_i$, where each $R_i$ is irreducible with respect to its linear span $\langle R_i\rangle_K$, then, by \cite[\S\phantom{.}1.2, Ch.VI]{bourbaki}), we have:
\begin{equation*}
	W_0(R)\cong\prod_{i<m}W_0(R_i),
\end{equation*}
\noindent
where $W_0(R_i)$ denotes the Weyl group associated with $R_i$ in $\langle R_i\rangle_K$, for all $i<m$.
\begin{definition}[Coxeter groups]\label{def_Coxeter_groups} Let $S$ be a set. A matrix $m: S \times S \rightarrow \{1, 2, . . . , \infty \}$ is called a {\em Coxeter matrix} if it satisfies:
	\begin{enumerate}[(1)]
		\item $m(s, s') = m(s' , s)$;
		\item $m(s, s') = 1 \Leftrightarrow s = s'$.
	\end{enumerate}
	For such a matrix, let $S^2_{*} = \{(s, s') \in S^2 : m(s, s' ) < \infty \}$. A Coxeter matrix $m$ determines
	a group $W_0$ with presentation:
	$$
	\begin{cases} \text{Generators:} \; \;  S \\
		\text{Relations:} \; \;   (ss')^{m(s,s')} = e, \text{ for all } (s, s' ) \in S^2_{*}.
	\end{cases} $$
	A group with a presentations as above is called a \emph{Coxeter group}, and the pair $(W_0, S)$ is a called a \emph{Coxeter system}. The \emph{rank} of the Coxeter system $(W_0, S)$ is $|S|$.
\end{definition}
\begin{definition}\label{def_Coxeter_graph}
	In the context of Definition~\ref{def_Coxeter_groups}, the Coxeter matrix $m$ can be equivalently represented by a labeled graph $\Delta$, known as the \emph{Coxeter diagram} of the system $(W_0,S)$. The vertex set of $\Delta$ is $S$, and its edge set $E_\Delta$ consists of unordered pairs $\{s, s' \}\subseteq S$ such that $3\leq m(s, s') < \infty$. Each edge $\{s,s'\}$ is labeled by the integer $m(s,s')$. By convention, labels corresponding to $m(s,s')=3$ are typically omitted.
\end{definition}
An automorphism $\sigma$ of a Coxeter diagram $\Delta$ is a bijection of the set of nodes of $\Delta$ such that two nodes $s,s'\in\Delta$ are connected by an edge labeled $m(s,s')$ if and only if $\sigma(s),\sigma(s')$ are connected by an edge with the same label. Each such automorphism $\sigma$ of the Coxeter diagram $\Delta$ of a Coxeter group $W_0$ extends uniquely to an automorphism $f_\sigma$ of $W_0$.

\smallskip
If $R$ is a root system in an real vector space $V$, then the only values $\delta\in\R$ for which the the scalar multiple $\delta.\alpha$ of a root $\alpha\in R$ still belongs to $R$ are:
\begin{equation*}
	\delta=\;\pm\frac{1}{2},\;\pm1,\;\pm2.
\end{equation*}
(cf. \cite[Proposition\cspace8(i), \S\phantom{.}1.3, Ch.\phantom{.}VI]{bourbaki}). A root $\alpha$ for which $\frac{1}{2}\alpha\notin R$ is said \emph{indivisible}, and a root system entirely consisting of indivisible roots is called \emph{reduced}.
\begin{definition}
	\label{fact - basis of a root system}
	Let $R$ be a root system in a real vector space $V$. Then, a subset $B\subseteq R$ is a \emph{basis} (or \emph{simple system}) of $R$ if the following conditions are satisfied:
	\begin{enumerate}[(1)]
		\item $B$ is a basis of $V$;
		\item $B$ consists entirely of indivisible roots;
		\item each $\alpha\in R$ can be written as a $\Z$-linear combination of elements from $B$ with coefficients all of the same sign (either all non-negative, or all non-positive).
	\end{enumerate}
	The elements $\alpha$ belonging to some basis of $R$ are called \emph{simple roots} of $R$, and the corresponding reflections $s_\alpha$ are the \emph{simple reflections} of $R$.
\end{definition}
\begin{remark}\label{remark - humphreys}
	In \cite{bourbaki}, bases of root systems are introduced through a constructive method that also ensures the existence of a basis $B$ for any root system $R$ (see \cite[Théorème\cspace2, \S\phantom{.}1.5, Ch.\phantom{.}VI]{bourbaki}). Definition\cspace\ref{fact - basis of a root system} provides an equivalent characterization, as stated in \cite[Corollaire\cspace3, \S\phantom{.}1.7, Ch.\phantom{.}VI]{bourbaki}. This formulation is also adopted as the definition of a simple system in \cite{humphreys_1, humphreys}, where only reduced systems are considered (cf. \cite[\S\phantom{.}9.2,\phantom{.}10.1, Ch.\phantom{.}III]{humphreys_1}).
\end{remark}
\begin{fact}{\cite[Théorème\cspace2(vii), \S{\phantom{.}}1.5, Ch.VI]{bourbaki}}\label{fact - simple reflections form a Coxeter system of the Weyl gp}
	Let $R$ be a root system in a real vector space $V$, and $W_0(R)$ its Weyl group. If $B$ is a basis of $R$ and $S\vcentcolon=\{s_\alpha\in W_0(R):\alpha\in B\}$ is the set of simple reflections of $B$, then $(W_0(R), S)$ is a (finite) Coxeter system. 
\end{fact}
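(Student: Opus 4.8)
The plan is to verify the two defining features of a Coxeter system for $(W_0(R),S)$: that $S$ generates $W_0(R)$, and that every relation among the elements of $S$ follows from $s_\alpha^2=e$ together with the braid relations $(s_\alpha s_\beta)^{m(\alpha,\beta)}=e$, where $m(\alpha,\beta):=\mathrm{ord}(s_\alpha s_\beta)$. One first checks that $m$ is a genuine Coxeter matrix on $S$ in the sense of Definition \ref{def_Coxeter_groups}: each $s_\alpha$ is an involution by Definition \ref{def - reflection}, every product $s_\alpha s_\beta$ has finite order because $W_0(R)$ is finite, and $m(\alpha,\beta)=1$ exactly when $\alpha=\beta$, since distinct simple roots yield distinct reflections.

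The geometric input I would isolate first is the permutation lemma: writing $R=R^+\sqcup R^-$ for the splitting of $R$ into positive and negative roots determined by $B$ (Definition \ref{fact - basis of a root system}(3)), each simple reflection $s_\alpha$, $\alpha\in B$, sends $\alpha\mapsto-\alpha$ and permutes $R^+\setminus\{\alpha\}$. This is immediate from $s_\alpha(\beta)=\beta-\coalpha(\beta)\alpha$: a positive root $\beta\neq\alpha$ has a strictly positive coefficient on some simple root other than $\alpha$, and that coefficient is left unchanged, forcing $s_\alpha(\beta)\in R^+$. From this I would define the length $\ell(w)$ as the number of positive roots that $w$ sends to negative roots, prove $\ell(ws_\alpha)=\ell(w)\pm 1$ with sign $+$ exactly when $w(\alpha)\in R^+$, and deduce by induction on $\ell(w)$ that every element of $W_0(R)$ is a product of simple reflections; this yields generation.

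For the presentation, let $\widetilde{W}$ be the abstract Coxeter group attached to $m$ (Definition \ref{def_Coxeter_groups}). The defining relations hold in $W_0(R)$, so there is a canonical surjection $\pi\colon\widetilde{W}\twoheadrightarrow W_0(R)$ carrying generators to simple reflections; the content of the statement is that $\pi$ is injective. I would obtain this through the \emph{Exchange Condition}: if $s_{\alpha_1}\cdots s_{\alpha_r}$ is reduced and $\ell(s_\beta s_{\alpha_1}\cdots s_{\alpha_r})\leq r$, then $s_\beta s_{\alpha_1}\cdots s_{\alpha_r}=s_{\alpha_1}\cdots\widehat{s_{\alpha_j}}\cdots s_{\alpha_r}$ for some $j$, an identity one extracts from the permutation lemma by tracking the unique positive root that gets turned negative. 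The Exchange Condition, together with the fact that $\ell$ agrees with word length in $S$, lets one reduce any word representing $e$ to the empty word using only the Coxeter relations, so $\ker\pi=1$.

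The step I expect to be the main obstacle is precisely this injectivity, i.e., excluding hidden relations. Generation and the verification that the braid relations hold are routine bookkeeping with $\ell$, but the passage from ``the relations hold'' to ``these are all the relations'' is the genuinely nontrivial point; the Exchange Condition --- equivalently, the faithfulness of the Tits geometric representation of $\widetilde{W}$, which is the alternative route --- is what makes it work. This is the content of \cite[Th\'eor\`eme 2, \S\phantom{.}1.5, Ch.\phantom{.}VI]{bourbaki}, whose argument I would follow.
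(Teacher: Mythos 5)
This statement is quoted in the paper as a Fact with no proof of its own, the citation being precisely \cite[Th\'eor\`eme\cspace2, \S\phantom{.}1.5, Ch.\phantom{.}VI]{bourbaki}, and your sketch (generation via the permutation lemma and the length function, injectivity of $\pi\colon\widetilde{W}\twoheadrightarrow W_0(R)$ via the Exchange Condition) is exactly the standard argument of that cited theorem, so it is correct and takes the same route. The only slip worth flagging is that for a possibly non-reduced $R$ (which the statement allows) the permutation lemma should say $s_\alpha$ permutes the positive roots \emph{not proportional} to $\alpha$ (it also flips $2\alpha$ when $2\alpha\in R$); since $B$ consists of indivisible roots and $W_0(R)$ coincides with the Weyl group of the reduced subsystem of indivisible roots, this does not affect the argument.
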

\begin{fact}\label{fact - the Weyl group acts simply transitively on the set of bases of a root system}
	Let $R$ be a root system in a real vector space $V$, and $W_0(R)$ its Weyl group. If $B$ is a basis of $R$, then, for each $w\in W_0(R)$, $w(B)$ is also a basis of $R$ (see \cite[\S\phantom{.}1.5, Ch.\phantom{.}VI]{bourbaki} and \cite[\S\phantom{.}10.1, Ch.\phantom{.}III]{humphreys_1}). The resulting action of $W_0(R)$ on the sets of bases of $R$ is simply transitive (cf. \cite[Remarques(4), \S\phantom{.}1.5, Ch.\phantom{.}VI]{bourbaki}).
\end{fact}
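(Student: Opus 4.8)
The plan is to treat the three assertions in turn: that $w(B)$ is again a basis, that the action is transitive, and that it is free.

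First, every $w \in W_0(R)$ is a linear automorphism of $V$ permuting the finite set $R$, since $W_0(R)$ is generated by the reflections $s_\alpha$ and each such reflection stabilizes $R$ by Definition~\ref{def - root system}(2)(b). With this in hand I would verify the three clauses of Definition~\ref{fact - basis of a root system} for $w(B)$. Clause~(1) is immediate, as $w \in \mathrm{GL}(V)$ carries a basis of $V$ to a basis of $V$. Clause~(2) holds because $w$ commutes with scalar multiplication and permutes $R$: were $\tfrac{1}{2}w(\alpha) \in R$ for some $\alpha \in B$, then $\tfrac{1}{2}\alpha = w^{-1}(\tfrac{1}{2}w(\alpha)) \in R$, contradicting the indivisibility of $\alpha$. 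For clause~(3), given $\beta \in R$ write $\beta = w(\gamma)$ with $\gamma \in R$; expressing $\gamma = \sum_{\alpha \in B} c_\alpha \alpha$ with all $c_\alpha \in \Z$ of a common sign and applying $w$ exhibits $\beta = \sum_{\alpha \in B} c_\alpha\, w(\alpha)$ as a $\Z$-combination of $w(B)$ with uniform sign.

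For transitivity I would equip $V$ with a $W_0(R)$-invariant inner product $(\,\cdot\,,\,\cdot\,)$, which exists by averaging since $W_0(R)$ is finite, so that every $s_\alpha$ lies in $O(V)$, and pass to the geometry of \emph{Weyl chambers}, the connected components of $V \setminus \bigcup_{\alpha \in R} H_\alpha$, where $H_\alpha = \mathrm{ker}(s_\alpha - 1)$. Each basis $B$ determines the positive system $R^+$ of roots that are non-negative $\Z$-combinations of $B$ together with the open chamber $C_B = \{x \in V : (x, \alpha) > 0 \text{ for all } \alpha \in B\}$, and conversely each chamber recovers a basis as its set of indecomposable positive roots; I would first record that this yields a $W_0(R)$-equivariant bijection between bases and chambers, reducing the problem to transitivity on chambers. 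The latter is the standard maximization argument: given chambers $C, C'$ with regular representatives $\gamma, \gamma'$, choose $w \in W_0(R)$ maximizing $(\gamma', u\gamma)$ over $u \in W_0(R)$, and compare the value at $w$ with that at $s_\alpha w$ for each simple root $\alpha$ of $C'$; using the reflection formula of Fact~\ref{fact - every involution induces a decomposition of V in ker(1-s)+ker(1+s)} this forces $(w\gamma, \alpha) \geq 0$, and regularity of $\gamma$ upgrades the inequality to $(w\gamma, \alpha) > 0$, whence $w\gamma \in C'$ and $wC = C'$.

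Finally, freeness amounts to showing the stabilizer of $B$ is trivial. If $w(B) = B$, then $w$ permutes $B$ and hence permutes $R^+$, so $w$ carries no positive root to a negative one; invoking the Coxeter-system structure of $(W_0(R), S)$ from Fact~\ref{fact - simple reflections form a Coxeter system of the Weyl gp} together with the interpretation of the length $\ell(w)$ as the number of positive roots sent to negative roots, this gives $\ell(w) = 0$ and therefore $w = e$. I expect the transitivity step to be the main obstacle: both the clean identification of bases with chambers and the maximization argument demand care, and it is precisely here that the finiteness of $W_0(R)$ and the regularity of the chosen representatives are indispensable.
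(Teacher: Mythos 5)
The paper does not prove this Fact at all --- it is quoted directly from Bourbaki and Humphreys --- and your proposal reconstructs precisely the standard argument of those cited sources: a direct verification of the three clauses of Definition~\ref{fact - basis of a root system} for $w(B)$, transitivity via the $W_0(R)$-equivariant bases--chambers correspondence together with the inner-product maximization argument, and freeness via the characterization of $\ell(w)$ as the number of positive roots sent to negative ones. Your argument is correct, and the two ingredients you take on faith (the chamber--basis bijection and the length formula) are exactly the textbook machinery the paper itself is citing, so this level of detail is appropriate for a statement the paper treats as a quoted fact.
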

\begin{fact}{\cite[Proposition\cspace15, \S\phantom{.}1.5]{bourbaki}}\label{fact - every root in a reduced root system is the image of a root of the basis through an element of the Weyl group}
	Let $R$ be a reduced root system in a real vector space $V$, and $W_0(R)$ be the Weyl group of $R$. If $B$ is a basis of $R$ (cf. \ref{fact - basis of a root system}), then, for every root $\alpha'\in R$, there exist some $w\in W_0(R)$ and $\alpha\in B$ such that $\alpha'=w(\alpha)$.
\end{fact}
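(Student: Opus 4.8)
The plan is to fix a basis $B$ of $R$ and argue by induction on the \emph{height} of a root relative to $B$. First I would set up the standard dichotomy coming from Definition~\ref{fact - basis of a root system}(3): writing each $\beta\in R$ uniquely as $\beta=\sum_{\gamma\in B}c_\gamma\gamma$ with all $c_\gamma\in\Z$ of a common sign, I declare $\beta$ \emph{positive} (write $\beta\in R^+$) if all $c_\gamma\geq 0$, so that $R=R^+\sqcup(-R^+)$, and I set $\mathrm{ht}(\beta)\vcentcolon=\sum_{\gamma\in B}c_\gamma$ for $\beta\in R^+$. Since $s_\alpha(\alpha)=-\alpha$ for a simple root $\alpha\in B$ (cf.\ the discussion following Definition~\ref{def - reflection}), every negative root is $W_0(R)$-conjugate to a positive one; hence it suffices to prove that every $\beta\in R^+$ lies in the $W_0(R)$-orbit of some $\alpha\in B$.

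For the induction, the base case $\mathrm{ht}(\beta)=1$ is exactly the assertion that $\beta\in B$, and there is nothing to prove. For the inductive step I would assume $\mathrm{ht}(\beta)\geq 2$, so that $\beta$ is not simple. The key point to establish is the existence of a simple root $\alpha\in B$ with $(\beta,\alpha)>0$: this follows because $0<(\beta,\beta)=\sum_{\gamma\in B}c_\gamma(\beta,\gamma)$ forces $(\beta,\gamma)>0$ for at least one $\gamma$ with $c_\gamma>0$. Using the Euclidean description of $s_\alpha$ from Fact~\ref{fact - every involution induces a decomposition of V in ker(1-s)+ker(1+s)} together with the integrality $\coalpha(R)\subseteq\Z$ of Definition~\ref{def - root system}(2)(c), one gets
\[
	s_\alpha(\beta)=\beta-\coalpha(\beta)\,\alpha,\qquad \coalpha(\beta)=\tfrac{2(\beta,\alpha)}{(\alpha,\alpha)}\in\Z_{>0}.
\]
I would then verify that $s_\alpha(\beta)\in R^+$ with $\mathrm{ht}(s_\alpha(\beta))=\mathrm{ht}(\beta)-\coalpha(\beta)<\mathrm{ht}(\beta)$, and conclude by the induction hypothesis: if $s_\alpha(\beta)=w(\alpha')$ with $w\in W_0(R)$ and $\alpha'\in B$, then $\beta=(s_\alpha w)(\alpha')$, as desired.

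The step I expect to be the main obstacle is verifying that $s_\alpha$ preserves positivity, i.e.\ that $s_\alpha(\beta)\in R^+$ rather than merely $s_\alpha(\beta)\in R$, and this is exactly where the \emph{reducedness} hypothesis enters. Since $R$ is reduced, the only roots proportional to $\alpha$ are $\pm\alpha$; as $\beta\in R^+$ with $\mathrm{ht}(\beta)\geq 2$, it is not proportional to $\alpha$, so some coefficient $c_\gamma$ with $\gamma\neq\alpha$ is strictly positive. Because $s_\alpha(\beta)=\beta-\coalpha(\beta)\alpha$ leaves every coefficient except the one on $\alpha$ unchanged, the $\gamma$-coefficient of $s_\alpha(\beta)$ is still $c_\gamma>0$; by the sign-coherence in Definition~\ref{fact - basis of a root system}(3) this forces all coefficients of $s_\alpha(\beta)$ to be non-negative, whence $s_\alpha(\beta)\in R^+$. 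An alternative route, avoiding the height estimate, would be to show directly that every root extends to a basis $B'$ of $R$ and then invoke the simply transitive action of Fact~\ref{fact - the Weyl group acts simply transitively on the set of bases of a root system} to transport $B'$ onto $B$; but proving that a given root extends to a basis is itself the technical core, so I would prefer the self-contained height induction above.
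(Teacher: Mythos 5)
Your argument is correct, but note first that the paper does not prove this statement at all: it is imported as a Fact with a citation to Bourbaki (Proposition 15, \S\,1.5, Ch.\ VI), so the only meaningful comparison is with the cited source. Your height induction is the standard self-contained textbook argument (the one in Humphreys), and it correctly isolates the role of reducedness: it is needed precisely to rule out that a positive root $\beta$ of height $\geq 2$ is proportional to the chosen simple root $\alpha$, which is exactly what fails for the root $2\alpha$ in a non-reduced system (and indeed the Fact is false there, since the $W_0(R)$-orbit of a divisible root contains no indivisible root). Bourbaki's cited proof goes by the alternative route you sketch and set aside: one first shows that every indivisible root belongs to \emph{some} basis of $R$ (by choosing a chamber having the hyperplane of $\alpha$ among its walls), and then transports that basis onto $B$ using the transitive action of the Weyl group on bases, i.e.\ Fact~\ref{fact - the Weyl group acts simply transitively on the set of bases of a root system}. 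So your route is genuinely different from the cited one, trades the geometry of chambers for an elementary induction, and has the advantage of using only the combinatorial definition of a basis.

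One small repair is needed. The Fact is stated for a root system in a bare real vector space $V$, whereas your proof freely uses an inner product: both in the step $0<(\beta,\beta)=\sum_{\gamma\in B}c_\gamma(\beta,\gamma)$ and in the identity $\coalpha(\beta)=2(\beta,\alpha)/(\alpha,\alpha)$, which presupposes that $s_\alpha$ is an orthogonal reflection. You should begin by fixing a $W_0(R)$-invariant inner product on $V$ (average an arbitrary one over the finite group $W_0(R)$). Invariance under $s_\alpha$ forces $\ker\coalpha\perp\alpha$: for $x\in\ker\coalpha$ one has $(x,\alpha)=(s_\alpha x,s_\alpha\alpha)=-(x,\alpha)$, hence $(x,\alpha)=0$, and then $\coalpha$ agrees with $x\mapsto 2(x,\alpha)/(\alpha,\alpha)$ on $\ker\coalpha$ and at $\alpha$, so everywhere. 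With that one sentence added, your argument goes through verbatim and the proof is complete.
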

\begin{fact}{\cite[Corollaire, \S\phantom{.}1.6, Ch.\phantom{.}VI]{bourbaki}}\label{fact - every vector in Q(R) l.d. from an indivisible root alpha, is Z-proposrtional to alpha}
	Let $R$ be a root system in a real vector space, and $\alpha$ be an indivisible root in $R$. Then, for every $x\in \langle R\rangle_\Z\cap\langle\alpha\rangle_\R$, $x$ belongs to $\langle\alpha\rangle_\Z$.
\end{fact}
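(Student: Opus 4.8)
The plan is to reduce the statement to a purely lattice-theoretic assertion and then to exploit the coroot pairing together with the action of the Weyl group $W_0(R)$ on the root lattice $L\vcentcolon=\langle R\rangle_\Z$. Write $x=\lambda\alpha$ with $\lambda\in\R$, which is possible since $x\in\langle\alpha\rangle_\R$. The first step is to pin down $\lambda$ up to a half-integer: because $x\in L$ and, by Definition~\ref{def - root system}(2)(c), the coroot $\coalpha$ takes integer values on every root and hence on all of $L$, we get $\coalpha(x)\in\Z$. As $\coalpha(\alpha)=2$, this reads $2\lambda=\coalpha(x)\in\Z$, so $\lambda\in\tfrac12\Z$.

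If $\lambda\in\Z$ we are done, so suppose $\lambda=k+\tfrac12$ for some $k\in\Z$. Then $\tfrac12\alpha=x-k\alpha$ is a difference of elements of $L$, whence $\tfrac12\alpha\in L$, i.e.\ $\alpha\in 2L$. The whole problem therefore reduces to showing that an \emph{indivisible} root can never lie in $2L$ (note that this is exactly where indivisibility must enter, since for a divisible root $\alpha=2\gamma$ with $\gamma\in R$ the conclusion genuinely fails).

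To establish this I would use that $L$ is a free $\Z$-module admitting any simple system $B$ of $R$ as a basis: by Definition~\ref{fact - basis of a root system}, $B$ is an $\R$-basis of $V$ consisting of indivisible roots through which every root is an integral combination, so $L=\langle R\rangle_\Z=\bigoplus_{\beta\in B}\Z\beta$. Consequently $2L=\bigoplus_{\beta\in B}2\Z\beta$ is $W_0(R)$-stable, since each generating reflection $s_\beta$ permutes $R$ and hence preserves $L$; thus membership in $2L$ is a $W_0(R)$-invariant condition. Now pass to the reduced root system $R_0$ of indivisible roots of $R$, which has the same root lattice $\langle R_0\rangle_\Z=L$ and the same Weyl group (because $s_{2\beta}=s_\beta$, so the doubled roots contribute no new reflections and no new lattice points). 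Since $\alpha$ is indivisible it lies in $R_0$, so Fact~\ref{fact - every root in a reduced root system is the image of a root of the basis through an element of the Weyl group} provides $w\in W_0(R)$ and a simple root $\alpha_0\in B$ with $\alpha=w(\alpha_0)$. Then $\alpha\in 2L$ if and only if $\alpha_0=w^{-1}(\alpha)\in 2L$; but $\alpha_0$ is a basis vector of the free module $L$, hence has a coordinate equal to $1$ and so $\alpha_0\notin 2L$. This contradiction forces $\lambda\in\Z$, giving $x\in\langle\alpha\rangle_\Z$.

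I expect the only genuinely delicate point to be the reduction to the reduced subsystem $R_0$ in the non-reduced case: one must verify that the indivisible roots again form a root system with unchanged Weyl group and unchanged root lattice, so that the conjugacy statement of Fact~\ref{fact - every root in a reduced root system is the image of a root of the basis through an element of the Weyl group}, which is phrased for reduced systems, may legitimately be applied to $\alpha$. The admissible-multiple list $\pm\tfrac12,\pm1,\pm2$ recorded before Definition~\ref{fact - basis of a root system} is what controls how $R$ differs from $R_0$ and guarantees that this passage changes neither $L$ nor $W_0(R)$. When $R$ is already reduced this step is vacuous and the argument applies verbatim.
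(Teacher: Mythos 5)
Your proposal is correct, but note that the paper itself offers no proof of this statement: it is quoted as a black-box citation to Bourbaki (Corollaire, \S\,1.6, Ch.\,VI), so there is no internal argument to compare against. Your reconstruction is a legitimate, self-contained derivation and is essentially the standard one: the coroot pairing $\coalpha(x)=2\lambda\in\Z$ (from axiom (2)(c) of Definition~\ref{def - root system} extended by linearity to $\langle R\rangle_\Z$) pins $\lambda$ to $\tfrac12\Z$, and the residual case $\lambda\in\Z+\tfrac12$ is killed by showing an indivisible root cannot lie in $2\langle R\rangle_\Z$. Your handling of that last step is sound: $\langle R\rangle_\Z=\bigoplus_{\beta\in B}\Z\beta$ for any simple system $B$ (this identity holds for non-reduced systems too, since divisible roots are integer multiples of indivisible ones), the sublattice $2\langle R\rangle_\Z$ is $W_0(R)$-stable, and Fact~\ref{fact - every root in a reduced root system is the image of a root of the basis through an element of the Weyl group} applied to the reduced subsystem $R_0$ of indivisible roots conjugates $\alpha$ to a simple root, which visibly has a unit coordinate and so lies outside $2\langle R\rangle_\Z$. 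You are also right that the passage to $R_0$ is the only point needing care, and your justification (same Weyl group since $s_{2\beta}=s_\beta$, same root lattice, and a basis of $R$ is a basis of $R_0$) is exactly what is needed; within the paper's framework the existence of a basis for arbitrary, possibly non-reduced, root systems is already granted by Remark~\ref{remark - humphreys}. The trade-off is only one of economy: the paper imports the result wholesale, while your argument proves it from the axioms plus two facts the paper does state, at the cost of a page of verification.
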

\begin{fact}
	The reduced irreducible root systems have been classified (see \cite[Planche{\cspace}I to IX]{bourbaki}). A list of the Coxeter diagrams of the corresponding Weyl groups can be found in Table~\ref{table - Coxeter diagrams of irreducible root systems} from the Appendix (i.e., Section~\ref{app_sec}).
\end{fact}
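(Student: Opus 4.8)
This is the classical classification of reduced irreducible root systems (equivalently, of connected Dynkin/Coxeter diagrams), so the proof I would give is the standard Dynkin-diagram argument, which the paper legitimately defers to the tables of \cite{bourbaki}. The plan is to reduce everything to a finite combinatorial problem about the Coxeter diagram attached to a basis of $R$. First I would fix a basis (simple system) $B=\{\alpha_1,\dots,\alpha_l\}$, which exists by Remark~\ref{remark - humphreys}, and record the Cartan integers $n_{ij}:=\alpha_j^{\vee}(\alpha_i)=2(\alpha_i,\alpha_j)/(\alpha_j,\alpha_j)\in\Z$. Since the $\alpha_i$ are linearly independent (condition~(1) of Definition~\ref{fact - basis of a root system}), distinct simple roots span a plane and the angle $\theta_{ij}$ between them satisfies $\cos^2\theta_{ij}<1$; hence for $i\neq j$ the product $n_{ij}n_{ji}=4\cos^2\theta_{ij}$ is a non-negative integer in $\{0,1,2,3\}$. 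This is exactly the edge data of the Coxeter diagram of $(W_0(R),S)$ in the sense of Definition~\ref{def_Coxeter_graph}, which is a Coxeter system by Fact~\ref{fact - simple reflections form a Coxeter system of the Weyl gp}; and by Definition~\ref{def. - irreducible root system} irreducibility of $R$ is equivalent to connectedness of this diagram, since a disconnected diagram would partition $B$ into mutually orthogonal pieces and split $R$ as a direct sum.

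The heart of the argument is positive-definiteness: the Gram matrix of the unit vectors $e_i:=\alpha_i/\|\alpha_i\|$ is positive definite because $(\,\cdot\,,\,\cdot\,)$ is a genuine inner product on the real span of $B$. From $\|\sum_{i\in F} c_i e_i\|^2>0$ for suitable finite subsets $F$ and coefficients $c_i$ I would extract the usual structural constraints on a connected admissible diagram: (i) it contains no cycle, since summing the $e_i$ around a putative cycle forces $\|\sum e_i\|^2\le 0$; (ii) every node has total edge-multiplicity at most $3$ among its neighbours, again by expanding a norm square centred at that node; and (iii) a string of single edges may be contracted to a single vertex without destroying positive-definiteness, which lets me replace long simple chains by one node. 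Together these already forbid two branch points, two multiple edges, or a multiple edge sharing a vertex with a branch point, except in the minimal configurations.

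With the diagram now reduced to having at most one branch node or at most one multiple edge, the remaining step is a finite case analysis bounding the lengths of the arms. Concretely I would test the candidate extended (affine) diagrams $\widetilde{D}_l,\widetilde{E}_6,\widetilde{E}_7,\widetilde{E}_8$, whose Gram forms are positive \emph{semi}definite, and show via the relevant Gram determinant that enlarging any arm by one further node makes the form degenerate; this pins the branched series to $D_l$ and the exceptionals to $E_6,E_7,E_8$, while the double-edge and triple-edge cases collapse to $B_l/C_l$, $F_4$ and $G_2$. The survivors are exactly $A_l,B_l,C_l,D_l,E_6,E_7,E_8,F_4,G_2$, and reading off their Coxeter diagrams yields the list recorded in Table~\ref{table - Coxeter diagrams of irreducible root systems}. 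I expect the genuine obstacle to be precisely this last determinant bookkeeping in step~(iii) together with the arm-length estimates: the computations ruling out $\widetilde{E}_8$ and its relatives are where positive-definiteness is tight, and some care is needed to be sure no admissible connected diagram has been overlooked — which is exactly why one ordinarily cites \cite{bourbaki} rather than reproducing the full elimination.
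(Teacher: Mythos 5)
The paper offers no internal proof of this Fact: it is a citation-level statement, deferring entirely to Bourbaki's classification \cite[Planche I to IX]{bourbaki}, so there is no argument of the paper's own to compare yours against. Your sketch is precisely the standard elimination argument that underlies that citation (as in Bourbaki or Humphreys): pass to the Gram matrix of the normalized simple roots, use positive-definiteness to exclude cycles, vertices of total edge-multiplicity $\geq 4$, and the configurations ruled out by chain contraction, and then bound arm lengths via the positive semidefinite extended diagrams $\widetilde{D}_l,\widetilde{E}_6,\widetilde{E}_7,\widetilde{E}_8$; this is correct in outline. Two caveats are worth recording. First, your argument only shows that no connected diagram outside the list can occur; a full classification also needs the converse, namely that each listed diagram is realized by an explicit root system and that the Cartan matrix determines the root system up to isomorphism --- exactly the content of Bourbaki's Planches, which both you and the paper defer to. Second, since you work with unoriented Coxeter diagrams, $B_l$ and $C_l$ are indistinguishable at your level of bookkeeping; classifying the root systems themselves (as opposed to their Weyl groups) requires the oriented Dynkin data recording relative root lengths. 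For the object the paper actually uses --- the table of Coxeter diagrams of the Weyl groups --- this second caveat is immaterial.
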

The following result is folklore, we will need it in the \mbox{next section (cf. \ref{remark - we can consider reduced root systems}).}
\begin{fact}\label{fact - R reduced root system is irreducible iff the coxeter diagram is connected}
	Let $R$ be a reduced root system in a real vector space $V$, and $W_0(R)$ be its Weyl group. Then, $R$ is irreducible if and only if the Coxeter diagram of $W_0(R)$ is connected.
\end{fact}
\begin{fact}\label{fact - w s_alpha w^-1 = s_w(alpha)}
	Let $R$ be a root system in a real Euclidean vector space $V$, and $W_0(R)\leq O(V)$ be its Weyl group. Then,
	\begin{enumerate}[(1)]
		\item for every basis $B$ of $R$, and every $w\in W_0(R)$, $w(B)$ is a basis of $R$ (cf. \cite[\S\phantom{.}1.4, Ch.\phantom{.}1]{humphreys});
		\item for every $\alpha\in R$ and every $w\in W_0(R)$, $(s_\alpha)^w=s_{w(\alpha)}$ (cf. \cite[Proposition, \S\phantom{.}1.2, Ch.\phantom{.}1]{humphreys}).
	\end{enumerate}
	In particular, inner automorphisms of $W_0(R)$ map Coxeter systems of simple reflections into Coxeter systems of simple reflections.
\end{fact}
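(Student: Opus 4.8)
The plan is to establish the three assertions in order, deducing the ``in particular'' clause from the first two. The single fact I will use repeatedly is that $W_0(R)\leq O(V)$ consists of orthogonal transformations that stabilize $R$: each generating reflection $s_\alpha$ fixes $R$ setwise by Definition~\ref{def - root system}(2)(b), hence so does every $w\in W_0(R)$, and, $W_0(R)$ being a group, so does every $w^{-1}$.

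For item (1) I would simply invoke Fact~\ref{fact - the Weyl group acts simply transitively on the set of bases of a root system}, which already records this. Alternatively, one argues directly from the characterization of a basis in Definition~\ref{fact - basis of a root system}. Since $w$ is an invertible linear map, $w(B)$ is again a vector-space basis of $V$. As $w$ and $w^{-1}$ preserve $R$, each $w(\alpha)$ is a root, and it is indivisible, for $\tfrac12 w(\alpha)\in R$ would force $\tfrac12\alpha=w^{-1}(\tfrac12 w(\alpha))\in R$. Finally, for an arbitrary root $\beta$ write $\beta=w(w^{-1}(\beta))$ and expand the root $w^{-1}(\beta)$ as a same-sign $\Z$-combination of $B$; the linearity of $w$ then exhibits $\beta$ as a same-sign $\Z$-combination of $w(B)$. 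Thus $w(B)$ meets all the defining conditions and is a basis of $R$.

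For item (2) I would compute the conjugate directly from the explicit Euclidean description of a reflection supplied by Fact~\ref{fact - every involution induces a decomposition of V in ker(1-s)+ker(1+s)}, namely $s_\alpha(x)=x-2\frac{(x,\alpha)}{(\alpha,\alpha)}\alpha$. For $y\in V$, setting $x=w^{-1}(y)$ and using orthogonality of $w$ (so that $(w^{-1}(y),\alpha)=(y,w(\alpha))$ and $(\alpha,\alpha)=(w(\alpha),w(\alpha))$) gives
\[
w s_\alpha w^{-1}(y)=w(x)-2\frac{(x,\alpha)}{(\alpha,\alpha)}w(\alpha)=y-2\frac{(y,w(\alpha))}{(w(\alpha),w(\alpha))}w(\alpha)=s_{w(\alpha)}(y),
\]
where $w(\alpha)\in R$ so that $s_{w(\alpha)}$ is indeed a reflection of $R$. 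With the paper's conjugation convention $x^w=wxw^{-1}$ (as used, e.g., in the proof of Lemma~\ref{lemma_the_prop_def_transla_crysp}) this reads $(s_\alpha)^w=s_{w(\alpha)}$.

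The ``in particular'' clause then follows by combining (1) and (2). A Coxeter system of simple reflections is a set $S=\{s_\alpha:\alpha\in B\}$ attached to some basis $B$ of $R$ (Fact~\ref{fact - simple reflections form a Coxeter system of the Weyl gp}). An inner automorphism of $W_0(R)$ is conjugation by some $w\in W_0(R)$, and by (2) it sends $S$ to $\{s_{w(\alpha)}:\alpha\in B\}=\{s_\beta:\beta\in w(B)\}$; by (1) the set $w(B)$ is again a basis of $R$, so the image is once more a Coxeter system of simple reflections. I do not anticipate any genuine obstacle: the only points requiring care are the bookkeeping of the conjugation convention in (2) and the use of orthogonality to move $w$ across the inner product, both of which are routine.
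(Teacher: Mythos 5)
Your proposal is correct, but note that there is no proof in the paper to compare against: the statement is recorded as a Fact imported from the literature (Humphreys/Bourbaki), so your blind write-up is in effect supplying the proof the paper omits. What you give is the standard argument, and it is complete. For item (1), invoking Fact~\ref{fact - the Weyl group acts simply transitively on the set of bases of a root system} is legitimate (it precedes the present statement in the paper), and your alternative direct verification from Definition~\ref{fact - basis of a root system} is also sound: invertibility of $w$ plus stability of $R$ under both $w$ and $w^{-1}$ give that $w(B)$ consists of indivisible roots, is a vector-space basis, and inherits the same-sign expansion property. For item (2), the orthogonality computation is exactly the classical proof; the two delicate points are correctly handled, namely (i) that the root reflection $s_\alpha$ (defined via the coroot) coincides with the Euclidean formula $x\mapsto x-2\frac{(x,\alpha)}{(\alpha,\alpha)}\alpha$, which is licensed by the hypothesis $W_0(R)\leq O(V)$ together with Fact~\ref{fact - every involution induces a decomposition of V in ker(1-s)+ker(1+s)} (and applies equally to $s_{w(\alpha)}$, so the computed conjugate really is the root reflection attached to $w(\alpha)$), and (ii) the paper's conjugation convention $x^w=wxw^{-1}$, which matches its later uses of this Fact, e.g.\ in Lemma~\ref{lemma - abstract reflections and affine reflections cohincide}. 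The ``in particular'' clause then follows from (1), (2) and Fact~\ref{fact - simple reflections form a Coxeter system of the Weyl gp} exactly as you say.
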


\begin{definition}{\cite[Definition\cspace1.36, \S\phantom{.}1.4, Ch.{\cspace}1]{franzsen}}\label{def - inner-by-graph automorphism}
	Let $R$ be a root system in a real vector space $V$ with Weyl group $W_0(R)$. Suppose that $B$ is a basis of $R$ and $S=\{s_\alpha:\alpha\in B\}$ is a system of simple reflections. Then, an automorphism $f\in\mathrm{Aut}(W_0(R))$ is said to be \emph{inner-by-graph} if it belongs to the subgroup of $\mathrm{Aut}(W_0(R))$ generated by the subgroup of inner automorphisms and the subgroup of graph automorphisms of $(W_0(R), S)$, i.e., the permutations of $S$ inducing an automorphism of the Coxeter diagram $\Delta$ of $(W_0(R),S)$ (cf. \ref{def_Coxeter_graph} and \ref{fact - simple reflections form a Coxeter system of the Weyl gp}).
\end{definition}
Since the subgroup of inner automorphisms of $W_0(R)$ is normal, any inner-by-graph automorphism $f$ of $W_0(R)$ is a composition of the form:
\begin{equation*}
	f=(\,\cdot\,)^w\circ f_\sigma,
\end{equation*}
\noindent
for some graph automorphism $\sigma$ and some $w\in W_0(R)$ (cf. \cite[\S\phantom{.}1.4, Ch.\phantom{.}1]{franzsen}).
\begin{fact}{\cite[Proposition\cspace1.44, \S\phantom{.}1.4, Ch\phantom{.}1]{franzsen}}\label{fact - every automorphism of a Weyl group of an irreducible root system is inner-by-grapph} Let $R$ be an irreducible reduced root system in a real vector space $V$ with Weyl group $W_0(R)$. If $B$ is a basis of $R$, then every automorphism $f\in\mathrm{Aut}(W_0(R))$ preserving the set $S^{W_0(R)}=\{ws_\alpha w^{-1}:\alpha\in B\}$ is inner-by-graph.
\end{fact}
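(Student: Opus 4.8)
The plan is to show that such an $f$ differs from a graph automorphism by an inner automorphism, by proving that $f$ carries the chosen system of simple reflections $S=\{s_\alpha:\alpha\in B\}$ to a $W_0(R)$-conjugate of a system of simple reflections. First I would unwind the hypothesis. By Fact~\ref{fact - every reflection in a Weyl group is induced by a root}, the identity $(s_\alpha)^w=s_{w(\alpha)}$ from Fact~\ref{fact - w s_alpha w^-1 = s_w(alpha)}, and Fact~\ref{fact - every root in a reduced root system is the image of a root of the basis through an element of the Weyl group} (this is where reducedness of $R$ enters), the set $S^{W_0(R)}$ is exactly $\{s_\beta:\beta\in R\}$, i.e.\ the set of \emph{all} reflections of $W_0(R)$; so the assumption on $f$ is simply that $f$ sends reflections to reflections. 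For each $\alpha\in B$ I would write $t_\alpha:=f(s_\alpha)=s_{\beta_\alpha}$ with $\beta_\alpha\in R$. Since $f$ is an automorphism, the $t_\alpha$ generate $W_0(R)$ and the order of $t_\alpha t_\gamma$ equals that of $s_\alpha s_\gamma$, namely $m(\alpha,\gamma)$. As $R$ is irreducible, $W_0(R)$ has no nonzero fixed vector in $V$, so the common fixed space $\bigcap_\alpha\ker(1-t_\alpha)$ is $0$; hence the roots $\{\beta_\alpha:\alpha\in B\}$ are linearly independent and form a basis of $V$.

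The heart of the matter — and the step I expect to be the main obstacle — is to promote this to the assertion that, after a \emph{coherent} choice of signs, $B'':=\{\beta_\alpha:\alpha\in B\}$ is itself a basis (simple system) of $R$ in the sense of Definition~\ref{fact - basis of a root system}. Condition (1) there has just been verified, and condition (2) is automatic because $R$ is reduced, so every root is indivisible. The genuine content is condition (3), equivalently the statement that the $n$ reflecting hyperplanes $H_{\beta_\alpha}$ are the walls of a single chamber of the reflection arrangement of $W_0(R)$. Each pair $\{t_\alpha,t_\gamma\}$ generates a dihedral group of order $2m(\alpha,\gamma)$, so the rank-two subsystem $R\cap\langle\beta_\alpha,\beta_\gamma\rangle_\R$ is of type $I_2(m(\alpha,\gamma))$ and pins down the angle between $\beta_\alpha$ and $\beta_\gamma$ up to the two admissible signs. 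The delicate points will be (i) making a single global sign choice that renders all pairwise inner products non-positive, and (ii) checking that the resulting simplicial cone is not subdivided by any further reflecting hyperplane, i.e.\ that it is a genuine chamber. I would try to settle both uniformly, exploiting that the $t_\alpha$ generate all of $W_0(R)$, so that no reflecting hyperplane can cut the cone without forcing a relation incompatible with the matched orders; failing a uniform argument, (i)--(ii) can be verified type-by-type against the classification of irreducible reduced root systems recalled in the Appendix.

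Granting that $B''$ is a basis, the conclusion is quick. By Fact~\ref{fact - the Weyl group acts simply transitively on the set of bases of a root system} there is a unique $v\in W_0(R)$ with $v(B'')=B$; then, by Fact~\ref{fact - w s_alpha w^-1 = s_w(alpha)}, conjugation by $v$ sends each $t_\alpha=s_{\beta_\alpha}$ to $s_{v(\beta_\alpha)}\in S$ and permutes $S$. Hence $g:=(\,\cdot\,)^{v}\circ f$ is an automorphism of $W_0(R)$ stabilising $S$ setwise, say $g(s_\alpha)=s_{\sigma(\alpha)}$ for a permutation $\sigma$ of $B$. Since $g$ preserves orders of products, $m(\sigma(\alpha),\sigma(\gamma))=m(\alpha,\gamma)$, so $\sigma$ is an automorphism of the Coxeter diagram; as $g$ and the graph automorphism $f_\sigma$ agree on the generating set $S$, they coincide, i.e.\ $g=f_\sigma$. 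Therefore $f=(\,\cdot\,)^{v^{-1}}\circ f_\sigma$ is inner-by-graph in the sense of Definition~\ref{def - inner-by-graph automorphism}, as required.

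Beyond the reduction above, the whole weight of the proof rests on the rigidity statement of the second paragraph: a generating set of $\mathrm{rank}(R)$ reflections whose pairwise product orders reproduce the Coxeter matrix of $W_0(R)$ must be a simple system. I expect this, rather than the bookkeeping around inner and graph automorphisms, to be where the real effort lies, and it is plausibly the reason the original source treats it through a careful analysis of reflection subgroups rather than by the short geometric argument sketched here.
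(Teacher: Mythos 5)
The paper does not prove this statement at all: it is imported as a Fact, with a citation to Franzsen's thesis (Proposition 1.44 there), so there is no internal proof to compare your argument against; your proposal has to be judged on its own merits. Your reduction steps are correct and well organized: $S^{W_0(R)}$ is indeed the set of all reflections, the images $t_\alpha=f(s_\alpha)=s_{\beta_\alpha}$ generate $W_0(R)$ with matching product orders, the $\beta_\alpha$ are linearly independent (since the common fixed space of a generating set is the fixed space of $W_0(R)$, which is trivial), and the final bookkeeping (simple transitivity on bases, then $g=f_\sigma$ because they agree on generators) is fine. But the argument as written has a genuine gap, and you have located it yourself: the assertion that a generating set of $\mathrm{rank}(R)$ reflections realizing the Coxeter matrix is, after sign adjustment, a simple system is never proved. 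You offer only the hope of "a uniform argument" or a type-by-type check, and neither is carried out. Since this assertion is the entire content of the theorem, the proposal is a correct reduction, not a proof.

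Moreover, the uniform argument you hope for — "no reflecting hyperplane can cut the cone without forcing a relation incompatible with the matched orders" — cannot exist at the level of generality at which you phrase it, because the purely group-theoretic statement is false for non-crystallographic reflection groups. In the dihedral group $I_2(5)$ of order $10$, two reflections whose mirrors meet at angle $2\pi/5$ generate the group and their product has order $5$, exactly matching the Coxeter matrix, yet no choice of signs turns the corresponding roots into a simple system (simple roots meet at $4\pi/5$, not $3\pi/5$); correspondingly, $I_2(5)$ has reflection-preserving automorphisms that are not inner-by-graph. So any correct proof must use crystallographicity in an essential way. A workable completion of your outline would: (a) use that the Coxeter diagram of $W_0(R)$ is a tree to make the global sign choice rendering all pairwise inner products non-positive (your point (i), which a parity obstruction blocks for non-tree configurations); (b) observe that for $m\in\{2,3,4,6\}$ an obtuse angle realizing product order $m$ is forced to be exactly $\pi-\pi/m$ — this is where $m=5$ fails and where integrality enters; and (c) invoke the Tits/Vinberg root-basis theorem (a linearly independent set of roots at exact dihedral angles $\pi-\pi/m_{ij}$ spans a fundamental chamber of the reflection group it generates), after which generation forces this subsystem to coincide with $R$ and your point (ii) follows. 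None of steps (a)--(c) appears in the proposal, so as it stands the heart of Franzsen's proposition remains unproven.
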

In Section\cspace\ref{sec6} we will see that, for any choice of a basis $B$ for a root system $R$ in $V$, the set $S^{W_0(R)}=\{ws_\alpha w^{-1}:\alpha\in B\}$ actually consists of all the affine reflections in $W_0(R)$. This will be crucial for the proof of Theorem\cspace\ref{main_th2}.

\begin{definition}{\cite[$\S\phantom{.}1.9$, Ch.\phantom{.}VI]{bourbaki}}\label{def - Q(R) and P(R)}
	Let $V$ be a finite-dimensional real vector space. If $R$ is a root system in $V$ and $W_0(R)$ is the Weyl group of $R$, then we define:
	\begin{enumerate}[(1)]
		\item $P(R)\vcentcolon=\{x\in V:\forall \alpha\in R(\coalpha(x)\in\Z)\}$, the group of \emph{weights} of $R$.
		\item $Q(R)\vcentcolon=\langle R\rangle_\Z$, the group of \emph{radical weights} of $R$;
	\end{enumerate}
\end{definition}
Clearly, by Definition\cspace\ref{def - root system}(2)(c), every radical weight is also a weight, and hence the inclusion $Q(R)\leq P(R)$ holds for any root system $R$. Both $Q(R)$ and $P(R)$ are known to be free abelian of the same rank (see \cite[Proposition\cspace26, \S\phantom{.}1.9, Ch.\phantom{.}VI]{bourbaki}), and the finite index $[P(R):Q(R)]$ is called the \emph{index of connection} of $R$ (cf.\cspace\cite[$\S${\phantom{.}}1.9, Ch.{\phantom{.}}VI]{bourbaki}). Since $P(R)$ and $Q(R)$ are stable under the action of the Weyl group $W_0$ of $R$, they are also endowed with a natural structure of $\Z[W_0]$-lattice (cf. \cite{martinais, maxwell}).

\smallskip
If $V$ is a finite-dimensional real vector space, any choice of a basis $B=\{v_j:1\leq j\leq l\}$ of $V$ determines a \emph{dual basis} $B^*=\{\delta_i:1\leq i\leq l\}$ in $V^*$ (cf. \ref{notation - root systems}) whose elements are the $\R$-linear maps $\delta_i:V\rightarrow \R$ such that $\delta_i(v_j)=1$ if $i=j$, and $0$ otherwise. The correspondence $v_i\mapsto\delta_i$, for $1\leq i\leq l$, naturally yields and isomorphism of $V$ and $V^*$.

If $R$ is a root system in $V$, then the identity $(\coalpha)^\vee=\alpha$ holds for each root $\alpha\in R$ (cf. \cite[Proposition\cspace2, \S\phantom{.}1.1, Ch.\phantom{.}VI]{bourbaki}). As a result, one has that the assignment $\alpha\mapsto\coalpha$ is a 1-1 correspondence, called \emph{canonical bijection}. This map usually does not extend to an isomorphism of $\langle R\rangle_\Z$ and $\langle R^\vee\rangle_\Z$, as it does not preserve sums of roots (see \cite[\S\phantom{.}1.1, Ch.\phantom{.}VI]{bourbaki}). Nevertheless, if the system $R$ is reduced, then every basis $B$ of $R$ yields a basis of $R^\vee$ of the form $B^\vee\vcentcolon=\{\coalpha:\alpha\in B\}$  (cf. \cite[Remarques(5), \S\phantom{.}1.5, Ch.\phantom{.}VI]{bourbaki}).

The weight group $P(R)$, as defined in  Definition\cspace\ref{def - Q(R) and P(R)}, is the free abelian group generated by the dual basis $(B^\vee)^*=\{\overline{\omega}_i:1\leq i\leq l\}$ of $B^\vee\subset V^*$ in $V$. The generators $\overline{\omega}_i$ are known as the \emph{fundamental weights} of the root system $R$ (cf. \cite[\S\phantom{.}1.10, Ch.\phantom{.}VI]{bourbaki}).  By Fact\cspace\ref{fact - Q(R^vee) is a Q-structure}, the natural isomorphism $V\cong V^*$ induced by the identification of $B^\vee$ and $(B^\vee)^*$ witnesses the following fundamental property.
\begin{fact}\label{fact - P(R) is a Q-structure on V}
	Let $R$ be a reduced root system in a real vector space $V$, and $P(R)$ be the group of weights of $R$. Then, $\langle P(R)\rangle_\Q$ is a $\Q$-structure on $V$.
\end{fact}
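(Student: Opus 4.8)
The plan is to exhibit a single set that is simultaneously a $\Q$-basis of $\langle P(R)\rangle_\Q$ and an $\R$-basis of $V$, and then invoke condition (1) of Definition~\ref{prop. - characterization Q-structures}. The natural candidate is the family of fundamental weights $\{\overline{\omega}_i : 1\leq i\leq l\}$, which, as recalled immediately before the statement, freely generates $P(R)$ as an abelian group.

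First I would fix a basis $B$ of $R$ and pass to $B^\vee=\{\alpha^\vee:\alpha\in B\}$. Since $R$ is reduced, every basis of $R$ yields a basis of the dual system $R^\vee$ of the form $B^\vee$; in particular $B^\vee$ is a basis of $V^*$ over $\R$. The fundamental weights $\overline{\omega}_i$ are by construction the dual basis $(B^\vee)^*$ of $B^\vee$, and the dual basis of an $\R$-basis of $V^*$ is an $\R$-basis of $V$ (via the canonical identification $V\cong V^{**}$). Hence $\{\overline{\omega}_i:1\leq i\leq l\}$ is an $\R$-basis of $V$.

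Next I would observe that, because $P(R)=\langle\overline{\omega}_1,\ldots,\overline{\omega}_l\rangle_\Z$, the fundamental weights $\Q$-span $\langle P(R)\rangle_\Q$. As they are $\R$-linearly independent, they are a fortiori $\Q$-linearly independent, so they constitute a $\Q$-basis of $\langle P(R)\rangle_\Q$. Thus $\{\overline{\omega}_i:1\leq i\leq l\}$ is a $\Q$-basis of $\langle P(R)\rangle_\Q$ which is at the same time an $\R$-basis of $V$, and condition (1) of Definition~\ref{prop. - characterization Q-structures} yields that $\langle P(R)\rangle_\Q$ is a $\Q$-structure on $V$.

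I expect no genuine obstacle here: all the real content is already packaged in the description of $P(R)$ as the $\Z$-lattice spanned by the dual basis of $B^\vee$, and in the elementary principle that an $\R$-basis consisting of rational vectors is automatically $\Q$-linearly independent. The only points requiring a line of care are confirming that the dual basis of a real basis of $V^*$ is again a real basis of $V$, and that $\Z$-generation of $P(R)$ by the $\overline{\omega}_i$ does give $\Q$-generation of $\langle P(R)\rangle_\Q$; both are immediate. (Equivalently, one could phrase the argument as transporting the $\Q$-structure $\langle R^\vee\rangle_\Q$ on $V^*$ from Fact~\ref{fact - Q(R^vee) is a Q-structure} across the isomorphism $V\cong V^*$ induced by identifying $B^\vee$ with $(B^\vee)^*$, but the direct dual-basis computation is cleaner.)
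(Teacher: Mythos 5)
Your proposal is correct and takes essentially the same route as the paper: the paper's one-line justification is exactly the transport argument you mention in your closing parenthesis, namely carrying the $\Q$-structure $\langle R^\vee\rangle_\Q$ of Fact~\ref{fact - Q(R^vee) is a Q-structure} across the isomorphism $V^*\cong V$ identifying $B^\vee$ with $(B^\vee)^*=\{\overline{\omega}_i:1\leq i\leq l\}$. Your direct check that the fundamental weights are simultaneously a $\Q$-basis of $\langle P(R)\rangle_\Q$ and an $\R$-basis of $V$ merely unwinds that isomorphism, resting on the same two inputs (reducedness, so that $B^\vee$ is a basis of $R^\vee$, and $P(R)=\langle\overline{\omega}_1,\ldots,\overline{\omega}_l\rangle_\Z$).
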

\begin{theorem}\label{thm. - R irreducible iff modules are (abs) irreducible}
	Let $R$ be a root system in a real vector space $V$ with Weyl group $W_0$. Then, the following conditions are equivalent:
	\begin{enumerate}[(1)]
		\item $R$ is irreducible in $V$;
		\item $R$ is irreducible as a root system in $\langle R\rangle_\Q$;
		\item $V$ is irreducible $($or indecomposable$)$ as an $\R[W_0]$-module;
		\item $V$ is absolutely irreducible as an $\R[W_0]$-module;
		\item $\langle R\rangle_\Q$ is irreducible $($or indecomposable$)$ as a $\Q[W_0]$-module;
		\item $\langle R\rangle_\Q$ is absolutely irreducible as a $\Q[W_0]$-module;
		\item $Q(R)$ is absolutely irreducible as a $\Z[W_0]$-lattice.
	\end{enumerate}
\end{theorem}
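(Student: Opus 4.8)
The plan is to show that each of the seven conditions is equivalent to (6), i.e.\ absolute irreducibility of $\langle R\rangle_\Q$ as a $\Q[W_0]$-module, using three ingredients: Maschke's theorem (so that, over $\Q$ or $\R$, ``irreducible'' and ``indecomposable'' coincide, making (3) and (5) unambiguous); the characterization that a finite-dimensional module $M$ over $K[W_0]$ with $K\supseteq\Q$ is absolutely irreducible if and only if $\End_{K[W_0]}(M)=K$ (\cite[Theorem\cspace29.13, Ch.\phantom{.}IV]{representation_theory_book}); and the fact that $\langle R\rangle_\Q$ is a $\Q$-structure on $V$ (Fact\cspace\ref{fact - Q(R) is always a Q-structure in V}). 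Concretely, I would prove the cycle $(6)\Rightarrow(4)\Rightarrow(3)\Rightarrow(1)\Rightarrow(6)$ together with $(6)\Rightarrow(5)\Rightarrow(2)$, the equivalences $(1)\Leftrightarrow(2)$ and $(6)\Leftrightarrow(7)$.

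\emph{Lattice and base-change reductions.} First I would dispatch $(6)\Leftrightarrow(7)$: a basis $B$ of $R$ lies in $Q(R)=\langle R\rangle_\Z$ and is a $\Q$-basis of $\langle R\rangle_\Q$, so $Q(R)$ is a full lattice and $\Q Q(R)\cong\langle R\rangle_\Q$ as $\Q[W_0]$-modules; the equivalence is then Definition\cspace\ref{def - absolutely irreducible lattice}. Next, since $\langle R\rangle_\Q$ is a $\Q$-structure on $V$, we have $V\cong\R\otimes_\Q\langle R\rangle_\Q$ as $\R[W_0]$-modules, whence $K\otimes_\R V\cong K\otimes_\Q\langle R\rangle_\Q$ for every field $K\supseteq\R$. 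As any such $K$ extends $\Q$, condition (6) forces every $K\otimes_\R V$ to be irreducible, i.e.\ (4); and $(4)\Rightarrow(3)$, $(6)\Rightarrow(5)$ are immediate, since absolute irreducibility implies irreducibility.

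\emph{Root-system reductions.} For $(3)\Rightarrow(1)$ and $(5)\Rightarrow(2)$ I argue by contraposition: if $R$ is reducible, write $R=\bigsqcup_{i<m}R_i$ into irreducible components; then $W_0\cong\prod_{i<m}W_0(R_i)$ (recalled above, cf.\ \cite[\S\phantom{.}1.2, Ch.\phantom{.}VI]{bourbaki}), each factor acting on $\langle R_i\rangle$ and trivially off it, so the $\langle R_i\rangle$ are proper nonzero $W_0$-submodules and $V$ (respectively $\langle R\rangle_\Q$) decomposes, contradicting indecomposability. Finally $(1)\Leftrightarrow(2)$: because $\langle R\rangle_\Q$ is a $\Q$-structure, a partition $R=R_0\sqcup R_1$ realizes $\langle R_0\rangle_\R\oplus\langle R_1\rangle_\R=V$ if and only if it realizes $\langle R_0\rangle_\Q\oplus\langle R_1\rangle_\Q=\langle R\rangle_\Q$ (the two are related by $\otimes_\Q\R$, which preserves and reflects the dimension count), while the remaining root-system axioms for $R_0,R_1$ are insensitive to the ground field; so the two notions of reducibility coincide.

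\emph{Closing the loop.} It remains to prove $(1)\Rightarrow(6)$, and this is the crux. Assuming $R$ irreducible, I would compute $\End_{\Q[W_0]}(\langle R\rangle_\Q)$: let $\phi$ commute with every reflection $s_\alpha\in W_0$. From $s_\alpha(x)=x-\coalpha(x)\alpha$ one sees that the $(-1)$-eigenspace of $s_\alpha$ on $\langle R\rangle_\Q$ is exactly the line $\Q\alpha$, so $\phi$ preserves it and $\phi(\alpha)=c_\alpha\alpha$ for each root $\alpha$, with $c_\alpha\in\Q$. For non-orthogonal, linearly independent roots $\alpha,\beta$ (so $\coalpha(\beta)\neq0$), expanding $\phi(s_\alpha(\beta))$ by linearity as $c_\beta\beta-\coalpha(\beta)c_\alpha\alpha$ and equating with $c_{s_\alpha(\beta)}\,s_\alpha(\beta)$, then comparing the $\alpha$- and $\beta$-coefficients, forces $c_\alpha=c_\beta$ (proportional roots share the constant trivially). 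Irreducibility of $R$ makes the non-orthogonality graph on the root lines connected --- a partition into two nonempty mutually orthogonal sets would immediately yield a reducible decomposition $R=R_0\sqcup R_1$ --- so all $c_\alpha$ equal a single $c\in\Q$, and since the roots span $\langle R\rangle_\Q$ we get $\phi=c\cdot\mathrm{id}$. Thus $\End_{\Q[W_0]}(\langle R\rangle_\Q)=\Q$, and (6) follows from \cite[Theorem\cspace29.13, Ch.\phantom{.}IV]{representation_theory_book}. The main obstacle is precisely this endomorphism computation: getting the $(-1)$-eigenspace identification right over $\Q$ and extracting connectedness of the non-orthogonality graph from irreducibility of $R$; the rest is bookkeeping with base change and the $\Q$-structure.
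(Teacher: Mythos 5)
Your proposal is correct, but it follows a genuinely different route from the paper's. The paper's proof is largely a matter of citation: the equivalences $[1\Leftrightarrow3\Leftrightarrow4]$ and $[2\Leftrightarrow5\Leftrightarrow6]$ are quoted from Bourbaki \cite[Corollaire, \S\phantom{.}1.2, Ch.\phantom{.}VI]{bourbaki}, and only $[1\Leftrightarrow2]$ (by the $\Q$-structure argument, essentially identical to yours) and $[6\Leftrightarrow7]$ are argued directly. You instead reprove the Bourbaki equivalences from scratch: base change along the $\Q$-structure ($V\cong\R\otimes_\Q\langle R\rangle_\Q$) gives $(6)\Rightarrow(4)\Rightarrow(3)$; contraposition via the direct-sum decomposition of a reducible system gives $(3)\Rightarrow(1)$ and $(5)\Rightarrow(2)$; and the crux $(1)\Rightarrow(6)$ is a Schur-type computation showing $\End_{\Q[W_0]}(\langle R\rangle_\Q)=\Q$ (each reflection $s_\alpha$ pins an equivariant endomorphism to the eigenline $\Q\alpha$, and connectedness of the non-orthogonality graph, forced by irreducibility, propagates the scalar). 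This is essentially the classical argument underlying the cited Corollaire, so your proof is a self-contained expansion of what the paper outsources; a side benefit is that it makes explicit the endomorphism-algebra criterion that the paper itself invokes later (in Lemma~\ref{thm. - absolutely irreducible -> there are only finitely many maximal centerings}). Two small points should be tightened, though neither is a real gap. First, \cite[Theorem\cspace29.13, Ch.\phantom{.}IV]{representation_theory_book} assumes the module is irreducible, so before invoking it you must observe that $\End_{\Q[W_0]}(\langle R\rangle_\Q)=\Q$ already forces irreducibility: by Maschke the algebra $\Q[W_0]$ is semisimple, and a nontrivial direct-sum decomposition would produce a non-scalar idempotent endomorphism. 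Second, your claim that a partition of $R$ into two nonempty mutually orthogonal subsets yields a reducible decomposition in the sense of Definition~\ref{def. - irreducible root system} deserves a line of justification: orthogonality of coroots (i.e.\ $\coalpha(\beta)=0$) coincides with orthogonality for a $W_0$-invariant inner product, so the two spans intersect trivially and sum to the whole space, and each $s_\alpha$ with $\alpha\in R_0$ fixes $R_1$ pointwise, which makes each $R_i$ a root system in its own span.
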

\begin{proof}
	Equivalence $[6\Leftrightarrow 7]$ follows directly from Definition\cspace\ref{def - absolutely irreducible lattice} and the fact that $\langle R\rangle_\Q$ and $\Q Q(R)$ are isomorphic as $\Q[W_0]$-modules. Equivalences $[1\Leftrightarrow3\Leftrightarrow4]$ and $[2\Leftrightarrow5\Leftrightarrow6]$ are established in \cite[Corollaire, \S\phantom{.}1.2, Ch.\phantom{.}VI]{bourbaki}. It remains to prove the equivalence $[1\Leftrightarrow2]$. The implication $[2\Rightarrow1]$ is immediate, as any decomposition of $R=R_0\sqcup R_1$ satisfying
	\begin{enumerate}[(a)]
		\item $R_0,R_1\neq\emptyset$;
		\item $V=\langle R_0\rangle_\R\oplus\langle R_1\rangle_\R$;
		\item each $R_i$ is a root system in $\langle R_i\rangle_\R$;
	\end{enumerate}
	\noindent
	naturally induces a decomposition of $\langle R\rangle_\Q$ as a direct sum of $\Q$-vector spaces $\langle R\rangle_\Q=\langle R_0\rangle_\Q\oplus\langle R_1\rangle_\Q$, in which each $R_i$ remains a root system in $\langle R\rangle_\Q$, by Definition\cspace\ref{def - root system}(2)(c).
	
	\smallskip\noindent
	For implication $[1\Rightarrow2]$, assume, towards a contradiction, that there exist two non-empty subsets $R'_0,R_1'\subseteq R$ such that:
	\begin{enumerate}[(a')]
		\item $R=R'_0\sqcup R'_1$;
		\item $\langle R\rangle_\Q=\langle R'_0\rangle_\Q\oplus\langle R'_1\rangle_\Q$;
		\item each $R'_i$ is a root system in the $\Q$-vector space $\langle R'_i\rangle_\Q$.
	\end{enumerate}
	\noindent
	Then, by (c'), $R_0$ and $R_1$ are also root systems in $\langle R_0\rangle_\R$ and $\langle R_1\rangle_\R$, respectively (cf. Fact\cspace\ref{fact - Q(R) is always a Q-structure in V} and the related discussion). Since the rational spans of $R_0,R_1$ are $\Q$-structures in $\langle R_0\rangle_\R$ and $\langle R_1\rangle_\R$, Definition\cspace\ref{prop. - characterization Q-structures}(2) implies that each basis $B_i$ of $\langle R_i\rangle_\Q$ is also a basis of $\langle R_i\rangle_\R$ as a real vector space. Moreover, by (b'), $B_0\sqcup B_1$ is a basis of $\langle R\rangle_\Q$, which is a $\Q$-structure on $V$, since $R\subseteq V$ is a root system. Hence, applying Definition\cspace\ref{prop. - characterization Q-structures}(2) once more, we conclude that $B_0\sqcup B_1$ is a basis of $V$ yielding the decomposition $V=\langle R_0\rangle_\R\oplus\langle R_1\rangle_\R$. This contradicts the irreducibility of $R$ in $V$, completing the proof.
\end{proof}
\section{Crystallography of Coxeter groups}\label{sec6}

In this section, we consider crystallographic groups admitting an affine realization as groups of isometries of a finite-dimensional real Euclidean vector space $V$, whose point group is generated by reflections and essential. A subgroup $W_0\leq O(V)$ is called \emph{essential} if its fixed-point subspace is trivial; that is, if we have:
\begin{equation*}
    V^{W_0}\vcentcolon=\{x\in V:w(x)=x\quad\text{for all}\quad w\in W_0\}=\{0\}.
\end{equation*}
\begin{fact}[{\cite[Proposition\cspace9, \S\phantom{.}2.5]{bourbaki}}]\label{fact - property of finite groups of isometries essential and generated by reflections} If $V$ is a real Euclidean vector space of finite dimension $l$, and $W_0$ is a finite subgroup of $O(V)$ that is essential and generated by reflections, then the following conditions are equivalent:
\begin{enumerate}[(1)]
	\item there exists a lattice of rank $l$ in $V$ stable under $W_0$;
	\item there exists a root system $R$ in $V$ whose Weyl group is $W_0$.
\end{enumerate}
\end{fact}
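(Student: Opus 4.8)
The plan is to prove the two implications separately, the direction $(2)\Rightarrow(1)$ being routine and $(1)\Rightarrow(2)$ carrying the real content. For $(2)\Rightarrow(1)$, assume $R$ is a root system in $V$ with $W_0(R)=W_0$, and take the root lattice $Q(R)=\langle R\rangle_\Z$. Since $R$ spans $V$ over $\R$ by Definition~\ref{def - root system}(1) and $Q(R)$ is free abelian of rank $l=\dim V$, it is a lattice of full rank; it is stable under $W_0$ because the Weyl group permutes $R$ and hence preserves its $\Z$-span. This gives (1), and notice that neither essentiality nor being generated by reflections is needed here.

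For $(1)\Rightarrow(2)$, let $L$ be a $W_0$-stable lattice of rank $l$; the idea is to manufacture one pair of roots for each reflection of $W_0$ directly out of $L$. For a reflection $s\in W_0$, write $V=H_s\oplus D_s$ for the $(+1)$- and $(-1)$-eigenspaces of $s$, so that $D_s=\R a_s$ is a line by Fact~\ref{fact - every involution induces a decomposition of V in ker(1-s)+ker(1+s)}. First I would show that $L\cap D_s$ is a lattice of rank $1$: the operator $1-s$ annihilates $H_s$ and acts as $2\cdot\mathrm{id}$ on $D_s$, so its image is exactly $D_s$; since $L$ spans $V$ there is $x\in L$ with $(x,a_s)\neq 0$, whence $0\neq(1-s)(x)=x-s(x)\in L\cap D_s$ using $s(L)=L$, and discreteness of $L$ forces $L\cap D_s=\Z\alpha_s$ for a primitive vector $\alpha_s$. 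I then set $R\vcentcolon=\{\pm\alpha_s : s \text{ a reflection of } W_0\}$.

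The crux of the verification is the crystallographic condition. For any $\beta\in L$ and any reflection $s$ one has $\beta-s(\beta)=2\frac{(\beta,\alpha_s)}{(\alpha_s,\alpha_s)}\alpha_s$, and the left-hand side lies in $L\cap D_s=\Z\alpha_s$; comparing coefficients yields $2\frac{(\beta,\alpha_s)}{(\alpha_s,\alpha_s)}\in\Z$, so the coroot $\coalpha_s$ is integer-valued on $L$, in particular on $R$, giving Definition~\ref{def - root system}(2)(c). The remaining axioms I would dispatch as follows: $R$ is finite and avoids $0$ by construction; $R$ spans $V$ because essentiality forces the normals $\{a_s\}$ to span, since any vector orthogonal to all of them lies in every $H_s$, hence is fixed by every reflection and thus by all of $W_0$, and so must be $0$; the identity $\coalpha_s(\alpha_s)=2$ is immediate; and $R$ is stable under each $s_{\alpha_s}=s$ because $W_0$ permutes its reflections by conjugation (with $w s w^{-1}$ the reflection of normal $w(a_s)$) while preserving $L$, so it carries each line $D_s$ to another such line and each generator $\pm\alpha_s$ to $\pm\alpha_{s'}$. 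Finally $W_0(R)=W_0$: every $s_\alpha$ with $\alpha\in R$ is by construction a reflection of $W_0$, and conversely every reflection $s$ of $W_0$ equals $s_{\alpha_s}$ with $\alpha_s\in R$, so since $W_0$ is generated by its reflections the two groups coincide.

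The main obstacle is precisely the integrality step together with the identification $L\cap D_s=\Z\alpha_s$: the whole argument hinges on $\beta-s(\beta)$ landing inside the rank-one lattice $L\cap D_s$, which is what converts $W_0$-stability of $L$ into the crystallographic restriction $\coalpha(R)\subseteq\Z$. The two hypotheses are used sharply and in different places — essentiality guarantees that the constructed roots span $V$, while ``generated by reflections'' is exactly what is needed to recover all of $W_0$ as the Weyl group $W_0(R)$.
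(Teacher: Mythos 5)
The paper itself offers no proof of this statement: it is imported verbatim as a fact from Bourbaki (Ch.~V, \S\,2.5, Proposition~9), so there is no internal argument to compare against. Your proof is correct, and it is essentially the classical argument underlying Bourbaki's proposition. All the key steps check out: for each reflection $s$ the intersection $L\cap\ker(1+s)$ is a nonzero discrete subgroup of a line, hence $\Z\alpha_s$ for a primitive $\alpha_s$; the constructed set $R=\{\pm\alpha_s\}$ lies inside $L$, so your observation that $\beta-s(\beta)\in L\cap\ker(1+s)=\Z\alpha_s$ for all $\beta\in L$ yields exactly the integrality axiom $\coalpha(R)\subseteq\Z$ of Definition~\ref{def - root system}(2)(c); conjugation permutes the reflections of $W_0$ and hence (using $w(L)=L$) permutes the lines $\Z\alpha_s$, giving stability of $R$ under each $s_{\alpha_s}=s$; essentiality gives spanning; and ``generated by reflections'' gives $W_0(R)=W_0$. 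The only step stated too loosely is the easy direction $(2)\Rightarrow(1)$: ``free abelian of rank $l$ and spanning $V$'' does not by itself imply discreteness (consider $\Z+\Z\sqrt{2}\subset\R$), so to conclude that $Q(R)$ is a lattice in the paper's sense you should instead invoke the existence of a basis $B$ of $R$ (Definition~\ref{fact - basis of a root system}), which is simultaneously a $\Z$-basis of $Q(R)$ and an $\R$-basis of $V$; this is a one-line repair and does not affect the substance of your argument.
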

In \cite{martinais, maxwell}, crystallographic groups with point groups satisfying the above conditions were analyzed. Building on \cite{martinais}, we consider the following notion.
\begin{definition}\label{def - crystallographic group arising from a root system}
	We say that an (abstract) crystallographic group $W$ \emph{arises from a root system} if it admits an affine realization as a discrete subgroup of the isometry group of an $l$-dimensional Euclidean vector space $V$, with $l$ being the rank of the translation subgroup, such that the associated point group $W_0\leq O(V)$ is essential and generated by reflections $($in the sense of the ambient space $V)$. By Fact\cspace\ref{fact - property of finite groups of isometries essential and generated by reflections}, $W_0$ is the Weyl group of some root system $R$ in $V$. If, in addition, $R$ can be chosen to be irreducible, we say that $W$ \emph{arises from an irreducible root system}.
\end{definition}
%
\begin{fact}{\cite[Theorem\cspace1.10]{martinais}}\label{fact - existence reduced root system and sandwich condition}
	If $R$ is a root system in a real Euclidean vector space $V$ whose Weyl group $W_0(R)\leq O(V)$ is essential and generated by reflections, then for every lattice $L$ in $V$ invariant under $W_0(R)$ there exists a reduced root system $R'$ in $V$ such that the following conditions are satisfied:
	\begin{enumerate}[(1)]
		\item $W_0(R)$ is the Weyl group of $R'$ (i.e., $W_0(R)=W_0(R')$);
		\item $Q(R')\leq L\leq P(R')$.
	\end{enumerate}
\end{fact}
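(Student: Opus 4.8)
The plan is to construct $R'$ explicitly from the reflecting hyperplanes of $W_0$ and the invariant lattice $L$, using the hypothesis $W_0(R)=W_0$ only to know that $W_0$ is an essential finite reflection group. Since $W_0$ is finite and generated by reflections, it contains only finitely many reflections $s$, each determined by its fixed hyperplane $H_s=\mathrm{ker}(1-s)$ and its $(-1)$-eigenline $\ell_s=\mathrm{ker}(1+s)$, as in Fact~\ref{fact - every involution induces a decomposition of V in ker(1-s)+ker(1+s)}. Because $W_0$ is essential we have $\bigcap_s H_s=V^{W_0}=\{0\}$ (a vector fixed by all reflections is fixed by the group they generate), so the lines $\ell_s$ together span $V$. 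The idea is to place on each such line the generator of the rank-one lattice $L\cap\ell_s$ and to let $R'$ be the resulting symmetric set of vectors.

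The key preliminary step is to show that, for each reflection $s\in W_0$, the intersection $L\cap\ell_s$ is a nonzero lattice of rank one. First I would note that, since $s$ preserves $L$, for every $x\in L$ the vector
\[ x-s(x)=2\frac{(x,\beta)}{(\beta,\beta)}\,\beta \]
(where $\beta$ is any nonzero vector spanning $\ell_s$, using the orthogonal description from Fact~\ref{fact - every involution induces a decomposition of V in ker(1-s)+ker(1+s)}) lies in $L$ and on $\ell_s$, hence in $L\cap\ell_s$. As $L$ spans $V$, some $x\in L$ has $(x,\beta)\neq0$, so $L\cap\ell_s\neq\{0\}$; and as $L$ is a lattice, $L\cap\ell_s$ is discrete in the line $\ell_s$, hence infinite cyclic. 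Let $\beta_s$ denote a generator, so that $L\cap\ell_s=\langle\beta_s\rangle_\Z$.

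I would then set $R'\vcentcolon=\{\pm\beta_s : s\text{ a reflection of }W_0\}$ and verify the axioms of Definition~\ref{def - root system}. The set is finite, avoids $0$, and spans $V$ by the essentiality remark above; the candidate coroot is the $\R$-linear form $\beta_s^\vee(x)\vcentcolon=2(x,\beta_s)/(\beta_s,\beta_s)$, which satisfies $\beta_s^\vee(\beta_s)=2$ and induces exactly the reflection $s$. For integrality, note $\beta_s^\vee(x)\,\beta_s=x-s(x)\in L\cap\ell_s=\langle\beta_s\rangle_\Z$ for every $x\in L$, so $\beta_s^\vee(x)\in\Z$ for all $x\in L$; since every $\beta_{s'}\in R'$ lies in $L$, this yields $\beta_s^\vee(R')\subseteq\Z$. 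Stability under each $s$ holds because $s$ permutes the reflecting lines by conjugation and preserves $L$, hence sends the generator $\beta_{s'}$ to a generator $\pm\beta_{ss's^{-1}}$ of the conjugate line, so $s(R')=R'$. Reducedness is immediate, since by construction the only roots on each line $\ell_s$ are $\pm\beta_s$. Finally, $W_0(R')$ is generated by the reflections $s_{\beta_s}=s$ as $s$ ranges over all reflections of $W_0$, which generate $W_0$; hence $W_0(R')=W_0=W_0(R)$, giving conclusion (1).

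For the sandwich (2): by Definition~\ref{def - Q(R) and P(R)}, $Q(R')=\langle R'\rangle_\Z$, and since each $\beta_s\in L$ we get $Q(R')\subseteq L$; while $P(R')=\{x\in V:\beta_s^\vee(x)\in\Z\text{ for all }s\}$, and the integrality computation above shows precisely that every $x\in L$ lies in $P(R')$, i.e.\ $L\subseteq P(R')$. The step I expect to be the crux is the rank-one lemma together with its two consequences pulling in opposite directions: the generator $\beta_s$ of $L\cap\ell_s$ must simultaneously be small enough to lie in $L$ (controlling $Q(R')$ from above) and large enough that $\beta_s^\vee$ is integral on all of $L$ (controlling $P(R')$ from below). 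Both follow from the single identity $x-s(x)=\beta_s^\vee(x)\,\beta_s\in L\cap\ell_s$, so the real content is in establishing that $L\cap\ell_s$ is exactly a full rank-one sublattice and that taking its generator as the root forces every integrality condition to hold on the nose.
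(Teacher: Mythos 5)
The paper does not prove this statement at all: it is quoted as a Fact with a citation to \cite{martinais}, so there is no internal proof to compare against. Your argument is correct and is, in essence, the classical construction underlying Maxwell's and Martinais's theorem: for each reflection $s$ of $W_0$ take the primitive vector $\beta_s$ of the rank-one lattice $L\cap\ker(1+s)$, and let $R'=\{\pm\beta_s\}$. The key verifications all hold as you state them: $L\cap\ell_s\neq\{0\}$ because $x-s(x)=\beta_s^\vee(x)\beta_s\in L\cap\ell_s$ and $L$ spans $V$; the lines $\ell_s$ span $V$ because essentiality gives $\bigcap_s\ker(1-s)=V^{W_0}=\{0\}$ and $\ker(1-s)=\ell_s^\perp$; stability holds since $s(L\cap\ell_{s'})=L\cap\ell_{ss's^{-1}}$ forces $s(\beta_{s'})=\pm\beta_{ss's^{-1}}$; reducedness follows because an orthogonal reflection is determined by its $(-1)$-eigenline, so distinct reflections contribute distinct lines and each line carries only $\pm\beta_s$; and the single identity $x-s(x)=\beta_s^\vee(x)\beta_s\in\Z\beta_s$ for $x\in L$ simultaneously yields $Q(R')=\langle R'\rangle_\Z\leq L$ (since each $\beta_s\in L$) and $L\leq P(R')$ (integrality of every $\beta_s^\vee$ on $L$), which is exactly the sandwich in condition (2) of the statement, while $W_0(R')=\langle s:\ s\ \text{reflection of}\ W_0\rangle=W_0(R)$ gives condition (1). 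So your proposal supplies a complete, self-contained proof of a result the paper only imports; the only cosmetic remark is that the hypothesis that $W_0(R)$ is a Weyl group is indeed used for nothing beyond knowing it is a finite essential reflection group stabilizing $L$, which is also how the cited source frames it.
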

\begin{remark}\label{remark - we can consider reduced root systems}
	Let $W'$ be a crystallographic group arising from an irreducible root system, and let $V$ be a finite-dimensional real Euclidean vector space. Suppose that ${W}$ is an affine realization of $W'$ in $V$, whose point group $W_0\leq O(V)$ is the Weyl group of an irreducible root system $R$ in $V$. Then, in light of Fact\cspace\ref{fact - existence reduced root system and sandwich condition}, it is always possible to find a reduced root system $R'$ in $V$ with Weyl group $W_0(R')=W_0$ such that the translation lattice $L$ of $W$ satisfies the inclusions:
	\begin{equation*}
		Q(R')\leq L\leq P(R').
	\end{equation*}
	\noindent
	Since $R$ is irreducible, the Coxeter diagram of $W_0$ is connected, and hence $R'$ is also irreducible, by Fact\cspace\ref{fact - R reduced root system is irreducible iff the coxeter diagram is connected}.
	For this reason, in the remainder of the paper, we will adopt a standard convention (cf. \cite{martinais, maxwell}) and simply say \emph{irreducible} root system in place of \emph{irreducible reduced} root system. In fact, since all our results only depend on the point group $W_0(R)$ and the translation lattice $L$ (considered as a $\mathbb{Z}[W_0(R)]$-module), they remain valid in the general non-reduced context. 
\end{remark}

In \cite[Table I]{maxwell} (also reproduced in Table\cspace\ref{table all inequivalent lattices} below), Maxwell classified the isomorphism classes of lattices in a finite-dimensional real Euclidean vector space $V$ that are invariant under the action of the Weyl group $W_0(R)$ associated with an irreducible root system $R$ in $V$. These classes are represented by lattices $L$ satisfying:
\begin{equation*}
	Q(R)\leq L\leq P(R)
\end{equation*}
\noindent
(see also \cite[Tables I, II]{martinais}). Building on the techniques developed in Section~\ref{sec4}, we establish in the following lemma that all these lattices are absolutely irreducible.

\begin{lemma}\label{lemma-every_lattice_L_s.t._Q(R)<L<P(R)_is_absolutely_irreducible}
	Let $R$ be a root system in a real vector space $V$. If $W_0(R)$ is the Weyl group of $R$ in $V$, then the following conditions are equivalent:
	\begin{enumerate}[(1)]
		\item $R$ is irreducible in $V$;
		\item any $\Z[W_0(R)]$-lattice $L$ fitting into the chain of inclusions:
		\begin{equation*}
			Q(R)\leq L\leq P(R)
		\end{equation*}
		\noindent
		is absolutely irreducible.
	\end{enumerate}
\end{lemma}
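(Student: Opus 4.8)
The plan is to prove the equivalence by chaining through the characterizations already established in Theorem~\ref{thm. - R irreducible iff modules are (abs) irreducible}, reducing everything to the case $L = Q(R)$ and then exploiting that $Q(R) \leq L \leq P(R)$ forces $L$ to be $\Q$-equivalent to $Q(R)$. First I would handle the direction $[2 \Rightarrow 1]$: taking $L = Q(R)$ itself (which clearly fits into the chain $Q(R) \leq Q(R) \leq P(R)$), the hypothesis gives that $Q(R)$ is absolutely irreducible as a $\Z[W_0(R)]$-lattice, which by the equivalence $[7 \Rightarrow 1]$ of Theorem~\ref{thm. - R irreducible iff modules are (abs) irreducible} yields that $R$ is irreducible in $V$.

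For the substantive direction $[1 \Rightarrow 2]$, assume $R$ is irreducible. By Theorem~\ref{thm. - R irreducible iff modules are (abs) irreducible}, the group $Q(R)$ is absolutely irreducible as a $\Z[W_0(R)]$-lattice, i.e.\ $\Q Q(R) \cong \langle R \rangle_\Q$ is absolutely irreducible as a $\Q[W_0]$-module. The key observation is that any $L$ with $Q(R) \leq L \leq P(R)$ is a free abelian group of the same rank as $Q(R)$: since $[P(R):Q(R)]$ is finite (the index of connection) and $Q(R) \leq L$, the index $[P(R):L]$ is finite and hence $Q(R)$ has finite index in $L$. Thus $Q(R)$ is a centering of $L$ (equivalently $L$ is an overlattice of $Q(R)$ of maximal rank), so by Lemma~\ref{lemma every centering is Q-equivalent to the ambient module} we get $Q(R) \sim_\Q L$, i.e.\ $\Q Q(R) \cong \Q L$ as $\Q[W_0]$-modules. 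Absolute irreducibility is a property of the $\Q[W_0]$-module $\Q L$, and it is clearly invariant under $\Q[W_0]$-module isomorphism; hence $\Q L$ is absolutely irreducible, which by Definition~\ref{def - absolutely irreducible lattice} means precisely that $L$ is an absolutely irreducible $\Z[W_0(R)]$-lattice.

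The only genuinely delicate point is verifying that $Q(R)$ really is a \emph{finite-index} sublattice of $L$ (so that the centering machinery of Section~\ref{sec4} applies). This follows because $P(R)$ and $Q(R)$ are free abelian of the same rank with $[P(R):Q(R)]$ finite, and $L$ is sandwiched between them; one should confirm that $L$ is finitely generated and of full rank, which is immediate since $Q(R) \leq L$ forces $\mathrm{rank}(L) \geq \mathrm{rank}(Q(R)) = \dim_\Q \langle R \rangle_\Q$ while $L \leq P(R)$ caps the rank at the same value. I expect this finite-index bookkeeping, rather than any representation-theoretic subtlety, to be the main (though minor) obstacle; once it is in place the argument is a direct application of Lemma~\ref{lemma every centering is Q-equivalent to the ambient module} together with the chain of equivalences in Theorem~\ref{thm. - R irreducible iff modules are (abs) irreducible}.
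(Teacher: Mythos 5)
Your proposal is correct and follows essentially the same route as the paper's proof: both directions use Theorem~\ref{thm. - R irreducible iff modules are (abs) irreducible} in exactly the same way, and the key step $[1\Rightarrow2]$ proceeds identically via the finite index of connection, the factorization $[P(R):Q(R)]=[P(R):L]\cdot[L:Q(R)]$, the observation that $Q(R)$ is a centering of $L$, and Lemma~\ref{lemma every centering is Q-equivalent to the ambient module}. The extra rank bookkeeping you flag is harmless but not needed beyond what the paper already records.
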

\begin{proof}
	Direction $[2\Rightarrow1]$ is trivial: it follows directly from instantiating implication $[7\Rightarrow1]$ of Theorem\cspace\ref{thm. - R irreducible iff modules are (abs) irreducible} to the case $L= Q(R)$.
	
	\smallskip\noindent
	For the converse direction $[1\Rightarrow2]$, observe that $Q(R)$ is absolutely irreducible, by Theorem\cspace\ref{thm. - R irreducible iff modules are (abs) irreducible}. According to Definition\cspace\ref{def - absolutely irreducible module}, this means that $\Q Q(R)$ is absolutely irreducible as a $\Q[W_0(R)]$-module. Furthermore, by Definition\cspace\ref{def - Q(R) and P(R)} and the subsequent discussion, $Q(R)$ has finite index in $P(R)$. Since
	\begin{equation*}
		[P(R):Q(R)]=[P(R):L]\,\cdot\,[L:Q(R)],
	\end{equation*}
	\noindent
	it follows that $Q(R)$ has finite index in $L$. Hence, by Definition\cspace\ref{def - centering of a Z[G]-repr module}, $Q(R)$ is a centering of $L$. Therefore, by applying Lemma\cspace\ref{lemma every centering is Q-equivalent to the ambient module}, we conclude that $\Q Q(R)\cong\Q L$, and so that $L$ is absolutely irreducible, completing the proof. 
\end{proof}

Maxwell's inequivalent lattices from \cite[Table I]{maxwell} were explicitly computed by Martinais in \cite[Tables I, II, III]{martinais}, employing Bourbaki's classical framework \cite[Planche~I to IX]{bourbaki}. These lattices fall into a few families, parametrized by the rank $l$ of the root system $R$. For completeness, these lattices are presented in \mbox{Table\cspace\ref{table - martinais nomenclature} below.}

\medskip
By Bieberbach's First Theorem, every crystallographic group $W$ has an affine realization as a subgroup of the group of isometries of a finite-dimensional real Euclidean vector space $V$. In this context, the point group of $W$ is a finite subgroup $W_0$ of the orthogonal group $O(V)$ and its translation lattice $L$ is a discrete cocompact subgroup of $V$ stable under the action of $W_0$ (cf. Section\cspace\ref{prel_sec}). It follows from a general theory (cf. \cite{hiller, martinais, maxwell}) that the group extensions of $W_0$ by $L$, regarded as a $\Z[W_0]$-lattice, correspond to cohomology classes in $H^1(W_0,V/L)$. Moreover, this correspondence is such that two crystallographic groups are isomorphic if and only if their cohomology classes lie in the same orbit under the natural action of the normalizer of $W_0$ in $\mathrm{Aut}(L)$ (see \cite[Sectiom\phantom{.}1]{martinais} and \cite[Theorem\cspace5.2]{hiller}).

\smallskip
Working within this framework, Martinais \cite{martinais} also computed, for every irreducible root system $R$ and $\Z[W_0(R)]$-lattice $L$ from Table~\ref{table - martinais nomenclature}, the number $n(W_0(R), L)$ of these orbits, and hence of isomorphism classes, of crystallographic groups extending $W_0(R)$ by $L$ (cf. the relevant column of Table~\ref{table all inequivalent lattices}). Finally, he provided explicit representatives for each of these isomorphism classes (cf. \cite[Table{\cspace}V]{martinais}), that we list in Table\cspace\ref{table - explicit crystallographic groups} for convenience of the reader. The following remark explains some conventions used by Martinais.

\smallskip
\begin{remark}\label{remark_for_star2}
	Every crystallographic group $W$ in Martinais's classification from \cite{martinais} is defined by a presentation of the form:
	\begin{equation}\tag{$\star_2$}\label{eq.1 - martinais' groups presentation}
		W=\langle (x,1), (t_1,s_1),\ldots,(t_l,s_l): x\in L'\rangle_{\iso(V)},
	\end{equation}
	\noindent
	where:
	\begin{enumerate}[(1)]
		\item $V$ is the real Euclidean vector space underlying the irreducible root system $R$;
		\item $L'$ is one of the $\Z[W_0]$-lattices in Table\cspace\ref{table - martinais nomenclature}, and $L=\{(x,1):x\in L'\}$ is the translation lattice of $W$;
		\item $l$ is the rank of $R$ (equivalently, the dimension of $V$);
		\item $t_1,\ldots,t_l$ are translations in $V$ (but generally not in $L'$);
		\item $s_1,\ldots,s_l$ is the (Coxeter) system of simple reflections associated with the basis $B=\{\alpha_i:1\leq i\leq l\}$ of $R$ from Bourbaki's standard realization \cite[Planche{\cspace}I to IX]{bourbaki}.
	\end{enumerate}
\end{remark}
In particular, for any irreducible root system $R$ and $\Z[W_0]$-lattice
there exists a split crystallographic group:
\begin{equation*}
	W=\langle (x,1),(0,s_j):1\leq j\leq l,x\in L'\rangle_{\iso(V)}\cong L\rtimes W_0(R).
\end{equation*}

By Theorem~\ref{theorem}, if $W$ is a crystallographic group of the form \eqref{eq.1 - martinais' groups presentation} from \ref{remark_for_star2}, then every finitely generated $H$ that is elementarily equivalent to $W$ necessarily extends $W_0(R)$ by $L$, and hence falls into one of the isomorphism classes of Martinais's analysis \cite{martinais}. Lemma\cspace\ref{lemma-split extensions are definable} below shows that the isomorphism $H\cong W$ is immediate when $n(W_0(R),L)\leq 2$, since in these cases there are only two possible non-isomorphic extensions: one split, and one non-split. As a result, in the proof of Theorem\cspace\ref{main_th2} we will only need an explicit description of these crystallographic groups in the case $n(W_0(R),L)\geq 3$. So, we briefly recall them in Table\cspace\ref{table - explicit crystallographic groups} below.
\begin{lemma}\label{lemma-split extensions are definable}
	Let $W$ be a crystallographic group with point group $W_0$ and translation subgroup $T$. Then there exists a first-order sentence $\psi$ (depending only on $W_0$, and not on $W$ or $T$) such that $W\models\psi$ if and only if $W$ is split.
\end{lemma}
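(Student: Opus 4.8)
The plan is to write down a single existential sentence $\psi$ asserting the existence of a \emph{complement} to $T$ in $W$, that is, a subgroup $C\le W$ with $C\cap T=\{e\}$ and $CT=W$; recall that the extension $1\to T\to W\xrightarrow{\,p\,} W_0\to 1$ splits precisely when such a $C$ exists. The key structural observation I will exploit is that any complement is isomorphic to $W_0$ and hence finite, while $T\cong\mathbb{Z}^n$ is torsion-free. Therefore any finite subgroup $C\le W$ automatically satisfies $C\cap T=\{e\}$, so I never need to express the intersection condition directly.

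Concretely, since $W_0$ is finite it is finitely presented, say $W_0=\langle g_1,\dots,g_m\mid w_1,\dots,w_r\rangle$ (one may even take the full multiplication-table presentation). Let $k=|W_0|$ and fix words $u_1,\dots,u_k$ in the free group on $g_1,\dots,g_m$ representing the $k$ distinct elements $v_1,\dots,v_k$ of $W_0$. All of this data depends only on the abstract finite group $W_0$. Writing $\bar h$ for the tuple $(h_1,\dots,h_m)$, I then set
\[
\psi \;\equiv\; \exists h_1\cdots\exists h_m\left(\bigwedge_{j=1}^{r} w_j(\bar h)=e \;\wedge\; \bigwedge_{1\le i<i'\le k} u_i(\bar h)\neq u_{i'}(\bar h)\right),
\]
which is a first-order sentence in $L_{gp}$ depending only on $W_0$. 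The first conjunct forces $g_i\mapsto h_i$ to respect the relations of $W_0$; the second forces the resulting subgroup to have exactly $k$ elements.

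To verify correctness, suppose $W\models\psi$ with witnesses $h_1,\dots,h_m$, and let $C=\langle h_1,\dots,h_m\rangle$. The relators hold, so $g_i\mapsto h_i$ extends to a surjective homomorphism $\pi\colon W_0\twoheadrightarrow C$; in particular $C$ is finite of order at most $k$, and its elements are the $\pi(v_i)=u_i(\bar h)$. The second conjunct says these $k$ elements are pairwise distinct, so $\pi$ is injective and $C\cong W_0$ has order exactly $k$. Now comes the crucial point: since $C$ is finite and $T$ is torsion-free, $C\cap T$ is a finite subgroup of $T$ and hence trivial. Thus $p|_C\colon C\to W_0$ has trivial kernel, and being a map between groups of the same order $k$ it is an isomorphism; surjectivity of $p|_C$ then yields $CT=W$, so $C$ is a complement and $W$ splits. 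Conversely, if $W$ splits there is a section $\sigma\colon W_0\to W$ with $p\circ\sigma=\mathrm{id}$; taking $h_i:=\sigma(g_i)$ makes the relators hold (as $\sigma$ is a homomorphism) and makes the $u_i(\bar h)=\sigma(v_i)$ pairwise distinct (as $\sigma$ is injective), so $W\models\psi$.

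The one point requiring care — and what I expect to be the main conceptual obstacle — is that the literal definition of ``split'' refers to a subgroup $C$ together with the conditions $C\cap T=\{e\}$ and $CT=W$, which appear to quantify over the infinitely many words in the generators of $C$ and over the unbounded subgroup $T$. Two devices make the statement genuinely first-order: \emph{(i)} the finiteness of $W_0$, which bounds both the number of defining relators and the number of transversal words $u_i$, turning everything into a finite Boolean combination; and \emph{(ii)} the torsion-freeness of $T$, which lets me replace the intersection condition $C\cap T=\{e\}$ by the purely internal condition that $C$ be a finite group of order $k$, detected by the distinctness of the $k$ elements $u_i(\bar h)$. Note in particular that the explicit definable description of $T$ from Lemma~\ref{lemma_the_prop_def_transla_crysp} is not even needed for this argument.
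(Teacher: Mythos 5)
Your proof is correct, and it differs from the paper's in a way worth noting. Both proofs use the same basic template: an existential sentence asserting the existence of a copy of $W_0$ inside $W$, witnessed by finitely many elements (finiteness of $W_0$ is what makes this first-order). The difference is in how the complement condition is certified. The paper invokes Lemma~\ref{lemma_the_prop_def_transla_crysp} to make $T$ $\emptyset$-definable and then writes $\psi$ as: there exist $x_0,\dots,x_{k-1}$ forming a subgroup isomorphic to $W_0$ such that $x_ix_j^{-1}\in T$ if and only if $i=j$ --- i.e.\ it encodes directly that the copy of $W_0$ meets the cosets of $T$ in at most one point, which is a one-line translation of the existence of a section. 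You instead drop every reference to $T$ and observe that the intersection condition is automatic: a finite subgroup $C\leq W$ meets the torsion-free group $T$ trivially, so $p|_C$ is injective, and the order count $|C|=|W_0|=k$ forces $p|_C$ to be an isomorphism and hence $C$ to be a complement. Your route buys two things: the sentence is purely existential (the paper's defining formula for $T$ hides a universal quantifier), and the argument works verbatim for any extension of a finite group by a torsion-free group, without the faithfulness of the point-group action that underlies the definability of $T$. What the paper's route buys is economy in context: Lemma~\ref{lemma_the_prop_def_transla_crysp} is needed elsewhere anyway, and once $T$ is definable the verification is immediate, whereas you need the (short but genuine) extra argument that any abstract copy of $W_0$ in $W$ is automatically a complement.
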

\begin{proof}
	Let $p:W\rightarrow W_0$ denote the canonical projection of $W$ onto $W_0$, and let $\{w_i:i<k\}$ be an enumeration without repetition of $W_0$. By definition (see \cite[Proposition\phantom{.}2.1, Ch.\phantom{.}IV]{brown}), $W$ is split if and only if there exists an embedding $j:W_0\rightarrow W$ such that $p\circ j=\mathrm{id}_{W_0}$. Since $W_0\cong W/T$, this amounts to saying that there is an embedding of $W_0$ in $W$ whose elements belong to pairwise \mbox{distinct cosets.}
	
	\smallskip\noindent
	By Lemma\cspace\ref{lemma_the_prop_def_transla_crysp}, the translation subgroup $T$ is $\emptyset$-definable in $W$ by a formula depending only on the order $k$ of $W_0$. Thus, since $W_0$ is finite, it suffices to consider the formula $\psi$ stating the existence of $k$ elements $x_0,\ldots,x_{k-1}$ such that $\{x_0,\ldots,x_{k-1}\}$ is a group isomorphic to $W_0$, and $x_ix_j^{-1}\in T$ if and only if $i=j$.
\end{proof}
\begin{lemma}\label{lemma - abstract reflections and affine reflections cohincide}
	Let $R$ be a reduced root system in a real Euclidean vector space $V$, and let $B$ be a basis of $R$. If $S$ is the system of simple reflections in the Weyl group $W_0(R)$ of $R$ associated to $B$, then, for every $s\in W_0(R)$, the following conditions are equivalent:
	\begin{enumerate}[(1)]
		\item $s$ is an affine reflection of $V$ (cf. Definition\cspace\ref{def - reflection});
		\item $s$ belongs to the set $S^{W_0(R)}\vcentcolon=\{ws_\alpha w^{-1}:\alpha\in B, w\in W_0(R)\}$.
	\end{enumerate}
\end{lemma}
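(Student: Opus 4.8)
The plan is to prove the two implications separately, treating $(2)\Rightarrow(1)$ as the routine direction and reserving the only real content for $(1)\Rightarrow(2)$, where the hypothesis that $R$ is \emph{reduced} genuinely enters. Throughout, the lemma's hypotheses place us exactly in the Euclidean framework of this section, with $W_0(R)\leq O(V)$, so that all the cited facts of Section~5 are available.

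For $(2)\Rightarrow(1)$, I would take an element $s=ws_\alpha w^{-1}$ with $\alpha\in B$ and $w\in W_0(R)$ and rewrite it, via Fact~\ref{fact - w s_alpha w^-1 = s_w(alpha)}, as $s=s_{w(\alpha)}$. Since $W_0(R)$ permutes $R$, the vector $w(\alpha)$ is again a root, so $s=s_\beta$ for $\beta:=w(\alpha)\in R$. By Notation~\ref{s_a_notation} this is the reflection $s_{\beta,\beta^\vee}$ attached to $\beta$, and since $\beta^\vee(\beta)=2$ by Definition~\ref{def - root system}(2)(a), the discussion following Definition~\ref{def - reflection} shows that $s_\beta$ is an affine reflection of $V$ in the required sense. (One could equally argue directly that conjugation by $w\in O(V)$ preserves both $s^2=1$ and $\dim\ker(1-s)=l-1$, so a conjugate of the reflection $s_\alpha$ is again a reflection.)

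For $(1)\Rightarrow(2)$, suppose $s\in W_0(R)$ is an affine reflection. Applying Fact~\ref{fact - every reflection in a Weyl group is induced by a root} produces a root $\beta\in R$ with $s=s_\beta$. This is precisely where reducedness is used: by Fact~\ref{fact - every root in a reduced root system is the image of a root of the basis through an element of the Weyl group}, the reducedness of $R$ lets me write $\beta=w(\alpha)$ for some $w\in W_0(R)$ and some simple root $\alpha\in B$. Feeding this back through Fact~\ref{fact - w s_alpha w^-1 = s_w(alpha)} gives $s=s_\beta=s_{w(\alpha)}=ws_\alpha w^{-1}$, exhibiting $s$ as an element of $S^{W_0(R)}$. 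The main (and essentially only) obstacle is bookkeeping: the lemma amounts to the identity $S^{W_0(R)}=\{s_\beta:\beta\in R\}$, combined with the identification of the affine reflections of $V$ lying in $W_0(R)$ with exactly the $s_\beta$. The first ingredient rests on every root being $W_0(R)$-conjugate to a simple root, which fails for non-reduced systems and is guaranteed here only because $R$ is reduced; note also that, since $w$ ranges over all of $W_0(R)$, the set $\{ws_\alpha w^{-1}:w\in W_0(R)\}$ is insensitive to the conjugation convention, so no ambiguity in reading $(\,\cdot\,)^w$ affects the argument.
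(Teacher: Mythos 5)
Your proof is correct and follows essentially the same route as the paper's: both directions use exactly the same three facts (Fact~\ref{fact - every reflection in a Weyl group is induced by a root}, Fact~\ref{fact - every root in a reduced root system is the image of a root of the basis through an element of the Weyl group}, and Fact~\ref{fact - w s_alpha w^-1 = s_w(alpha)}) in the same way, with reducedness entering precisely where you place it. Your slightly more explicit justification that $s_{w(\alpha)}$ is an affine reflection (via $\beta^\vee(\beta)=2$, or alternatively by conjugation in $O(V)$) is a harmless elaboration of the paper's one-line remark.
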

\begin{proof}
	For implication $[1\Rightarrow2]$, suppose that $s\in W_0(R)$ is an affine reflection in $V$ (in the sense of Definition~\ref{def - reflection})). Then, by Fact\cspace\ref{fact - every reflection in a Weyl group is induced by a root}, there exists a root $\alpha'\in R$ such that $s=s_{\alpha'}$. Since $R$ is reduced, Fact\cspace\ref{fact - every root in a reduced root system is the image of a root of the basis through an element of the Weyl group} ensures that any such $\alpha'$ is of the form $\alpha'=w(\alpha)$ for some $\alpha\in B$ and $w\in W_0(R)$. By Fact\cspace\ref{fact - w s_alpha w^-1 = s_w(alpha)}(2), we have:
	\begin{equation*}
		s=s_{w(\alpha)}=ws_\alpha w^{-1},
	\end{equation*}
	\noindent
	and hence $s\in S^{W_0(R)}$.
	
	\smallskip\noindent
	Similarly, implication $[2\Rightarrow1]$ is an straightforward consequence of Fact\cspace\ref{fact - w s_alpha w^-1 = s_w(alpha)}(2): if $s=ws_\alpha w^{-1}$ for some $\alpha\in B$ and $w\in W_0(R)$, then $s=s_{w(\alpha)}$ (recall that, by Fact\cspace\ref{fact - w s_alpha w^-1 = s_w(alpha)}(1), $w(\alpha) \in R$, and thus  $s_{w(\alpha)}$ is trivially an affine reflection~of~$V$).
\end{proof}
\begin{remark}\label{remark - all lattices with n>2 is of type Q(R) or P(R)}
	Let $R$ be an irreducible root system in a real vector space $V$ with Weyl group $W_0(R)$. If $L'$ is a $\Z[W_0]$-lattice in $V$ from Table\cspace\ref{table all inequivalent lattices} such that $n(W_0,L')\geq3$, then $L'$ is of type $Q(R)$ or $P(R)$.
\end{remark}
\begin{lemma}\label{lemma - reflections can be detected on the translation subgroup}\label{lemma - logic characterization of reflections}
	Let $V$ be a real Euclidean vector space, and $R$ be a root system of rank $l$ in $V$ with Weyl group $W_0(R) \leq O(V)$. Suppose that $L'$ is a $\Z[W_0(R)]$-lattice from \emph{Table\cspace\ref{table all inequivalent lattices}} such that $n(W_0(R),L')\geq3$, and $T'$ is the abelian group structure on $L'$. Then, for every involution $s\in W_0(R)$ the following conditions are equivalent:
	\begin{enumerate}[(1)]
		\item $s$ is an affine reflection in $V$ (cf. Definition\cspace\ref{def - reflection});
		\item $T'\cap\mathrm{ker}(1-s)$ and $T'\cap\mathrm{ker}(1+s)$ are free abelian of rank $l-1$ and $1$, respectively.
	\end{enumerate}
	Furthermore, if $W$ is a crystallographic group of the form \eqref{eq.1 - martinais' groups presentation} from \ref{remark_for_star2}, with translation subgroup $T=\{(t,1):t\in T'\}$,
	then, for each $u\in W$ such that $u=(t,s)$, with $t\in V$, (1) and (2) are equivalent to:
	\begin{enumerate}[(3)]
		\item the subgroups $T^u=\{(x,1)\in T:(x,1)^u=(x,1)\}$ and $T_u=\{(x,1)\in T: (x,1)^u=(-x,1)\}$ have rank $l-1$ and $1$, respectively.
	\end{enumerate}
\end{lemma}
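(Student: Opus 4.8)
The plan is to establish the two equivalences $(1)\Leftrightarrow(2)$ and $(2)\Leftrightarrow(3)$ separately: the first is a statement purely about the full-rank lattice $T'$ together with the involution $s$, while the second is a direct computation of the conjugation action inside $\iso(V)=V\rtimes O(V)$. Throughout, the two structural inputs I would use are that $T'=L'$, being a $\Z[W_0(R)]$-lattice of rank $l$ in the $l$-dimensional space $V$, is a \emph{full-rank} lattice \emph{stable} under $s\in W_0(R)$; and that, since $s$ is an involution, Fact~\ref{fact - every involution induces a decomposition of V in ker(1-s)+ker(1+s)} supplies the eigenspace decomposition $V=\ker(1-s)\oplus\ker(1+s)$.

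For $(1)\Rightarrow(2)$, assume $s$ is an affine reflection, so that $\dim\ker(1-s)=l-1$ by Definition~\ref{def - reflection}, and hence $\dim\ker(1+s)=1$. For each $t\in T'$, stability under $s$ gives $(1+s)t\in T'\cap\ker(1-s)$ and $(1-s)t\in T'\cap\ker(1+s)$. Since $T'$ spans $V$ over $\R$ and every $v\in\ker(1-s)$ satisfies $v=\tfrac12(1+s)v$ (resp. $v=\tfrac12(1-s)v$ for $v\in\ker(1+s)$), the subgroups $(1+s)T'$ and $(1-s)T'$ span the eigenspaces $\ker(1-s)$ and $\ker(1+s)$ respectively. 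Being subgroups of the lattice $T'$, the intersections $T'\cap\ker(1\pm s)$ are discrete, hence lattices in their real spans, so their ranks equal the dimensions $l-1$ and $1$ of the ambient eigenspaces, which is exactly $(2)$. For $(2)\Rightarrow(1)$, note that a subgroup of $T'$ contained in a subspace $U$ is discrete and thus has rank at most $\dim U$; so $(2)$ forces $\dim\ker(1-s)\geq l-1$ and $\dim\ker(1+s)\geq 1$, and since these dimensions sum to $l$ we conclude $\dim\ker(1-s)=l-1$. Thus $s$ is a pseudo-reflection, and being an involution it is an affine reflection in the sense of Definition~\ref{def - reflection}.

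For the ``Furthermore'' part, I would compute conjugation in $\iso(V)=V\rtimes O(V)$, whose product is $(v_1,A_1)(v_2,A_2)=(v_1+A_1 v_2,A_1 A_2)$. For $u=(t,s)$ with $s^2=1$ one has $u^{-1}=(-st,s)$, and a direct cancellation of the translation parts yields $(x,1)^u=(sx,1)$ for every $x\in T'$. Consequently $(x,1)\in T^u$ if and only if $sx=x$, i.e. $x\in T'\cap\ker(1-s)$, while $(x,1)\in T_u$ if and only if $sx=-x$, i.e. $x\in T'\cap\ker(1+s)$. Hence $T^u\cong T'\cap\ker(1-s)$ and $T_u\cong T'\cap\ker(1+s)$ as abelian groups, so $(3)$ is literally $(2)$, giving $(2)\Leftrightarrow(3)$ and completing the equivalences.

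The only genuine content is the projection argument in $(1)\Rightarrow(2)$ showing that $T'\cap\ker(1\pm s)$ attains \emph{full} rank in its eigenspace — this is precisely where both the fullness and the $s$-stability of $T'$ are used — together with getting the semidirect-product conjugation exactly right (in particular the inverse $u^{-1}=(-st,s)$ and the exact cancellation leaving $(sx,1)$). Both steps are elementary, so I expect no serious obstacle. I would also remark that the special hypotheses on $L'$ (that $n(W_0(R),L')\geq 3$, so that $L'$ is of type $Q(R)$ or $P(R)$ by Remark~\ref{remark - all lattices with n>2 is of type Q(R) or P(R)}, and that $W$ has the form \eqref{eq.1 - martinais' groups presentation}) are not actually needed for the argument itself; they merely record the ambient setting in which the lemma will later be applied.
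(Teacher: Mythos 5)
Your proof is correct, but it takes a genuinely different and substantially more economical route than the paper. For $(1)\Rightarrow(2)$ the paper first writes $s=s_\alpha$ for a root $\alpha$ (Fact\cspace\ref{fact - every reflection in a Weyl group is induced by a root}), reduces to $\alpha$ simple via transitivity of $W_0(R)$ on bases, and then computes, type by type ($B_l$ with $CL_l$, $B_4$ with $CCL_4$, $C_l$ with $FL_l$, $D_l$ with $FL_l$), explicit free abelian bases of $H_\alpha\cap T'$ and $\langle\alpha\rangle_\R\cap T'$ from the Table\cspace\ref{table - martinais nomenclature} descriptions --- several pages of case analysis. You replace all of this by a uniform argument: $s$-stability gives $(1\pm s)T'\subseteq T'\cap\ker(1\mp s)$, fullness of $T'$ plus the identity $v=\tfrac12(1+s)v$ on $\ker(1-s)$ gives spanning, and discreteness upgrades spanning to equality of rank and dimension. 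For $(2)\Rightarrow(1)$ the paper invokes Remark\cspace\ref{remark - all lattices with n>2 is of type Q(R) or P(R)} to see that $T'$ is $Q(R)$ or $P(R)$, hence a $\Q$-structure on $V$, and uses that to convert $\Z$-independence into $\R$-independence; you get the same dimension bound directly from discreteness (a discrete subgroup of rank $r$ inside a subspace $U$ forces $\dim U\geq r$ --- note that mere free-abelian rank would not suffice here, so discreteness is doing real work, and you correctly lean on it). The $(2)\Leftrightarrow(3)$ computation is identical in both. Your approach buys brevity, uniformity, and genuine extra generality: as you observe, the hypotheses $n(W_0(R),L')\geq3$ and the Table\cspace\ref{table all inequivalent lattices} classification are never needed --- the equivalence holds for any full-rank $s$-stable lattice --- whereas the paper's proof formally depends on them in both directions. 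The only external input you need is the classical structure theorem for discrete subgroups of a finite-dimensional real vector space (free abelian on $\R$-linearly independent generators), which the paper's authors apparently chose to avoid in favor of their $\Q$-structure and root-system toolkit; conversely, the paper's explicit bases are not reused elsewhere, so its extra labor purchases essentially nothing downstream.
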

\begin{proof}
	For implication $[1\Rightarrow2]$, suppose that $s\in W_0(R)$ is an affine reflection. Then, by Fact\cspace\ref{fact - every reflection in a Weyl group is induced by a root}, there exists some root $\alpha\in R$ such that $s=s_\alpha$. Since $V$ is Euclidean, by Fact\cspace\ref{fact - every involution induces a decomposition of V in ker(1-s)+ker(1+s)} the subspaces $\mathrm{ker}(1-s)$ and $\mathrm{ker}(1+s)$ of $V$ correspond to $H_\alpha$ and $\langle \alpha\rangle_\R$, with $H_\alpha=\{x\in V:(\alpha,x)=0\}$ denoting the hyperplane orthogonal to $\alpha$. We show that $\langle\alpha\rangle_\R\cap T'$ and $H_\alpha\cap T'$ have rank $1$ and $l-1$, respectively.
	
	\smallskip\noindent
	Since $R$ is reduced, Fact\cspace\ref{fact - every root in a reduced root system is the image of a root of the basis through an element of the Weyl group} ensures that each $\alpha$ as above is actually the image of an element $\alpha'$ in some basis $B'$ of $R$ through a map $w\in W_0(R)$. Any such $w$ is an orthogonal map, i.e., an automorphism of $V$ preserving inner products, and hence it maps $H_{\alpha'}$ into $H_\alpha$ and $\langle\alpha'\rangle_\R$ into $\langle\alpha\rangle_\R$. In particular, $w$ transforms the decomposition $V=H_{\alpha'}\oplus\langle \alpha'\rangle_\R$ into $V=H_{\alpha}\oplus\langle \alpha\rangle_\R$. Therefore, since $T'$ is stable under the action of $W_0(R)$, this means that $w$ restricts to an isomorphism between $H_{\alpha'}\cap T'$ and $H_{\alpha}\cap T'$, and an isomorphism between $\langle \alpha'\rangle_\R\cap T'$ and $\langle \alpha\rangle_\R\cap T'$, respectively.
	
	\smallskip\noindent
	By Fact\cspace\ref{fact - the Weyl group acts simply transitively on the set of bases of a root system}, $W_0(R)$ acts simply transitively on the set of bases of $R$. Hence, without loss of generality, we can assume that $\alpha$ belongs to one of the bases $B$ of $R$ listed in Table\cspace\ref{table - root systems, bases and reflections} in terms of the canonical basis $\{\epsilon_i:1\leq i\leq l\}$ of $V$. According to the root system and lattice types realizing $n(W_0(R),L')\geq 3$, we distinguish the following cases.
	
	\smallskip\noindent
	\underline{{\bf Case 1}}. $R=B_l$ and $T'=Q(B_l)=CL_l$, with $l\geq3$.
	
	\smallskip\noindent
	In this case, $T'$ is the $\Z$-linear span of the basis $B=\{\alpha_i:1\leq i\leq l\}$ of $R$ such that:
	\begin{equation}\label{eq.1 - case B_l lemma logical characterization of affine reflections}\tag{$\star_3$}
		\alpha_i=\epsilon_i-\epsilon_{i+1},\quad\text{and}\quad\alpha_l=\epsilon_l,\quad\text{for all}\quad i\in[1,l-1].
	\end{equation} 
	\noindent
	By \ref{fact - basis of a root system}, $B$ is also a basis of $V$, and each element of $T'$ has a unique expression as a $\Z$-linear combination of the $\alpha_i$'s. {Therefore, $\langle \alpha\rangle_\R\cap T'$ is clearly a subgroup of $T'$ of} rank $1$, with free abelian basis $\{\alpha\}$ (alternatively, one can use  {Fact\cspace\ref{fact - every vector in Q(R) l.d. from an indivisible root alpha, is Z-proposrtional to alpha}). To establish} that $H_\alpha\cap T'$ has rank $l-1$, we use the explicit description of $CL_l$ from Table\cspace\ref{table all inequivalent lattices}, {together with the fact that $\{\epsilon_i:1\leq i\leq l\}\subseteq B_l\subseteq T'$. By \eqref{eq.1 - case B_l lemma logical characterization of affine reflections}, we discuss two cases.}
	
	\smallskip\noindent
	\underline{{\bf Case 1.1}}. $\alpha=\alpha_i=\epsilon_i-\epsilon_{i+1}$, for some $i\in[1,l-1]$.
	
	\smallskip\noindent
	In this case, the subset $\{\epsilon_j:j\neq i,i+1\}$ of the canonical basis clearly consists of $\R$-linearly independent vectors lying in the intersection $H_\alpha\cap T'$. Since $H_\alpha$ is an $(l-1)$-dimensional subspace, it suffices to complete this set to an $\R$-basis of $H_\alpha$ by adding a suitable $\beta\in H_\alpha\cap T'$ that is $\R$-linear independent from all the $\epsilon_j$'s such that $j\neq i, i+1$. The root $\beta\vcentcolon=\epsilon_i+\epsilon_{i+1}\in B_l$ satisfies these requirements: it lies in $H_\alpha$, since it is orthogonal to $\alpha=\epsilon_i-\epsilon_{i+1}$, and it is clearly $\R$-linearly independent from $\{\epsilon_j:j\neq i, i+1\}$. Consequently, $\{\beta,\epsilon_j:j\neq i, i+1\}$ is a free abelian basis of $H_\alpha\cap T'$ of size $l-1$.
	
	
	\smallskip\noindent
	\underline{{\bf Case 1.2}}. $\alpha=\alpha_l=\epsilon_l$.
	
	\smallskip\noindent
	This case is trivial: $\{\epsilon_j:1\leq j<l\}$ is a basis of $H_\alpha$ lying entirely in $T'$. Hence, it is a free abelian basis of $H_\alpha\cap T'$.

	\smallskip\noindent
	\underline{{\bf Case 2}}. $R=B_4$, and $T'=P(B_4)=CCL_4$.
	
	\smallskip\noindent
	In this case, by the explicit description of $CCL_4$ in Table\cspace\ref{table - martinais nomenclature}, each element $x\in T'$ has a unique expression as a $\Z$-linear combination of the form:
	\begin{equation}\tag{$\star_4$}\label{eq.1 - reflections of type B_4 and CCL_4}
		x=\sum_{j=1}^3a_i\epsilon_j+b\left(\frac{1}{2}(\epsilon_1+\ldots+\epsilon_4)\right)=\sum_{j=1}^3\left(a_j+\frac{b}{2}\right)\epsilon_j+\frac{b}{2}\epsilon_4,
	\end{equation}
	\noindent
	for some $a_1,a_2,a_3,b\in\Z$. 
	
	\smallskip\noindent
	By Table\cspace\ref{table - root systems, bases and reflections}, we can consider the basis $B=\{\alpha_i:1\leq l\leq4\}$ for $B_4$ such that:
	\begin{equation}\label{eq.2 - case B_4 lemma logical characterization of affine reflections}\tag{$\star_5$}
		\alpha_i=\epsilon_i-\epsilon_{i+1},\quad\text{and}\quad\alpha_4=\epsilon_4,\quad\text{for all}\quad i\in[1,3].
	\end{equation}
	\noindent
	Since $T'=P(B_4)$ strictly contains the group $Q(B_4)$ of radical weights of $B_4$, the set $B$ does not form a free abelian basis of $T'$. As a consequence, for each $\alpha\in B$, we cannot directly conclude that $\langle \alpha\rangle_\R\cap T'$ coincides with the $\Z$-linear span of $\alpha$, and we must verify, case by case, both that $\langle \alpha\rangle_\R\cap T'$ has rank $1$ and that $H_\alpha\cap T'$ has rank $l-1$. In particular, from \eqref{eq.2 - case B_4 lemma logical characterization of affine reflections} we distinguish three possibilities.
	
	\smallskip\noindent
	\underline{{\bf Case 2.1}} $\alpha=\alpha_i=\epsilon_i-\epsilon_{i+1}$, with $i\in[1,2]$.
	
	\smallskip\noindent
	To prove that $H_\alpha\cap T'$ has rank $3$, we study the set of solutions $(a_1,a_2,a_3,b)\in\Z^4$ of the equation $(\alpha,x)=0$, for $x\in T'$ as in \eqref{eq.1 - reflections of type B_4 and CCL_4}. This is the set of tuples satisfying the identity $a_i=a_{i+1}$. Hence, each $x\in H_\alpha\cap T'$ is expressible a as linear a combination:
	\begin{equation*}
		x=a_k\epsilon_k+a_i(\epsilon_i+\epsilon_{i+1})+b\left(\frac{1}{2}(\epsilon_1+\ldots+\epsilon_4)\right),
	\end{equation*}
	\noindent
	such that $a_i,a_k,b\in\Z$ and $k\in\{1,2,3\}\setminus\{i,i+1\}$. This shows that \mbox{$\{\epsilon_k,\epsilon_i+\epsilon_{i+1},$}\\{$\frac{1}{2}(\epsilon_1+\ldots+\epsilon_4)\}$} is a set of generators for $H_\alpha\cap T'$. The $\Z$-linear independence follows\\directly from that of the free abelian basis \mbox{$\{\epsilon_1,\epsilon_2,\epsilon_3,\frac{1}{2}(\epsilon_1+\ldots+\epsilon_4)\}$ of $T'=CCL_4$.}
	
	\smallskip\noindent
	Similarly, it is straightforward to verify that any $x\in \langle\alpha\rangle_\R\cap T'$ as in \eqref{eq.1 - reflections of type B_4 and CCL_4} is uniquely determined by a tuple $(a_1,a_2,a_3,b)\in\Z^4$ such that:
	\begin{equation*}
		b=0,\quad a_i=-a_{i+1},\quad \text{and}\quad a_k=0,\quad \text{for} \quad k\in\{1,2,3\}\setminus\{i,i+1\}.
	\end{equation*}
	\noindent
	Therefore, $\alpha=\epsilon_i-\epsilon_{i+1}$ itself is a free generator of $\langle \alpha\rangle_\R\cap T'$, as \mbox{$\alpha\in B_4\subseteq P(B_4)=T'$.}
	
	\smallskip\noindent
	\underline{{\bf Case 2.2}} $\alpha=\alpha_3=\epsilon_3-\epsilon_4$.
	
	\smallskip\noindent
	By imposing the condition $(\alpha,x)=0$ on the elements $x\in T'$ as in \eqref{eq.1 - reflections of type B_4 and CCL_4}, we deduce that each $x\in H_\alpha\cap T'$ corresponds uniquely to a tuple $(a_1,a_2,a_3,b)\in \Z^4$ satisfying $a_3=0$. It follows that $\{\epsilon_1,\epsilon_2,\frac{1}{2}(\epsilon_1+\ldots+\epsilon_4)\}$ is a free abelian basis of \mbox{$H_\alpha \cap T'$.}
	
	\smallskip\noindent
	Similarly, each tuple $(a_1,a_2,a_3,b)\in \Z^4$ identifying some $x\in \langle \alpha\rangle_\R\cap T'$ must satisfy the identities:	
	\begin{equation*}
		a_1=a_2=-\frac{b}{2},\quad\text{and}\quad a_3=-b.
	\end{equation*}
	\noindent
	Thus, any such $x$ has an expression of the form:
	\begin{equation*}
		x=a \epsilon_1+a\epsilon_2+2a\epsilon_3-2a\left(\frac{1}{2}(\epsilon_1+\ldots+\epsilon_4)\right)=a\left(\epsilon_3-\epsilon_4\right),
	\end{equation*}
	\noindent
	for some $a\in\Z$. Hence, $\alpha=\epsilon_3-\epsilon_4$ itself is a free generator of $\langle\alpha\rangle_\R\cap T'$.
	
	\smallskip\noindent
	\underline{{\bf Case 2.3}} $\alpha=\alpha_4=\epsilon_4$.
	
	\smallskip\noindent
	In this case, \eqref{eq.2 - case B_4 lemma logical characterization of affine reflections} directly witnesses that $\{\epsilon_1,\epsilon_2,\epsilon_3\}\subseteq CCL_4=T'$ is a free abelian basis of $H_\alpha\cap T'$. Moreover, each $x\in\langle\alpha\rangle_\R\cap T'$ as in \eqref{eq.1 - reflections of type B_4 and CCL_4} is uniquely determined by a tuple $(a_1,a_2,a_3,b)\in\Z^4$ such that:
	\begin{equation*}
		a_1=a_2=a_3=-\frac{b}{2}
	\end{equation*}
	\noindent
	Therefore, any such $x$ is of the form:
	\begin{equation*}
		x=a\epsilon_1+a\epsilon_2+a\epsilon_3-2a\left(\frac{1}{2}(\epsilon_1+\ldots+\epsilon_4)\right)=a\epsilon_4,
	\end{equation*}
	\noindent
	for some $a\in\Z$. Therefore, also in this case $\alpha$ is a free generator of $\langle\alpha\rangle_\R\cap T'$.
	
	\smallskip\noindent
	\underline{{\bf Case 3}}. $C_l$, and $T'=Q(C_l)=FL_l$, with $l\geq3$ odd.
	
	\smallskip\noindent
	In this case, $T'$ is the free $\Z$-linear span of the basis $B=\{\alpha_i:1\leq i\leq l\,\}$ of $C_l$ such that:
	\begin{equation}\label{eq.3 - case C_l lemma logical characterization of affine reflections}\tag{$\star_6$}
		\alpha_i=\epsilon_i-\epsilon_{i+1},\quad\text{and}\quad\alpha_l=2\epsilon_l,\quad\text{for all}\quad i\in[1,l-1]
	\end{equation} 
	\noindent
	(see Table\cspace\ref{table - root systems, bases and reflections}). Since we assumed $\alpha\in B$, it follows that $\langle \alpha\rangle_\R\cap T'$ is necessarily a rank-$1$ subgroup of $T'$ freely generated by $\alpha$ (cf. Fact\cspace\ref{fact - every vector in Q(R) l.d. from an indivisible root alpha, is Z-proposrtional to alpha}). Moreover, each $x\in T'$ admits a unique expression as a linear combination:
	\begin{equation}\label{eq.4 - case C_l lemma logical characterization of affine reflections}\tag{$\star_7$}
		x=\sum_{i=1}^la_i\alpha_i
		=a_1\epsilon_1+\sum_{j=2}^{l-1}(a_j-a_{j-1})\epsilon_j+(2a_l-a_{l-1})\epsilon_l,
	\end{equation}
	\noindent
	for some $a_1,\ldots,a_l\in\Z$. To compute the rank of $H_\alpha\cap T'$, we refer to \eqref{eq.3 - case C_l lemma logical characterization of affine reflections} and distinguish four cases.
	
	\smallskip\noindent
	\underline{{\bf Case 3.1}}. $\alpha=\alpha_1=\epsilon_1-\epsilon_2$.
	
	\smallskip\noindent
	By imposing the condition $(\alpha,x)=0$ on the elements $x\in T'$ as in \eqref{eq.4 - case C_l lemma logical characterization of affine reflections}, we obtain that each $x\in H_\alpha\cap T'$ is uniquely determined by a tuple $(a_1,\ldots,a_l)\in\Z^l$ such that $2a_1-a_2=0$. Hence, any such $x$ is of the form:
	\begin{equation*}
		x=\sum_{j\neq 1,2}a_j\alpha_j+a_1\alpha_1+2a_1\alpha_2= \sum_{j\neq 1,2}a_j\alpha_j+a_1(\alpha_1+2\alpha_2).
	\end{equation*}
	\noindent
	It follows that $\{\alpha_j,\alpha_1+2\alpha_2:j\neq 1,2\}\subseteq Q(C_l)=T'$ is a generating set of $H_\alpha\cap T'$. We claim that this is actually a free abelian basis of $H_\alpha\cap T'$.
	
	\smallskip\noindent
	Adding $\alpha_2$ to $\{\alpha_j,\alpha_1+2\alpha_2:j\neq 1,2\}$ yields a generating set for $T'$ of cardinality $l$. Since free abelian groups of finite rank are Hopfian, it follows that $\{\alpha_2,\alpha_j,\alpha_1+2\alpha_2:j\neq 1,2\}$ is a free abelian basis of $T'$. In particular, the set $\{\alpha_j,\alpha_1+2\alpha_2:j\neq 1,2\}$ consists of $\Z$-linearly independent vectors, and thus forms a free abelian basis of $H_\alpha\cap T'$.	
	
	\smallskip\noindent
	\underline{{\bf Case 3.2}}. $\alpha=\alpha_i=\epsilon_i-\epsilon_{i+1}$, for some $i\in[2,l-2]$.
	
	\smallskip\noindent
	As above, by imposing the condition $(\alpha,x)=0$ on the elements $x\in T'$ as in \eqref{eq.4 - case C_l lemma logical characterization of affine reflections}, one derives that each $x\in H_\alpha\cap T'$ uniquely corresponds to a tuple $(a_1,\ldots,a_l)\in\Z^l$ satisfying $2a_i-a_{i-1}-a_{i+1}=0$. Hence, any such $x$ is of the form:
	\begin{equation*}
		\begin{split}
			x&=\sum_{j\neq i,i+1}a_j\alpha_j+a_i\alpha_i+(2a_i-a_{i-1})\alpha_{i+1}\\
			&= \sum_{j\neq i-1,i,i+1}a_j\alpha_j+a_{i-1}(\alpha_{i-1}-\alpha_{i+1})+a_i(\alpha_i+2\alpha_{i+1}).
		\end{split}
	\end{equation*}
	\noindent
	Therefore, the subset $\{\alpha_j,\alpha_{i-1}-\alpha_{i+1}, \alpha_i+2\alpha_{i+1}:j\neq i-1,i,i+1\}\subseteq Q(C_l)=T'$ generates $H_\alpha\cap T'$. Extending this set by including $\alpha_{i-1}$ yields a generating set for $T'$ of cardinality $l$. Consequently, by Hopfianity,
    the set $\{\alpha_j,\alpha_{i-1}-\alpha_{i+1}, \alpha_i+2\alpha_{i+1}:j\neq i-1,i,i+1\}$ consists of $\Z$-linearly independent vectors, and hence forms a free abelian basis of $H_\alpha\cap T'$.
	
	\smallskip\noindent
	\underline{{\bf Case 3.3}}. $\alpha=\alpha_{l-1}=\epsilon_{l-1}-\epsilon_{l}$.
	
	\smallskip\noindent
	By studying the set of solutions of the equation $(\alpha,x)=0$, for $x\in T'$ as in \eqref{eq.4 - case C_l lemma logical characterization of affine reflections}, one derives that each $x\in H_\alpha\cap T'$ uniquely corresponds to a tuple $(a_1,\ldots,a_l)\in\Z^l$ satisfying \mbox{$-a_{l-2}+2a_{l-1}-2a_{l}=0$.} Thus, any such $x$ is of the form:
	\begin{equation*}
		\begin{split}
			x&=\sum_{j\neq l-2}a_j\alpha_j+(2a_{l-1}-2a_{l})\alpha_{l-2}\\
			&= \sum_{j\neq l-2,l-1,l}a_j\alpha_j+a_{l-1}(\alpha_{l-1}+2\alpha_{l-2})+a_l(\alpha_l-2\alpha_{l-2}).
		\end{split}
	\end{equation*}
	\noindent
	This proves that the subset $\{\alpha_j,\alpha_{l-1}+2\alpha_{l-2}, \alpha_l-2\alpha_{l-2}:j\neq l-2,l-1,l\}\subseteq Q(C_l)=T'$ generates $H_\alpha\cap T'$. Extending it by including $\alpha_{l-2}$ gives a generating set for $T'$ of cardinality $l$. By Hopfianity, it follows that the vectors in $\{\alpha_j,\alpha_{l-1}+2\alpha_{l-2}, \alpha_l-2\alpha_{l-2}:j\neq l-2,l-1,l\}$ are also $\Z$-linearly independent, and hence form a free abelian basis of $H_\alpha\cap T'$ of size $l-1$.
	
	\smallskip\noindent
	\underline{{\bf Case 3.4}}. $\alpha=\alpha_l=2\epsilon_l$.
	
	\smallskip\noindent
	As in the previous cases, the analysis of the equation $(\alpha,x)=0$, for $x\in T'$ as in \eqref{eq.4 - case C_l lemma logical characterization of affine reflections}, shows that each $x\in H_\alpha\cap T'$ is uniquely determined by a tuple $(a_1,\ldots,a_l)\in\Z^l$ satisfying the relation $2a_{l}-a_{l-1}=0$. Hence, any such $x$ is of the form:
	\begin{equation*}
			x=\sum_{j\neq l-1,l}a_j\alpha_j+2a_{l}\alpha_{l-1}+a_l\alpha_l= \sum_{j\neq l-1,l}a_j\alpha_j+a_{l}(2\alpha_{l-1}+\alpha_l).
	\end{equation*}
	\noindent
	It follows that the subset $\{\alpha_j,2\alpha_{l-1}+\alpha_l:j\neq l-1,l\}\subseteq Q(C_l)=T'$ generates $H_\alpha\cap T'$. Extending it by including $\alpha_{l-1}$ yields a generating set for $T'$ of cardinality $l$. Since free abelian groups of finite rank are Hopfian, the original set $\{\alpha_j,2\alpha_{l-1}+\alpha_l:j\neq l-1,l\}$ must consist of $\Z$-linearly independent vectors. Therefore, it is a free abelian basis of $H_\alpha\cap T'$ of size $l-1$.
	
	\smallskip\noindent
	\underline{{\bf Case 4}}. $R=D_l$, and $T'=Q(D_l)=FL_l$, with $l\geq6$ even.
	
	\smallskip\noindent
	\if{In this case, $T'$ is the free $\Z$-linear span of the basis $B=\{\alpha_i:1\leq i\leq l\,\}$ of $C_l$ such that:
	\begin{equation}\label{eq.4 - case D_l lemma logical characterization of affine reflections}\tag{$\star_8$}
		\alpha_i=\epsilon_i-\epsilon_{i+1},\quad\text{and}\quad\alpha_l=\epsilon_{l-1}+\epsilon_l,\quad\text{for all}\quad i\in\{1,\ldots,l-1\}.
	\end{equation} 
	\noindent
	(see Fact\cspace\ref{fact - basis of a root system}). Since by assumption  $\alpha\in B$, $\langle \alpha\rangle_\R\cap T'$ is necessarily a rank-$1$ subgroup of $T'$ freely generated by $\alpha$ (cf. Fact\cspace\ref{fact - every vector in Q(R) l.d. from an indivisible root alpha, is Z-proposrtional to alpha}). To compute the rank of $H_\alpha\cap T'$, observe that each $x\in T'$ has a unique expression as a linear combination of the form:
	\begin{equation}\label{eq.5 - case D_l lemma logical characterization of affine reflections}\tag{$\star_9$}
				x=\sum_{i=1}^la_i\alpha_i
			=a_1\epsilon_1+\sum_{j=2}^{l-2}(a_j-a_{j-1})\epsilon_j+(a_{l-1}-a_{l-2}+a_l)\epsilon_{l-1}+(a_l-a_{l-1})\epsilon_l,
	\end{equation}
	\noindent
	for some $a_1,\ldots,a_l\in\Z$. From \eqref{eq.4 - case D_l lemma logical characterization of affine reflections}, we distinguish five cases.
	
	\smallskip\noindent
	\underline{{\bf Case 4.1}}. $\alpha=\alpha_1=\epsilon_1-\epsilon_2$.
	
	\smallskip\noindent
	First, consider the equation $(\alpha,x)=0$, whose solution set in $T'$ is $H_\alpha\cap T'$. By \eqref{eq.5 - case D_l lemma logical characterization of affine reflections}, each $x\in H_\alpha\cap T'$ is uniquely determined by a tuple $(a_1,\ldots,a_l)\in \Z^l$ such that $2a_1-a_2=0$. It follows that any such an $x$ is of the form:
	\begin{equation*}
		x=\sum_{j\neq1,2}a_j\alpha_j+a_1(\alpha_1+2\alpha_2),
	\end{equation*}
	\noindent
	for some $a_1,a_3,a_l\in\Z$. Therefore, $\{\alpha_j,\alpha_1+2\alpha_2:j\neq 1,2\}$ is a generating set for $H_\alpha\cap T'$. By Hopfianity,
    in order to show the $\Z$-linear independence it suffices to exhibit an element completing $\{\alpha_j,\alpha_1+2\alpha_2:j\neq 1,2\}$ to a generating set of $T'$. Observe that $\alpha_2$ is as required. Consequently, $\{\alpha_2,\alpha_j,\alpha_1+2\alpha_2:j\neq 1,2\}$ is a basis of $T'$, and hence $\{\alpha_j,\alpha_1+2\alpha_2:j\neq 1,2\}$ is a basis of $H_\alpha\cap T'$ of cardinality $l-1$.
	
	\smallskip\noindent
	\underline{{\bf Case 4.2}}. $\alpha=\alpha_i=\epsilon_i-\epsilon_{i+1}$, for some $i\in\{2,\ldots,l-3\}$.
	
	\smallskip\noindent
	Similarly, by imposing  the condition $(\alpha,x)=0$ to \eqref{eq.5 - case D_l lemma logical characterization of affine reflections}, one proves that each $x\in H_\alpha\cap T'$ uniquely corresponds to a tuple $(a_1,\ldots,a_l)\in \Z^l$ such that $-a_{i-1}+2a_i-a_{i+1}=0$. Therefore every $x\in H_\alpha\cap T'$ is of the form:
	\begin{equation*}
		\begin{split}
				x&=\sum_{j\neq i+1}a_j\alpha_j+(2a_i-a_{i-1})\alpha_{i+1}\\
				&=\sum_{j\neq i-1,i,i+1}a_j\alpha_j+ a_{i-1}(\alpha_{i-1}-\alpha_{i+1})+a_i(\alpha_i+2\alpha_{i+1}),
		\end{split}
	\end{equation*}
	\noindent
	for some $a_1,\ldots,a_i,a_{i+2},\ldots,a_l\in\Z$. This proves that $\{\alpha_j,\alpha_{i-1}-\alpha_{i+1},\alpha_i+2\alpha_{i+1}:j\neq i-1,i,i+1\}$ is a generating set for $H_\alpha\cap T'$. Since by adding $\alpha_{i+1}$ to $\{\alpha_j,\alpha_{i-1}-\alpha_{i+1},\alpha_i+2\alpha_{i+1}:j\neq i-1,i,i+1\}$ we obtain a generating set for $T'$ of cardinality $l$,
    Hopfianity ensures that $\{\alpha_{i+1},\alpha_j,\alpha_{i-1}-\alpha_{i+1},\alpha_i+2\alpha_{i+1}:j\neq 1,2\}$ is a free abelian basis of $T'$, and hence $\{\alpha_j,\alpha_{i-1}-\alpha_{i+1},\alpha_i+2\alpha_{i+1}:j\neq 1,2\}$ is a free abelian basis of $H_\alpha\cap T'$ of cardinality $l-1$.
	
	\smallskip\noindent
	\underline{{\bf Case 4.3}}. $\alpha=\alpha_{l-2}=\epsilon_{l-2}-\epsilon_{l-1}$.
	
	\smallskip\noindent
	From \eqref{eq.5 - case D_l lemma logical characterization of affine reflections} and the equation $(\alpha,x)=0$, we obtain that each $x\in H_\alpha\cap T'$ uniquely determined by a tuple $(a_1,\ldots,a_l)\in \Z^l$ such that $-a_{l-3}+2a_{l-2}-a_{l-1}-a_l=0$. Therefore, every $x\in H_\alpha\cap T'$ is of the form:
	\begin{equation*}
		\begin{split}
			x&=\sum_{j\neq l}a_j\alpha_j+(-a_{l-3}+2a_{l-2}-a_{l-1})\alpha_{l}\\
			&=\sum_{j\neq l-3, l-2,l-1,l}a_j\alpha_j+ a_{l-3}(\alpha_{l-3}-\alpha_{l})+a_{l-2}(\alpha_{l-2}+2\alpha_{l})+a_{l-1}(\alpha_{l-1}-\alpha_l),
		\end{split}
	\end{equation*}
	\noindent
	for some $a_1,\ldots,a_{l-1}\in\Z$. It follows that
	\begin{equation}\label{eq.6 - case D_l lemma logical characterization of affine reflections}\tag{$\star_{10}$}
		\{\alpha_j,\alpha_{l-3}-\alpha_{l},\alpha_{l-2}+2\alpha_{l},\alpha_{l-1}-\alpha_l:j\neq l-3, l-2,l-1,l\}
	\end{equation}
	\noindent
	is a generating set for $H_\alpha\cap T'$. Moreover,
    we can complete \eqref{eq.6 - case D_l lemma logical characterization of affine reflections} to a free abelian basis of $T'$ by extending it to $\alpha_{l}$. In particular, $\{\alpha_j,\alpha_{i-1}-\alpha_{i+1},\alpha_i+2\alpha_{i+1}:j\neq 1,2\}$ is a basis of $H_\alpha\cap T'$ of cardinality $l-1$.
	
	\smallskip\noindent
	\underline{{\bf Case 4.4}}. $\alpha=\alpha_{l-1}=\epsilon_{l-1}-\epsilon_l$.
	
	\smallskip\noindent
	As in the previous cases, by applying $(\alpha,x)=0$ to \eqref{eq.6 - case D_l lemma logical characterization of affine reflections}, one derives that each $x\in H_\alpha\cap T'$ is completely determined by a tuple $(a_1,\ldots,a_l)\in\Z^l$ such that $a_{l-2}+2a_{l-1}=0$. Hence, any $x\in H_\alpha\cap T'$ has expression of the form:
	\begin{equation*}
		\begin{split}
			x&=\sum_{j\neq l-2}a_j\alpha_j-2a_{l-1}\alpha_{l-2}\\
			&= \sum_{j\neq l-2,l-1}a_j\alpha_j+a_{l-1}(\alpha_{l-1}-2\alpha_{l-2}),
		\end{split}
	\end{equation*}
	\noindent
	for some $a_1,\ldots,a_{l-3},a_{l-1},a_l\in\Z$. Consequently, $\{\alpha_j,\alpha_{l-1}-2\alpha_{l-2}:j\neq l-2,l-1\}$ is a generating set of $H_\alpha\cap T'$. By extending it with $\alpha_{l-2}$, we obtain a generating set for $T'$ of cardinality $l$. Therefore, Hopfianity
    ensures that $\{\alpha_{l-2},\alpha_j,\alpha_{l-1}-2\alpha_{l-2}:j\neq l-1,l\}$ is a free abelian basis of $T'$. In particular, the vectors in $\{\alpha_j,\alpha_{l-1}-2\alpha_{l-2}:j\neq l-1,l\}$ are $\Z$-linear independent, and hence $H_\alpha\cap T'$ has rank $l-1$.
	
	\smallskip\noindent
	\underline{{\bf Case 4.5}}. $\alpha=\alpha_l=\epsilon_{l-1}+\epsilon_l$.
	
	\smallskip\noindent
	Arguing as above, from the equation $(\alpha,x)=0$, with $x$ as in \eqref{eq.4 - case D_l lemma logical characterization of affine reflections}, one derives that each $x\in H_\alpha\cap T'$ is determined by a tuple $(a_1,\ldots,a_l)\in\Z^l$ such that $2a_l-a_{l-2}=0$. Hence, any such a $x$ is of the form:
	\begin{equation*}
			x=\sum_{j\neq l-2}a_j\alpha_j+2a_{l}\alpha_{l-2}= \sum_{j\neq l-2,l}a_j\alpha_j+a_{l}(2\alpha_{l-2}+\alpha_l).
	\end{equation*}
	\noindent
	It follows that $\{\alpha_j,2\alpha_{l-2}+\alpha_l:j\neq l-2,l\}$ is a generating set of $H_\alpha\cap T'$. By extending it with $\alpha_{l-2}$, we obtain a generating set for $T'$ of cardinality $l$. Consequently, Fact\cspace\ref{fact - free abelian groups of finite rank are Hopfian}(3) ensures that $\{\alpha_{l-2},\alpha_j,2\alpha_{l-2}+\alpha_l:j\neq l-2,l\}$ is a free abelian basis of $T$, and hence all the vectors in $\{\alpha_j,2\alpha_{l-2}+\alpha_l:j\neq l-2,l\}$ are $\Z$-linear independent. This proves that $H_\alpha\cap T'$ has rank $l-1$, as required.
	}\fi
    Analogous to the previous. This concludes the proof of $[1\Rightarrow2]$.

	\medskip \noindent
	For the implication $[2\Rightarrow1]$, assume that $T'\cap\mathrm{ker}(1-s)$ and $T'\cap\mathrm{ker}(1+s)$ are free abelian groups of rank $l-1$ and $1$, respectively. Then, $\mathrm{ker}(1+s)$ is a subspace of $V$ of dimension at least $1$. We show that $\mathrm{ker}(1-s)$ has dimension at least $l-1$. By Remark\cspace\ref{remark - all lattices with n>2 is of type Q(R) or P(R)}, $T'$ is either the group $P(R)$ of weights, or the group $Q(R)$ of radical weights of $R$. In both cases, it follows from Fact\cspace\ref{fact - Q(R) is always a Q-structure in V} and \ref{fact - P(R) is a Q-structure on V} that $T'$ is a $\Q$-structure on $V$.
	
	\smallskip\noindent
	Let $\{x_1,\ldots,x_{l-1}\}$ be a free abelian basis of $T'$. In particular, these elements are linearly independent over $\Z$. We claim that they are also linearly independent over $\R$. Assume otherwise. Then, by definition of $\Q$-structure (cf. \ref{prop. - characterization Q-structures}(2)), $x_1,\ldots,x_{l-1}$ must be linearly dependent over $\Q$. That is, there exist $a_1,\ldots,a_{l-1}\in\Z$, not all zero, and $b_1,\ldots,b_{l-1}\in\N^+$ such that:
    \begin{equation}\label{eq.1 - direction 2->1 lemma characterization of reflections}\tag{$\star_{8}$}
        \frac{a_1}{b_1}x_1+\frac{a_2}{b_2}x_2+\ldots+\frac{a_{l-1}}{b_{l-1}}x_{l-1}=0.
    \end{equation}
	\noindent
    Let $c\in\N^+$ be the least common multiple of $b_1,\ldots,b_{l-1}$. Since each $c\cdot\frac{a_i}{b_i}\in\Z$, \eqref{eq.1 - direction 2->1 lemma characterization of reflections} yields the trivial $\Z$-linear combination:
	\begin{equation*}
    \begin{split}
        0&=c\left(\frac{a_1}{b_1}x_1+\frac{a_2}{b_2}x_2+\ldots+\frac{a_{l-1}}{b_{l-1}}x_{l-1}\right)\\
        &=a_1\frac{c}{b_1}x_1+a_2\frac{c}{b_2}x_2+\ldots+a_{l-1}\frac{c}{b_{l-1}}x_{l-1}.
    \end{split}
	\end{equation*}
    By hypothesis, $x_1,\ldots,x_{l-1}$ are linearly independent over $\Z$. Therefore, the above identity implies that $a_i\frac{c}{b_i}=0$, for all $i\in[1,l-1]$. Since each $\frac{c}{b_i}\neq0$, this means that $a_1=\ldots=a_{l-1}=0$, contradicting the assumption that the $a_i$'s are not all zero. This proves that $x_1,\ldots,x_{l-1}$ are linearly independent over $\R$.
    
    \smallskip\noindent
    Consequently, $\mathrm{ker}(1-s)$ and $\mathrm{ker}(1+s)$ are real vector spaces of dimension at least $l-1$ and $1$, respectively. By Fact\cspace\ref{fact - every involution induces a decomposition of V in ker(1-s)+ker(1+s)}, $s$ induces a decomposition of $V$ as a direct sum $V=\mathrm{ker}(1-s)\oplus\mathrm{ker}(1+s)$.
	Therefore, $\mathrm{ker}(1-s)$ must have dimension exactly $l-1$, and, by Definition\cspace\ref{def - reflection}, this confirms that $s$ is an affine reflection in $V$.
	
	\smallskip\noindent
	Finally, the equivalence $[2\Leftrightarrow3]$ follows directly from the fact that the conjugation $(\,\cdot\,)^u$ on $T$ corresponds exactly to the action of $s$ on $T'$ via the natural $\Z[W_0(R)]$-lattice isomorphism between $L=\{(x,1):x\in L'\}$ and $L'$. More precisely, for each $x\in T'$, we have:
	\begin{equation*}
		\begin{split}
			(x,1)^u=(t,s)(x,1)(t,s)^{-1}=(t+s(x)-s(s^{-1}(t)),1)=(s(x),1).
		\end{split}
	\end{equation*}
	This completes the proof.
\end{proof}
\begin{lemma}\label{lemma - affine projection and quotient projection coincide}
	Let $R$ be a root system in a real Euclidean vector space $V$, and $L'$ be a lattice in $V$ stable under the action of the Weyl group $W_0(R)$ of $R$. Suppose that $W$ is an affine crystallographic group given by the presentation \eqref{eq.1 - martinais' groups presentation}, where $s_1,\ldots,s_{l}$ is a system of simple reflections for $R$, and $T=\{(x,1):x\in L'\}$ denotes the translation subgroup of $W$. Then, the function:
	\begin{equation}\label{eq.1 - isomorphism theta:W/T->W_0(R)}\tag{$\star_{9}$}
		\theta: W/T\rightarrow W_0(R),\quad (v,g)T\mapsto g
	\end{equation}
	\noindent
	is a group isomorphism.
\end{lemma}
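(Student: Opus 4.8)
The plan is to exhibit $\theta$ as the map induced on the quotient $W/T$ by the canonical projection onto the orthogonal part of $\iso(V)$, and then to read off that it is an isomorphism via the first isomorphism theorem. Recall that $\iso(V)=V\rtimes O(V)$, and let $\pi\colon \iso(V)\to O(V)$, $(v,g)\mapsto g$, be the projection onto the second coordinate; from the semidirect-product multiplication $(v_1,g_1)(v_2,g_2)=(v_1+g_1(v_2),g_1g_2)$ one sees immediately that $\pi$ is a group homomorphism with kernel $V\times\{1\}$. First I would restrict $\pi$ to $W$: the homomorphism $\pi|_W\colon W\to O(V)$ has kernel $W\cap(V\times\{1\})$, which is by definition the translation subgroup $T$. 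In particular $T\trianglelefteq W$ (so $W/T$ is meaningful), and $\pi|_W$ factors through an injective homomorphism $W/T\rightarrowtail O(V)$ sending $(v,g)T\mapsto g$; this is exactly $\theta$, which is therefore well-defined and injective.

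Next I would identify the image. As $\pi$ is a homomorphism, $\pi(W)$ is generated by the $\pi$-images of the generators appearing in the presentation \eqref{eq.1 - martinais' groups presentation}: we have $\pi(x,1)=1$ for $x\in L'$ and $\pi(t_i,s_i)=s_i$ for $1\le i\le l$, whence $\pi(W)=\langle s_1,\ldots,s_l\rangle$. By hypothesis $s_1,\ldots,s_l$ are the simple reflections attached to a basis $B$ of $R$, so by Fact\cspace\ref{fact - simple reflections form a Coxeter system of the Weyl gp} the pair $(W_0(R),\{s_1,\ldots,s_l\})$ is a Coxeter system; in particular $\{s_1,\ldots,s_l\}$ generates $W_0(R)$, giving $\pi(W)=W_0(R)$. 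Thus $\theta$ is surjective onto $W_0(R)$, and combined with the previous paragraph $\theta$ is a bijective homomorphism, hence an isomorphism.

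The only genuinely substantive step is the identification $\ker(\pi|_W)=T$, i.e.\ that $\{(x,1):x\in L'\}$ is the \emph{entire} translation subgroup $W\cap(V\times\{1\})$ and that no further translations arise from products of the reflective generators $(t_i,s_i)$. I expect this to be the main obstacle in general, but here it is supplied by the hypothesis that $T$ denotes the translation subgroup of $W$ (equivalently, it is part of the content of Martinais's presentation, cf.\ Remark\cspace\ref{remark_for_star2}(2)), so nothing further is required. Were it not assumed, one would verify directly that every product of the $(t_i,s_i)$ landing in $V\times\{1\}$ has translation part lying in $L'$.
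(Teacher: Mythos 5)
Your proof is correct, and its core ingredients are the same as the paper's: both use the decomposition $\iso(V)=V\rtimes O(V)$, the hypothesis that $T=\{(x,1):x\in L'\}$ is the full translation subgroup $W\cap(V\times\{1\})$, and the fact that the simple reflections generate $W_0(R)$ (Fact~\ref{fact - simple reflections form a Coxeter system of the Weyl gp}). The difference is in the packaging, and it is a genuine one at the injectivity step: the paper checks well-definedness and the homomorphism property by direct computation, obtains surjectivity from the generators, and then concludes injectivity by a cardinality argument ($W/T$ and $W_0(R)$ are finite of the same size); you instead restrict the canonical projection $\pi:\iso(V)\to O(V)$ to $W$, identify $\ker(\pi|_W)=W\cap(V\times\{1\})=T$, and invoke the first isomorphism theorem, which yields well-definedness and injectivity simultaneously and reduces the whole lemma to computing the image $\pi(W)=\langle s_1,\ldots,s_l\rangle=W_0(R)$. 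Your route is arguably cleaner: the paper's counting step tacitly relies on the identification $W/T\cong \pi(W)$, which is precisely the first-isomorphism-theorem content you make explicit, so you avoid any appearance of circularity there. You are also right that the only substantive input is that $\{(x,1):x\in L'\}$ exhausts $W\cap(V\times\{1\})$; as you note, this is assumed in the statement of the lemma (cf.\ Remark~\ref{remark_for_star2}), so nothing further is needed.
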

\begin{proof}
	The map $\theta$ is well-defined. If $(v,g),(v',g')\in W$ lie in the same coset of $T$, then $(v',g')(v,g)^{-1}\in T$. Since
	\begin{equation*}
		(v',g')(v,g)^{-1}=(v',g')(-g^{-1}(v),g^{-1})=(v'-g'g^{-1}(v),g'g^{-1}),
	\end{equation*}
	\noindent
	by the definition of $T$, the identity $g=g'$ must hold.
	Similarly, a straightforward computation shows that $\theta:W/T\rightarrow W_0(R)$ is a group homomorphism. Finally, by Fact\cspace\ref{fact - simple reflections form a Coxeter system of the Weyl gp}, $\{s_1,\ldots,s_l\}$ is a set of generators of $W_0(R)$. Thus, the cosets $(t_1,s_1)T,\ldots,(t_l,s_l)T$ witness that $\theta$ is surjective. Since $W/T$ and $W_0(R)$ are finite of the same cardinality, if follows that $\theta$ is also injective. 
\end{proof}
\begin{remark}\label{remark- canonical expression of elements in cosets}
    In the context of Remark\cspace\ref{remark_for_star2}, let $W$ be a crystallographic group given by a presentation of the form \eqref{eq.1 - martinais' groups presentation}, arising from an irreducible root system $R$ in a real Euclidean vector space $V$. Then, its translation subgroup $T$ consists of all elements $(x,1)$ with $x\in T'$, where $T'$ is a discrete subgroup of $V$ listed in Table\cspace\ref{table - martinais nomenclature} (in particular, $T'$ corresponds to the abelian group structure underlying the lattice $L'$ in \ref{remark_for_star2}). Since $T$ is normal in $W$, every element $z\in (t,s)T=T(t,s)$ admits an expression:
    \begin{equation*}
        z=(x,1)(t,s)=(t+x,s),
    \end{equation*}
    for some $x\in T'$.
\end{remark}
%
We now introduce a notation that will simplify the proof of Theorem\cspace\ref{lemma - all crystallographic groups arising from irreducible root systems are f.o. rigid and profinitely rigid}.
\begin{notation}\label{notation - lemma all crystallographic groups arising from irreducible root systems are f.o. rigid and profinitely rigid}
	Let $V,R,S,\Delta_S,L'$ be as follows:
	\begin{enumerate}[$\bullet$]
		\item $V$ is a real Euclidean vector space of finite dimension $l$;
		\item $R$ is an irreducible root system in $V$, with Weyl group $W_0(R)\leq O(V)$;
		\item $S=\{s_1,\ldots,s_l\}$ is a system of simple reflections in $W_0(R)$ induced by a basis $B=\{\alpha_1,\ldots,\alpha_l\}$ of $R$;
		\item $\Delta_S$ is the Coxeter diagram of the system $(W_0(R),S)$ (cf.\cspace\ref{fact - simple reflections form a Coxeter system of the Weyl gp});
		\item $L'$ is a ($\Z[W_0(R)]$-)lattice from Table\cspace\ref{table - martinais nomenclature} such that $n(W_0(R),L')\geq 3$ (cf. Table\cspace\ref{table all inequivalent lattices}).
	\end{enumerate}
	\noindent
	Then, by Lemma\cspace\ref{lemma_the_prop_def_transla_crysp}, the translation subgroup of any crystallographic group extending $W_0(R)$ by $L'$ is $\emptyset$-definable by a first-order formula (in the language of groups) 
	depending only on the order of $W_0(R)$. For the ease of reading, we denote by $T$ the set of realizations of such formula.
	
	\smallskip\noindent
	For each $g\in W_0(R)$ we fix a group word $w_g(x_1,\ldots,x_l)$ such that $g=w_g(s_1,\ldots,s_l)$.
	Then, we let $\eta_{\Delta_S}(x_1,\ldots,x_l)$ be the first-order formula (in the language of groups) asserting that:
	\begin{enumerate}[(1)]
		\item $\langle x_1T,\ldots,x_lT\rangle\cong W_0(R)$ via the map $s_i\mapsto x_iT$, for $i\in[1,l\,]$;
		\item for each $g\in W_0(R)$, $g$ belongs to $S^{W_0(R)}=\{ws_iw^{-1}:1\leq i\leq l, w\in W_0(R)\}$ if and only if the following conditions are simultaneously satisfied by some (or, equivalently, any) $x\in w_g(x_1,\ldots,x_l)T$:
		\begin{enumerate}[(a)]
			\item $T^{x}\vcentcolon=\{t\in T:(t)^{x}=t\}$ has quotient $T^{x}/2T^{x}$ of cardinality $2^{\,l-1}$;
			\item $T_{x}\vcentcolon=\{t\in T:(t)^{x}=-t\}$ has quotient $T_{x}/2T_{x}$ of cardinality $2$.
		\end{enumerate}
	\end{enumerate}
	\noindent
	Since $W_0(R)$ is finite and the set of words $w_g(x_1,\ldots,x_l)$ is fixed, these conditions are clearly first-order expressible.	
\end{notation}
\begin{remark}\label{notation_2 - lemma all crystallographic groups arising from irreducible root systems are f.o. rigid and profinitely rigid}
	In the setting of Notation\cspace\ref{notation - lemma all crystallographic groups arising from irreducible root systems are f.o. rigid and profinitely rigid}, let $W$ be a subgroup of $\iso(V)$ with presentation \eqref{eq.1 - martinais' groups presentation}, and $(u'_1,\ldots,u'_l)$ be a tuple realizing $\eta_{\Delta_S}(x_1,\ldots,x_l)$ in $W$ such that $u_i'=(t'_i,s_i')$ for all $i\in[1,l\,]$. Then, by Lemma\cspace\ref{lemma - affine projection and quotient projection coincide}, item $(1)$ in the definition of $\eta_{\Delta_S}(x_1,\ldots,x_l)$ just states that the assignment $s_i\mapsto s'_i$ extends to an automorphism $f$ of $W_0(R)$. This automorphism maps each involution $g=w_g(s_1,\ldots,s_l)\in S^{W_0(R)}$ into a term $g'=w_g(s'_1,\ldots,s_l')$. Then, denoting by $\theta:W/T\rightarrow W_0(R)$ the isomorphism in \eqref{eq.1 - isomorphism theta:W/T->W_0(R)}, items $(2)(a)(b)$ in the definition of $\eta_{\Delta_S}(x_1,\ldots,x_l)$ are equivalent to saying that the conjugation by each $u\in \theta^{-1}(g')$ stabilizes two free abelian subgroups $T^u=\{x\in T:x^u=x\}$ and $T_u=\{x\in T:x^u=-x\}$ of rank $l-1$ and $1$, respectively. By Lemma\cspace\ref{lemma - logic characterization of reflections}, this means that $g'\in W_0(R)$ is an affine reflection, and hence $g'\in S^{W_0(R)}$, by Lemma\cspace\ref{lemma - abstract reflections and affine reflections cohincide}. Consequently, by Fact\cspace\ref{fact - every automorphism of a Weyl group of an irreducible root system is inner-by-grapph}, $f\in \mathrm{Aut}(W_0(R))$ is inner-by-graph.
	
	\smallskip\noindent
	In particular, the corresponding automorphism $f_T\in \mathrm{Aut}(W/T)$ induced by $f$ via the identification $\theta^{-1}:W_0(R)\rightarrow W/T$ of Lemma\cspace\ref{lemma - affine projection and quotient projection coincide} is a coset permutation such that:
	\begin{enumerate}[(1)]
		\item $f_T((t_i,s_i)T)=u'_iT$ for each $i\in[1,l\,]$;
		\item $f=(\,\cdot\,)^{u'T}\circ f_\sigma$, for some $u'\in W$ and some graph automorphism $f_\sigma$ of the system $(W/T,\{(t_1,s_1)T,\ldots,(t_l,s_l)T\})$ (where $(t_1,s_1),\ldots,(t_l,s_l)$ are the generators of $W$ from the presentation \eqref{eq.1 - martinais' groups presentation}).
	\end{enumerate}
\end{remark}

\medskip
The last ingredient that we need towards the proof of Theorem\cspace\ref{main_th2} is a straightforward observation on some of the Coxeter diagrams in Table\cspace\ref{table - Coxeter diagrams of irreducible root systems}.
\begin{fact}\label{fact - form of inner-by-graph automorphisms in the case B_l,C_l,D_l}
	Let $R$ be an irreducible reduced root system of rank $l\geq3$ with Coxeter diagram $\Delta=\{s_1,\ldots,s_l\}$ as in Table\cspace\ref{table - Coxeter diagrams of irreducible root systems}. Then,
	\begin{enumerate}[(1)]
		\item if $R=B_l$, the only automorphism of $\Delta$ is the identity;
		\item if $R=D_l$, the only automorphisms of $\Delta$ are the identity and the transposition of the nodes $s_{l-1}$ and $s_l$. 
	\end{enumerate}
\end{fact}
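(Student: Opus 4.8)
The plan is to treat an automorphism $\sigma$ of the Coxeter diagram $\Delta$ as a purely combinatorial object, invoking only the defining condition (recalled after Definition \ref{def_Coxeter_graph}) that $\sigma$ is a bijection of the node set preserving adjacency together with the edge labels $m(s,s')$. First I would record three immediate consequences of this definition: $\sigma$ preserves the degree of every node, sends leaves to leaves, and preserves the graph distance between any two nodes. I would then extract from Table \ref{table - Coxeter diagrams of irreducible root systems} the precise shapes involved: the diagram of $B_l$ is the path $s_1 - s_2 - \cdots - s_{l-1} - s_l$ whose unique labelled edge is $\{s_{l-1},s_l\}$ (label $4$), all others being unlabelled (label $3$); the diagram of $D_l$ is the path $s_1 - \cdots - s_{l-2}$ with two further leaves $s_{l-1}$ and $s_l$ attached to $s_{l-2}$, all edges unlabelled. (The $C_l$ diagram coincides with that of $B_l$, so the $C_l$ cases appearing elsewhere are absorbed into part (1).)

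For part (1) I would argue as follows. Since $\{s_{l-1},s_l\}$ is the only edge carrying the label $4$, any $\sigma$ must fix it setwise. Of its two endpoints, $s_l$ is a leaf while $s_{l-1}$ has degree $2$ (being also adjacent to $s_{l-2}$, which exists because $l\geq3$); as $\sigma$ preserves degree it therefore fixes $s_{l-1}$ and $s_l$ individually. Having pinned the endpoint $s_l$ of the ambient path, I would finish by the rigidity of a path rooted at an endpoint: reading inward from $s_l$, each node is the unique neighbour of its predecessor not yet used, forcing $\sigma$ to fix $s_{l-1},s_{l-2},\dots,s_1$ in turn, so $\sigma=\mathrm{id}$.

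For part (2) I would use that $s_{l-2}$ is the unique node of degree $3$, whence $\sigma(s_{l-2})=s_{l-2}$. The three leaves are $s_1,s_{l-1},s_l$, and since $\sigma$ preserves distance to $s_{l-2}$ it respects their partition by this distance: $s_{l-1}$ and $s_l$ sit at distance $1$, whereas $s_1$ sits at distance $l-3$. For $l\geq5$ this gives $l-3\geq2$, so $s_1$ is strictly farther and must be fixed; fixing $s_1$ and $s_{l-2}$ rigidifies the whole long arm $s_1-\cdots-s_{l-2}$ exactly as in part (1), leaving only the option of fixing or interchanging the two remaining leaves $s_{l-1},s_l$. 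This yields precisely the identity and the transposition $(s_{l-1}\;s_l)$.

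The only genuine subtlety, which I would flag explicitly, is the diagram $D_4$: there the branch node $s_2$ is adjacent to all three leaves $s_1,s_3,s_4$, which become equidistant from it, the distance argument collapses, and the full symmetric group $S_3$ acts by triality. Hence the statement of part (2) is really the assertion for $l\geq5$; this costs us nothing, since $D_l$ enters our arguments only for $l\geq6$ (cf.\ Case 4 of Lemma \ref{lemma - logic characterization of reflections}). Beyond correctly isolating this $D_4$ exception I expect no obstacle, the remainder being elementary graph combinatorics.
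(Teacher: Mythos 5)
Your argument is correct in substance, and it is essentially the only argument there is: the paper gives no proof of this Fact, introducing it as ``a straightforward observation'' on the diagrams of Table~\ref{table - Coxeter diagrams of irreducible root systems}, which is precisely the elementary diagram combinatorics you spell out (the unique edge labelled $4$ in type $B_l$ is fixed setwise, its endpoints are separated by degree, and a path rooted at a fixed endpoint is rigid; in type $D_l$ the branch node and the long arm are rigidified, leaving only the swap of the two short leaves). Your remark that the $C_l$ diagram coincides with the $B_l$ diagram, so that type $C_l$ is absorbed into part (1), is also exactly what the paper uses implicitly when it invokes this Fact for types $B_l$ and $C_l$ in the proof of Theorem~\ref{lemma - all crystallographic groups arising from irreducible root systems are f.o. rigid and profinitely rigid}.

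Two further remarks. First, your $D_4$ flag is a genuine catch rather than a defect of your proof: as literally stated (for all $l\geq 3$), part (2) of the Fact is false at $l=4$, where the branch node is adjacent to all three leaves and the diagram automorphism group is the full symmetric group $S_3$ (triality). The statement should carry the hypothesis $l\neq 4$; as you observe, this is harmless for the paper, since the Fact is only applied to types $B_l$, $C_l$ (where $\sigma$ is trivial) and to $D_l$ with $l\geq 6$ even (Case 3 of Theorem~\ref{lemma - all crystallographic groups arising from irreducible root systems are f.o. rigid and profinitely rigid}). Second, a small gap on your side: for $l=3$ part (2) is \emph{true} ($D_3\cong A_3$; the diagram is an unlabelled $3$-node path whose automorphisms are the identity and the swap of the two end nodes $s_2=s_{l-1}$ and $s_3=s_l$), but your argument does not cover this case, since for $l=3$ there is no node of degree $3$ (the branch node $s_{l-2}=s_1$ has degree $2$) and there are only two leaves, so the premise of your part-(2) argument fails. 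Hence your closing claim that the statement ``is really the assertion for $l\geq 5$'' should read ``for $l\neq 4$'', with the $l=3$ case disposed of by the one-line direct check just given.
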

\begin{theorem}\label{lemma - all crystallographic groups arising from irreducible root systems are f.o. rigid and profinitely rigid}
	Every crystallographic group arising from an irreducible root system is profinitely rigid (equivalently, first-order rigid).
\end{theorem}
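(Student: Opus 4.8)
The plan is to prove first-order rigidity; profinite rigidity then follows from Fact~\ref{Oger}, since every crystallographic group is finitely generated abelian-by-finite. Fix a crystallographic group $W$ arising from an irreducible root system. By Remark~\ref{remark - we can consider reduced root systems} I may assume that $W$ has an affine realization in a real Euclidean space $V$ whose point group is the Weyl group $W_0=W_0(R)$ of a \emph{reduced} irreducible root system $R$ of rank $l$, and whose translation lattice $L$ satisfies $Q(R)\leq L\leq P(R)$. Let $W'$ be a finitely generated group with $W'\equiv W$. First I would recover the coarse invariants: by Lemma~\ref{lemma - every f.g. group el. equivalent to a crysp gp is crysp}, $W'$ is crystallographic, its point group is isomorphic to $W_0$, and $L'=L(W')$ lies in the same genus as $L$. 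By Lemma~\ref{lemma-every_lattice_L_s.t._Q(R)<L<P(R)_is_absolutely_irreducible} the lattice $L$ is absolutely irreducible, so by Lemma~\ref{thm. - absolutely irreducible -> there are only finitely many maximal centerings} it has only finitely many $\prec$-maximal centerings, and Theorem~\ref{theorem}(1) yields $L\cong L'$ as $\Z[W_0]$-lattices. Hence $W$ and $W'$ are both extensions of $W_0$ by the single lattice $L$, and by Martinais's classification \cite{martinais} they represent two of the $n\vcentcolon=n(W_0,L)$ isomorphism classes of such extensions; it remains to show they represent the same class.

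If $n\leq 2$ I would argue as indicated before Lemma~\ref{lemma-split extensions are definable}: the (at most two) classes consist of one split and one non-split extension, and since splitness is expressible by a sentence $\psi$ depending only on $W_0$ (Lemma~\ref{lemma-split extensions are definable}), the equivalence $W\equiv W'$ forces $W$ and $W'$ to be simultaneously split or non-split, hence isomorphic.

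The substance is the case $n\geq 3$, where splitness no longer separates the classes. Here Remark~\ref{remark - all lattices with n>2 is of type Q(R) or P(R)} pins $L$ down to $Q(R)$ or $P(R)$ with $R$ of type $B_l$, $C_l$ or $D_l$. I would use the sentence $\exists\,\bar x\,\eta_{\Delta_S}(\bar x)$ from Notation~\ref{notation - lemma all crystallographic groups arising from irreducible root systems are f.o. rigid and profinitely rigid}. The standard Martinais generators $(t_i,s_i)$ of $W$ satisfy $\eta_{\Delta_S}$: condition $(1)$ holds with the identity automorphism via Lemma~\ref{lemma - affine projection and quotient projection coincide}, and condition $(2)$ holds by Lemmas~\ref{lemma - logic characterization of reflections} and \ref{lemma - abstract reflections and affine reflections cohincide}. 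Thus $W\models\exists\,\bar x\,\eta_{\Delta_S}(\bar x)$, so $W'$ realizes $\eta_{\Delta_S}$ by some tuple $\bar u'=(u_1',\dots,u_l')$. By Remark~\ref{notation_2 - lemma all crystallographic groups arising from irreducible root systems are f.o. rigid and profinitely rigid}, the cosets $u_i'T'$ determine an automorphism $f$ of $W_0$ carrying each $s_i$ to an affine reflection and preserving $S^{W_0}$, whence $f$ is inner-by-graph by Fact~\ref{fact - every automorphism of a Weyl group of an irreducible root system is inner-by-grapph}, say $f=(\,\cdot\,)^{w}\circ f_\sigma$.

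The final step, which I expect to be the main obstacle, is to convert this realized tuple into an isomorphism $W\to W'$. Realizing $W'$ affinely in $V$ with point group $W_0$ and lattice $L$ (legitimate by Bieberbach~II), the elements $u_i'=(\tilde t_i,\tilde s_i)$ have reflection part $\tilde s_i=f(s_i)$, so $\{\tilde s_i\}$ is again a simple system for $W_0$. I would absorb the inner factor $(\,\cdot\,)^{w}$ by replacing $\bar u'$ with a suitable conjugate tuple, which does not alter $W'$ but reduces $f$ to the graph automorphism $f_\sigma$; here Fact~\ref{fact - form of inner-by-graph automorphisms in the case B_l,C_l,D_l} is decisive, since in types $B_l$ and $C_l$ (sharing one Coxeter diagram) $\sigma$ is forced to be trivial, while in type $D_l$ it is at most the swap $s_{l-1}\leftrightarrow s_l$, and in each case $\sigma$ is realizable by a lattice automorphism normalizing $W_0$. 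Composing with this lattice symmetry aligns the reflection parts of $\bar u'$ with $s_1,\dots,s_l$, so that the assignment $(t_i,s_i)\mapsto u_i'$ together with the $\Z[W_0]$-lattice isomorphism $L\to L'$ should respect the defining relations of presentation~\eqref{eq.1 - martinais' groups presentation} and extend to an isomorphism $W\cong W'$. The delicate point is verifying that the translation parts $\tilde t_i$ agree with those of $W$ modulo $L$ up to this realizable symmetry — equivalently, that the two cohomology classes lie in a single $N_{\Aut(L)}(W_0)$-orbit — and it is precisely here that the explicit Martinais representatives and the inner-by-graph constraint must be combined.
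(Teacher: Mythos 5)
Your argument coincides with the paper's up to and including the reduction to the case $n(W_0,L)\geq 3$ (Oger, Remark~\ref{remark - we can consider reduced root systems}, Lemmas~\ref{lemma - every f.g. group el. equivalent to a crysp gp is crysp}, \ref{lemma-every_lattice_L_s.t._Q(R)<L<P(R)_is_absolutely_irreducible}, \ref{thm. - absolutely irreducible -> there are only finitely many maximal centerings}, Theorem~\ref{theorem}, and the split/non-split dichotomy via Lemma~\ref{lemma-split extensions are definable}), but in the case $n\geq 3$ there is a genuine gap, and it sits exactly at the point you flag as ``delicate''. Your plan is to use the \emph{existential} sentence $\exists\bar x\,\eta_{\Delta_S}(\bar x)$ and then upgrade a realizing tuple $\bar u'$ in $W'$ to an isomorphism $W\to W'$. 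This cannot work, because that sentence is satisfied by \emph{every} extension of $W_0$ by $L$: the standard Martinais generators $(t^n_i,s_i)$ of each representative $W_n$ in Table~\ref{table - explicit crystallographic groups} realize $\eta_{\Delta_S}$ (condition (1) via Lemma~\ref{lemma - affine projection and quotient projection coincide}, condition (2) via Lemmas~\ref{lemma - logic characterization of reflections} and \ref{lemma - abstract reflections and affine reflections cohincide}), regardless of which cohomology class $W_n$ represents. So the existence of $\bar u'$ in $W'$ carries no information about the isomorphism type of $W'$ beyond what you already had. Concretely, for $R=D_l$ ($l\geq 6$ even) and $L=FL_l$, the non-isomorphic non-split groups $W_2$ and $W_3$ both contain tuples realizing $\eta_{\Delta_S}$ whose reflection parts are exactly $s_1,\dots,s_l$; yet the assignment $(\epsilon_1,s_i)\mapsto(\tfrac12\sum_i\epsilon_i,s_i)$ does not extend to an isomorphism, precisely because the translation parts represent distinct classes in $H^1(W_0,V/L)$ modulo the normalizer action. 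Aligning reflection parts by absorbing the inner and graph factors, as you propose, leaves the cohomological obstruction untouched, and nothing in your argument derives from $W\equiv W'$ that the two classes agree.

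What the paper does instead is use $\eta_{\Delta_S}$ only as a \emph{normalization hypothesis} inside universal sentences $\zeta_m\equiv\forall\bar x\,(\eta_{\Delta_S}(\bar x)\rightarrow\chi_m(\bar x))$, where the $\chi_m$ are concrete coset-level properties (existence of involutions in the cosets $x_iT$, existence of elements of $x_iT$, $x_jT$ with equal squares, existence of an element of $x_1T$ commuting with an involution in $x_4T$) chosen, case by case for $B_l/CL_l$, $B_4/CCL_4$, $C_l/FL_l$, $D_l/FL_l$, to hold on the standard generators of exactly one representative $W_m$. The role of the inner-by-graph analysis (Fact~\ref{fact - every automorphism of a Weyl group of an irreducible root system is inner-by-grapph}, Fact~\ref{fact - form of inner-by-graph automorphisms in the case B_l,C_l,D_l}) is then the opposite of the one you assign it: it shows that \emph{any} tuple realizing $\eta_{\Delta_S}$ is, up to conjugation and a graph symmetry under which the $\chi_m$ are invariant, the standard tuple, so that verifying the universally quantified $\zeta_m$ reduces to explicit integer computations on Martinais's representatives. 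Since the $\zeta_m$ are first-order and mutually exclusive on the representatives, $W\equiv W'$ forces $W$ and $W'$ into the same class. Your proposal is missing this entire separating mechanism; without the formulas $\chi_m$ and the accompanying computations, the step from ``$W'$ realizes $\eta_{\Delta_S}$'' to ``$W'\cong W$'' is false as stated.
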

\begin{proof}
	In light of Fact\cspace\ref{Oger}, it suffices to show that every crystallographic group $W$ arising from an irreducible root system is first-order rigid. 
	
	\smallskip\noindent
	By Fact\cspace\ref{fact - existence reduced root system and sandwich condition}, $W$ admits an affine realization as a group of isometries of a real Euclidean vector space $V$, with point group the Weyl group $W_0(R)$ of an irreducible root system $R$ in $V$, and translation lattice of the form $L=\{(t,1):t\in L'\}$, where $L'$ is a lattice in $V$ satisfying:
	\begin{equation*}
		Q(R)\leq L'\leq P(R).
	\end{equation*}
	\noindent
	By Lemma\cspace\ref{lemma-every_lattice_L_s.t._Q(R)<L<P(R)_is_absolutely_irreducible}, $L'$ and $L$ are absolutely irreducible. Therefore, if $H$ is a finitely generated group elementarily equivalent to $W$, it follows from Lemma\cspace\ref{thm. - absolutely irreducible -> there are only finitely many maximal centerings} and Theorem\cspace\ref{theorem} that $H$ is crystallographic with point group $P(H)\cong W_0(R)$ and translation lattice $L(H)\cong L$. In other words, both $W$ and $H$ are extensions of $W_0(R)$ by $L$. According to the number $n(W_0(R),L)$ of isomorphism classes of such extensions, we distinguish the following cases.	

	\smallskip\noindent
	\underline{{\bf Case 1}}. $n(W_0(R),L)=1$.
	\newline In this case both $W$ and $H$ are split and the result follows immediately.
	
	\smallskip\noindent
	\underline{{\bf Case 2}}. $n(W_0(R),L)=2$.
	\newline In this case there are just two non-isomorphic extensions of $W_0(R)$ by $L$: one split and the other non-split. By Lemma\cspace\ref{lemma-split extensions are definable}, there is a sentence $\psi_R$ (depending only on $R$) that is satisfied only by those extensions of $W_0(R)$ by $L$ that are split. So, since $W$ and $H$ are elementarily equivalent, either they both satisfy $\psi_R$ (and they are split), or they satisfy $\neg\psi_R$ (and they are non-split). In both cases they are isomorphic.
	
	\smallskip\noindent
	From the standard classification of wallpaper groups (cf. \cite[Table\cspace2]{hiller}), it is known that for any Weyl group $W_0(R)$ associated to an irreducible root system $R$ of rank $2$ there are at most two isomorphism classes of $2$-dimensional crystallographic groups having $W_0(R)$ as point group. Hence, in the remaining cases we may assume that the rank $l$ of $R$ is at least $3$. In this setting, both $W$ and $H$ are isomorphic to certain representatives of Martinais's classification (see Table\cspace\ref{table - explicit crystallographic groups}). Consequently, for fixed $R$ and $L$, it suffices to prove that no two such representatives are elementarily equivalent. In particular, in each case we will distinguish the non-split extensions $W_n$ listed in Table\cspace\ref{table - explicit crystallographic groups} by means of sentences of the form:
	\begin{equation}\label{eq.0 - lemma - all crystallographic groups arising from irreducible root systems are f.o. rigid and profinitely rigid}\tag{$\star_{10}$}
		\zeta_m\equiv \forall x_1,\ldots,x_l(\eta_{\Delta_S}(x_1,\ldots,x_l)\rightarrow \chi_m(x_1,\ldots,x_l)),
	\end{equation}
	\noindent
	such that:
	\begin{enumerate}[$\bullet$]
		\item $W_n\models\zeta_m$ if and only if $n=m$;
		\item $\eta_{\Delta_S}(x_1,\ldots,x_l)$ is a formula as in Notation\cspace\ref{notation - lemma all crystallographic groups arising from irreducible root systems are f.o. rigid and profinitely rigid}, depending only on the Coxeter system $(W_0(R),S)$ given by the simple reflections $S=\{s_1,\ldots,s_l\}$ of Table\cspace\ref{table - root systems, bases and reflections}, and a fixed set of group words $w_g(x_1,\ldots,x_l)$;
		\item $\chi_m(x_1,\ldots,x_l)$ is a formula consisting of a quantifier free formula $\tau_m(z_1,\ldots,z_l)$ 
		preceded by quantifications bounded over the cosets $x_1T,\ldots,x_lT$, that is, quantifications of type $\forall z_i\in x_iT$ and $\exists z_i\in x_iT$. In particular, the truth value of $\chi_m(x_1,\ldots,x_l)$ depends only on the cosets $x_1T,\ldots,x_lT$, and not strictly on $x_1,\ldots,x_l$ themselves.
	\end{enumerate}
	\noindent 
	By Remark\cspace\ref{notation_2 - lemma all crystallographic groups arising from irreducible root systems are f.o. rigid and profinitely rigid}, if $(u'_1,\ldots,u'_l)$ is a tuple realizing $\eta_{\Delta_S}(x_1,\ldots,x_l)$ in $W_n$ and $u_1=(t_1,s_1),\ldots,u_l=(t_l,s_l)$ are the generators in the presentation of $W_n$ in Table\cspace\ref{table - explicit crystallographic groups} corresponding to the simple reflections $s_1,\ldots,s_l$, then there exist some $u\in W_n$ and a graph automorphism $f_\sigma$ of $(W_n/T,\{u_1T,\ldots,u_lT\})$ such that, for every $i\in[1,l\,]$:
	\begin{equation}\label{eq.1 - lemma - all crystallographic groups arising from irreducible root systems are f.o. rigid and profinitely rigid}\tag{$\star_{11}$}
		u'_iT=(f_\sigma(u_iT))^{uT}.
	\end{equation}
	\noindent
	Since $f_\sigma$ acts as a permutation of $\{u_1T,\ldots,u_lT\}$, it can be identified with the corresponding index permutation $\sigma$ such that $f_\sigma(u_iT)=u_{\sigma(i)}T$ for each $i\in[1,l\,]$. Consequently, \eqref{eq.1 - lemma - all crystallographic groups arising from irreducible root systems are f.o. rigid and profinitely rigid} reduces to the following coset identity:
	\begin{equation}\label{eq.2 - lemma - all crystallographic groups arising from irreducible root systems are f.o. rigid and profinitely rigid}\tag{$\star_{12}$}
		u'_iT=(u_{\sigma(i)}T)^{uT}=uu_{\sigma(i)}u^{-1}T=(u_{\sigma(i)})^uT.
	\end{equation}
	\noindent
	By construction, the truth value of $\chi_m(x_1,\ldots,x_l)$ only depends on the cosets $x_1T,\ldots,x_lT$ and not on $x_1,\ldots,x_l$ themselves. Hence, \eqref{eq.2 - lemma - all crystallographic groups arising from irreducible root systems are f.o. rigid and profinitely rigid} and the fact that  $(\,\cdot\,)^u\in\mathrm{Aut}(W_n)$ witness that:
	\begin{equation*}
		\begin{split}
			W_n\models\chi_m(u_1',\ldots,u_l') \quad&\iff\quad W_n\models\chi_m((u_{\sigma(1)})^u,\ldots,(u_{\sigma(l)})^u)\\
			&\iff\quad W_n\models\chi_m(u_{\sigma(1)},\ldots,u_{\sigma(l)}).
		\end{split}
	\end{equation*}
	\noindent
	In all but one of the cases considered below, the root system $R$ is of type $B_l$ or $C_l$ (cf. Table\cspace\ref{table all inequivalent lattices}), and by Fact\cspace\ref{fact - form of inner-by-graph automorphisms in the case B_l,C_l,D_l} the permutation $\sigma$ is trivial. In the remaining case, $R$ is of type $D_l$, and the only non-trivial graph automorphism corresponds to the transposition of the indices $l-1$ and $l$. In this instance, we select the formulas $\chi_m(x_1,\ldots,x_l)$ to be symmetric in the variables $x_{l-1}$ and $x_l$, and hence invariant under the action of $\sigma$. In both cases, we have:
	\begin{equation*}
		W_n\models\chi_m(u_{\sigma(1)},\ldots,u_{\sigma(l)})\quad\iff\quad W_n\models\chi_m(u_1,\ldots,u_l). 
	\end{equation*}
	\noindent	
	This procedure allows to verify the truth value of a universal statement by a direct analysis of the cosets of the generators $u_1=(t_1,s_1),\ldots,u_l=(t_l,s_l)$. We will use this fact freely in what follows.
	
	\noindent\smallskip
	\underline{{\bf Case 3}}. $n(W_0(R),L)=3$.
	\newline In this case, according to Tables\cspace\ref{table all inequivalent lattices} and \ref{table - explicit crystallographic groups}, we must have $R=D_l$ and $L=FL_l$, for some even $l\geq 6$. A complete system of representatives of the isomorphism classes of crystallographic groups extending $W_0(R)$ by $L$ is the following:
	\begin{enumerate}[$\bullet$]
		\item $W_1=\langle (\sum_{i=1}^la_i\epsilon_i,1),(0,s_j):1\leq j\leq l, a_i\in\Z, \sum_{i=1}^la_i\in2\Z\rangle_{\iso(V)}$;
		\item $W_2=\langle (\sum_{i=1}^la_i\epsilon_i,1),(\epsilon_1,s_j):1\leq j\leq l, a_i\in\Z, \sum_{i=1}^la_i\in2\Z\rangle_{\iso(V)}$;
		\item \mbox{$W_3=\langle (\sum_{i=1}^la_i\epsilon_i,1),(\frac{1}{2}\sum_{i=1}^l\epsilon_i,s_j):1\leq j\leq l, a_i\in\Z, \sum_{i=1}^la_i\in2\Z\rangle_{\iso(V)}$.}
	\end{enumerate}
	\noindent
	Note that $W_1$ is split. Hence, by Lemma\cspace\ref{lemma-split extensions are definable}, $W_1\models \psi_{D_l}$ and $W_2,W_3\models\neg\psi_{D_l}$.
	
	\smallskip\noindent
	Let $\zeta_2,\zeta_3$ be sentences of the form \eqref{eq.0 - lemma - all crystallographic groups arising from irreducible root systems are f.o. rigid and profinitely rigid}, and $\chi_2(x_1,\ldots,x_l),\chi_3(x_1,\ldots,x_l)$ be the first-order formulas defined as follows: 
	\begin{enumerate}[$\bullet$]
		\item $\chi_2(x_1,\ldots,x_l)$ states that there exist $z_{l-1}\in x_{l-1}T$ and $z_l\in x_lT$ such that $z_{l-1}^2= z_l^2$;
		\item $\chi_3(x_1,\ldots,x_l)$ states that $z_{l-1}^2\neq z_l^2$ for all $z_{l-1}\in x_{l-1}T$ and $z_l\in x_lT$.
	\end{enumerate}

	\begin{claim}
		For each $n,m\in\{2,3\}$, $W_n\models\zeta_m$ if and only if $n=m$.
	\end{claim}
	
	\smallskip\noindent
	\begin{claimproof}
	Note that both $\chi_2(x_1,\ldots,x_l)$ and $\chi_3(x_1,\ldots,x_l)$ are symmetric in the variables $x_{l-1}$ and $x_l$. Hence, in light of the discussion above, it suffices to show that $((\epsilon_1,s_j):1\leq j\leq l)$ realizes $\chi_2(x_1,\ldots,x_l)$ in $W_2$, while $((\frac{1}{2}\sum_{i=1}^l\epsilon_i,s_j):1\leq j\leq l)$ realizes its negation in $W_3$.
	
	\smallskip\noindent
	The first claim is immediate. It follows from the fact that, in this case, the reflection $s_{l-1}$ acts on the canonical basis $\{\epsilon_1,\ldots,\epsilon_l\}$ by transposing $\epsilon_{l-1}$ and $\epsilon_l$, and $s_l$ maps $\epsilon_{l-1}$ to $-\epsilon_l$ and $\epsilon_l$ to $-\epsilon_{l-1}$, while leaving the other $\epsilon_i$'s unchanged (cf. Table\cspace\ref{table - root systems, bases and reflections}). From this, one readily derives the following identities:
	\begin{equation*}
		(\epsilon_1,s_{l-1})^2=(2\epsilon_1,1)=(\epsilon_1,s_l)^2.
	\end{equation*}
	\noindent
	This proves that $W_2\models\zeta_2$, and hence $W_2\not\models\zeta_3$. It only remains to show that in $W_3$ there are no $z_{l-1}\in(\frac{1}{2}\sum_{i=1}^l\epsilon_i,s_{l-1})T$ and $z_l\in(\frac{1}{2}\sum_{i=1}^l\epsilon_i,s_l)T$ with the same square. 

	\smallskip\noindent
	By Remark\cspace\ref{remark- canonical expression of elements in cosets} and the explicit description of $FL_l$ in Table\cspace\ref{table - martinais nomenclature}, each $z_{l-1}\in(\frac{1}{2}\sum_{i=1}^l\epsilon_i,s_{l-1})T$ and $z_{l}\in(\frac{1}{2}\sum_{i=1}^l\epsilon_i,s_{l})T$ is of the form:
		\begin{equation*}
			z_{l-1}=\left(\sum_{i=1}^l\left(a^{l-1}_i+\frac{1}{2}\right)\epsilon_i,s_j\right)\quad\text{and}\quad z_l=\left(\sum_{i=1}^l\left(a^l_i+\frac{1}{2}\right)\epsilon_i,s_j\right),
		\end{equation*}
		\noindent
		for some $a_1^{l-1},\ldots,a_l^{l-1},a_1^l,\ldots,a_l^l\in\Z$ such that $\sum_{i=1}^la_i^{l-1},\sum_{i=1}^la_i^l\in 2\Z$. Consequently, one derives the following expressions for the squares:
		\begin{equation*}
			\begin{split}
				z_{l-1}^2&=\left(\sum_{i\neq l-1,l}(2a^{l-1}_i + 1)\epsilon_i+(a^{l-1}_{l-1}+a_l^{l-1} + 1)\epsilon_{l-1}+(a^{l-1}_{l-1}+a_l^{l-1} + 1)\epsilon_{l},1\right),\\
				z_{l}^2&=\left(\sum_{i\neq l-1,l}(2a^{l}_i + 1)\epsilon_i+(a^{l}_{l-1}-a_l^{l})\epsilon_{l-1}+(a^{l}_{l} -a_{l-1}^{l})\epsilon_{l},1\right).
			\end{split}
		\end{equation*}
		\noindent
		Assume, for a contradiction, that $z_{l-1}^2=z_l^2$. Then, by equating the coefficients of the expressions above with respect to the canonical basis $\{\epsilon_1,\ldots,\epsilon_l\}$, we obtain a compatible system over $\Z$:
		\begin{equation*}
			\begin{cases}
				&a_i^l=a_i^{l-1}\phantom{a_l^{l-1}+a_{l-1}^{l-1}+1}\quad\quad\forall i\neq l-1,l\\
				&a^l_{l-1}-a_l^l=a_l^{l-1}+a_{l-1}^{l-1}+1\\
				&a_l^l-a_{l-1}^l=a_l^{l-1}+a_{l-1}^{l-1}+1.
			\end{cases}
		\end{equation*}
		\noindent
		From this, by summing the last two equations, one readily derives the equivalent system:
		\begin{equation}\tag{$\star_{13}$}\label{eq.1 - case D_l}
			\begin{cases}
				&a_i^l=a^{l-1}_i\phantom{a^l_l+a^{l-1}_{l-1}+a^l_{l-1}+1}\forall i\neq l-1,l\\
				&a^{l-1}_{l-1}=-a^{l-1}_l-1\\
				&a^l_l=a^{l-1}_l+a^{l-1}_{l-1}+a^l_{l-1}+1.
			\end{cases}
		\end{equation}
		\noindent
		It follows from the second equation in \eqref{eq.1 - case D_l} that $a^{l-1}_{l-1}+a^{l-1}_l\in 2\Z+1$. Since by assumption, $\sum_{i=1}^{l}a^{l-1}_i\in 2\Z$, the first equation in \eqref{eq.1 - case D_l} then forces:
		\begin{equation}\tag{$\star_{14}$}\label{eq.2 - case D_l}
			\sum_{i\neq l-1,l}a_i^l=\sum_{i\neq l-1,l}a_i^{l-1}\in2\Z+1.
		\end{equation}
		\noindent
		Similarly, substituting $a^{l-1}_{l-1}=-a^{l-1}_l-1$ into the third equation of \eqref{eq.1 - case D_l} yields $a^l_l=a^l_{l-1}$, and thus:
		\begin{equation*}
			a^l_{l-1}+a^l_l=2a^l_{l-1}\in2\Z.
		\end{equation*} 
		\noindent
		By \eqref{eq.2 - case D_l}, this entails that $\sum_{i=1}^la_i^l\in2\Z+1$, contradicting the definition of $z_l$. This concludes the proof of the claim.
	\end{claimproof}
	
	\noindent\smallskip
	\underline{{\bf Case 4}}. $n(W_0(R),L)=4$.
	\newline In this case, according to Table\cspace\ref{table - explicit crystallographic groups}, exactly one of the following occurs:
	\begin{enumerate}[(1)]
		\item $R=B_l$, with $l\geq3$, and $L=CL_l$;
		\item $R=B_4$ and $L=CCL_4$;
		\item $R=C_l$, with $l\geq3$ odd, and $L=FL_l$.
	\end{enumerate}

	\noindent
	We show that in each of these cases it is possible to find first-order sentences distinguishing the crystallographic groups extending $W_0(R)$ by $L$.
	
	\noindent\smallskip
    \underline{{\bf Case 4.1}}. $R=B_l$, with $l\geq3$, and $L=CL_l$;
    \newline In this case, by Table\cspace\ref{table - explicit crystallographic groups}, a complete system of representatives of the isomorphism classes of crystallographic groups extending $W_0(R)$ by $L$ is the following:
	\begin{enumerate}[$\bullet$]
		\item $W_1=\langle(\sum_{i=1}^lx_i\epsilon_i,1),(0,s_j):1\leq j\leq l, x_i\in \Z \rangle_{\iso(V)}$;
		\item $W_2=\langle(\sum_{i=1}^lx_i\epsilon_i,1),(0,s_j),(\frac{1}{2}\sum_{i=1}^l\epsilon_i,s_l):1\leq j< l, x_i\in \Z \rangle_{\iso(V)}$;
		\item $W_3=\langle(\sum_{i=1}^lx_i\epsilon_i,1),(\frac{1}{2}\sum_{i=1}^l\epsilon_i,s_j),(0,s_l):1\leq j< l, x_i\in \Z \rangle_{\iso(V)}$;
		\item $W_4=\langle(\sum_{i=1}^lx_i\epsilon_i,1),(\frac{1}{2}\sum_{i=1}^l\epsilon_i,s_j):1\leq j\leq l, x_i\in \Z \rangle_{\iso(V)}$.
	\end{enumerate}
	\noindent
	Since $W_1$ is split, by Lemma\cspace\ref{lemma-split extensions are definable} there exists a first-order sentence $\psi_{B_l}$ such that $W_1\models\psi_{B_l}$, while $W_n\models\neg\psi_{B_l}$ for all $n\in\{2,3,4\}$.
	
	\smallskip\noindent
	Let $\zeta_2,\zeta_3,\zeta_4$ be sentences of the form \eqref{eq.0 - lemma - all crystallographic groups arising from irreducible root systems are f.o. rigid and profinitely rigid}, and $\chi_2(x_1,\ldots,x_l),\chi_3(x_1,\ldots,x_l),\chi_4\\(x_1,\ldots,x_l)$ be the first-order formulas defined as follows: 
	\begin{enumerate}[$\bullet$]
		\item $\chi_2(x_1,\ldots,x_l)$ states that there exists an involution in each $x_iT$, for $i\in[1,l-1]$, but there are no involutions in $x_lT$;
		\item $\chi_3(x_1,\ldots,x_l)$ states that there exists an involution in $x_lT$, but there are no involutions in $x_iT$, for all $i\in[1,l-1]$;
		\item $\chi_4(x_1,\ldots,x_l)$ states that there are no involutions in any of the cosets $x_1T,\ldots,x_lT$.
	\end{enumerate}
	
	\begin{claim}
		For every $n,m\in\{2,3,4\}$, $W_n\models\zeta_m$ if and only if $n=m$.
	\end{claim}

    \smallskip\noindent
	\begin{claimproof}
	    For ease of reading, for each $n\in\{2,3,4\}$, we denote by $(t_1^n,s_1),\ldots,(t_l^n,s_l)$ the generators in the presentation of $W_n$ above, so that, for example, $(t_1^2,s_1)=(0,s_1)$, while $(t^3_1,s_1)=(t^4_1,s_1)=(\frac{1}{2}\sum_{i=1}^l\epsilon_i,s_1)$.

        \smallskip\noindent
        Since the $\chi_m(x_1,\ldots,x_l)$'s are mutually inconsistent, it suffices to show that for each $n\in\{2,3,4\}$, $W_n\models\chi_n((t^n_1,s_1),\ldots,(t^n_l,s_l))$. This is immediate for $W_2$, where we have:
        \begin{equation*}
        	(t^2_j,s_j)^2=(0,s_j)^2=(0,1)\quad \forall j\in[1,l-1].
        \end{equation*}
        
        \smallskip\noindent
        For the remaining cases, note that, by Remark\cspace\ref{remark- canonical expression of elements in cosets} and the explicit description of $CL_l$ in Table\cspace\ref{table - martinais nomenclature}, each element $z_{j}\in(\frac{1}{2}\sum_{i=1}^l\epsilon_i,s_{j})T$ for $j\in[1,l]$ has the form:
        \begin{equation}\label{eq.1 - main theorem - case B_l, CL_l}\tag{$\star_{15}$}
            z_j=\left(\,\sum_{i=1}^la_i^j\epsilon_j+\frac{1}{2}\sum_{i=1}^l\epsilon_i,s_j\right)=\left(\,\sum_{i=1}^l\left(a_i^j+\frac{1}{2}\right)\epsilon_i,s_j\right),
        \end{equation}
        for some $a_1^j,\ldots,a_l^j\in\Z$. As indicated in Table\cspace\ref{table - root systems, bases and reflections}, in this context, for each $j\in[1,l-1]$, the reflection $s_j$ acts on the canonical basis $\{\epsilon_1,\ldots,\epsilon_l\}$ as the transposition of $\epsilon_j$ and $\epsilon_{j+1}$; while $s_l$ maps $\epsilon_l$ to $-\epsilon_l$, leaving the other $\epsilon_i$'s unchanged. Consequently, \eqref{eq.1 - main theorem - case B_l, CL_l} yields the following expressions for the squares:
        \begin{equation*}
            \begin{split}
                z_l^2&=\left(\,\sum_{i=1}^{l-1}(2a_i^l+1)\epsilon_i,1\right),\quad\text{and}\\
                z_j^2&=\left(\sum_{i\neq j, j+1}(2a_i^j+1)\epsilon_i+(a_j^j+a_{j+1}^j+1)\epsilon_j+(a_j^j+a_{j+1}^j+1)\epsilon_{j+1},1\right),               
            \end{split}
        \end{equation*}
        \noindent
        for all $j\in[1,l-1]$. In both cases, none of the terms $2a_i^l+1$ or $2a_i^j+1$, for $i\in[1,l-2]$, can be zero, since the $a^l_j$'s and $a_i^j$'s are integers. Because $l\geq3$, for each $j\in[1,l\,]$ no element $z_{j}\in(\frac{1}{2}\sum_{i=1}^l\epsilon_i,s_{j})T$ can be an involution. Since $(t^4_j,s_j)=(\frac{1}{2}\sum_{i=1}^l\epsilon_i,s_{j})$ for all $j\in[1,l]$, it follows that $W_4\models\chi_4((t^4_1,s_1),\ldots,(t^4_l,s_l))$. Likewise, from $(t^3_l,s_l)=(0,s_l)$ and $(t^3_j,s_j)=(\frac{1}{2}\sum_{i=1}^l\epsilon_i,s_{j})$ for all $j\in[1,l-1]$, we obtain that $W_3\models\chi_3((t^3_1,s_1),\ldots,(t^3_l,s_l))$, as required. 
	\end{claimproof}
	
	\smallskip\noindent
	\underline{{\bf Case 4.2}}. $R=B_4$, and $L=CCL_4$;
    \newline In this case, by Table\cspace\ref{table - explicit crystallographic groups}, a complete system of representatives of the isomorphism classes of crystallographic groups extending $W_0(R)$ by $L$ is the following:
	\begin{enumerate}[$\bullet$]
		\item $W_1=\langle(\sum_{i=1}^3(x_i+\frac{y}{2})\epsilon_i+\frac{y}{2}\epsilon_4,1), (0,s_j):1\leq j\leq 4, x_i,y\in\Z\rangle_{\iso(V)}$;
		\item $W_2=\langle(\sum_{i=1}^3(x_i+\frac{y}{2})\epsilon_i+\frac{y}{2}\epsilon_4,1), (\frac{1}{4}\sum_{i=1}^4\epsilon_i,s_j),(0,s_4):1\leq j\leq 3,\\
		\phantom{W_2=\langle (} x_i,y\in\Z\rangle_{\iso(V)}$;
		\item $W_3=\langle(\sum_{i=1}^3(x_i+\frac{y}{2})\epsilon_i+\frac{y}{2}\epsilon_4,1), (\frac{1}{2}\epsilon_3,s_1),(\frac{1}{2}\epsilon_1,s_2),(\frac{1}{2}\epsilon_2,s_3),(0,s_4):\\
		\phantom{W_3=\langle (}x_i,y\in\Z\rangle_{\iso(V)}$;
		\item $W_4=\langle(\sum_{i=1}^3(x_i+\frac{y}{2})\epsilon_i+\frac{y}{2}\epsilon_4,1), (\frac{1}{2}\epsilon_3+\frac{1}{4}\sum_{i=1}^4\epsilon_i,s_1),(\frac{1}{2}\epsilon_1+\frac{1}{4}\sum_{i=1}^4\epsilon_i,s_2),\\\phantom{W_4=\langle\,}(\frac{1}{2}\epsilon_2+\frac{1}{4}\sum_{i=1}^4\epsilon_i,s_3),(0,s_4):x_i,y\in\Z\rangle_{\iso(V)}$.
	\end{enumerate}
	\noindent
	Since $W_1$ is split, by Lemma\cspace\ref{lemma-split extensions are definable} there exists a first-order sentence $\psi_{B_4}$ such that $W_1\models\psi_{B_4}$, while $W_n\models\neg\psi_{B_4}$ for all $n\in\{2,3,4\}$.
	
	\smallskip\noindent
	Let $\zeta_2,\zeta_3,\zeta_4$ be sentences of the form \eqref{eq.0 - lemma - all crystallographic groups arising from irreducible root systems are f.o. rigid and profinitely rigid}, with $\chi_2(x_1,\ldots,x_4),\chi_3(x_1,\ldots,x_4),\\\chi_4(x_1,\ldots,x_4)$ being the first-order formulas defined as follows:
	\begin{enumerate}[$\bullet$]
		\item $\chi_2(x_1,\ldots,x_4)$ states that there exist $z_1\in x_1T$, $z_2\in x_2T$ and $z_3\in x_3 T$ such that $z_1^2=z_2^2=z_3^2$;
		\item $\chi_3(x_1,\ldots,x_4)$ states that no elements in the cosets $x_1T,x_2T$ and $x_3T$ share the same square, and that there exists an element $z_1\in x_1 T$ commuting with an involution in $x_4T$;
        \item $\chi_4(x_1,\ldots,x_4)$ states that no elements in the cosets $x_1T,x_2T$ and $x_3T$ share the same square, and there is no $z_1\in x_1 T$ commuting with some involution in $x_4T$.
	\end{enumerate}    
	
	\begin{claim}
		For every $n,m\in\{2,3,4\}$, $W_n\models\zeta_m$ if and only if $n=m$.
	\end{claim}

    \smallskip\noindent
    \begin{claimproof}
    	As above, for each $n\in\{2,3,4\}$ we denote by $(t_1^n,s_1),\ldots,(t_4^n,s_4)$ the generators of $W_n$ in the presentation above. By construction, $\chi_2(x_1,\ldots,x_l),\chi_3(x_1,\ldots,x_l)$ and $\chi_4(x_1,\ldots,x_l)$ are mutually inconsistent. Thus, it suffices to verify that $W_n\models((t^n_1,s_1),\ldots,(t^n_4,s_4))$ for all $n\in\{2,3,4\}$.
        
        \smallskip\noindent
        First, we show that $W_2\models\chi_2((t^2_1,s_1),\ldots,(t^2_4,s_4))$. Recall that for each $j\in[1,3]$, $s_j$ acts on the canonical basis $\{\epsilon_1,\ldots,\epsilon_4\}$ as the transposition of $\epsilon_j$ and $\epsilon_{j+1}$, while $s_4$ maps $\epsilon_4$ to $-\epsilon_4$ and leaves $\epsilon_1,\epsilon_2,\epsilon_3$ fixed (cf. Table\cspace\ref{table - root systems, bases and reflections}). Since $(t^2_1,s_1),(t^2_2,s_2)$ and $(t_3^2,s_3)$ have translation component $t^2_1=t^2_2=t^2_3=\frac{1}{4}\sum_{i=1}^{4}\epsilon_i$, it follows that their squares coincide:
        \begin{equation*}
        	(t_1^2,s_1)^2=(t_2^2,s_2)^2=(t_3^2,s_3)^2=\left(\frac{1}{2}\sum_{i=1}^4\epsilon_i,1\right).
        \end{equation*}
    	\noindent
    	This proves that $W_2\models\chi_2((t^2_1,s_1),\ldots,(t^2_4,s_4))$, and hence $W_2\models\zeta_2$.
    	
    	\smallskip\noindent
    	We now show that $W_3\not\models\chi_2((t^3_1,s_1),\ldots,(t^3_4,s_4))$. By Remark\cspace\ref{remark- canonical expression of elements in cosets} and the explicit description of $CCL_4$ in Table\cspace\ref{table - martinais nomenclature}, every $z_1\in (t^3_1,s_1)T$, $z_2\in(t^3_2,s_2)T$ and $z_3\in(t^3_3,s_3)T$ is of the form:
    	\begin{equation*}
    		\begin{split}
    			z_1&=\left(\left(a^1_1+\frac{b^1}{2}\right)\epsilon_1+\left(a^1_2+\frac{b^1}{2}\right)\epsilon_2+\left(a^1_3+\frac{b^1}{2}+\frac{1}{2}\right)\epsilon_3+\frac{b^1}{2}\epsilon_4,s_1\right),\\
    			z_2&=\left(\left(a^2_1+\frac{b^2}{2}+\frac{1}{2}\right)\epsilon_1+\left(a^2_2+\frac{b^2}{2}\right)\epsilon_2+\left(a^2_3+\frac{b^2}{2}\right)\epsilon_3+\frac{b^2}{2}\epsilon_4,s_2\right),\\
    			z_3&=\left(\left(a^3_1+\frac{b^3}{2}\right)\epsilon_1+\left(a^3_2+\frac{b^3}{2}+\frac{1}{2}\right)\epsilon_2+\left(a^3_3+\frac{b^3}{2}\right)\epsilon_3+\frac{b^3}{2}\epsilon_4,s_3\right).
    		\end{split}
    	\end{equation*}
        \noindent
        for some $a^i_j,b^i\in\Z$, with $i,j\in[1,3]$. Consequently, we derive the following expression for the squares:
        \begin{equation*}
        	\begin{split}
        		z_1^2&=\left((a^1_1+a^1_2+b^1)(\epsilon_1+\epsilon_2)+(2a^1_3+b^1+1)\epsilon_3+b^1\epsilon_4,1\right),\\
        		z_2^2&=\left((2a^2_1+b^2+1)\epsilon_1+(a^2_2+a_3^2+b^2)(\epsilon_2+\epsilon_3)+b^2\epsilon_4,1\right),\\
        		z_3^2&=\left((2a^3_1+b^3)\epsilon_1+(2a^3_2+b^3+1)\epsilon_2+(a^3_3+b^3)(\epsilon_3+\epsilon_4),1\right).
        	\end{split}
        \end{equation*}
    	\noindent
    	If the identity $z_1^2=z_2^2=z_3^2$ holds, then equating the coefficients of these expressions with respect to the canonical basis yields the following compatible system of equations over $\Z$:
    	\begin{equation}\label{eq.2 - case B_4 - main_th_2}\tag{$\star_{16}$}
    			\begin{tabular}{cccccc}
    				$\epsilon_1:$&$a^1_1+a^1_2+b^1$&$=$&$2a^2_1+b^2+1$&$=$&$2a^3_1+b^3$\\ \addlinespace
    				& \verteq &&&&\\
    				$\epsilon_2:$& $a^1_1+a^1_2+b^1$ & $=$ & $a^2_2+a_3^2+b^2$ & $=$ & $2a^3_2+b^3+1.$
    			\end{tabular}
    	\end{equation}
    	\noindent
    	In particular the identity $2a^3_1+b^3=2a^3_2+b^3+1$ holds, which simplifies to $2a^3_1=2a^3_2+1$. This equation admits no integer solutions, as it equates an even and an odd value, leading to a contradiction. It follows that $W_3\not\models\chi_2((t^3_1,s_1),\ldots,(t^3_4,s_4))$. 
    	
    	\smallskip\noindent
    	Furthermore, since the nodes with labels $1$ and $4$ in the Coxeter diagram of $(W_0(B_4),\\\{s_1,\ldots,s_4\})$ are not linked (cf. Table\cspace\ref{table - Coxeter diagrams of irreducible root systems}), $s_1$ and $s_4$ commute. Hence, using that $s_4$ fixes $\epsilon_3$ (cf. Table\cspace\ref{table - root systems, bases and reflections}), one readily verifies that:
    	\begin{equation*}
    	 	\begin{tabular}{ccccc}
    	 		$(t^3_1,s_1)\cdot(t^3_4,s_4)$ & $=$ & $\left(\frac{1}{2}\epsilon_3,s_1\right)\cdot(0,s_4)$ & $=$ & $\left(\frac{1}{2}\epsilon_3,s_1s_4\right)$\\ \addlinespace
    	 		&&&& \verteq \\
    	 		$(t^3_4,s_4)\cdot(t^3_1,s_1)$ & $=$ & $(0,s_4)\cdot\left(\frac{1}{2}\epsilon_3,s_1\right)$ & $=$ & $\left(\frac{1}{2}\epsilon_3,s_4s_1\right)$.
    	 	\end{tabular}
     	\end{equation*}
     	\noindent
     	Therefore, we have $W_3\models \chi_3((t^3_1,s_1),\ldots,(t^3_4,s_4))$. It only remains to prove that $W_4\models \chi_4((t^4_1,s_1),\ldots,(t^4_4,s_4))$.
     	
     	\smallskip\noindent
     	As above, by Remark\cspace\ref{remark- canonical expression of elements in cosets} and the explicit description of $CCL_4$ in Table\cspace\ref{table - martinais nomenclature}, each $z_1\in (t^4_1,s_1)T$, $z_2\in(t^4_2,s_2)T$, $z_3\in(t^4_3,s_3)T$ and $z_4\in(t^4_4,s_4)T$ is of the form:
    	\begin{equation*}
    		\begin{split}
    			z_1&=\left(\left(a^1_1+\frac{b^1}{2}+\frac{1}{4}\right)\epsilon_1+\left(a^1_2+\frac{b^1}{2}+\frac{1}{4}\right)\epsilon_2+\left(a^1_3+\frac{b^1}{2}+\frac{3}{4}\right)\epsilon_3+\left(\frac{b^1}{2}+\frac{1}{4}\right)\epsilon_4,s_1\right),\\
    			z_2&=\left(\left(a^2_1+\frac{b^2}{2}+\frac{3}{4}\right)\epsilon_1+\left(a^2_2+\frac{b^2}{2}+\frac{1}{4}\right)\epsilon_2+\left(a^2_3+\frac{b^2}{2}+\frac{1}{4}\right)\epsilon_3+\left(\frac{b^2}{2}+\frac{1}{4}\right)\epsilon_4,s_2\right),\\
    			z_3&=\left(\left(a^3_1+\frac{b^3}{2}+\frac{1}{4}\right)\epsilon_1+\left(a^3_2+\frac{b^3}{2}+\frac{3}{4}\right)\epsilon_2+\left(a^3_3+\frac{b^3}{2}+\frac{1}{4}\right)\epsilon_3+\left(\frac{b^3}{2}+\frac{1}{4}\right)\epsilon_4,s_3\right),\\
    			z_4&=\left(\sum_{i=1}^3\left(a^4_i+\frac{b^4}{2}\right)\epsilon_i+\frac{b^4}{2}\epsilon_4,s_4\right).
    		\end{split}
    	\end{equation*}
    	\noindent
    	for some $a^i_j,b^i\in\Z$. The corresponding squares
    	have expressions:
    	\begin{equation*}
    		\begin{split}
    			z_1^2&=\left(\left(a^1_1+a_2^1+b^1+\frac{1}{2}\right)(\epsilon_1+\epsilon_2)+\left(2a^1_3+b^1+\frac{3}{2}\right)\epsilon_3+\left(b^1+\frac{1}{2}\right)\epsilon_4,1\right),\\
    			z_2^2&=\left(\left(2a^2_1+b^2+\frac{3}{2}\right)\epsilon_1+\left(a^2_2+a^2_3+b^2+\frac{1}{2}\right)(\epsilon_2+\epsilon_3)+\left(b^2+\frac{1}{2}\right)\epsilon_4,1\right),\\
    			z_3^2&=\left(\left(2a^3_1+b^3+\frac{1}{2}\right)\epsilon_1+\left(2a^3_2+b^3+\frac{3}{2}\right)\epsilon_2+\left(a^3_3+b^3+\frac{1}{2}\right)(\epsilon_3+\epsilon_4),1\right).
    		\end{split}
    	\end{equation*}
    	\noindent
    	Assume that the identity $z_1^2=z_2^2=z_3^2$ holds for some $z_1,z_2,z_3$. Then, equating the coefficients of the expressions above with respect to the canonical basis returns the same system as \eqref{eq.2 - case B_4 - main_th_2}, thus yielding a contradiction. It follows that $W_4\not\models\chi_2((t^4_1,s_1),\ldots,(t^4_4,s_4))$.
    	
    	\smallskip\noindent
    	Finally, we prove that no $z_1$ and $z_4$ as above commute. This clearly implies the weaker statement that no element in the coset $(t^4_1,s_1)T$ commutes with an involution in $(t^4_4,s_4)T$. Recall that $s_1$ acts on the basis $\{\epsilon_1,\ldots,\epsilon_4\}$ by transposing $\epsilon_1$ and $\epsilon_2$, while $s_4$ fixes $\epsilon_1,\epsilon_2,\epsilon_3$ and maps $\epsilon_4$ to $-\epsilon_4$. Consequently, one derives the following identities:
    	\begin{equation*}
    		\begin{split}
    			z_1\cdot z_4 & =\bigg(\left(a^1_1+a_2^4+\frac{b^1+b^4}{2}+\frac{1}{4}\right)\epsilon_1+\left(a^1_2+a^4_1+\frac{b^1+b^4}{2}+\frac{1}{4}\right)\epsilon_2\\
    			&+\left(a^1_3+a_3^4+\frac{b^1+b^4}{2}+\frac{3}{4}\right)\epsilon_3+\left(\frac{b^1+b^4}{2}+\frac{1}{4}\right)\epsilon_4,s_1s_4\bigg),\\
    			z_4\cdot z_1 & = \bigg(\left(a^1_1+a_1^4+\frac{b^1+b^4}{2}+\frac{1}{4}\right)\epsilon_1+\left(a^1_2+a_2^4+\frac{b^1+b^4}{2}+\frac{1}{4}\right)\epsilon_2\\
    			&+\left(a^1_3+a_3^4+\frac{b^1+b^4}{2}+\frac{3}{4}\right)\epsilon_3+\left(\frac{b^4}{2}-\frac{b^1}{2}-\frac{1}{4}\right)\epsilon_4,s_4s_1\bigg).
    		\end{split}
    	\end{equation*}
    	\noindent
    	If $z_1\cdot z_4=z_4\cdot z_1$ were to hold, then equating the coefficients of $z_1\cdot z_4$ and $z_4\cdot z_1$ with respect to the element $\epsilon_4$ of the canonical basis would determine the compatible integral equation:
    	\begin{equation*}
    		\frac{b^1+b^4}{2}+\frac{1}{4}=\frac{b^4}{2}-\frac{b^1}{2}-\frac{1}{4}
    		\quad\Rightarrow\quad
    		b^1=-\frac{1}{2}.
    	\end{equation*}
    	\noindent
    	Since $b^1$ is supposed to be an integer, this leads to a contradiction. It follows that $W_4\models \chi_4((t^4_1,s_1),\ldots,(t^4_4,s_4))$, completing the proof.
    	\end{claimproof}
    	
    	\smallskip\noindent
    	\underline{{\bf Case 4.3}}. $R=C_l$, with $l\geq3$ odd, and $L=FL_l$;
    	\newline In this case, by Table\cspace\ref{table - explicit crystallographic groups} a complete system of representatives of the isomorphism classes of crystallographic groups extending $W_0(R)$ by $L$ is the following:
    	\begin{enumerate}[$\bullet$] 
    		\item $W_1=\langle(\sum_{i=1}^lx_i\epsilon_i,1),(0,s_j):1\leq j\leq l,x_i\in \Z, \sum_{i=1}^lx_i\in2\Z \rangle_{\iso(V)}$;
    		\item $W_2=\langle(\sum_{i=1}^lx_i\epsilon_i,1),(0,s_j),(\frac{1}{2}\sum_{i=1}^l\epsilon_i,s_l):1\leq j< l,x_i\in \Z,\\
    		\phantom{W_e=\langle\,}\sum_{i=1}^lx_i\in2\Z \rangle_{\iso(V)}$;
    		\item $W_3=\langle(\sum_{i=1}^lx_i\epsilon_i,1),(\epsilon_1,s_j):1\leq j\leq l,x_i\in \Z, \sum_{i=1}^lx_i\in2\Z \rangle_{\iso(V)}$;
    		\item $W_4=\langle(\sum_{i=1}^lx_i\epsilon_i,1),(\epsilon_1,s_j),(\epsilon_1+\frac{1}{2}\sum_{i=1}^l\epsilon_i,s_l):1\leq j< l,x_i\in \Z,\\
    		\phantom{W_4=\langle\,} \sum_{i=1}^lx_i\in2\Z \rangle_{\iso(V)}$.
    	\end{enumerate}
    	\noindent
    	Since $W_1$ is split, by Lemma\cspace\ref{lemma-split extensions are definable} there exists a first-order sentence $\psi_{C_l}$ such that $W_1\models\psi_{C_l}$, while $W_n\models\neg\psi_{C_l}$ for all $n\in\{2,3,4\}$.
    	
    	\smallskip\noindent
    	Let $\zeta_2,\zeta_3,\zeta_4$ be sentences of the form \eqref{eq.0 - lemma - all crystallographic groups arising from irreducible root systems are f.o. rigid and profinitely rigid}, with $\chi_2(x_1,\ldots,x_4),\chi_3(x_1,\ldots,x_4),\\\chi_4(x_1,\ldots,x_4)$ being the first-order formulas defined as follows:  
    	\begin{enumerate}[$\bullet$]
    		\item $\chi_2(x_1,\ldots,x_l)$ states that there exists an involution in $x_iT$ for all $i\in[1,l-1]$;
    		\item $\chi_3(x_1,\ldots,x_l)$ states that there are no involutions in $x_2T$, and there exist $z_2\in x_2T$ and $z_l\in x_l T$ such that $z_2^2=z_l^2$;
    		\item $\chi_4(x_1,\ldots,x_l)$ states that there are no involutions in $x_2T$,
    		and no element in $x_2T$ has the same square of an element in $x_lT$.
    	\end{enumerate}    
    	\smallskip\noindent
    	\begin{claim}
    		For every $n,m\in\{2,3,4\}$, $W_n\models\zeta_m$ if and only if $n=m$.
    	\end{claim}
    	
    	\smallskip\noindent
    	\begin{claimproof}
    		For each $n\in\{2,3,4\}$, denote by $(t_1^n,s_1),\ldots,(t_l^n,s_l)$ the generators of $W_n$ in the presentation above. As in the preceding cases, $\chi_2(x_1,\ldots,x_l),\chi_3(x_1,\ldots,x_l)$ and $\chi_4(x_1,\ldots,x_l)$ are mutually inconsistent, thus it suffices to establish that $W_n\models\chi_n((t^n_1,s_1),\ldots,(t^n_l,s_l))$ for all $n\in\{2,3,4\}$.
    		
    		\smallskip\noindent
    		Clearly, $W_2\models\chi_2((t^2_1,s_1),\ldots,(t^2_l,s_l))$, since for each $j\in[1,l-1]$ we have $(t^2_j,s_j)=(0,s_j)$. We prove that $W_3\models\chi_3((t^3_1,s_1),\ldots,(t^3_l,s_l))$.
    		
    		\smallskip\noindent
    		The second claim of the formula follows directly from the fact that $s_2$ acts on $\{\epsilon_1,\ldots,\epsilon_l\}$ as the transposition of $\epsilon_2$ and $\epsilon_3$, while $s_l$ maps $\epsilon_l$ to $-\epsilon_l$ and leaves the other elements of the canonical basis fixed (cf.\cspace Table\cspace\ref{table - root systems, bases and reflections}). Therefore, since $(t^3_2,s_2)=(\epsilon_1,s_2)$ and $(t^3_l,s_l)=(\epsilon_1,s_l)$, we have:
    		\begin{equation*}
    			(t^3_2,s_2)^2=(\epsilon_1,s_2)^2=(2\epsilon_1,1)=(\epsilon_1,s_l)^2=(t^3_l,s_l)^2.
       		\end{equation*}
    		\noindent
    		We now prove that there are no involutions in the coset $(t^3_2,s_2)T$. By Remark\cspace\ref{remark_for_star2} and the explicit description of $FL_l$ in Table\cspace\ref{table all inequivalent lattices}, each $z_2\in (t^3_2,s_2)T$ has the form:
    		\begin{equation}\label{eq.2 - case C_l main thm}\tag{$\star_{17}$}
    				z_2=\left((a^2_1+1)\epsilon_1+\sum_{j=2}^la_j^2\epsilon_j,s_2\right),
    		\end{equation}
    		\noindent
    		for some $a_1^2,\ldots,a_l^2\in\Z$ such that $\sum_{i=1}^la_i^2\in2\Z$.
    		Consequently, using again that $s_2$ acts on $\{\epsilon_1,\ldots,\epsilon_l\}$ by transposing $\epsilon_2$ and $\epsilon_3$ (cf. Table\cspace\ref{table - root systems, bases and reflections}), one readily computes the following expression for the square:
    		\begin{equation}\label{eq.3 - case C_l main thm}\tag{$\star_{18}$}
    			\begin{split}
    				z_2^2&=\left((2a^2_1+2)\epsilon_1+(a^2_2+a^2_3)\epsilon_2+(a^2_2+a^2_3)\epsilon_3+\sum_{i\neq1,2,3}2a_i^2\epsilon_i,1\right).
    			\end{split}
    		\end{equation}
    		\noindent
    		If $z_2$ were an involution, the identity $z_2^2=(0,1)$ would yield the following compatible system of equations over $\Z$:
    		\begin{equation*}
    			\begin{cases}
    				&a^2_1+1=0\\
    				&a^2_2+a^2_3=0\\
    				&a^2_i=0\quad\text{for all }i\neq1,2,3.
    			\end{cases}
    		\end{equation*}
    		\noindent
    		This leads to a contradiction, since each tuple $a^2_1,\ldots,a^2_l\in\Z$ realizing the system above satisfies the identity:
    		\begin{equation*}
    			\sum_{i=1}^la^2_i=a^2_1+(a^2_2+a^2_3)+\sum_{i\neq1,2,3}a^2_i=-1+0+0=-1\notin2\Z,
    		\end{equation*}
    		\noindent
    		against the assumption $\sum_{i=1}a^2_i\in2\Z$. This proves that there are no involutions in the coset $(t^3_2,s_2)T$, and thus that $W_3\not\models\chi_2((t^3_1,s_1),\ldots,(t^3_l,s_l))$.
    		
    		\smallskip\noindent
    		It only remains to prove that $W_4\models\chi_4((t^4_1s_1),\ldots,(t^4_l,s_l))$. In this case, we have $(t^4_j,s_j)=(t^3,s_j)$ for all $j\in[1,l-1]$. Thus, each $z_2\in(t^4_2,s_2)T$ is of the form \eqref{eq.2 - case C_l main thm} and the preceding argument shows that this is not an involution.
    		
    		\smallskip\noindent
    		By Remark\cspace\ref{remark- canonical expression of elements in cosets} and the explicit description of $FL_l$ in Table\cspace\ref{table - martinais nomenclature}, each $z_l\in(t^4_l,s_l)T$ is admits expression:    		
    		\begin{equation*}
    			z_l=\left(\left(a^4_1+\frac{3}{2}\right)\epsilon_1+\sum_{i=2}^l\left(a^4_i+\frac{1}{2}\right)\epsilon_i,s_l\right),
    		\end{equation*}
    		\noindent
    		for some $a^4_1,\ldots,a^4_l\in\Z$ such that $\sum_{i=1}^la^4_i\in2\Z$. Hence, it has square:
    		\begin{equation}\label{eq.4 - case C_l main thm}\tag{$\star_{19}$}
    			z_l^2=\left((2a^4_1+3)\epsilon_1+\sum_{i=2}^{l-1}(2a^4_i+1)\epsilon_i,1\right).
    		\end{equation}
    		\noindent
    		If the identity $z_2^2=z_l^2$ were to hold, then comparing the terms of \eqref{eq.3 - case C_l main thm} and \eqref{eq.4 - case C_l main thm} with respect to the canonical basis would yield the following system of equations:
    		\begin{equation*}
    			\begin{cases}
    				&2a^2_1+2=2a^4_1+3\\
    				&a^2_2+a^2_3=2a^4_2+1\\
    				&a^2_2+a^2_3=a^4_3+1\\
    				&2a_i^2=2a^4_i+1\quad\text{for all}\quad i\neq1,2,3.			
    			\end{cases}
    		\end{equation*}
    		\noindent
    		However, this system is clearly inconsistent, since it contains the identity $2a_i^2=2a^4_i+1$ of an even and an odd number. It follows that no $z_2\in(t^4_2,s_2)T$ and $z_l\in(t^4_l,s_l)T$ have the same square, and hence that $W_4\models\chi_4((t^4_1,s_1),\ldots,(t^4_l,s_l))$.
    		
    	\end{claimproof}
    
    	\noindent
    	Since in each case we have found a first-order sentence separating the representatives from Martinais's classification, this concludes the proof.
\end{proof}
\begin{theorem1.2}
	\emph{Finite direct products of crystallographic groups arising from an irreducible root system are profinitely rigid (equiv. first-order rigid).}
\end{theorem1.2}
\begin{proof}
	This is immediate by Lemma\cspace\ref{lemma_direct_products} and Theorem\cspace\ref{lemma - all crystallographic groups arising from irreducible root systems are f.o. rigid and profinitely rigid}.
\end{proof}


\newpage \section{Appendix}\label{app_sec}
\begin{table}[h!]
	\caption{Coxeter diagrams of all irreducible root systems $R$ of rank $l\geq3$. Each node corresponds to a simple reflection $s_i$ associated to the simple root $\alpha_i$ in the standard Bourbaki numbering (cf. \cite[Planche{\cspace}I to IX]{bourbaki}). The index $i$ labels the node corresponding to $s_i$, for all $i\in[1,l\,]$.}
	\begin{tabular}{ll}
		\toprule
		Type of $R$ & Coxeter diagram of $(W_0(R),S)$ \\
		\bottomrule
		
		&\\
		
		$A_l$, $l\geq3$ &
		\begin{tikzpicture}[scale=0.8, baseline={(current bounding box.center)}]
			
			\foreach \i in {1,...,3} {
				\draw (\i*1.2,0) -- ({(\i+1)*1.2},0);
			}
			\foreach \i in {1,...,3} {
				\nodecircle{\i}{(\i*1.2,0)}
			}
			\node at (5.4,0) {$\cdots$};
			\draw (6,0) -- (7,0);
			\nodecircle{$l$}{(7,0)};
		\end{tikzpicture} \\[1em]
		
		$B_l$, $l\geq3$ &
		\begin{tikzpicture}[scale=0.8, baseline={(current bounding box.center)}]
			
			\foreach \i in {1,...,3} {
				\draw (\i*1.2,0) -- ({(\i+1)*1.2},0);
			}
			\foreach \i in {1,...,3} {
				\nodecircle{\i}{(\i*1.2,0)}
			}
			\node at (5.4,0) {$\cdots$};
			\draw (6,0) -- (7,0);
			\draw (7,0) -- (8,0) node[midway, above] {\small 4};
			\nodecircle{$l-1$}{(7,0)};
			\nodecircle{$l$}{(8,0)};
			
		\end{tikzpicture} \\[1em]
		
		$C_l$, $l\geq3$ &
		\begin{tikzpicture}[scale=0.8, baseline={(current bounding box.center)}]
			\foreach \i in {1,...,3} {
				\draw (\i*1.2,0) -- ({(\i+1)*1.2},0);
			}
			\foreach \i in {1,...,3} {
				\nodecircle{\i}{(\i*1.2,0)}
			}
			\node at (5.4,0) {$\cdots$};
			\draw (6,0) -- (7,0);
			\draw (7,0) -- (8,0) node[midway, above] {\small 4};
			\nodecircle{$l-1$}{(7,0)};
			\nodecircle{$l$}{(8,0)};
		\end{tikzpicture} \\[1em]
		
		$D_l$, $l\geq3$ &
		\begin{tikzpicture}[scale=0.8, baseline={(current bounding box.center)}]
			\node at (4.2,0) {$\cdots$};
			\foreach \i in {1,...,2} { \draw (\i*1.2,0) -- ({(\i+1)*1.2},0); }
			\foreach \i in {1,...,2} { \nodecircle{\i}{(\i*1.2,0)} }
			\draw (4.8,0) -- (5.8,0);
			\draw (5.8,0) -- (7,0.6);
			\draw (5.8,0) -- (7,-0.6);
			\nodecircle{$l-2$}{(5.8,0)}
			\nodercircle{$l-1$}{(7,0.6)};
			\nodercircle{$l$}{(7,-0.6)};
		\end{tikzpicture} \\[1em]
		
		$E_6$ &
		\begin{tikzpicture}[scale=0.8, baseline={(current bounding box.center)}]
			\foreach \i in {1,...,4} { \draw (\i*1.2,0) -- ({(\i+1)*1.2},0); }
			\draw (3.6,0) -- (3.6,1.2);
			\nodecircle{1}{(1.2,0)};
			\nodecircle{3}{(2.4,0)};
			\nodecircle{4}{(3.6,0)};
			\nodecircle{5}{(4.8,0)};
			\nodecircle{6}{(6,0)};
			\nodercircle{2}{(3.6,1.2)};
		\end{tikzpicture} \\[1em]
		
		$E_7$ &
		\begin{tikzpicture}[scale=0.8, baseline={(current bounding box.center)}]
			\foreach \i in {1,...,5} { \draw (\i*1.2,0) -- ({(\i+1)*1.2},0); }
			\draw (3.6,0) -- (3.6,1.2);
			\nodecircle{1}{(1.2,0)};
			\nodecircle{3}{(2.4,0)};
			\nodecircle{4}{(3.6,0)};
			\nodecircle{5}{(4.8,0)};
			\nodecircle{6}{(6,0)};
			\nodecircle{7}{(7.2,0)};
			\nodercircle{2}{(3.6,1.2)};
		\end{tikzpicture} \\[1em]
		
		$E_8$ &
		\begin{tikzpicture}[scale=0.8, baseline={(current bounding box.center)}]
			\foreach \i in {1,...,6} { \draw (\i*1.2,0) -- ({(\i+1)*1.2},0); }
			\draw (3.6,0) -- (3.6,1.2);
			\nodecircle{1}{(1.2,0)};
			\nodecircle{3}{(2.4,0)};
			\nodecircle{4}{(3.6,0)};
			\nodecircle{5}{(4.8,0)};
			\nodecircle{6}{(6,0)};
			\nodecircle{7}{(7.2,0)};
			\nodecircle{8}{(8.4,0)};
			\nodercircle{2}{(3.6,1.2)};
		\end{tikzpicture} \\[1em]
		
		$F_4$ &
		\begin{tikzpicture}[scale=0.8, baseline={(current bounding box.center)}]
			\draw (0,0) -- (1.2,0);
			\draw (1.2,0) -- (2.4,0) node[midway, above] {\small 4};
			\draw (2.4,0) -- (3.6,0);
			\nodecircle{1}{(0,0)};
			\nodecircle{2}{(1.2,0)};
			\nodecircle{3}{(2.4,0)};
			\nodecircle{4}{(3.6,0)};
		\end{tikzpicture} \\[1em]
		
		$G_2$ &
		\begin{tikzpicture}[scale=0.8, baseline={(current bounding box.center)}]
			\draw (0,0) -- (1.2,0) node[midway, above] {\small 6};
			\nodecircle{1}{(0,0)};
			\nodecircle{2}{(1.2,0)};
		\end{tikzpicture} \\
		
		&\\
		
		\bottomrule
	\end{tabular}
	\label{table - Coxeter diagrams of irreducible root systems}
\end{table}
\newpage 
\begin{table}[h!]
	\caption{Representatives $L$ of the isomorphism classes of lattices in a real vector space $V$ invariant under the Weyl group $W_0(R)$ of an irreducible root system $R$ of rank $l\geq3$ in $V$. Each lattice $L$ is described both implicitly, in terms of $Q(R)$ and $P(R)$ as in \cite{maxwell}, and explicitly, following Martinais's notation from \cite{martinais} (see also Table\cspace\ref{table - martinais nomenclature}). The elements $\overline{\omega}_1$ and $\overline{\omega}_l$ denote the fundamental weights associated to the simple roots $\alpha_1$ and $\alpha_l$ in Bourbaki's standard realization \cite[Planche I to IX]{bourbaki}.}
	\makebox[\linewidth]{
		\begin{tabular}{lllc}
			\toprule
			Type of $R$ & Inequivalent lattices $L(R)$ & Martinais's & $n(W_0(R),L(R))$ \\
			& & notation & \\ \bottomrule
			
			&&&\\
			
			$A_l$, $l \geq 4$ & $L_k(A_l)=Q(A_l)+\langle a_k\,\overline{\omega}_1\rangle_\Z$ & $\Lambda_{l,a_k}$ & $1$ if $a_k$ is odd \\
			&{\small with $a_k\in\N^+$ the $k$th-divisor of $l+1$}&&$2$ if $a_k$ is even\\
			
			&&&\\
			
			$B_l$, $l\geq3$ & $L_1(B_l)=Q(B_l)$ & $CL_l$ & $4$ \\
			
			&&&\\
			
			& $L_2(B_l)=P(B_l)$ & $CCL_l$ & $2$ if $l=3$ \\
			&&& $4$ if $l=4$ \\
			&&& $1$ if $l\geq 5$ \\
			
			&&&\\
			
			$C_l$, $l\geq3$ & $L_1(C_l)=Q(C_l)$ & $FL_l$ & $4$ if $l$ odd\\
			&&& $2$ if $l$ is even \\
			
			&&&\\
			
			$D_l$, $l \geq 3$ odd, & $L_1(D_l)=Q(D_l)$ & $FL_l$ & $2$ \\
			$\phantom{D_l,}$ or $l=4$ & $L_2(D_l)=Q(D_l)+\langle \overline{\omega}_1\rangle_\Z$ & $CL_l$ & $2$ \\
			& $L_3(D_l)=P(D_l)$ & $CCL_l$ & $2$ if $l=3$ or $l=4$\\
			&&& $1$ if $l\geq5$ \\
			
			&&&\\
			
			$D_l$, $l\geq6$ even & $L_1(D_l)=Q(D_l)$ & $FL_l$ & $3$ \\
			& $L_2(D_l)=Q(D_l)+\langle\overline{\omega}_1\rangle_\Z$ & $CL_l$ & $2$ \\
			& $L_3(D_l)=Q(D_l)+\langle\overline{\omega}_l\rangle_\Z$ & $\Omega_l$ & $2$ \\			
			& $L_4(D_l)=P(D_l)$ & $CCL_l$ & $1$ \\
			
			&&&\\
			
			$E_6$ & $L_1(E_6)=Q(E_6)$ & $Q_6$ & $1$ \\
			& $L_2(E_6)=P(E_6)$ & $P_6$ & $1$ \\
			
			&&&\\
			
			$E_7$ & $L_1(E_7)=Q(E_7)$ & $Q_7$ & $2$ \\
			& $L_2(E_7)=P(E_7)$ & $P_7$ & $1$ \\
			
			&&&\\
			
			$E_8$ & $L_1(E_8)=Q(E_8)$ & $\Omega_8$ & $1$ \\
			
			&&&\\
			
			$F_4$ & $L_1(F_4)=Q(F_4)$ & $CCL_4$ & $1$ \\
			
			&&&\\
			\bottomrule
		\end{tabular}
	}
	\label{table all inequivalent lattices}
\end{table}
\newpage
\begin{table}[h!]
	\caption{Families of lattices associated with irreducible root systems $R$, described according to Bourbaki's standard notation \cite[Planche I to VIII]{bourbaki}. In particular, $l$ represents the rank of $R$, and each $\epsilon_i$ denotes the $i$th-element of the canonical basis of the real vector space underlying $R$.}
	\begin{tabular}{ll}
		\toprule
		&\\
		
		& $\Lambda_{l,a_k}=\bigoplus_{i=1}^{l-1}\langle \epsilon_i-\epsilon_{i+1}\rangle_\Z\oplus\langle a_k\epsilon_1-\frac{a_k}{l+1}\sum_{i=1}^{l+1}\epsilon_i\rangle_\Z,$ \\
		& \phantom{$\Lambda_{l,a_k}=$}with $a_k\in\N^+$ being the $k$th-divisor of $l+1$ \\
		
		&\\
		
		& $CL_l=\bigoplus_{i=1}^{l}\langle\epsilon_i\rangle_\Z$ \\
		
		&\\
		
		& $CCL_l=\bigoplus_{i=1}^{l-1}\langle\epsilon_i\rangle_\Z\oplus\langle\frac{1}{2}\sum_{i=1}^l\epsilon_i\rangle_\Z$ \\
		
		&\\
		
		& $FL_l=\{\sum_{i=1}^lx_i\epsilon_i\,:\,x_i\in\Z\text{ and }\sum_{i=1}^lx_i\text{ even}\}$ \\
		
		&\\
		
		& $\Omega_l=\{\sum_{i=1}^lx_i\epsilon_i\,:\,x_i\in\Z\text{ and }\sum_{i=1}^lx_i\text{ even}\}+\langle \frac{1}{2}\sum_{i=1}^{l}\epsilon_i\rangle_\Z$ \\
		
		&\\
		
		& $Q_6=\bigoplus_{i=1}^5\langle\epsilon_1+\epsilon_i\rangle_\Z\oplus\langle \frac{1}{2}(\epsilon_1+\epsilon_8-\sum_{i=2}^7\epsilon_i)\rangle_\Z$ \\
		
		&\\
		
		& $P_6=\bigoplus_{i=1}^4\langle\epsilon_1+\epsilon_i\rangle_\Z\oplus\langle\frac{1}{2}(\epsilon_1+\epsilon_8-\sum_{i=2}^7\epsilon_i)\rangle_\Z\oplus\langle \epsilon_1+\epsilon_5+\frac{2}{3}(\epsilon_6+\epsilon_7-\epsilon_8)\rangle_\Z$ \\
		
		&\\
		
		& $Q_7=\bigoplus_{i=1}^6\langle \epsilon_1+\epsilon_i\rangle_\Z \oplus\langle \frac{1}{2}(\epsilon_1+\epsilon_8-\sum_{i=2}^7\epsilon_i)\rangle_\Z$ \\
		
		&\\
		
		& $P_7=\bigoplus_{i=1}^5\langle \epsilon_1+\epsilon_i\rangle_\Z\oplus\langle \frac{1}{2}(\epsilon_1+\epsilon_8-\sum_{i=2}^7\epsilon_i)\rangle_\Z\oplus \langle \frac{1}{2}\sum_{i=1}^6\epsilon_i\rangle_\Z$ \\
		
		&\\
		
		\bottomrule
	\end{tabular}
	\label{table - martinais nomenclature}
\end{table}

\newpage
\begin{table}[h!]
	\caption{Representatives of the isomorphism classes of crystallographic groups arising from irreducible root systems from \cite[Table{\cspace}V]{martinais}. The representatives are described according to Bourbaki's standard notation \cite[Planche I to VIII]{bourbaki}. In particular, $\epsilon_i$ denotes the $i$th-element of the canonical basis of the ambient real vector space and $l$ the rank of~$R$.}
	\makebox[\linewidth]{
		\renewcommand{\arraystretch}{1.5}
		\begin{tabular}{ll}
			\toprule
			Type of $R$ & Crystallographic groups with point group $W_0(R)$ and translation lattice $L$ \\
			Lattice $L$ & \\
			\bottomrule
			&\\
			
			$B_l$, $l\geq3$ & $W_1=\langle(\sum_{i=1}^lx_i\epsilon_i,1),(0,s_j):1\leq j\leq l, x_i\in \Z \rangle_{\iso(V)}$ \\
			$CL_l$ & $\phantom{W_1}=CL_l\rtimes W_0(R)$ \\
			& $W_2=\langle(\sum_{i=1}^lx_i\epsilon_i,1),(0,s_j),(\frac{1}{2}\sum_{i=1}^l\epsilon_i,s_l):1\leq j< l, x_i\in \Z \rangle_{\iso(V)}$ \\
			& $W_3=\langle(\sum_{i=1}^lx_i\epsilon_i,1),(\frac{1}{2}\sum_{i=1}^l\epsilon_i,s_j),(0,s_l):1\leq j< l, x_i\in \Z \rangle_{\iso(V)}$ \\
			& $W_4=\langle(\sum_{i=1}^lx_i\epsilon_i,1),(\frac{1}{2}\sum_{i=1}^l\epsilon_i,s_j):1\leq j\leq l, x_i\in \Z \rangle_{\iso(V)}$ \\
			
			&\\
			
			$B_4$ & $W_1=\langle(\sum_{i=1}^4(x_i+\frac{y}{2})\epsilon_i,1), (0,s_j):1\leq j\leq 4, x_i,y\in\Z\rangle_{\iso(V)}$ \\
			$CCL_4$ & $\phantom{W_1}=CCL_4\rtimes W_0(B_4)$ \\
			& $W_2=\langle(\sum_{i=1}^4(x_i+\frac{y}{2})\epsilon_i,1), (\frac{1}{4}\sum_{i=1}^4\epsilon_i,s_j),(0,s_4):1\leq j\leq 3, x_i,y\in\Z\rangle_{\iso(V)}$ \\
			& $W_3=\langle(\sum_{i=1}^4(x_i+\frac{y}{2})\epsilon_i,1), (\frac{1}{2}\epsilon_3,s_1),(\frac{1}{2}\epsilon_1,s_2),(\frac{1}{2}\epsilon_2,s_3),(0,s_4):x_i,y\in\Z\rangle_{\iso(V)}$ \\
			& $W_4=\langle(\sum_{i=1}^4(x_i+\frac{y}{2})\epsilon_i,1), (\frac{1}{2}\epsilon_3+\frac{1}{4}\sum_{i=1}^4\epsilon_i,s_1),$\\
			& $\phantom{W_4=\langle}(\frac{1}{2}\epsilon_1+\frac{1}{4}\sum_{i=1}^4\epsilon_i,s_2),(\frac{1}{2}\epsilon_2+\frac{1}{4}\sum_{i=1}^4\epsilon_i,s_3),(0,s_4):x_i,y\in\Z\rangle_{\iso(V)}$ \\
			
			&\\
			
			$C_l$, $l\geq 3$ odd & $W_1=\langle(\sum_{i=1}^lx_i\epsilon_i,1),(0,s_j):1\leq j\leq l,x_i\in \Z, \sum_{i=1}^lx_i\in2\Z \rangle_{\iso(V)}$ \\
			$FL_l$ & $\phantom{W_1}=FL_L\rtimes W_0(R)$ \\
			& $W_2=\langle(\sum_{i=1}^lx_i\epsilon_i,1),(0,s_j),(\frac{1}{2}\sum_{i=1}^l\epsilon_i,s_l):1\leq j< l,x_i\in \Z, \sum_{i=1}^lx_i\in2\Z \rangle_{\iso(V)}$ \\
			& $W_3=\langle(\sum_{i=1}^lx_i\epsilon_i,1),(\epsilon_1,s_j):1\leq j\leq l,x_i\in \Z, \sum_{i=1}^lx_i\in2\Z \rangle_{\iso(V)}$ \\
			& $W_4=\langle(\sum_{i=1}^lx_i\epsilon_i,1),(\epsilon_1,s_j),(\epsilon_1+\frac{1}{2}\sum_{i=1}^l\epsilon_i):1\leq j< l,x_i\in \Z, \sum_{i=1}^lx_i\in2\Z \rangle_{\iso(V)}$ \\
			
			&\\
			
			$D_l$, $l\geq 6$ even & $W_1=\langle(\sum_{i=1}^lx_i\epsilon_i,1),(0,s_j):1\leq j\leq l, x_i\in\Z, \sum_{i=1}^l x_i\in 2\Z\rangle_{\iso(V)}$ \\
			$FL_l$ & $\phantom{W_1}=FL_l\rtimes W_0(D_l)$ \\
			& $W_2=\langle(\sum_{i=1}^lx_i\epsilon_i,1),(\epsilon_1,s_j):1\leq j\leq l, x_i\in\Z, \sum_{i=1}^l x_i\in 2\Z\rangle_{\iso(V)}$ \\
			& $W_3=\langle(\sum_{i=1}^lx_i\epsilon_i,1),(\frac{1}{2}\sum_{i=1}^l \epsilon_i,s_j):1\leq j\leq l, x_i\in\Z, \sum_{i=1}^l x_i\in 2\Z\rangle_{\iso(V)}$ \\
			
			&\\
			\bottomrule
		\end{tabular}
		\renewcommand{\arraystretch}{1}
	}
	\label{table - explicit crystallographic groups}
\end{table}

\newpage
\begin{table}[h!]
	\caption{Irreducible root systems $R$ of type $B_l,C_l$ and $D_l$, for $l\geq3$, together with selected bases and their simple reflections, given in Bourbaki's standard realization \cite[Planche I to IX]{bourbaki}. Each $\epsilon_i$ denotes the $i$th-element of the canonical basis of the real Euclidean vector space underlying $R$, and $s_i$ denotes the simple reflection induced by the root $\alpha_i$.}
	\label{table - root systems, bases and reflections}
	\renewcommand{\arraystretch}{1.5}
	\begin{tabular}{ll}
		\toprule
		
		& Root system type: $B_l$, $l\geq3$\\
		& Roots: $\pm\epsilon_i$, $\pm(\epsilon_i+\epsilon_j)$, $\pm(\epsilon_i-\epsilon_j)$, {\small with} $1\leq i<j\leq l$ \\
		& Basis: $\alpha_j=\epsilon_j-\epsilon_{j+1}$, $\alpha_l=\epsilon_l$, {\small with} $1\leq j\leq l-1$\\
		& Simple reflections:\\
		&$s_j(\epsilon_i)=\begin{cases}
			\epsilon_{j+1}&\text{if }i=j\\
			\epsilon_{j}&\text{if }i=j+1,\\
			\epsilon_i&\text{if }i \neq j, j+1
		\end{cases}$ for all $1\leq j\leq l-1$\\ \addlinespace
		& $s_l(\epsilon_i)=\begin{cases}
			\epsilon_i&\text{if }i\neq l\\
			-\epsilon_l&\text{if }i=l
		\end{cases}$\\ 
		[3ex]
		
		\midrule
		
		& Root system type: $C_l$, $l\geq3$\\
		& Roots: $\pm2\epsilon_i$, $\pm(\epsilon_i+\epsilon_j)$, $\pm(\epsilon_i-\epsilon_j)$, {\small with} $1\leq i<j\leq l$ \\
		& Basis: $\alpha_j=\epsilon_j-\epsilon_{j+1}$, $\alpha_l=2\epsilon_l$, {\small with} $1\leq j\leq l-1$\\
		& Simple reflections:\\
		&$s_j(\epsilon_i)=\begin{cases}
			\epsilon_{j+1}&\text{if }i=j\\
			\epsilon_{j}&\text{if }i=j+1,\\
			\epsilon_i&\text{if }i \neq j, j+1
		\end{cases}$ for all $1\leq j\leq l-1$\\ \addlinespace
		& $s_l(\epsilon_i)=\begin{cases}
			\epsilon_i&\text{if }i\neq l\\
			-\epsilon_l&\text{if }i=l
		\end{cases}$\\[3ex]
		
		\toprule
		
		& Root system type: $D_l$, $l\geq3$\\
		& Roots: $\pm(\epsilon_i+\epsilon_j)$, $\pm(\epsilon_i-\epsilon_j)$, {\small with} $1\leq i<j\leq l$ \\
		& Basis: $\alpha_j=\epsilon_j-\epsilon_{j+1}$, $\alpha_l=\epsilon_{l-1}+\epsilon_l$, {\small with} $1\leq j\leq l-1$\\
		& Simple reflections:\\
		&$s_j(\epsilon_i)=\begin{cases}
			\epsilon_{j+1}&\text{if }i=j\\
			\epsilon_{j}&\text{if }i=j+1,\\
			\epsilon_i&\text{if }i \neq j, j+1
		\end{cases}$ for all $1\leq j\leq l-1$\\ \addlinespace
		
		& $s_l(\epsilon_i)=\begin{cases}
			\epsilon_i&\text{if }i\neq l-1,l\\
			-\epsilon_l&\text{if }i=l-1\\
			-\epsilon_{l-1}&\text{if }i=l
		\end{cases}$\\ \addlinespace
		
		\bottomrule
	\end{tabular}
	\renewcommand{\arraystretch}{1}
\end{table}

\newpage
\clearpage

\end{document}